\definecolor{cof}{RGB}{219,144,71}
\definecolor{pur}{RGB}{186,146,162}
\definecolor{greeo}{RGB}{91,173,69}
\definecolor{greet}{RGB}{52,111,72}
\newtheorem{Theo}{Theorem}[section]
\newtheorem{Prop}[Theo]{Proposition}
\newtheorem{Cor}[Theo]{Corollary}
\newtheorem{Lemma}[Theo]{Lemma}
\theoremstyle{definition}
\newtheorem{Exam}[Theo]{Example}
\newcommand{\mmid}{\mid}
\newcommand{\rest}{|}
\def\mystrut(#1,#2){\vrule height #1pt depth #2pt width 0pt}
\newcommand{\rep}{{\rm rep}}
\newcommand{\Hom}{{\rm Hom}}
\newcommand{\Ext}{{\rm Ext}}
\newcommand{\Z}{\mathbb{Z}}
\newcommand{\C}{\mathcal{C}}
\newcommand{\p}{{}^\perp}
\newcommand{\ind}{\operatorname{ind}}
\newcommand{\hyp}{\mathcal J}
\newcommand{\lat}{\mathcal L}
\newcommand{\reg}{{\mathcal R}{\rm eg}}
\newcommand{\add}{\operatorname{add}}
\begin{document}

\title[Semi-stable subcategories]{Semi-stable subcategories for Euclidean quivers}

\dedicatory{Dedicated to the memory of Dieter Happel}

\author{Colin Ingalls}
\address{Colin Ingalls, Dept. of Math. and Stat, University of New Brunswick, Fredericton, NB, Canada, E3B 5A3}
\author{Charles Paquette}
\address{Charles Paquette, Dept. of Math., University of Connecticut, Storrs, CT, USA, 06269-3009}
\author{Hugh Thomas}
\address{Hugh Thomas, Dept. of Math. and Stat, University of New Brunswick, Fredericton, NB, Canada, E3B 5A3}

\subjclass{16G20}

\maketitle

\begin{abstract} In this paper, we study the semi-stable subcategories of the category of representations of a Euclidean quiver, and the possible intersections of these subcategories. Contrary to the Dynkin case, we find out that the intersection of semi-stable subcategories may not be semi-stable.  However, only a finite number of exceptions occur, and we give a description of these subcategories. Moreover, one can attach a simplicial fan in $\mathbb{Q}^n$ to any acyclic quiver $Q$, and this simplicial fan allows one to completely determine the canonical presentation of any element in $\mathbb{Z}^n$. This fan has a nice description in the Dynkin and Euclidean cases: it is described using an arrangement of convex codimension-one subsets of $\mathbb{Q}^n$, each such subset being indexed by a real Schur root or a set of quasi-simple objects.  This fan also characterizes
when two different stability conditions give rise to the same semi-stable
subcategory.
\end{abstract}

\section{Introduction}

\subsection*{Semi-stable subcategories}
In Mumford's geometric invariant theory, in order to form the quotient of a variety
by a group action, one first replaces the variety by its semi-stable points.
This important idea was interpreted in the setting of quiver representations
by King \cite{King}.

Let $Q$ be a (possibly disconnected) acyclic quiver, and $k$ an algebraically closed field. In this paper, all quivers are acyclic. We write $\rep(Q)$ for the category
of finite-dimensional representations of $Q$ over $k$.
For $\theta,$ a $\mathbb Z$-linear
functional on the Grothendieck group of $\rep(Q)$, referred to as a
stability condition,
King shows how to define the
subcategory of $\theta$-semi-stable objects in $\rep(Q)$.  It is
immediate from King's definition, which we shall recall below, that
for any $\theta$, the
$\theta$-semi-stable subcategory of $\rep(Q)$ is abelian and
extension-closed.
It is therefore natural to ask which abelian and extension-closed subcategories
can arise as semi-stable subcategories.  (All subcategories considered in this paper are full subcategories.  Also, when we refer to abelian subcategories,
we mean exact abelian subcategories, that is to say, subcategories that
are abelian with respect to the abelian structure of the ambient category.)

In \cite{IT}, two of the authors
of the present paper showed that when $Q$ is Dynkin, \emph{any}
abelian and extension-closed subcategory arises as the
$\theta$-semi-stable subcategory of a suitable choice of $\theta$.
In fact, somewhat more is known; see \cite[Theorem 1.1]{IT}.
For any acyclic quiver $Q$, the following conditions on an abelian,
extension-closed subcategory $\mathcal A$ of $\rep(Q)$ are equivalent; see Proposition \ref{equiv_thick}.
\begin{itemize}
\item $\mathcal A$ admits a projective generator.
\item $\mathcal A$ is generated by the elements of an exceptional sequence.
\item $\mathcal A$ is equivalent to the category of representations of
some quiver $Q'$.
\end{itemize}
We refer to subcategories satisfying these equivalent
conditions as finitely generated.
In \cite{IT}, it was shown for any acyclic quiver $Q,$
that any finitely generated, abelian and extension-closed subcategory of $\rep (Q)$
arises as the semi-stable subcategory for
some stability condition.  This resolves the Dynkin case because in that
case, every abelian and extension-closed subcategory is
finitely generated.

The next case one would hope to settle is the Euclidean case. In (ii) below, we refer to possibly disconnected Euclidean quivers.
A \emph{possibly disconnected Euclidean quiver} is defined to be a quiver with one Euclidean
component and all other components (if any) Dynkin. A \emph{regular object} is then a representation which is a direct sum of regular representations of the Euclidean component and/or representations of Dynkin components. We resolve the Euclidean case as follows:

{\renewcommand{\theTheo}{\ref{one}}
\begin{Theo} For $Q$ a Euclidean quiver, an abelian and extension-closed
subcategory $\mathcal B$ of $\rep (Q)$ is the subcategory of
$\theta$-semi-stable representations for some $\theta$ if and only if either:
\begin{enumerate}[$(i)$]
\item $\mathcal B$ is finitely generated, or
\item there exists some abelian, extension-closed,
finitely generated subcategory $\mathcal A$ of
$\rep(Q)$, equivalent to the representations of a Euclidean quiver
(possibly disconnected),
and $\mathcal B$ consists of all the regular objects of $\mathcal A$.
\end{enumerate}
\end{Theo}
\addtocounter{Theo}{-1}}

The semi-stable subcategories of the second type can also be described
more explicitly as follows:

{\renewcommand{\theTheo}{\ref{oneone}}
\begin{Prop} The semi-stable subcategories in (ii) of Theorem \ref{one}
can also be described as those abelian, extension-closed
subcategories of the regular part of $\rep(Q)$, which
contain infinitely many indecomposable objects from each tube.
\end{Prop}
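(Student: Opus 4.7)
The plan is to derive the proposition by reducing to a local analysis of each tube, combined with the orthogonality of distinct tubes in $\reg(Q)$. The central local statement I would establish first is the following tube lemma: if $\mathcal{T}$ is a tube of rank $r$ in $\rep(Q)$ and $\mathcal{C}\subseteq\mathcal{T}$ is an abelian, extension-closed subcategory containing infinitely many indecomposables, then $\mathcal{C}=\mathcal{T}$. Indeed, such a $\mathcal{C}$ must contain some indecomposable $M$ of quasi-length at least $r$; iterating sub- and quotient-objects of $M$ then realises every quasi-simple $S_1,\dots,S_r$ of $\mathcal{T}$ inside $\mathcal{C}$, and closing under extensions of the $S_i$ recovers all of $\mathcal{T}$.

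Since there are no non-zero $\Hom$ or $\Ext^1$ between distinct tubes, any abelian, extension-closed $\mathcal{B}\subseteq\reg(Q)$ splits as $\mathcal{B}=\bigoplus_\lambda(\mathcal{B}\cap\mathcal{T}_\lambda)$. Combined with the tube lemma, the hypothesis in the statement translates into $\mathcal{B}$ being a direct sum of a collection of full tubes of $\rep(Q)$. The forward direction then follows: if $\mathcal{A}\cong\rep(Q')$ with $Q'$ a possibly disconnected Euclidean quiver, then $\reg(\mathcal{A})$ is a direct sum of the tubes of the Euclidean component of $Q'$, and each such tube of $\mathcal{A}$ --- being infinite, abelian, and extension-closed inside some $\mathcal{T}_\lambda\subseteq\rep(Q)$ --- must equal $\mathcal{T}_\lambda$ by the tube lemma.

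For the converse direction, one must realise any such direct sum $\mathcal{B}=\bigoplus_{\lambda\in S}\mathcal{T}_\lambda$ as the regular part of some finitely generated $\mathcal{A}$ equivalent to the representations of a possibly disconnected Euclidean quiver. The plan is to construct $\mathcal{A}$ as the right perpendicular category of a suitably chosen exceptional sequence in $\rep(Q)$ built from indecomposables supported on the excluded tubes $\mathcal{T}_\mu$, $\mu\notin S$, invoking the general fact that the perpendicular of an exceptional sequence is again equivalent to $\rep(Q'')$ for some acyclic $Q''$. The principal obstacle is to verify that $Q''$ is of the claimed possibly disconnected Euclidean type and that $\reg(\mathcal{A})=\mathcal{B}$: since perpendicularity by a quasi-simple of an exceptional tube merely lowers its rank by one, and the unique quasi-simple of a homogeneous tube is not exceptional, removing a tube entirely requires iterating the construction or adjoining auxiliary exceptional objects from the preprojective or preinjective part, and controlling the combined effect on the remaining tubes so that precisely those indexed by $S$ survive is the technical heart of this direction.
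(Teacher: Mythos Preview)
Your central ``tube lemma'' is false. Take a non-homogeneous tube $\mathcal{T}$ of rank $2$ with quasi-simples $S_1,S_2$, and set $\mathcal{C}=S_2^\perp\cap\mathcal{T}$. This is an abelian, extension-closed subcategory of $\mathcal{T}$, equivalent to a homogeneous tube, so it contains infinitely many indecomposables; yet neither $S_1$ nor $S_2$ lies in $\mathcal{C}$ (the unique simple object of $\mathcal{C}$ is the singular-isotropic representation with quasi-socle $S_1$), so $\mathcal{C}\subsetneq\mathcal{T}$. More generally, for any proper non-empty subset $J$ of the quasi-simples of a rank-$r$ tube, the category $\mathcal{E}_J$ of the paper is an infinite proper abelian extension-closed subcategory. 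The flaw is in the step ``iterating sub- and quotient-objects of $M$ then realises every quasi-simple $S_1,\dots,S_r$ of $\mathcal{T}$ inside $\mathcal{C}$'': being an \emph{exact} abelian subcategory means $\mathcal{C}$ is closed under kernels and cokernels of morphisms \emph{between objects of $\mathcal{C}$}, not under arbitrary subobjects and quotients computed in $\mathcal{T}$. The quasi-simple filtration of $M$ uses maps whose sources or targets need not lie in $\mathcal{C}$.

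This error collapses both directions. In the forward direction, your argument would show that the regular part of any type-(ii) category $\mathcal{A}$ is a union of full tubes of $\rep(Q)$; but for $\mathcal{A}=S^\perp$ with $S$ a quasi-simple in a non-homogeneous tube $\mathcal{T}$, the intersection $\mathcal{A}\cap\mathcal{T}$ is proper. In the converse direction, since you require infinitely many indecomposables from \emph{every} tube, your characterisation would force $\mathcal{B}=\reg$, whereas there are many more type-(ii) subcategories. The paper's route is different: it first classifies all thick subcategories of a single tube as $\mathcal{S}_{J,\mathcal{F}}$, the additive hull of $\mathcal{E}_J$ and a representation-finite orthogonal piece $\mathcal{F}$ (Proposition~\ref{thicktube}); observes that $\mathcal{S}_{J,\mathcal{F}}$ has infinitely many indecomposables exactly when $J\neq\emptyset$; and then invokes the proposition immediately preceding Proposition~\ref{oneone}, which identifies ``$J_i\neq\emptyset$ in every non-homogeneous tube and all homogeneous tubes present'' as precisely the condition for a thick subcategory of $\reg$ to be semi-stable.
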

\addtocounter{Theo}{-1}}

From \cite[page 126]{Len}, the category of coherent sheaves over a weighted projective line, in the tame domestic case, is derived-equivalent to $\rep(Q)$ for $Q$ Euclidean.  Hence, we get the following, where two subcategories $\mathcal B_1,\mathcal B_2$ of a given hereditary abelian category $\mathcal{H}$ are \emph{orthogonal} if for any objects $M_1 \in \mathcal B_1, M_2 \in \mathcal B_2$, $\Hom_{\mathcal{H}}(M_1, M_2) = 0 = \Hom_{\mathcal{H}}(M_2, M_1)$ and $\Ext^1_{\mathcal{H}}(M_1, M_2) = 0 = \Ext^1_{\mathcal{H}}(M_2, M_1)$.

{\renewcommand{\theTheo}{\ref{coh}}
\begin{Theo}
Let $\mathbb{X}$ be a weighted projective line of tame domestic type.  Then an abelian and extension-closed
subcategory $\mathcal B$ of coh$(\mathbb{X})$ is the subcategory of
$\theta$-semi-stable objects for some $\theta$ if and only if either:
\begin{enumerate}[$(i)$]
\item $\mathcal{B}$ is equivalent to the category of coherent sheaves over a weighted projective line,
\item $\mathcal{B}$ is equivalent to the torsion part of a category of coherent sheaves over a weighted projective line,
\item $\mathcal B$ is the additive hull of two abelian, extension-closed subcategories $\mathcal B_1,\mathcal B_2$ that are orthogonal
to each other and where $\mathcal{B}_1$ is as in (i) or (ii) and $\mathcal{B}_2$ is of finite representation type.
\end{enumerate}
\end{Theo}
\addtocounter{Theo}{-1}}

\subsection*{Equivalence of stability conditions, canonical decomposition
of dimension vectors}
There is a great deal of interesting convex geometry associated to
stability conditions.  We can fix a dimension
vector $\alpha$, and ask for what stability conditions $\sigma$ there will be
objects of dimension vector $\alpha$ which are $\sigma$-semi-stable.  This
is shown to be a rational polyhedral cone \cite{DW3}, and these cones are
studied further in \cite{DW1}.  It is also of interest to consider the union
of the cones that arise in this way; this has been done in \cite{IOTW, Ch},
where the emphasis was on the general theory and on the Dynkin case.
We take a somewhat different approach: we draw similar pictures,
but interpret them differently, and we focus on the Dynkin and Euclidean cases.
They exhibit many
interesting features that are absent from the wild setting.
We consider all dimension vectors at once, focussing on the
subcategory of semi-stable objects which each stability condition
induces.

There is a natural equivalence relation on stability conditions, which
we call ss-equivalence,
where
$\theta$ and $\theta'$ are equivalent if they induce the same
semi-stable subcategory.
The set of stability conditions is naturally a free abelian group
of finite rank, dual to the Grothendieck group,
but in order to think about this equivalence,
it is easier to work in the corresponding
finite-dimensional vector space over $\mathbb Q$.
It is also convenient to use the Euler form on the Grothendieck group to
identify the Grothendieck group and its dual: we say
that $d_1$ and $d_2$ are ss-equivalent if $\langle d_1,-\rangle$ and
$\langle d_2, -\rangle$ are.
(We recall the
definition of the Euler form in Section \ref{sectiontwo}.)

Let $n$ denote the number of vertices of $Q$. There is a collection $\hyp$ of convex
codimension-one subsets
in $\mathbb Q^n$ such that
$d_1$ and $d_2$ are ss-equivalent if and only if they  lie in the same
subsets of $\hyp$.
Given an element $d\in \mathbb \Z^n$,
define
$$\hyp_d=\{J\in \hyp \mmid d\in J\}.$$ Then we have:

{\renewcommand{\theTheo}{\ref{ssequiv}}
\begin{Theo}
 Let $Q$ be a Dynkin or Euclidean quiver and $d_1,d_2 \in \Z^n$.
Then $d_1$ and $d_2$ are ss-equivalent if and only if $\hyp_{d_1}=\hyp_{d_2}$.
\end{Theo}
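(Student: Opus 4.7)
The plan is to interpret ss-equivalence as the equivalence ``same collection of codimension-one witnesses,'' so that the theorem reduces to showing that each $J \in \hyp$ really is a wall across which the semi-stable subcategory changes, and that these walls exhaust all possible distinctions between semi-stable subcategories. First I would recall, for each real Schur root $\alpha$, the associated convex codimension-one subset $J_\alpha \subseteq \mathbb{Q}^n$, and verify that $d \in J_\alpha$ is equivalent to the exceptional indecomposable $M_\alpha$ being $\theta_d$-semi-stable (using King's criterion and the fact that $M_\alpha$ is the unique indecomposable with dimension vector $\alpha$). In the Euclidean case I would then describe each codimension-one subset indexed by a set $S$ of quasi-simples from a tube, and verify that $d$ lies in this subset if and only if the regular objects generated by $S$ are $\theta_d$-semi-stable, in the sense made precise by Proposition~\ref{oneone}.

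For the direction $\hyp_{d_1} = \hyp_{d_2} \Rightarrow$ ss-equivalence, the key observation is that every indecomposable object of $\rep(Q)$ is either an exceptional module $M_\alpha$ for some real Schur root $\alpha$ or, when $Q$ is Euclidean, a regular object in some tube. Since the semi-stable subcategory is abelian and extension-closed, and since the semi-stability of a finite direct sum is determined by that of its summands, it suffices to check agreement on indecomposables. For an exceptional $M_\alpha$, semi-stability for $\theta_{d_i}$ is detected by $d_i \in J_\alpha$, and this agrees by hypothesis; for a regular indecomposable, semi-stability is detected by which tube-walls contain $d_i$, and again the hypothesis gives agreement. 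Hence the two semi-stable subcategories coincide.

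For the converse, assume (without loss of generality) that some $J \in \hyp$ lies in $\hyp_{d_1}$ but not $\hyp_{d_2}$. If $J = J_\alpha$ for a real Schur root $\alpha$, then $M_\alpha$ is $\theta_{d_1}$-semi-stable but not $\theta_{d_2}$-semi-stable, and directly witnesses non-ss-equivalence. If $J$ is a tube-wall indexed by a set $S$ of quasi-simple objects, then by Proposition~\ref{oneone} the tube-wall is the locus where an appropriate infinite family of regular objects built from $S$ becomes simultaneously semi-stable; any object from this family that is on one side of the wall but not the other serves as a witness distinguishing $\theta_{d_1}$ from $\theta_{d_2}$.

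\emph{Main obstacle.} The delicate part is clearly the tube case in the Euclidean setting. Unlike a real-Schur-root wall, which is cut out by a single linear condition involving a single exceptional module, a tube-wall records the semi-stability of an infinite family of regular indecomposables, and one must confirm that this information is genuinely packaged into a single convex codimension-one subset and that no additional finer ss-equivalence invariants are lurking in the regular part. This is precisely what Theorem~\ref{one}(ii) together with Proposition~\ref{oneone} are designed to supply: they pin down exactly which abelian extension-closed subcategories of the regular part arise as semi-stable subcategories, and in what families, so that the tube-walls in $\hyp$ really do detect every relevant distinction. I would therefore expect the bulk of the technical work to consist of translating Proposition~\ref{oneone} into the wall-crossing language needed here.
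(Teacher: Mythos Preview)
Your overall strategy is correct and matches the paper's: use the walls $H_\alpha^{ss}$ to detect exceptional indecomposables (this is exactly Lemma~\ref{reals}), and use the remaining walls $C_I$ to control the regular part. The Dynkin case then falls out immediately since every indecomposable is exceptional.

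However, your treatment of the Euclidean tube case has a gap, and Proposition~\ref{oneone} is not the tool that closes it. First, the walls $C_I$ are not indexed by ``a set $S$ of quasi-simples from a tube''; rather, each $I\in R$ is a choice of one quasi-simple to \emph{omit} from \emph{each} non-homogeneous tube, and $C_I$ is the cone on $\delta$ together with the remaining quasi-simples. What the paper shows (Lemma~\ref{siso}) is that $d\in C_I$ if and only if a \emph{single specific object} $Z_I$ --- the direct sum of the singular-isotropic representations whose quasi-socles are the omitted quasi-simples --- lies in $\rep(Q)_d$. So $\hyp_d$ determines precisely which exceptionals and which $Z_I$ are semi-stable, nothing more.

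Your assertion that ``for a regular indecomposable, semi-stability is detected by which tube-walls contain $d_i$'' is therefore the step that actually requires proof: a general non-exceptional regular indecomposable (of dimension $m\delta$ or $m\delta+\alpha$, $m\ge 1$) is not one of the $Z_I$, and its semi-stability is not read off from any single wall. The paper bridges this with Lemma~\ref{intersections}: any two abelian extension-closed subcategories agreeing on exceptionals and on indecomposables of dimension vector $\delta$ must coincide. The proof is a filtration argument inside each tube, showing that every non-exceptional regular object is built, within any such subcategory, from exceptionals and $\delta$-dimensional pieces. One must also argue separately that agreement on the $C_I$ forces agreement on the homogeneous tubes (either both $d_i$ lie in $H_\delta^{ss}$, so both $\rep(Q)_{d_i}$ contain all homogeneous tubes, or neither does). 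Proposition~\ref{oneone} plays no role here; it is a byproduct of the later classification of thick subcategories of tubes and is used for a different purpose.
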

\addtocounter{Theo}{-1}}

Somewhat surprisingly, the geometry of $\hyp$, which controls ss-equivalence,
can also be used to describe canonical decompositions, and more generally, canonical presentations in the sense of \cite{IOTW}.
Given a dimension vector for $Q$, it was shown by Kac \cite{Kac}
that
the dimension vectors of the indecomposable summands of a generic
representation of dimension vector $d$ are well-defined.  The expression
of $d$ as the sum of the dimension vectors of the indecomposable summands
of a generic representation is called the canonical decomposition. In \cite{IOTW}, the authors have extended this canonical decomposition to any element in $\Z^n$. If $d$ is a dimension vector, then the canonical presentation of $d$ coincides with the canonical decomposition of $d$. In general, one can write $d = d_+ + d_-$, where $d_+$ is a dimension vector and $-d_-$ is the dimension vector of a projective representation whose top has a support disjoint from $d_+$. The canonical presentation of $d_-$ is just the decomposition of $d_-$ as a (negative) linear combination of the dimension vectors of the indecomposable projective representations. Then the canonical presentation of $d$ is just the sum of the canonical decomposition of $d_+$ and the canonical presentation of $d_-$.
If the same
summands appear in the canonical presentation of two vectors
(possibly with different multiplicities),
we say that the vectors are \emph{cp-equivalent}.
We show that cp-equivalence can also be characterized in terms of $\hyp$.

Let $\lat$ be the set of all intersections of subsets of
$\hyp$.  We order $\lat$ by inclusion.
This is a lattice, which we call the intersection lattice
associated to $\hyp$. For $L\in \lat$, define the faces of $L$ to be
the connected components of the set of points which are in $L$ but
not in any smaller intersection.  Define the faces of $\lat$ to be
the collection of all faces of all the elements of $\lat$.  Then we have
the following theorem:

{\renewcommand{\theTheo}{\ref{two}}
\begin{Theo}
Let $Q$ be a Dynkin or Euclidean quiver.  Two vectors $d_1, d_2$ are cp-equivalent if and only if
they lie in the same face of $\lat$.  \end{Theo}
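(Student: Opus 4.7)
The plan is to identify the faces of $\lat$ with the cells of the canonical presentation fan constructed in \cite{IOTW}. Because the definition of a face of $\lat$ combines an intersection pattern with a connected-component condition, it encodes strictly more information than the ss-equivalence class described in Theorem \ref{ssequiv}, and this extra refinement is exactly what is needed to recover cp-equivalence.

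First, I would make the identification of $\hyp$ with the wall-set of the canonical presentation fan completely explicit. To each real Schur root $\beta$ one attaches a convex codimension-one subset $D(\beta)\in\hyp$ consisting of those vectors $d$ for which the canonical-presentation multiplicity of $\beta$ transitions between positive and zero under small perturbation of $d$; in the Euclidean case, to each non-homogeneous tube one attaches the analogous codimension-one subset indexed by its set of quasi-simple objects, accounting for regular summands. I would verify that these are precisely the subsets comprising $\hyp$, so that the complement of their union in $\mathbb{Q}^n$ decomposes into the top-dimensional cells of the canonical presentation fan.

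For the ``only if'' direction, suppose $d_1$ and $d_2$ share a common canonical-summand set $\{\gamma_1,\ldots,\gamma_k\}$. Both then lie in the relative interior of the simplicial cone $\sum_i \mathbb{Q}_{>0}\gamma_i$, and using Euler pairings between the $\gamma_i$ and the real Schur roots indexing $\hyp$ I would compute exactly which subsets of $\hyp$ contain this cone, then verify that the cone's relative interior is connected inside their common intersection; this identifies it with a single face of $\lat$. For the converse, suppose $d_1$ and $d_2$ lie in the same face. By Theorem \ref{ssequiv} they are already ss-equivalent, so $\hyp_{d_1}=\hyp_{d_2}$; the connected-component requirement built into the face definition then forces agreement of canonical-presentation summand sets, because crossing between adjacent cells of the canonical presentation fan would require traversing a subset of $\hyp$, contradicting the connectedness.

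The main obstacle is the Euclidean case, specifically the treatment of tubes. In the Dynkin setting every subset of $\hyp$ is a genuine hyperplane, so $\lat$ is an ordinary linear subspace arrangement and the face combinatorics is standard. In the Euclidean setting, however, the subsets of $\hyp$ associated to tubes are only convex codimension-one subsets, and their complements inside their affine spans can have more than two connected components; these extra components are precisely what allows cp-equivalence to properly refine ss-equivalence. Establishing the bijection between connected components of intersections and canonical-presentation types within a fixed ss-class will rely on a careful bookkeeping of which multisets of quasi-simple regular objects can appear together as summands of a canonical presentation. This is the step where the structural results of Theorem \ref{one}(ii) and Proposition \ref{oneone} are essential, since they give the exact list of admissible regular configurations and thereby pin down which connected components correspond to which cp-classes.
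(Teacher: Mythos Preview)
Your proposal has a genuine gap: you have misidentified what the elements of $\hyp$ are, and the step you describe as a preliminary ``verification'' is in fact the entire content of the theorem.

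The set $\hyp$ is \emph{defined} in the paper as $\{H_\alpha^{ss}\}_\alpha \cup \{C_I\}_{I\in R}$, where $H_\alpha^{ss}$ is the cone generated by the dimension vectors of the relative simples of $\p M(\alpha)$ together with certain negative projective vectors, and the $C_I$ are the simplicial cones obtained by adjoining $[\delta]$ to the facets $F_I$ of the regular cone. Neither of these is defined as a ``wall where the multiplicity of $\beta$ transitions'' in the cp-fan; that description is what the theorem \emph{asserts}, not what you may take as given. In particular, membership in $H_\alpha^{ss}$ is not determined by an Euler pairing alone (it is a cone inside the hyperplane $\langle -,\alpha\rangle=0$, not the hyperplane itself), and there is one $C_I$ for each element of $R=\prod_i\{1,\dots,r_i\}$, not one per tube.

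Because of this, your argument for the converse direction is circular: you say that crossing between adjacent cp-fan cells ``would require traversing a subset of $\hyp$,'' but that is precisely the claim to be proved. The paper establishes this by a completely different and concrete mechanism. Given the cp-summands $f_1,\dots,f_r$ of $d_1$, one must show that the facet spanned by $\{f_j : j\ne i\}$ lies in some $H\in\hyp$ not containing $f_i$. The paper does this by ordering the $M(f_j')$ into an exceptional sequence, deleting $M(f_r')$, completing to a full exceptional sequence $(\dots,X_r,\dots,X_n)$, and observing that re-inserting $M(f_r')$ would make the sequence too long; hence some $X_k$ satisfies $f_r\notin H_{d_{X_k}}^{ss}$ while $f_j\in H_{d_{X_k}}^{ss}$ for $j<r$. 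Separate arguments handle the cases $f_r=\delta$ and $f_j=\delta$ for $j<r$, the latter using containment in some wing $\mathcal W_I$. None of this machinery appears in your plan, and Theorem~\ref{one}(ii) and Proposition~\ref{oneone} (which concern semi-stable subcategories, not cp-fan facets) do not supply it.
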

\addtocounter{Theo}{-1}}

Note that if two vectors $d_1, d_2$
lie in the same face of $\lat$, then,
in particular, $\hyp_{d_1}=\hyp_{d_2}$.
It follows that if two dimension vectors are
cp-equivalent, then they are ss-equivalent.  This fact can also be
established directly.
The converse does not hold:
if $\hyp_{d_1}=\hyp_{d_2}$, then the smallest element of $\lat$ containing
$d_1$ is also the smallest element of $\lat$ containing $d_2$,
but since elements of $\lat$ are typically subdivided into more than one
face, it does not follow that $d_1$ and $d_2$ lie in the same face, so
they need not be cp-equivalent. Since cp-equivalence refines ss-equivalence, an
ss-equivalence class is a union of faces of $\lat$.

\subsection*{Posets of subcategories}
In Dynkin type, the semi-stable subcategories (or equivalently the abelian and extension-closed subcategories) form a lattice.  It
was shown in \cite{IT} that this poset is isomorphic to the lattice of noncrossing partitions
associated to the Weyl group corresponding  to $Q$.  These lattices
had already been studied by combinatorialists and group theorists and,
especially relevant for our purposes, they
play a central role in the construction of the dual Garside structure of
Bessis \cite{Be} on the corresponding Artin group, which also leads to
their use in constructing Eilenberg-Mac Lane spaces for the Artin groups
\cite{Br, BW}.
For these latter two uses,
the lattice property of this partial order is essential.

For this reason,
another motivation for our paper is to construct a potential replacement
lattice
in Euclidean type.  The most obvious choice for a lattice associated to a
general $Q$
would be to take
all the abelian and extension-closed subcategories.
However, this lattice
is very big and somehow non-combinatorial; already in Euclidean type,
it contains the Boolean lattice on the $\mathbb P^1(k)$-many tubes.
On the other hand, we could consider finitely generated abelian
and extension-closed subcategories.  It was shown in \cite{IT}
(Euclidean type) and \cite{IS} (general acyclic $Q$) that this yields the natural
generalization of the noncrossing partitions of the associated Weyl group.
However, for $Q$ non-Dynkin, this poset is typically
not a lattice \cite{Digne}, which makes it unsuitable.

Therefore, in pursuit of a suitable lattice of subcategories associated to $Q$
it seems that we need to consider a class of subcategories which
are not all finitely generated,
but not so broad as to include the full plethora of abelian, extension-closed subcategories.  The lattice property can be guaranteed in a
natural way if we can verify that our class of subcategories is closed under
intersections.  It turns out that the semi-stable subcategories do not
form a lattice, but it is quite easy to describe the subcategories that arise
as intersections of semi-stable subcategories; this class of subcategories
then (automatically) forms a lattice.  Specifically, we show:

{\renewcommand{\theTheo}{\ref{inters} {\bf (simplified form)}}
\begin{Theo}  Let $Q$ be a Euclidean quiver.
There are finitely many  subcategories of $\rep(Q)$
which arise as an intersection of semi-stable
subcategories, and which are not themselves semi-stable subcategories
for any stability condition. These subcategories are contained entirely
in the regular representations of $Q$.  Any such subcategory can be
written as the intersection of at most two semi-stable subcategories.
\end{Theo}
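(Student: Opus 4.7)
The strategy combines the classification of semi-stable subcategories from Theorem~\ref{one} and Proposition~\ref{oneone} with the finite-type combinatorial structure of the regular part in the Euclidean case. The proof would proceed in three main steps: first reduce to showing non-semi-stable intersections lie in the regular part, then analyze the regular part tube by tube to obtain the finiteness, and finally realize each exception as an intersection of just two semi-stable subcategories.

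The most delicate step is the first. If $\mathcal{C}$ is an intersection of semi-stable subcategories and $\mathcal{C}$ is not itself semi-stable, I claim $\mathcal{C}$ contains no non-regular indecomposable. I would argue by contradiction using perpendicular categories: if $X\in\mathcal{C}$ is preprojective or preinjective then $X$ is exceptional and rigid, so each semi-stable subcategory containing $X$ sits inside a controlled structure governed by $X^\perp$, which is itself a finitely generated thick subcategory. Combining this with the dichotomy of Theorem~\ref{one} (each semi-stable factor is either type~(i) finitely generated, or type~(ii) the regular objects of a finitely generated Euclidean-type subcategory) and with Proposition~\ref{oneone} (a type~(ii) subcategory is all-or-nothing on each tube), $\mathcal{C}$ must itself either be finitely generated or again of the form Theorem~\ref{one}(ii), and hence semi-stable, contradicting the assumption.

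Once $\mathcal{C}$ is known to lie in the regular part, I would exploit the decomposition of the regular part as a Hom- and Ext-orthogonal coproduct of tubes $\mathcal{T}_\lambda$. Every abelian extension-closed subcategory of the regular part then decomposes as $\bigoplus_\lambda \mathcal{C}_\lambda$ with $\mathcal{C}_\lambda$ thick in $\mathcal{T}_\lambda$; in a tube of rank $r$, a thick subcategory either equals the whole tube or has only finitely many indecomposables, and in the latter case only finitely many such subcategories exist (they are described combinatorially by wings). By Proposition~\ref{oneone} the type-(ii) semi-stable choices are precisely the all-or-nothing configurations, so the exceptional $\mathcal{C}$ are exactly those in which some $\mathcal{C}_\lambda$ is a proper, finite thick subcategory of a non-homogeneous tube. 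Since a Euclidean quiver has only finitely many non-homogeneous tubes, this yields only finitely many exceptions. To realize each such $\mathcal{C}$ as an intersection of two semi-stable subcategories, I would take $\mathcal{B}_1$ to be the type-(ii) semi-stable subcategory obtained by completing each truncated $\mathcal{C}_\lambda$ to the whole tube, and $\mathcal{B}_2$ to be a finitely generated type-(i) semi-stable subcategory cutting each relevant tube to exactly $\mathcal{C}_\lambda$; the existence of $\mathcal{B}_2$ follows from the supply of exceptional sequences in Euclidean type and from Theorem~\ref{one}(i). The main obstacle is the first step: this is where the Euclidean (as opposed to wild) hypothesis is essential, since it is only in tame type that an intersection containing preprojective or preinjective objects can be forced back into the form of Theorem~\ref{one}.
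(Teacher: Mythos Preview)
Your overall three-step plan is the same as the paper's, and your Step~1 conclusion is right, but the argument you sketch for it is muddled: a semi-stable subcategory containing a non-regular $X$ certainly does \emph{not} sit inside $X^\perp$. The clean argument (and the one the paper's machinery supports) is that an intersection of semi-stable subcategories is thick, and by Theorem~\ref{thick} any thick subcategory of $\rep(Q)$ not contained in $\mathcal{R}{\rm eg}$ is generated by an exceptional sequence, hence is finitely generated, hence is semi-stable of type~(i). So if $\mathcal{C}$ is not semi-stable it must lie in $\mathcal{R}{\rm eg}$.

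The serious gaps are in Steps~2 and~3, and they stem from a misreading of Proposition~\ref{oneone}. A type-(ii) semi-stable subcategory is \emph{not} ``all-or-nothing'' on each tube: the condition is that it contains infinitely many indecomposables from each tube, and in a non-homogeneous tube $\mathcal{T}$ this means its intersection is some $\mathcal{S}_{J,\mathcal{F}}$ with $J\neq\emptyset$, which is typically a proper sub-tube (see Proposition~\ref{thicktube} and the proposition just before~\ref{oneone}). Correspondingly, your dichotomy ``a thick subcategory of a rank-$r$ tube is either the whole tube or has finitely many indecomposables'' is false. Finiteness still holds, because there are only finitely many pairs $(J_i,\mathcal{F}_i)$ per non-homogeneous tube, but you also miss the constraint singled out in Lemma~\ref{leminter}(a): only those $\mathcal{S}_{K,\mathcal{G}}$ with $\mathcal{G}\cap\mathcal{O}_{\mathcal{T}}=\emptyset$ arise as intersections.

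Your Step~3 construction actually fails. Taking $\mathcal{B}_1=\mathcal{R}{\rm eg}$ (the type-(ii) subcategory containing each whole tube) and $\mathcal{B}_2=V^\perp$ for a rigid $V$ gives $\mathcal{B}_1\cap\mathcal{B}_2=V^\perp\cap\mathcal{R}{\rm eg}$. If $V$ is regular this is again of type~(ii), hence semi-stable; if $V$ has a non-regular summand then $V^\perp$ is representation-finite (Lemma~\ref{kindofperp}), so the intersection misses all homogeneous tubes, whereas the target $\mathcal{C}$ must contain them. Either way you never hit a non-semi-stable $\mathcal{C}$. The paper's fix is Lemma~\ref{leminter}(b): in each offending tube one writes $\mathcal{S}_{\emptyset,\mathcal{G}}=\mathcal{S}_{\{q\},\mathcal{G}}\cap\mathcal{S}_{J^c,0}$ for a suitable quasi-simple $q$ and its complement $J^c$, both factors having non-empty index set and hence assembling into two genuine type-(ii) semi-stable subcategories whose intersection is $\mathcal{C}$.
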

\addtocounter{Theo}{-1}}

In fact, we give an explicit description of these subcategories, which
we defer to the main body of the paper.

We hope to investigate the applicability of the lattice of these
subcategories to the problem
of constructing a dual Garside structure for Euclidean type Artin groups
in subsequent work.

\section{Some representation theory}\label{sectiontwo}

Let $Q = (Q_0,Q_1)$ be an (acyclic) quiver with $n$ vertices and let $k$ be an algebraically closed field.  For simplicity, we
shall assume that $Q_0=\{1,2,\ldots,n\}$. Our main concern is to study the semi-stable subcategories of $\rep (Q)$ and their intersections, when $Q$ is a \emph{Euclidean quiver}.  Unless otherwise specified, this includes the assumption
that $Q$ is connected.
When $Q$ is a Euclidean quiver, $\rep(Q)$ is of tame representation type and the representation theory of $Q$ is well understood. For the basic results concerning the structure of $\rep(Q)$, the reader
is referred to \cite{ASS, AS2}. We shall use many representation-theoretic results for $\rep(Q)$ and in particular, the structure of its Auslander-Reiten quiver.

Let $\langle \;,\; \rangle$ stand for the bilinear form defined on $\mathbb{Z}^n$ as follows.
If $d=(d_1,\ldots,d_n) \in \mathbb{Z}^n$ and $e=(e_1,\ldots,e_n) \in \mathbb{Z}^n$, then
$$\langle d,e \rangle = \sum_{i=1}^n{d_ie_i} \; - \hspace{-5pt} \sum_{\alpha:i \to j \in Q_1} d_{i}e_{j}.$$
This is known as the \emph{Euler form} associated to $Q$. This is a (non-symmetric) bilinear form defined on the Grothendieck group
$K_0(\rep(Q)) = \mathbb{Z}^n$ of $\rep(Q)$. For a representation $M$ in $\rep(Q)$, we denote by $d_M$ its dimension vector (which we identify with its
class in the Grothendieck group). A crucial property of the above bilinear form is the following; see for example \cite[Prop. III 3.13]{ASS}.
\begin{Prop} Let $M,N \in \rep(Q)$.  Then
$$\langle d_M,d_N \rangle = {\rm dim}_k \Hom(M,N) - {\rm dim}_k\Ext^1(M,N).$$
\end{Prop}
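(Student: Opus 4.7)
The plan is to exploit the fact that $kQ$ is hereditary (since $Q$ is acyclic) by producing a canonical length-one projective resolution of any representation $M$ directly from the quiver data, and then extracting the identity as the Euler characteristic of the resulting long exact sequence of $\Hom$ and $\Ext$.

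First, I would recall the standard projective resolution: for a representation $M$ with dimension vector $d_M = (d_1, \ldots, d_n)$, there is a short exact sequence
$$0 \to \bigoplus_{\alpha: i \to j \in Q_1} P_j^{\,d_i} \longrightarrow \bigoplus_{i \in Q_0} P_i^{\,d_i} \longrightarrow M \to 0,$$
where $P_i$ denotes the indecomposable projective at vertex $i$, the right-hand map is determined on each summand by the structure maps of $M$, and the left-hand map is built from the inclusions $P_j \hookrightarrow P_i$ associated to each arrow $\alpha : i \to j$. Exactness can be checked vertex by vertex, using that $(P_i)_j$ has a basis indexed by paths from $i$ to $j$.

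Next, applying the left exact functor $\Hom(-,N)$ and using that each $P_i$ is projective (so $\Ext^1(P_i,N)=0$) yields the four-term exact sequence
$$0 \to \Hom(M,N) \to \bigoplus_{i \in Q_0} \Hom(P_i,N)^{d_i} \to \bigoplus_{\alpha: i \to j} \Hom(P_j,N)^{d_i} \to \Ext^1(M,N) \to 0.$$
The standard identification $\Hom(P_i, N) \cong N_i$ rewrites this as
$$0 \to \Hom(M,N) \to \bigoplus_{i} N_i^{\,d_i} \to \bigoplus_{\alpha:i \to j} N_j^{\,d_i} \to \Ext^1(M,N) \to 0.$$

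Finally, taking the alternating sum of dimensions in this exact sequence gives
$$\dim_k \Hom(M,N) - \dim_k \Ext^1(M,N) = \sum_{i} d_i e_i - \sum_{\alpha: i \to j} d_i e_j = \langle d_M, d_N \rangle,$$
where $e_i = \dim_k N_i$. There is no serious obstacle here; the only care needed is in writing down the differentials of the standard resolution and verifying exactness, which is a routine but notation-heavy check. Once the resolution is in hand, the computation of the Euler characteristic is immediate and matches the definition of $\langle -, - \rangle$ exactly.
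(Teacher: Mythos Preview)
Your argument is correct and is precisely the standard proof. The paper does not actually give its own proof of this proposition; it merely cites \cite[Prop.~III 3.13]{ASS}, where the same argument via the canonical projective resolution appears.
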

This result justifies the terminology \emph{homological form} that is sometimes used for the Euler form.
We shall call a nonzero element $d=(d_1,\ldots,d_n)$ in $\mathbb{N}^n$ a \emph{dimension vector}, as it is the dimension vector of some representation in $\rep(Q)$. It is \emph{sincere} if $d_i > 0$ for $i=1,\ldots,n$; and called a \emph{root} if there exists an indecomposable representation $M$ with $d_M = d$.
The previous statement implies that $\langle d,d\rangle \le 1$;
see \cite{Kac}. If $d$ is a root with $\langle d,d \rangle=1$, then $d$ is called a \emph{real root}, and otherwise, an \emph{imaginary root}. If $d$ is a real root, then there is a unique, up to isomorphism, indecomposable representation having $d$ as a dimension vector; see \cite{Kac}. In this case, we write $M(d)$ for a fixed
indecomposable representation with this dimension vector.

A root is called a \emph{Schur root} if there exists an indecomposable representation
$M$ with $d=d_M$ and such that $M$ has a trivial endomorphism ring.
When $d$ is real, the latter condition is equivalent to  $\Ext^1(M,M)=0$.
An indecomposable representation having a trivial endomorphism ring
is called a \emph{Schur representation}.

We refer to an infinite component of the Auslander-Reiten quiver of
$\rep(Q)$ which contains projective representations
as a \emph{preprojective component} (it is unique when $Q$ is connected and non-Dynkin);
a \emph{preinjective component} is defined similarly.

A real Schur root $d$ is called \emph{preprojective} (resp.~\emph{preinjective}) if $M(d)$ lies in a preprojective (resp.~preinjective) component of the Auslander-Reiten quiver of $\rep(Q)$. Otherwise,
it is called \emph{regular}.

Suppose now that $Q$ is a Euclidean quiver. Recall that a \emph{stable tube} is a translation quiver obtained by taking the quotient of the translation quiver $\mathbb{Z}\mathbb{A}_\infty$ by some power of the translation. It is called a \emph{homogeneous tube} when that power is one. A stable tube $T$ is \emph{standard} if the full subcategory generated by the indecomposable representations in $T$ is equivalent to the path algebra $k T$ modulo the mesh relations. A representation in a stable tube which has only one arrow to and from it is called \emph{quasi-simple}. For the convenience of the reader, the following proposition collects some results concerning the structure of the Auslander-Reiten quiver of $\rep(Q)$ and the morphisms between its connected components. All the statements can be found in \cite{ASS} or \cite{AS2}.

\begin{Prop} \label{ARShape} Let $Q$ be a Euclidean quiver. The Auslander-Reiten quiver of $\rep(Q)$ contains a preprojective component $\mathcal{P}$ of shape $\mathbb{N}Q^{\rm \, op}$, a preinjective component $\mathcal{I}$ of shape $\mathbb{N}^-Q^{\rm \, op}$, a $\mathbb{P}^1(k)$-family $\{T_\lambda\}$ of standard stable tubes of which only finitely many are non-homogeneous. Moreover, for $\lambda_1, \lambda_2 \in \mathbb{P}^1(k)$, we have
\begin{enumerate}[$(1)$]
    \item $\Hom(\mathcal{I},\mathcal{P})=\Hom(\mathcal{I},T_{\lambda_1})=\Hom(T_{\lambda_1}, \mathcal{P})=0$,
\item $\Hom(T_{\lambda_1}, T_{\lambda_2})=0$ if $\lambda_1 \ne \lambda_2$.
\end{enumerate}
\end{Prop}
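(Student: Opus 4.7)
The plan is to assemble the statement from the classical representation theory of tame hereditary algebras, as developed in \cite{ASS, AS2}; no genuinely new ingredient is needed, so the proof is mostly a matter of identifying which piece of the existing theory gives each claim, and I will outline the key mechanisms rather than reproduce the computations.

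First I would treat the shapes of $\mathcal{P}$ and $\mathcal{I}$. Starting from the indecomposable projectives and applying the knitting algorithm, one produces a translation subquiver isomorphic to $\mathbb{N}Q^{\rm \, op}$ inside the Auslander-Reiten quiver; since $Q$ is not Dynkin, the Coxeter transformation has infinite order on this slice (the null root $\delta$ lies in the radical of the Euler form, and preprojective dimension vectors form an unbounded Coxeter orbit on one side of it), so no identification occurs and $\mathcal{P}$ is the full $\mathbb{N}Q^{\rm \, op}$. The dual argument with injectives and $\tau^{-1}$ gives $\mathcal{I} \cong \mathbb{N}^{-}Q^{\rm \, op}$. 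That the rest of the Auslander-Reiten quiver consists of a $\mathbb{P}^1(k)$-family of standard stable tubes, with only finitely many non-homogeneous, is the core tame-hereditary classification theorem: it uses the positive semi-definiteness of the Euler quadratic form with radical $\mathbb{Z}\delta$, together with the periodicity of $\tau$ on regular modules, and is proved in detail in \cite[Chapter XIII]{AS2}.

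Next I would deduce the Hom-vanishing statements (1) and (2) from the \emph{defect} function $\partial(M) := \langle \delta, d_M\rangle$ (up to the usual sign convention). From the description of the three parts of the Auslander-Reiten quiver one checks that $\partial$ is strictly negative on indecomposables of $\mathcal{P}$, strictly positive on indecomposables of $\mathcal{I}$, and zero on regulars. Moreover, a standard argument (comparing $\partial$ with the image of a nonzero morphism, cut into its preprojective, regular, and preinjective pieces via the Auslander-Reiten formulas) shows that if $\Hom(M,N)\neq 0$ with $M,N$ indecomposable, then $\partial(M)\leq \partial(N)$. Both halves of (1) are immediate: $\partial(\mathcal{I}) > 0 \geq \partial(\mathcal{P}\cup T_{\lambda_1})$ and $\partial(T_{\lambda}) = 0 > \partial(\mathcal{P})$. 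For (2), standardness of the tubes lets me reduce a hypothetical nonzero map $T_{\lambda_1}\to T_{\lambda_2}$, via composition with inclusions and projections along the ray/coray filtrations, to a nonzero map between quasi-simples of distinct tubes; since each quasi-simple is a brick and the regular subcategory decomposes as the (orthogonal) sum $\bigoplus_\lambda \add T_\lambda$, no such map exists.

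The only mildly delicate step is (2): establishing that distinct tubes are Hom-orthogonal really does require the standardness hypothesis (which makes each tube equivalent to a mesh category over which one can compute) together with the tame-hereditary fact that the quasi-simples corresponding to distinct points of $\mathbb{P}^1(k)$ are pairwise orthogonal bricks. Everything I need for this, including the Hom-orthogonality between distinct tubes and the defect calculation, is carried out in \cite[XIII.2, XIII.3]{AS2}, so the proof ultimately amounts to assembling the appropriate references.
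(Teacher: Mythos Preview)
The paper does not actually prove this proposition: it says explicitly that ``All the statements can be found in \cite{ASS} or \cite{AS2}'' and leaves it there. Your proposal is therefore aligned with the paper's approach --- defer to the standard references --- but you go further by sketching the underlying mechanisms, which is reasonable and largely correct.

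One genuine inaccuracy: your intermediate claim that $\Hom(M,N)\ne 0$ forces $\partial(M)\le\partial(N)$ for arbitrary indecomposables $M,N$ is false. Since $\delta$ is in the radical of the symmetrized form, $\langle\delta,-\rangle=-\langle-,\delta\rangle$, so $\partial(P_i)=-\delta_i$; moreover $\partial$ is constant on $\tau$-orbits. In types such as $\widetilde{\mathbb D}_4$ the entries $\delta_i$ are not all equal, and the preprojective component has irreducible maps running in both directions between $\tau$-orbits of different defect, so the inequality fails inside $\mathcal P$. What you actually need, and what is proved in \cite[VIII.2]{ASS} and \cite[XI--XII]{AS2}, is only the cross-type vanishing (no maps from $\mathcal I$ to $\mathcal P\cup T_\lambda$, none from $T_\lambda$ to $\mathcal P$); your deductions of (1) from the signs of $\partial$ on the three classes remain valid once you replace the false general inequality by these specific facts. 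The argument for (2) via standardness and reduction to quasi-simples is fine.
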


Recall that in the Euclidean case, the quadratic form $q(x) := \langle x, x \rangle$ is positive-semidefinite, and its radical is of rank one and is generated by a dimension vector $\delta$. In particular, an imaginary root $d$ has the property that $\langle d,d\rangle = 0$, and hence is also called \emph{isotropic}.
We shall call $\delta$ the \emph{null root} of $Q$.
Any isotropic (hence imaginary) root is a positive
multiple of $\delta$. Observe moreover that $\delta$ is a Schur root and any other positive multiple of $\delta$ is also a root, but not a Schur root.

In the Euclidean case, the \emph{regular roots} are the dimension vectors of the indecomposable representations which lie in a stable tube.
Finally, observe that if $d$ is a root, then $\langle \delta,d \rangle$ is zero (resp.~negative, positive) if and only if $d$ is regular
(resp.~preprojective, preinjective).

\section{Canonical decompositions, semi-invariants and semi-stable subcategories}

In this section, $Q$ is any acyclic quiver, unless otherwise indicated.
For a dimension vector $d=(d_1,\ldots,d_n)$, define $\rep(Q,d)$ to be the
product of vector spaces specifying matrix entries for each of the arrows of
$Q$.  An element in $\rep(Q,d)$ is canonically identified with the corresponding representation in $\rep(Q)$ of dimension vector $d$. The reductive algebraic group ${\rm GL}_d(k) := \prod_{i=1}^n{\rm GL}_{d_i}(k)$ acts on $\rep(Q,d)$ by simultaneous
change of basis and has a natural subgroup ${\rm SL}_d(k) := \prod_{i=1}^n{\rm SL}_{d_i}(k)$ which is a semi-simple subgroup. A \emph{semi-invariant} for $\rep(Q,d)$ is just a polynomial function in $k[\rep(Q,d)]$ which is invariant under the action of SL$_d(k)$. The set of semi-invariants for $\rep(Q,d)$ will be denoted ${\rm SI}(Q,d)$.

Now any linear map in $\Hom(\mathbb{Z}^n,\mathbb{Z})$ is called a \emph{weight} or a \emph{stability condition} for $Q$. For $w \in \Hom(\mathbb Z^n,\mathbb Z)$
and an element $g = (g_1,\ldots, g_n)$
in ${\rm GL}_d(k)$, we set $$w(g) = w(g_1,\ldots,g_n) = \prod_{i=1}^n{\rm det}(g_i)^{w(e_i)},$$
where for $1 \le i \le n$, the vector $e_i$ is the dimension vector of
the simple representation at vertex $i$.
In this way, when $d$ is sincere, the set of all weights for $Q$ corresponds to the set of all multiplicative characters for ${\rm GL}_d(k)$.
Following King \cite{King}, an element $f \in k[\rep(Q,d)]$ is called a \emph{semi-invariant of weight $w$} if $g(f) = w(g)f$ for all
$g = (g_1,\ldots, g_n)$
in ${\rm Gl}_d(k)$.
We write ${\rm SI}(Q,d)_{w}$ for the set of semi-invariants of weight $w$ of $\rep(Q,d)$.

When $d$ is sincere, the algebra ${\rm SI}(Q,d)$ is graded by the set of weights (which is the $\Z$-dual $K_0(Q)^*$ of $K_0(Q)$),
$${\rm SI}(Q,d) = \bigoplus_{w \; \text{weight}} {\rm SI}(Q,d)_{w}.$$
Contrary to the way it is stated in \cite{DW1}, observe that when $d$ is not sincere, the direct sum is graded instead by $K_0(Q_d)^*$, where $Q_d$ is the full subquiver of $Q$ generated by the vertices in the support of $d$. Equivalently, it is more convenient to use $K_0(Q)^*/\sim_d$ as the grading set, where for $w,w' \in K_0(Q)^*$, we have $w \sim_d w'$ if $w,w'$ agree on the elements supported over $Q_d$. In other words, if $f$ is a semi-invariant of weight $w$ in ${\rm SI}(Q,d)$ and $w \sim_d w'$, then $f$ is also a semi-invariant of weight $w'$, even if $w'\ne w$. In the sequel, for a representation $M$, we sometimes write $\sim_M$ for $\sim_{d_M}$. We refer the reader to \cite{DW1, Sch2} for fundamental results
on semi-invariants of quivers. Let us just recall the results we need. Given two representations $M,N$, we have an exact sequence
\begin{equation} \label{eqn1}
0 \to \Hom(M,N) \to \Hom(P_0,N) \stackrel{f^M_N}{\longrightarrow} \Hom(P_1,N) \to \Ext^1(M,N) \to 0
\end{equation}
where $0 \to P_1 \to P_0 \to M \to 0$ is the canonical projective resolution of $M$.
We see that the
$k$-linear map $f^M_N$ corresponds to a square matrix if and only if $\langle d_M,d_N \rangle = 0$. In this case, we define $C^M(N)$ to be the determinant
of $f^M_N$. Then $C^M(-)$, which is well defined up to a nonzero scalar, is a semi-invariant of weight $\langle d_M, - \rangle$ for $\rep(Q,d_N)$. We see that $C^M(N)\ne 0$ if and only if
$\Hom(M,N)=0=\Ext^1(M,N)$.
The following result was proven in \cite{DW3} for the general case and in \cite{SVDB} for the characteristic zero case.

\begin{Prop} \label{fund}
Let $w$ be a weight and $d$ a dimension vector. Then the vector space ${\rm SI}(Q,d)_{w}$ is generated over $k$ by the semi-invariants $C^M(-)$ where $\langle d_M,d\rangle=0$
and $w \sim_d \langle d_M, - \rangle$.
\end{Prop}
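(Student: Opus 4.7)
The plan is to treat the two directions separately: first verify that each $C^M(-)$ meeting the stated conditions does lie in ${\rm SI}(Q,d)_w$; second, establish that such semi-invariants span the whole graded piece, which is the essential content.

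For the first direction, notice from the exact sequence (\ref{eqn1}) that $f^M_N$ is a square matrix precisely when $\langle d_M,d_N \rangle = 0$, so that $C^M(N) = \det f^M_N$ is well defined up to scalar. A direct computation of how $f^M_N$ transforms under the ${\rm GL}_d(k)$-action by change of basis on $\rep(Q,d)$ shows that the determinant picks up the character $\langle d_M, - \rangle$: basis changes at vertex $i$ contribute determinantal factors to the source and target of $f^M_N$ whose product encodes that weight. Since the ${\rm GL}_d(k)$-action on $\rep(Q,d)$ only sees the coordinates supported on $Q_d$, weights $w$ and $\langle d_M,-\rangle$ that agree modulo $\sim_d$ define the same character on ${\rm GL}_d(k)$; hence $C^M(-)$ indeed sits in ${\rm SI}(Q,d)_w$ whenever $w \sim_d \langle d_M, - \rangle$.

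The spanning direction is the substantive part and is the main obstacle. Here I would invoke the generation theorem of Derksen--Weyman \cite{DW3}, extended in the characteristic-zero setting by Schofield--Van den Bergh \cite{SVDB}, which states that the Schofield determinantal semi-invariants with $\langle d_M, d \rangle = 0$ already span ${\rm SI}(Q,d)$ as a $k$-vector space. The key ingredients of that proof are a reciprocity identifying $C^M(N)$ with a dual invariant $D_N(M)$, a reduction along the canonical decomposition of $d$, and a dimension-count against a character formula that is most easily accessed on Schur summands; I would not reprove this here. Given spanning for ${\rm SI}(Q,d)$ as a whole, the graded refinement is immediate: restricting to weight $w$ keeps exactly those $C^M$ whose intrinsic weight $\langle d_M,-\rangle$ represents the class of $w$ modulo $\sim_d$, and this automatically forces $\langle d_M,d\rangle = 0$ as observed at the start. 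The delicate point therefore reduces entirely to the cited generation theorem; everything else is bookkeeping about characters and the equivalence $\sim_d$.
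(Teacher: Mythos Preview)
Your proposal is correct and matches the paper's treatment: the paper does not prove this proposition at all but simply cites it as a result of Derksen--Weyman \cite{DW3} (general case) and Schofield--Van den Bergh \cite{SVDB} (characteristic zero), exactly as you do for the spanning direction. Your additional verification of the easy direction and the bookkeeping about $\sim_d$ is a helpful elaboration, though note a small attribution slip: \cite{SVDB} handles the characteristic-zero case rather than extending \cite{DW3}.
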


Now, let us introduce the main object of study of this paper. Let $w$ be a weight. A representation $M$ in $\rep(Q)$ is said to be
\emph{semi-stable with respect to $w$} or \emph{$w$-semi-stable} if there exist $m>0$ and $f \in {\rm SI}(Q,d_M)_{mw}$ such that $f(M) \ne 0$.
Given an element $d \in \Z^n$, write
$\rep(Q)_{d}$
for the full subcategory of $\rep(Q)$ of the semi-stable representations with respect to the weight $\langle d, - \rangle$. We warn the reader here not to confuse $\rep(Q)_d$ with $\rep(Q,d)$, when $d$ is a dimension vector. These subcategories will be referred to as the \emph{semi-stable subcategories} of $\rep(Q)$.  If $Q'$ is a full subquiver of $Q$ and $w$ is a weight for $Q$, then we denote by $w|_{Q'}: K_0(Q') \to \Z$ the weight of $Q'$ which is the restriction of $w$. Given $M \in \rep(Q)$, denote by ${\rm Su}(M)$ its support and by $Q_{d_M}$ the full subquiver of $Q$ generated by Su$(M)$. The following observation will
be handy in the sequel.
\begin{Cor} \label{cor1} Let $M \in \rep(Q)$ and $d\in \Z^n$. Then $M \in \rep(Q)_d$ if and only if there exist a positive integer $m$ and a representation
$V$ supported over $Q_{d_M}$ with $\langle d_V, - \rangle  \sim_{M} \langle md, - \rangle$ and with $C^{V}(M) \ne 0$.
\end{Cor}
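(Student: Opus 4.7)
The plan is to reduce the problem to the full subquiver $Q_{d_M}$, where the relevant dimension vector becomes sincere, and then invoke Proposition \ref{fund} there.

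The reverse direction of the equivalence is essentially by construction: given such a $V$, the Schofield semi-invariant $C^V$ has weight $\langle d_V,-\rangle\sim_M\langle md,-\rangle$, so $C^V\in{\rm SI}(Q,d_M)_{m\langle d,-\rangle}$, and $C^V(M)\ne 0$ shows that $M$ is $\langle d,-\rangle$-semi-stable.

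For the forward direction, the key observation is that since $d_M$ vanishes outside ${\rm Su}(M)$, the variety $\rep(Q,d_M)$ canonically coincides with $\rep(Q_{d_M},d_M|_{Q_{d_M}})$ (arrows of $Q$ not in $Q_{d_M}$ have either source or target with zero dimension, contributing trivial Hom-factors), and likewise ${\rm GL}_{d_M}(k)={\rm GL}_{d_M|_{Q_{d_M}}}(k)$. This yields a canonical isomorphism of graded semi-invariant algebras ${\rm SI}(Q,d_M)\cong{\rm SI}(Q_{d_M},d_M|_{Q_{d_M}})$ under which $Q$-weights modulo $\sim_{d_M}$ correspond bijectively to $Q_{d_M}$-weights by restriction. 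Starting from a witness $f\in{\rm SI}(Q,d_M)_{m\langle d,-\rangle}$ with $f(M)\ne 0$, transfer $f$ to the graded piece of weight $w':=m\langle d,-\rangle|_{{\rm Su}(M)}$ of ${\rm SI}(Q_{d_M},d_M|_{Q_{d_M}})$, and apply Proposition \ref{fund} to $Q_{d_M}$. Since $d_M|_{Q_{d_M}}$ is sincere in $Q_{d_M}$, the equivalence $\sim_{d_M|_{Q_{d_M}}}$ is trivial, so $f$ expands as a linear combination of $C^V(-)$ with $V\in\rep(Q_{d_M})$, $\langle d_V,d_M|_{Q_{d_M}}\rangle_{Q_{d_M}}=0$, and $\langle d_V,-\rangle_{Q_{d_M}}=w'$; at least one such $V$ satisfies $C^V(M)\ne 0$.

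Viewing this $V$ as a representation of $Q$ supported on $Q_{d_M}$, three routine checks complete the proof: the Euler pairings $\langle d_V,d_M\rangle_Q$ and $\langle d_V,d_M|_{Q_{d_M}}\rangle_{Q_{d_M}}$ coincide; for $i\in{\rm Su}(M)$, $\langle d_V,e_i\rangle_Q=\langle d_V,e_i\rangle_{Q_{d_M}}$, so that $\langle d_V,-\rangle_Q\sim_M m\langle d,-\rangle$; and the Schofield semi-invariant $C^V(M)$ is nonzero whether computed in $Q$ or in $Q_{d_M}$, since non-vanishing amounts to $\Hom(V,M)=0=\Ext^1(V,M)$, and these Hom- and Ext-spaces agree in the two categories when $V$ and $M$ are supported on ${\rm Su}(M)$. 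The main subtlety lies in this last compatibility of the Schofield semi-invariants across the two ambient quivers---the minimal projective resolutions of $V$ in $Q$ and in $Q_{d_M}$ need not coincide, so the functions $C^V$ could a priori differ, but only their vanishing loci enter our argument and those are determined by the intrinsic Hom/Ext characterization.
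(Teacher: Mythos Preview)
Your proof is correct and follows essentially the same approach as the paper: reduce to the support subquiver $Q_{d_M}$, apply Proposition~\ref{fund} there (where the dimension vector is sincere so $\sim$ is trivial), and then push the resulting $V$ back to $Q$. The paper's proof is more terse---it simply asserts ``we see that $C^V(M)\ne 0$'' at the end---whereas you spell out the compatibility checks (Euler pairings, weight restriction, and the Hom/Ext characterization of the vanishing locus of $C^V$) that justify why the construction transfers between the two ambient quivers.
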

\begin{proof} For the sufficiency, assume there exist a positive integer $m$ and a representation
$V$ supported over $Q_{d_M}$ with $\langle d_V, - \rangle \sim_{M} \langle md, - \rangle$ and with $C^{V}(M) \ne 0$. Since $ \langle d_V, - \rangle \sim_{M} \langle md, - \rangle$, we see that $\langle md, - \rangle$ is a weight for $C^{V}(-)$ in SI$(Q,d_M)$. Since $C^{V}(M)\ne 0$, we see that $M$ is $\langle d, - \rangle$-semi-stable. Assume now that $M \in \rep(Q)_d$.  Then $M$ is identified with a representation $M'$ of $Q_{d_M}$ and clearly, $M'$ is $d'$-semi-stable in $\rep(Q_{d_M})$, where $d'= \langle d, - \rangle \rest_{Q_{d_M}}$. Then, there exist $m > 0$ and $f \in {\rm SI}(Q_{d_M},d_M)_{md'}$
such that $f(M') \ne 0$. By Proposition \ref{fund}, there exists a representation $V' \in \rep(Q_{d_M})$ with $C^{V'}(M') \ne 0$ and $\langle d_{V'}, - \rangle \rest_{Q_{d_M}} = md'$. Now, $V'$ is identified with a representation $V$ of $\rep(Q)$ and we see that $C^V(M) \ne 0$.
\end{proof}

Now, consider $K_0(\rep(Q))\otimes_\Z \mathbb{Q}$, which will be identified with $\mathbb{Q}^n$. Given $d \in \mathbb{Q}^n$, let $[d]$ be the ray containing $d$ and emerging from the origin.
We will denote by $(\mathbb{Q}^n)_+$ the
the positive orthant of $\mathbb{Q}^n$.
For $d \in \Z^n$, we define $H_d$ to be the set of all elements $f$ in $\mathbb{Q}^n$ with $\langle f , d \rangle = 0$ (observe that $H_d$ is a union of rays since $\langle -, d \rangle$ is homogeneous of degree one).
We denote by $H^{+}_d$ the intersection of $H_d$ with $(\mathbb{Q}^n)_+$.
When $Q$ is a Euclidean quiver, one particularly important such set is $H_{\delta}$ defined by the
equation $\langle -,\delta \rangle = 0$ (or equivalently, of equation $\langle \delta, - \rangle = 0$). As observed above, the roots in
$H^{+}_\delta$ are precisely the regular roots.

Now, let us recall the concept of canonical decomposition
of a dimension vector defined by Kac \cite{Kac0};
see also \cite{DW1, Kac}.
Let $d$ be a dimension vector.
Suppose that there exists a non-empty open set $\mathcal{U}$ in $\rep(Q,d)$ and dimension vectors $d(1),\ldots,d(r)$ satisfying the following property:
for each $M \in \mathcal{U}$, there exists a decomposition
$$M \cong M_1 \oplus \cdots \oplus M_r$$
where each $M_i$ is indecomposable of dimension vector $d(i)$.  (Note that
the isomorphism class of $M_i$ may vary depending on the choice of $M \in
\mathcal U$ --- all that is fixed is the dimension vector.)
In this case, we write
$$d = d(1) \oplus \cdots \oplus d(r)$$
and call this expression the \emph{canonical decomposition} of $d$.
The canonical decomposition always exists and is unique; see \cite{Kac0}. For more details concerning the canonical decomposition, we refer the reader to \cite{DW1}. Let us recall two fundamental results due to Kac and Schofield. Given two dimension vectors $d_1,d_2$, we set
$${\rm ext}(d_1,d_2) = \min(\{{\rm dim}_k\Ext^1(M_1,M_2) \mmid d_{M_1} = d_1, d_{M_2}=d_2\}).$$ One also defines ${\rm hom}(d_1,d_2)$ in an analogous way. The following result is due to Kac \cite{Kac}.
\begin{Prop}[Kac]\label{Kac1}
A decomposition $$d = d(1) + \cdots + d(m),$$
where the $d(i)$ are dimension vectors corresponds to the canonical decomposition for $d$ if and only if the $d(i)$ are all Schur roots and ${\rm ext}(d(i),d(j))=0$ for $i \ne j$.
\end{Prop}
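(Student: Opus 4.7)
The plan is to prove both directions by leveraging the upper semi-continuity of the dimensions of $\Hom$ and $\Ext^{1}$ on $\rep(Q,d)$, together with the fact that the set of Schur representations of a fixed dimension vector (when nonempty) is open and $\mathrm{GL}_d(k)$-stable.

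For the forward direction, assume $d = d(1) \oplus \cdots \oplus d(m)$ is the canonical decomposition, so there is a nonempty open $\mathcal{U} \subseteq \rep(Q,d)$ on which every $M$ decomposes as $M_1 \oplus \cdots \oplus M_m$ with $M_i$ indecomposable of dimension $d(i)$. First I would argue that each $d(i)$ is Schur: on the open subset $\mathcal{U}_i \subseteq \rep(Q,d(i))$ of those $N$ that appear as a summand of some $M \in \mathcal{U}$, the endomorphism ring of $N$ is a local $k$-algebra (since $N$ is indecomposable), and by upper semi-continuity its dimension is minimized on a dense open set; a minimal-dimension local $k$-algebra with residue field $k$ equals $k$, so generic $N \in \mathcal{U}_i$ has $\End(N) = k$, proving $d(i)$ is a Schur root. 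Next, for $i \neq j$ I would show $\mathrm{ext}(d(i),d(j)) = 0$: if not, upper semi-continuity shows $\dim \Ext^{1}(M_i,M_j) > 0$ for generic pairs, whence by the Schofield short exact sequence (or a direct construction using the middle term of a nontrivial extension), one produces an open family of representations of dimension $d$ whose generic member has an indecomposable summand different from a $d(i)$, contradicting genericity of the decomposition.

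For the converse, assume all $d(i)$ are Schur and $\mathrm{ext}(d(i),d(j))=0$ for $i \neq j$. I would first fix, for each $i$, a nonempty open set $\mathcal{V}_i \subseteq \rep(Q,d(i))$ of Schur representations, and consider the morphism
$$\mu\colon \mathrm{GL}_d(k) \times \mathcal{V}_1 \times \cdots \times \mathcal{V}_m \longrightarrow \rep(Q,d), \qquad (g,N_1,\dots,N_m) \mapsto g\cdot(N_1 \oplus \cdots \oplus N_m).$$
Using the assumption $\mathrm{ext}(d(i),d(j))=0$ and the exact sequence \eqref{eqn1}, one shows by a dimension count that the image of $\mu$ contains an open subset of $\rep(Q,d)$: the tangent map at a generic point is surjective because the failure of surjectivity is measured precisely by the $\Ext^{1}$ between distinct summands. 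Hence generic $M \in \rep(Q,d)$ is isomorphic to a direct sum of Schur representations $N_1 \oplus \cdots \oplus N_m$, and since each $N_i$ has $\End = k$ it is indecomposable, so this is indeed the canonical decomposition.

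The main obstacle is the dimension count in the converse: one has to verify that the orbit $\mathrm{GL}_d(k)\cdot (N_1 \oplus \cdots \oplus N_m)$, together with variation of each $N_i$ inside $\mathcal{V}_i$, sweeps out an open subset of $\rep(Q,d)$. Concretely, the stabilizer of $N_1 \oplus \cdots \oplus N_m$ is governed by $\Hom$ between the summands, while the differential of $\mu$ is governed by $\Ext^{1}$; balancing these using $\langle d(i),d(j)\rangle = \mathrm{hom}(d(i),d(j)) - \mathrm{ext}(d(i),d(j))$ and the vanishing hypothesis yields the desired open image. Once this is established, uniqueness of the canonical decomposition (already known from \cite{Kac0}) forces the given decomposition to coincide with it.
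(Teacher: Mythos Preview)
The paper does not prove this proposition: it is stated as a result of Kac, attributed via the bracket ``[Kac]'' and cited to \cite{Kac}, with no argument supplied. There is therefore no proof in the paper to compare your proposal against.

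That said, your sketch follows the standard line of argument, but the forward direction has a real gap. The assertion that ``a minimal-dimension local $k$-algebra with residue field $k$ equals $k$'' is not a proof that the generic indecomposable of dimension $d(i)$ is Schur: upper semi-continuity only tells you the generic $\dim\End$ is minimal among those occurring, not that this minimum is $1$. The missing step is to show that an indecomposable $N$ with a nonzero nilpotent endomorphism $\phi$ lies in the closure of orbits with strictly smaller $\dim\End$ (e.g.\ via the degeneration $N \rightsquigarrow \ker\phi \oplus \mathrm{coker}\,\phi$, or a one-parameter deformation along $\phi$); only then can you conclude the generic member is Schur. Your argument for $\mathrm{ext}(d(i),d(j))=0$ is also vague: the clean statement is that a nontrivial extension $0\to M_j\to E\to M_i\to 0$ exhibits $M_i\oplus M_j$ as a proper degeneration of $E$, so $M_i\oplus M_j$ cannot have a dense orbit among representations of dimension $d(i)+d(j)$, contradicting genericity of the decomposition. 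The converse direction via the differential of your map $\mu$ is correct in outline and is indeed how the standard proof goes.
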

Actually, using the previous notations, a stronger statement holds; see \cite{Kac}.  There exists a non-empty open set $\mathcal{U}'$ in $\rep(Q,d)$ such that for any $M \in \mathcal{U}'$, $$M \cong M_1 \oplus M_2 \oplus \cdots \oplus M_r$$ where each $M_i$ is a Schur representation of dimension vector $d(i)$ and we have $\Ext^1(M_i,M_j)=0$ if $i\ne j$. A representation $M$ which decomposes as a direct sum of pairwise Ext-orthogonal Schur representations will be called a \emph{general representation of dimension vector $d$}.  Hence, given a general representation $M$ of dimension vector $d$, one can recover the canonical decomposition of $d$ by looking at the dimension vectors of the indecomposable summands of $M$.

\medskip

A dimension vector $d$ whose canonical decomposition only involves real Schur roots is called \emph{prehomogeneous}.  In such a case, there is a unique, up to isomorphism, general representation of dimension vector $d$. A fixed such representation will be denoted by $M(d)$ in the sequel.  Note that $\Ext^1(M(d),M(d))=0$. When $d$ is a real Schur root, this notation agrees with our previous notation.

\medskip

The following result, due to Schofield \cite{Sch}, shows how the canonical decomposition behaves when we multiply a dimension vector by a positive integer.  For a Schur root $d$ and a positive integer $r$, let us define the following
notation:
$$d^r := \left\{ \begin{array}{ll} d\oplus \cdots \oplus d\, (r\, \text{copies}), & \text{if $d$ is real or isotropic}\\
rd \, , & \text{otherwise}.   \end{array}\right.$$
\begin{Prop}[Schofield] \label{Sch}
Let $d$ be a dimension vector with canonical decomposition $d = d(1) \oplus \cdots \oplus d(r)$. For $m$ a positive integer, $md$ has canonical decomposition $md = d(1)^m \oplus \cdots \oplus d(r)^m.$
\end{Prop}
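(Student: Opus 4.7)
The plan is to verify the criterion of Proposition \ref{Kac1} applied to the proposed right-hand side, viewing $d(1)^m \oplus \cdots \oplus d(r)^m$ as a list (with multiplicity) of dimension vectors. It suffices to check that every such summand is a Schur root and that ${\rm ext}$ vanishes between every pair of distinct summands.

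For the Schur-root condition, when $d(i)$ is real or isotropic, $d(i)^m$ consists of $m$ literal copies of the Schur root $d(i)$, so each such summand is trivially Schur. The substantive case is when $d(i)$ is an imaginary non-isotropic Schur root, in which case $d(i)^m = md(i)$ appears as a single summand and I would need to show that $md(i)$ is itself a Schur root. This can be proved by a dimension count: choosing a Schur representation $M_i$ of dimension $d(i)$, one has $\dim \End(M_i) - \dim \Ext^1(M_i,M_i) = \langle d(i), d(i)\rangle < 0$, so $\Ext^1(M_i^{\oplus m}, M_i^{\oplus m})$ is large compared to $\End(M_i^{\oplus m})$, and deforming the semisimple point $M_i^{\oplus m}$ in $\rep(Q, md(i))$ in a generic $\Ext^1$-direction reduces the stabilizer until one reaches a Schur representation of dimension $md(i)$.

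For the ext-orthogonality of distinct summands, two copies of a real Schur root $d(i)$ are ext-orthogonal since $\Ext^1(M(d(i)), M(d(i))) = 0$; two copies of an isotropic Schur root are likewise ext-orthogonal by the Schur property combined with $\langle d(i), d(i)\rangle = 0$. For $i \ne j$, the hypothesis ${\rm ext}(d(i), d(j)) = 0 = {\rm ext}(d(j), d(i))$ provides general representations $M_i, M_j$ with $\Ext^1(M_i, M_j) = 0 = \Ext^1(M_j, M_i)$; passing from $M_i, M_j$ to general representations of suitable positive multiples of $d(i), d(j)$ preserves this vanishing (in the non-isotropic imaginary case one replaces the direct-sum copies by a Schur representation of dimension $a d(i)$ produced in the previous paragraph and argues via semicontinuity of $\Ext^1$), giving ${\rm ext}$ of the relevant multiples is zero.

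Once both conditions are verified, Proposition \ref{Kac1} identifies $d(1)^m \oplus \cdots \oplus d(r)^m$ as the canonical decomposition of $md$. The principal obstacle is the Schur-root verification in the non-isotropic imaginary case, i.e., that positive integer multiples of non-isotropic imaginary Schur roots remain Schur; everything else reduces to routine manipulations with general representations and the linearity of the Euler form.
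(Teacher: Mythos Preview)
The paper does not actually prove this proposition: it is stated as a result of Schofield with a reference to \cite{Sch}, so there is no ``paper's own proof'' to compare against. Your approach via Kac's criterion (Proposition~\ref{Kac1}) is the natural one, but the two substantive cases are not established.

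First, your justification in the isotropic case is wrong. If $d(i)$ is an isotropic Schur root and $M_i$ is a Schur representation of that dimension, then $\langle d(i),d(i)\rangle=0$ together with $\End(M_i)\cong k$ gives $\dim\Ext^1(M_i,M_i)=1$, not $0$. To get $\mathrm{ext}(d(i),d(i))=0$ you must produce \emph{two non-isomorphic} representations $M_1,M_2$ of dimension $d(i)$ with $\Ext^1(M_1,M_2)=0$ (equivalently $\Hom(M_1,M_2)=0$). In the Euclidean situation this is easy (take quasi-simples from distinct homogeneous tubes), but for a general acyclic quiver the existence of such a pair is exactly one of the nontrivial facts that Schofield's argument supplies; it does not drop out of ``Schur plus $\langle d(i),d(i)\rangle=0$''.

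Second, the claim that $m\,d(i)$ is again a Schur root when $d(i)$ is imaginary non-isotropic is the other hard input. Your deformation sketch (``reduce the stabilizer until one reaches a Schur representation'') is suggestive but not a proof: one must show that a \emph{generic} point of $\rep(Q,m\,d(i))$ is indecomposable with trivial endomorphism ring, not merely that some deformation lowers the automorphism group. This is a genuine theorem (essentially part of the Kac--Schofield description of Schur roots) and cannot be obtained by a one-line dimension count.

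In short, your reduction to Kac's criterion correctly isolates the two key facts one needs---$\mathrm{ext}(d,d)=0$ for isotropic Schur $d$, and $md$ Schur for non-isotropic imaginary Schur $d$---but these are precisely the content of Schofield's theorem rather than consequences of what is available in the present paper. The remaining verifications (real self-$\mathrm{ext}$, and $\mathrm{ext}$ between summands with $i\neq j$) are routine once those two inputs are in hand.
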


Observe that when $Q$ is of wild type and $s$ is a Schur imaginary non-isotropic root, then all the positive multiples of $s$ are also Schur imaginary roots. When $Q$ is a Dynkin or a Euclidean quiver, two distinct Schur roots lie on different rays in $\mathbb{Q}^n$.

Given a ray $r \in \mathbb{Q}^n$ with a dimension vector $d$ in it, one can consider the distinct rays $r_1, \ldots, r_s$ corresponding to the indecomposable summands of the canonical decomposition of $d$, and this is well defined by the above result of Schofield. In fact, we have the following.

\begin{Lemma} Let $d_1, d_2$ be two Schur roots and suppose that $d_1' \in [d_1], d_2' \in [d_2]$ are Schur roots.  Then ${\rm ext}(d_1, d_2)=0$ if and only if ${\rm ext}(d_1', d_2')=0$.
\end{Lemma}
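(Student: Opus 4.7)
The plan is to reduce to a scaling equivalence and identify the main obstacle in the backward direction. First, suppose $d_1$ is a real Schur root or an isotropic Schur root. By Schofield's Proposition \ref{Sch}, the canonical decomposition of $md_1$ for $m\ge 2$ is $d_1^m = d_1 \oplus \cdots \oplus d_1$, which has more than one summand, so $md_1$ is not a Schur root. Hence the only Schur root on the ray $[d_1]$ is $d_1$ itself, so $d_1'=d_1$; applying the same to $d_2$, the statement is trivial in these cases. We may therefore assume $d_1$ and $d_2$ are imaginary non-isotropic Schur roots (so $Q$ is necessarily wild). A second application of Schofield's Proposition \ref{Sch} in both directions shows that the Schur roots on $[d_i]$ form an arithmetic progression $\tilde d_i, 2\tilde d_i, 3\tilde d_i, \ldots$ for some minimal integral Schur root $\tilde d_i$ on the ray. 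Writing $d_i = a_i \tilde d_i$ and $d_i' = a_i' \tilde d_i$ for positive integers $a_i, a_i'$, it suffices to prove, for all positive integers $b_1, b_2$, the equivalence ${\rm ext}(\tilde d_1, \tilde d_2) = 0 \Leftrightarrow {\rm ext}(b_1 \tilde d_1, b_2 \tilde d_2) = 0$.

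The forward direction is immediate. If general $M_i$ of dimension $\tilde d_i$ satisfy $\Ext^1(M_1, M_2)=0$, then the direct sums $M_1^{b_1}$ and $M_2^{b_2}$ have dimensions $b_1 \tilde d_1$ and $b_2 \tilde d_2$ respectively, with $\Ext^1(M_1^{b_1}, M_2^{b_2}) = b_1 b_2 \, \Ext^1(M_1, M_2) = 0$, so by definition of ext we obtain ${\rm ext}(b_1\tilde d_1, b_2 \tilde d_2)=0$.

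The backward direction is the main obstacle. Assume ${\rm ext}(b_1 \tilde d_1, b_2 \tilde d_2) = 0$; we want ${\rm ext}(\tilde d_1, \tilde d_2) = 0$. The difficulty is that a general representation of dimension $b_i \tilde d_i$ is indecomposable Schur (not a direct sum of copies of a representation of dimension $\tilde d_i$), so upper semi-continuity of $\dim \Ext^1$ does not allow us to pass from a vanishing-$\Ext^1$ pair of dimensions $(b_1\tilde d_1, b_2\tilde d_2)$ directly to one of dimensions $(\tilde d_1, \tilde d_2)$: the locus of decomposable representations of the form $(M_1^{b_1}, M_2^{b_2})$ is a closed subvariety on which $\dim\Ext^1$ is at least $b_1 b_2 \, {\rm ext}(\tilde d_1,\tilde d_2)$, and semi-continuity runs in the wrong direction to force this to vanish.

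The plan is to invoke Schofield's characterization of ext via generic subrepresentations: ${\rm ext}(e, f) = 0$ if and only if $e$ is the dimension of a generic subrepresentation of a general representation of dimension $e+f$. Under this translation, ${\rm ext}(b_1 \tilde d_1, b_2 \tilde d_2) = 0$ asserts that $b_1 \tilde d_1$ is a generic subrepresentation dimension inside $b_1 \tilde d_1 + b_2 \tilde d_2$. One then passes to a common multiple $k(\tilde d_1 + \tilde d_2)$ and applies Schofield's Proposition \ref{Sch} to compare the canonical decompositions of $\tilde d_1 + \tilde d_2$ and $k(\tilde d_1 + \tilde d_2)$, to argue that $\tilde d_1$ must itself be a generic subrepresentation dimension of $\tilde d_1 + \tilde d_2$, yielding ${\rm ext}(\tilde d_1, \tilde d_2) = 0$. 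The trickiest step is handling the non-uniform scaling ($b_1 \ne b_2$); this will likely proceed by first establishing the uniform-scaling case $b_1 = b_2 = b$ (directly via Schofield's Proposition \ref{Sch} applied to the canonical decomposition of $b(\tilde d_1 + \tilde d_2)$) and then bootstrapping to arbitrary $(b_1,b_2)$ using a common multiple.
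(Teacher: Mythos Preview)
Your proposal is incomplete, and it misses a structural simplification that the paper exploits.

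First, your case reduction has a gap: showing that a real or isotropic $d_1$ forces $d_1'=d_1$ does not make the statement trivial unless $d_2$ is also real or isotropic. The mixed case ($d_1$ real or isotropic, $d_2$ imaginary non-isotropic) still requires an argument, and you skip it.

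More importantly, you overlook that the original statement is symmetric under swapping $(d_1,d_2)\leftrightarrow(d_1',d_2')$: both pairs are Schur roots on the same pair of rays. Hence only one implication needs to be established. By first reducing to the minimal roots $\tilde d_i$ you destroy this symmetry and are left with a genuinely asymmetric ``backward'' direction, which you only sketch, with hedging language (``will likely proceed by''). The detour through Schofield's generic-subrepresentation characterization is unnecessary, and your bootstrap from uniform to non-uniform scaling is never carried out.

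The paper's argument is the common-multiple idea you gesture at, but applied directly, without the reduction to minimal roots. Assume ${\rm ext}(d_1,d_2)=0$ and let $f=d_1'+d_2'$ have canonical decomposition $f=f(1)\oplus\cdots\oplus f(t)$. Choose $\ell$ so that $\ell d_i'=t_i d_i$ for positive integers $t_i$. From the hypothesis there are pairwise $\Ext^1$-orthogonal Schur representations $M_i$ of dimension $d_i$; then the representations $M_1^{t_1}$ and $M_2^{t_2}$ witness that the Schur roots $\ell d_1'$ and $\ell d_2'$ (or $d_2$, when $d_2$ is real or isotropic) are ext-orthogonal, so by Kac's criterion the canonical decomposition of $\ell f$ is $(\ell d_1')\oplus(\ell d_2')$, respectively $(\ell d_1')\oplus d_2^{t_2}$. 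Comparing with Schofield's formula $f(1)^\ell\oplus\cdots\oplus f(t)^\ell$ and using uniqueness forces $d_1'$ and $d_2'$ to occur among the $f(j)$, whence ${\rm ext}(d_1',d_2')=0$. A short case split (according to whether $d_2$ is real/isotropic or imaginary non-isotropic) handles the mixed case you omitted. Note that this is, in essence, your ``forward'' step followed immediately by the Schofield comparison; the point is that once you keep the symmetric formulation, nothing further is needed.
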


\begin{proof} We need only to prove the necessity and further, we need only to consider the case where at least one of the $d_i$ is imaginary and non-isotropic, say $d_1$. So assume that ${\rm ext}(d_1, d_2)=0$. Consider $d = d_1 + d_2 = d_1 \oplus d_2$. We have to show that
$d_1' \oplus d_2'$
is the canonical decomposition of $f := d_1' + d_2'$. Let the canonical decomposition of $f$ be
$$f = f(1) \oplus \cdots \oplus f(t).$$
There are positive integers $p_1, p_2, r_1, r_2$ such that $p_1d_1=r_1d_1'$ and $p_2d_2 = r_2d_2'$.  Let $\ell$ be the least common multiple of the $p_i$ and the $r_i$. From Proposition \ref{Sch}, we know that the canonical decomposition of $\ell f$ is
$$\ell f = f(1)^\ell \oplus \cdots \oplus f(t)^\ell.$$
Observe now that \begin{eqnarray*} \ell f & = & \ell d_1' + \ell d_2'\\
& = & t_1 d_1 +  t_2 d_2
\end{eqnarray*}
for some positive integers $t_1,t_2$. Using the canonical decomposition of $d$, we see that there are indecomposable Schur representations $M_1,M_2$, that are pairwise Ext-orthogonal such that $d_{M_j} = d_j$. Consider first the case where $d_2 = d_2'$ is either real or isotropic.
Using these Schur representations, we see that the representations
$M_1^{t_1}, M_2$
are pairwise Ext-orthogonal. Hence, using the definition of ext, we see that the Schur roots $$\{t_1 d_1 = \ell d_1', d_2\}$$ are ext-orthogonal.
Hence, from Proposition \ref{Kac1}, the canonical decomposition of $\ell f$ is also given by
$$(\ell d_1') \oplus d_2^{t_2}.$$
By uniqueness of the canonical decomposition, we have
$$f(1)^\ell \oplus \cdots \oplus f(t)^\ell = (\ell d_1') \oplus d_2^{t_2},$$
from which it follows that $(\ell d_1') = (\ell f(j))$ for a unique $j$ and $d_2 = f(i)$ for some $i$. This implies that $d_1' = f(j)$ and $d_2' = f(i)$. In particular, we get ${\rm ext}(d_1', d_2')=0$. Assume now that $d_2$ is imaginary and non-isotropic. The representations
$M_1^{t_1}, M_2^{t_2}$
are pairwise Ext-orthogonal. Hence, we see that the Schur roots $$\{t_1 d_1 = \ell d_1', t_2d_2 = \ell d_2'\}$$ are ext-orthogonal.
Using the same argument as above, we see that $(\ell d_1') = (\ell f(j))$ for a unique $j$ and $(\ell d_2') = (\ell f(i))$ for a unique $i$ . This implies that $d_1' = f(j)$ and $d_2' = f(i)$. In particular, we also get ${\rm ext}(d_1', d_2')=0$.
\end{proof}

Now, let $r_1, r_2$ be two rays in $(\mathbb{Q}^n)_+$, each containing a Schur root. Then the condition ${\rm ext}(r_1,r_2)=0$ does not depend on the particular choice of a root in each ray, and hence is well defined. Thus, we can make $(\mathbb{Q}^n)_+$ into a simplicial fan whose rays are the rays associated to the Schur roots. Two rays $r_1, r_2$ associated to Schur roots belong to the same simplicial cone if and only if ${\rm ext}(r_1, r_2)= {\rm ext}(r_2, r_1) = 0$. It is then clear that if $d$ is a dimension vector belonging to the relative interior of a simplicial cone generated by the rays $r_1, \ldots, r_s$, then the rays associated to the Schur roots in the canonical decomposition of $d$ are precisely $r_1, \ldots, r_s$. We call this simplicial fan on $(\mathbb{Q}^n)_+$ the \emph{cd-fan}.

We would like to extend the structure of simplicial fan on $(\mathbb{Q}^n)_+$ to a structure of simplicial fan on the whole $\mathbb{Q}^n$. A similar construction is done in \cite{IOTW}, but with less generality since not all imaginary Schur roots are considered. Let us give the construction. We denote by $p_i$ the dimension vector of the indecomposable projective representation at vertex $i$, $i=1,2, \ldots, n$. We add the rays $[-p_i]$ for $1 \le i \le n$ to the cd-fan. Let us define a compatibility relation on the set $\{[d] \mid d \; \text{is a Schur root}\,\}\cup\{[-p_i] \mid i=1,2,\ldots,n\}$ of rays. If $d_1, d_2$ are Schur roots, then $[d_1], [d_2]$ are compatible if ${\rm ext}([d_1],[d_2])=0$. If $d_1$ is a Schur root, then $[d_1],[-p_i]$ are compatible if $d_1$ is not supported at $i$. Finally, $[-p_i],[-p_j]$ for $i \ne j$ are always compatible.
In this way, we can make $\mathbb{Q}^n$ into a simplicial fan such that if $d \in \Z^n$, then $d$ lies in the relative interior of a simplicial cone generated by some rays $r_1, \ldots, r_s$. The following proposition is essential for the definition of canonical presentation.

\begin{Prop}[\cite{IOTW}] \label{PropCanPres}
Let $d \in \Z^n$.
There exists a decomposition
$$d = t_1d_1 + t_2d_2 + \cdots + t_md_m + t_{m+1}d_{m+1} + \cdots + t_sd_s$$
where the $t_i$ are positive integers, $d_1, \ldots, d_m$ are Schur roots, and $d_{m+1}, \ldots, d_s$ are negative of dimension vectors of indecomposable projective representations. Moreover, the $d_i$, $1 \le i \le s$, are compatible in the sense defined above and $t_i=1$ whenever $d_i$ is an imaginary non-isotropic Schur root.
\end{Prop}

The decomposition given in Proposition \ref{PropCanPres} is called the \emph{canonical presentation} of $d$. The positive part $t_1d_1 + t_2d_2 + \cdots + t_md_m$ is denoted $d_+$ while the negative part $ t_{m+1}d_{m+1} + \cdots + t_sd_s$ is denoted $d_-$.
Observe that $d_+$ is a dimension vector whose canonical decomposition is $d_+ = t_1d_1 + \cdots + t_md_m$ (or $d_+ = d_1^{t_1} \oplus \cdots \oplus d_m^{t_m}$ using Schofield's notation) and $d_-$ is equal to $-d_P$, where $P$ is a projective representation whose top has support disjoint to $d_+$.

\begin{Exam}
Let $Q$ be the quiver $2 \rightarrow 1$. The Schur roots are the dimension vectors $(0,1), (1,0)$ and $(1,1)$, while the negative of the dimension vectors of the indecomposable projective representations are $(-1,0), (-1,-1)$. The figure below illustrates the fan for $Q$ that is defined above. The simplicial cones generated by two rays are denoted $C_1, C_2, C_3, C_4$ and $C_5$. The vector $d=(0,-1)$, shown in the diagram, lies in the relative interior of $C_5$. Therefore, we see that the canonical presentation of $d$ is $d = (1,0)+(-1,-1)$, where both coefficients are understood to be ones.
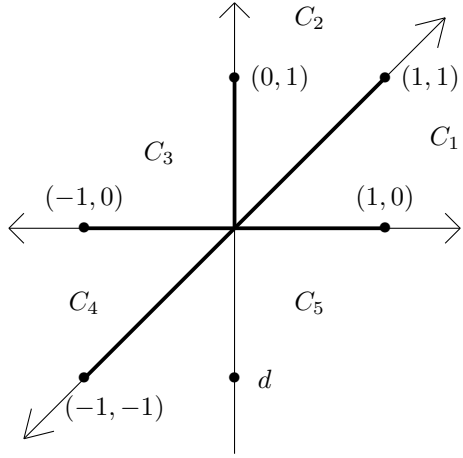
\begin{figure}[h]
  \centering
  \begin{tikzpicture}[xscale=2,yscale=2]

    \draw (-1.5,0) -- (1.5,0);
\draw (1.4,0.1) -- (1.5,0);
\draw (-1.4,-0.1) -- (-1.5,0);
\draw (-1.4,0.1) -- (-1.5,0);
\draw (1.4,-0.1) -- (1.5,0);
    \draw (0,-1.5) -- (0,1.5);
\draw (-0.1,1.4) -- (0,1.5);
\draw (0.1,1.4) -- (0,1.5);
\draw[line width=0.5mm] (-1,-1) -- (1,1);
\draw (-1.4,-1.4) -- (1.4,1.4);
\draw (-1.35,-1.2) -- (-1.4, -1.4);
\draw (-1.2,-1.35) -- (-1.4, -1.4);
\draw (1.35,1.2) -- (1.4, 1.4);
\draw (1.2,1.35) -- (1.4, 1.4);
\draw[line width=0.5mm] (-1,0) -- (1,0);
\draw[line width=0.5mm] (0,0) -- (0,1);
\node at (0.2,-1) {$d$};

   \node at (1, 0.2) {$(1,0)$};
   \node at (-1, 0.2) {$(-1,0)$};
    \node at (0.3, 1) {$(0,1)$};
\node at (1.3,1.0) {$(1,1)$};
\node at (-0.8,-1.2) {$(-1,-1)$};

\node at (1, 0) {$\bullet$};
   \node at (-1, 0) {$\bullet$};
    \node at (0, 1) {$\bullet$};
\node at (1,1) {$\bullet$};
\node at (-1,-1) {$\bullet$};
\node at (0,-1) {$\bullet$};

\node at (1.4, 0.6) {$C_1$};
   \node at (0.5,1.4) {$C_2$};
    \node at (-0.5,0.5) {$C_3$};
\node at (-1,-0.5) {$C_4$};
\node at (0.5, -0.5) {$C_5$};

  \end{tikzpicture}
  \caption{The above-defined fan for the quiver of type $\mathbb{A}_2$}
\label{fig:A2}
\end{figure}

\end{Exam}

The following lemma summarizes what we just discussed. It follows from our discussion and what appears in \cite{IOTW}.

\begin{Lemma}\label{IOTW} For an acyclic quiver $Q$, the above-defined structure makes $\mathbb{Q}^n$ into a simplicial fan of dimension $n-1$. If $d \in \Z^n$ lies in the relative-interior of a simplicial cone generated by the rays $r_1, \ldots, r_s$, then let $d_i \in r_i$ be the smallest dimension vector.
\begin{enumerate}[$(1)$]
\item The vectors $d_1, \ldots, d_s$ are linearly independent.
\item The canonical presentation of $d$ is $d = \sum_{i=1}^st_id_i$, where the $t_i$ are positive integers.
\item The sum splits into two parts, where the summands in the first part add up to $d_+$, and the summands in the second part add up to $d_-$.
\end{enumerate}
\end{Lemma}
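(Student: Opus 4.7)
The plan is to derive all three assertions from the canonical decomposition results of Kac and Schofield, combined with the elementary observation that a Schur ray $[d_1]$ is declared compatible with a projective ray $[-p_i]$ precisely when $d_1$ has no support at vertex $i$. The fan has been constructed in the paragraph preceding the lemma, so the content is that every $d\in\Z^n$ lies in the relative interior of a uniquely determined simplicial cone of the prescribed form, and that its canonical presentation is recovered as the unique integral combination of the minimal lattice generators on the corresponding rays.

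First I would establish simpliciality together with assertion (1) by showing that the generating rays of any cone in the fan admit linearly independent representatives. Partition these generators as $\{[d_1],\ldots,[d_m]\}\cup\{[-p_{j_1}],\ldots,[-p_{j_\ell}]\}$, with the $d_i$ pairwise ext-orthogonal Schur roots, each unsupported at each $j_k$. Pairwise ext-orthogonal Schur roots are linearly independent: a relation $\sum a_id_i=0$ splits into equal positive and negative parts giving two distinct canonical decompositions of the same dimension vector, contradicting Proposition \ref{Kac1}. The $-p_{j_k}$ are linearly independent because projective classes form a basis of $K_0(Q)$, and a combined dependency, after restriction to the coordinates $j_1,\ldots,j_\ell$ (where every $d_i$ vanishes by the support condition), reduces to a trivial dependency among the $-p_{j_k}$ alone. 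The dimension statement falls out by counting rays in a maximal compatible set.

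For (2) and (3), I would start from the decomposition $d = d_+ + d_-$ with $d_+$ a dimension vector, $-d_- = \sum_k n_k p_{j_k}$ for positive integers $n_k$, and ${\rm Su}(d_+)$ disjoint from $J := \{j_1,\ldots,j_\ell\}$. Applying Proposition \ref{Kac1} to $d_+$ yields a canonical decomposition $d_+ = \sum_i t_i d_i$ with the $d_i$ pairwise ext-orthogonal Schur roots and $t_i = 1$ for imaginary non-isotropic $d_i$, so that by Proposition \ref{Sch} each $d_i$ is the minimal dimension vector on its ray. The rays $[d_i]$ and $[-p_{j_k}]$ are mutually compatible, and the identity $d = \sum_i t_i d_i - \sum_k n_k p_{j_k}$ exhibits $d$ in the relative interior of their spanned cone; this is the canonical presentation required in (2), and the partition of the summands into positive Schur-root and negative projective contributions is the splitting of (3). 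Uniqueness of the coefficients $t_i$ and $n_k$ is immediate from the linear independence already established.

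The main obstacle I anticipate is verifying that the compatibility relation genuinely defines a fan, i.e.\ that distinct maximal cones intersect along a common face, which is what legitimizes speaking of ``the'' cone whose relative interior contains $d$. This reduces to a uniqueness statement: if $d$ admits two presentations of the canonical form, then uniqueness of the canonical decomposition of $d_+$ (Proposition \ref{Kac1}) pins down the Schur rays, while the projective part $-d_-$ has a unique expression as a non-negative integer combination of $p_j$ with $j$ outside ${\rm Su}(d_+)$, forcing the two generating sets to coincide.
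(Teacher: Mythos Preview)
The paper does not give a proof of this lemma at all; it simply states that the lemma ``follows from our discussion and what appears in \cite{IOTW}.'' Your proposal is therefore not a comparison against an existing argument but an attempt to supply what the paper omits, and in that capacity it is essentially correct and follows the natural line suggested by the surrounding discussion.

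Two small points are worth flagging. First, your linear-independence argument for the Schur part (``splits into equal positive and negative parts giving two distinct canonical decompositions'') needs a sentence of care in the imaginary non-isotropic case, since there $a_id_i$ is itself the Schur root appearing in the decomposition rather than $d_i$ with multiplicity $a_i$; the conclusion still holds because the rays are distinct, but the phrasing should acknowledge this. Second, your argument for (2)--(3) begins from the decomposition $d=d_++d_-$ with the stated support property, treating its existence and uniqueness as known. In the paper, $d_+$ and $d_-$ are \emph{defined} via the canonical presentation, so one must either establish independently that such a splitting exists and is unique (which is elementary: take $d_-$ to be the negative of the smallest projective dimension vector making $d-d_-$ non-negative, then check the support condition), or else cite \cite{IOTW} for it, as the paper implicitly does. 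With these clarifications your proof stands.
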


Note that in the above notation, if $d_i$ is imaginary and non-isotropic, then $t_id_i$ means the Schur root $(t_id_i)$ with a coefficient $1$ in front. We will refer to this simplical fan on $\mathbb{Q}^n$ as the \emph{cp-fan}. Observe that if we restrict this simplicial fan to the positive orthant, then we get the cd-fan.

\section{Exceptional objects and exceptional sequences}

A representation $M$ in $\rep(Q)$ with $\Ext^1(M,M)=0$ is said to be \emph{rigid}. If it is also indecomposable,
it is called \emph{exceptional}. It is well known (see for example \cite{DW1}) that a representation $M$ is rigid if and only if
its orbit in $\rep(Q,d_M)$ is open.
A sequence of exceptional representations $(E_1,\dots,E_r)$ is called an
\emph{exceptional sequence} if for all $i<j$, we have
$\Hom(E_i,E_j)=0=\Ext^1(E_i,E_j)$. Note that some authors define an exceptional sequence using the ``upper triangular'' convention rather than the ``lower triangular'' convention that is used here.

The maximum length of an exceptional sequence in $\rep(Q)$ is the number
of vertices in $Q$.  Any exceptional sequence can be extended to one of maximal length; see \cite{CB}.

Given any full subcategory $\mathcal{V}$ of $\rep(Q)$, which may consist of a single object, we define $\mathcal{V}^\perp$ to be the full subcategory of $\rep(Q)$
generated by the representations $M$ for which $\Hom(V,M) = \Ext^1(V,M) = 0$ for every $V \in \mathcal{V}$. We call $\mathcal{V}^\perp$ the \emph{right orthogonal category}
to $\mathcal{V}$.  One also has the dual notion of \emph{left orthogonal category} to $\mathcal{V}$, denoted $^\perp\mathcal{V}$. We denote by $\C(\mathcal{V})$ the smallest abelian, extension-closed subcategory of $\rep(Q)$ generated by the objects in $\mathcal{V}$.
If $E$ is an exceptional sequence, and $\add E$ its additive hull,
we will abbreviate $\C(\add E)$ by
$\C(E)$.
We need the following well known fact. It was first proven by Geigle and Lenzing in \cite{GL}, and later by Schofield in \cite{Sch2}.

\begin{Prop}\label{GL}
Let $X$ be an exceptional representation in $\rep(Q)$, where $Q$ is an acyclic quiver. Then the category $X^\perp$ $($or $\p X)$ is equivalent to $\rep(Q')$ where $Q'$ is an acyclic quiver
having $|Q_0|-1$ vertices.
\end{Prop}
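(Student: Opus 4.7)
The plan is to establish three facts about $X^\perp$ in sequence: it is an exact abelian subcategory of $\rep(Q)$, it is hereditary, and it admits a projective generator with exactly $n-1$ non-isomorphic indecomposable summands. Morita theory in the hereditary setting then delivers the equivalence with $\rep(Q')$ for an acyclic $Q'$ on $n-1$ vertices.

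First, closure of $X^\perp$ under kernels, cokernels, and extensions is a routine diagram chase. Given a short exact sequence $0 \to L \to M \to N \to 0$ in $\rep(Q)$, apply $\Hom(X,-)$ to get a six-term long exact sequence (the sequence stops at $\Ext^1$ because $\rep(Q)$ is hereditary); vanishing of $\Hom(X,-)$ and $\Ext^1(X,-)$ on two of the three terms forces vanishing on the third in all the relevant cases. This shows $X^\perp$ is an exact abelian subcategory. Heredity of $X^\perp$ is then automatic: an extension computed inside $X^\perp$ injects into the corresponding $\Ext^1$ in $\rep(Q)$, so $\Ext^2$ in $X^\perp$ is dominated by $\Ext^2$ in $\rep(Q)$, which vanishes.

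The heart of the argument is exhibiting a projective generator. I would proceed by a universal approximation construction: for an arbitrary $M \in \rep(Q)$, there is a two-step reflection producing an object $R(M) \in X^\perp$. Step one is the evaluation map $\mathrm{ev}\colon X^{\hom(X,M)} \to M$, whose cokernel $C$ satisfies $\Hom(X,C)=0$; step two is the universal extension
\[
0 \to C \to R(M) \to X^{\dim \Ext^1(X,C)} \to 0
\]
constructed to kill $\Ext^1(X,-)$, after which $R(M) \in X^\perp$. A short calculation using the hereditary property and the exceptionality of $X$ (which gives $\End(X)=k$) shows that this really lands in $X^\perp$ and behaves functorially up to the subcategory $\add X$. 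Applying $R$ to the indecomposable projectives $P_1,\dots,P_n$ of $\rep(Q)$, one obtains objects $R(P_1),\dots,R(P_n)$ of $X^\perp$; exactly one of these is zero (the one indexed by the support of $X$, in an appropriate sense), and the remaining $n-1$ objects, call them $E_1,\dots,E_{n-1}$, are projective in $X^\perp$ because any surjection in $X^\perp$ from them lifts along $R$ applied to a surjection from $P_i$ in $\rep(Q)$. Summing, $T := E_1 \oplus \cdots \oplus E_{n-1}$ is projective in $X^\perp$, and by construction every $M \in X^\perp$ is a quotient of an object in $\add T$, so $T$ is a projective generator. Alternatively, and more economically, one could extend $X$ to a full exceptional sequence $(E_1,\ldots,E_{n-1},X)$ using \cite{CB}, observe that each $E_i$ lies in $X^\perp$, and then identify $\bigoplus E_i$ with the output of the reflection.

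The main obstacle is precisely this construction of projective objects: one must verify both that $R(M) \in X^\perp$ (which uses heredity of $\rep(Q)$ and the vanishing $\Ext^1(X,X)=0$ crucially) and that the $R(P_i)$ are indeed projective in $X^\perp$ and generate it. Once this is in hand, the conclusion follows from standard hereditary Morita theory: the $k$-algebra $A := \End_{X^\perp}(T)^{\mathrm{op}}$ is basic, finite-dimensional, and hereditary (heredity is inherited from $X^\perp$), hence $A \cong kQ'$ for some finite acyclic quiver $Q'$ on $n-1$ vertices, and $\Hom_{X^\perp}(T,-)\colon X^\perp \to \rep(Q')$ is an equivalence. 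The left-orthogonal statement ${}^\perp X \simeq \rep(Q'')$ is obtained by applying the same argument to the opposite quiver, or by dualizing throughout via the standard duality $D\colon \rep(Q) \to \rep(Q^{\mathrm{op}})$.
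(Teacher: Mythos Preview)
The paper does not give its own proof of this proposition; it is quoted as a well-known result of Geigle--Lenzing \cite{GL} and Schofield \cite{Sch2}. Your sketch follows essentially the Geigle--Lenzing strategy: show $X^\perp$ is exact abelian and hereditary, build a left adjoint to the inclusion by a two-step reflection, and identify a projective generator.

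There is one genuine gap. Your claim that ``exactly one of the $R(P_i)$ is zero (the one indexed by the support of $X$, in an appropriate sense)'' is not correct, and the hedge in parentheses suggests you already sense this. The support of $X$ can involve many vertices, and in general none of the $R(P_i)$ need vanish, nor need they be pairwise non-isomorphic as stated. The correct way to get the count $n-1$ is via the Grothendieck group: $K_0(X^\perp)$ is the orthogonal complement to $d_X$ under the Euler form in $K_0(\rep(Q))\cong\Z^n$, hence free of rank $n-1$, and this forces the basic hereditary algebra $\End(T)^{\mathrm{op}}$ to have exactly $n-1$ simple modules. Your alternative route---extend $X$ to a full exceptional sequence $(E_1,\ldots,E_{n-1},X)$ via \cite{CB} and note that the $E_i$ lie in and generate $X^\perp$---delivers the vertex count directly and avoids the issue; that is the argument I would keep, though you should still say a word about why the resulting category is $\rep(Q')$ and not just generated by $n-1$ exceptionals.

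A minor point: the assertion that $\Hom(X,C)=0$ after step one is correct but not automatic; it uses that $\Ext^1(X,X)=0$ forces $\Ext^1(X,I)=0$ for $I$ the image of the evaluation map (heredity), whence the connecting map in the long exact sequence for $0\to I\to M\to C\to 0$ kills $\Hom(X,C)$.
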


As an easy consequence, we have the following.

\begin{Prop} \label{GL2} Let $E=(X_1,\ldots,X_r)$ be an exceptional sequence in $\rep(Q)$ where $Q$ is an acyclic quiver.  Then $\C(E)^\perp$ $($or $\p \C(E))$ is equivalent to $\rep(Q')$ where $Q'$ is an acyclic quiver
having $|Q_0|-r$ vertices.
\end{Prop}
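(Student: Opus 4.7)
The natural plan is to induct on $r$, chaining applications of Proposition \ref{GL}. The base case $r=1$ is exactly Proposition \ref{GL}. For the inductive step, set $E'=(X_1,\ldots,X_{r-1})$, which is still an exceptional sequence. By induction, $\C(E')^\perp$ is equivalent to $\rep(Q'')$ for an acyclic quiver $Q''$ with $|Q_0|-(r-1)$ vertices.

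Next I would verify two compatibility facts. First, $X_r\in\C(E')^\perp$: the exceptional sequence hypothesis gives $\Hom(X_i,X_r)=0=\Ext^1(X_i,X_r)$ for $i<r$, and since $\C(E')$ is the smallest abelian extension-closed subcategory of $\rep(Q)$ containing $X_1,\ldots,X_{r-1}$, an induction on composition series together with the long exact sequences of $\Hom(-,X_r)$ and $\Ext^1(-,X_r)$ shows that this orthogonality propagates to all of $\C(E')$. Second, $X_r$ remains exceptional inside $\C(E')^\perp$: since this subcategory is exact abelian and extension-closed in $\rep(Q)$, both $\End$ and $\Ext^1$ computed there agree with those in the ambient category.

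I can now apply Proposition \ref{GL} inside $\C(E')^\perp\cong \rep(Q'')$ to the exceptional object $X_r$, obtaining that the right orthogonal of $X_r$ computed in $\C(E')^\perp$ is equivalent to $\rep(Q')$ for an acyclic $Q'$ with $|Q''_0|-1=|Q_0|-r$ vertices. It only remains to identify this with $\C(E)^\perp$. For this, observe that by the long exact sequence argument mentioned above, $\C(E)^\perp=\bigcap_{i=1}^r X_i^\perp=\C(E')^\perp\cap X_r^\perp$, which is exactly the right orthogonal to $X_r$ taken inside $\C(E')^\perp$. The dual statement for $^\perp\C(E)$ is obtained by the same argument applied to the opposite quiver, noting that $(X_r,X_{r-1},\ldots,X_1)$ is an exceptional sequence in $\rep(Q^{\rm op})$.

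The main obstacle is the bookkeeping in the first compatibility fact: making sure that the perpendicular to the subcategory $\C(E')$ really coincides with the intersection of the perpendiculars to the generators $X_1,\ldots,X_{r-1}$. Once this is in hand, the rest is a clean induction that reduces everything to a single application of Proposition \ref{GL} at each stage.
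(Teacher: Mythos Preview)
Your proposal is correct and matches the paper's approach: the paper simply states Proposition~\ref{GL2} as ``an easy consequence'' of Proposition~\ref{GL} without writing out a proof, and the induction you describe is exactly the intended derivation. Your argument spells out carefully the compatibility checks (that $X_r\in\C(E')^\perp$ and that $\C(E)^\perp=\C(E')^\perp\cap X_r^\perp$) which the paper leaves implicit.
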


The following lemma is probably well known.  We include a proof for completeness.

\begin{Lemma} \label{Lemma1}
Let $Q$ be a Euclidean quiver and $V,X$ be indecomposable representations with $X$ non-exceptional.
\begin{enumerate}[$(1)$]
    \item If $V$ is preprojective, then $\Hom(V,X) \ne 0$.
    \item If $V$ is preinjective, then $\Ext^1(V,X) \ne 0$.
\end{enumerate}
\end{Lemma}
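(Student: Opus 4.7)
Since every indecomposable in the preprojective or preinjective component of a Euclidean quiver is exceptional, the hypothesis on $X$ forces it to lie in some tube $T_\lambda$. A routine inspection of tube combinatorics shows that an indecomposable, non-exceptional module in $T_\lambda$ has quasi-length at least the rank of $T_\lambda$; its quasi-composition series therefore wraps at least once around the tube and I obtain a decomposition $d_X = \delta + d_Y$, where $d_Y$ is either zero or the dimension vector of a regular object $Y$.

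For part (1), my plan is to first establish $\Ext^1(V,X) = 0$ and then compute $\dim_k \Hom(V,X) = \langle d_V, d_X\rangle$. When $V$ is projective this $\Ext^1$ vanishes automatically; otherwise, the Auslander-Reiten formula for hereditary algebras gives $\Ext^1(V,X) \cong D\Hom(X,\tau V)$, which vanishes because $\tau V$ remains preprojective and Proposition~\ref{ARShape}(1) yields $\Hom(T_\lambda,\mathcal{P}) = 0$. It therefore suffices to show $\langle d_V, d_X\rangle > 0$. I would split this as $\langle d_V,\delta\rangle + \langle d_V, d_Y\rangle$: the first summand equals $-\langle \delta, d_V\rangle$ since $\delta$ lies in the radical of the symmetrized Euler form, and this is strictly positive because $\langle \delta, d_V\rangle < 0$ for $V$ preprojective; the second summand equals $\dim_k\Hom(V,Y) \geq 0$ by the same $\Ext$-vanishing argument applied to the regular object $Y$.

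Part (2) runs dually. Here $\Hom(V,X) = 0$ directly from Proposition~\ref{ARShape}(1), since $V$ is preinjective and $X$ is regular, so the task reduces to showing $\langle d_V, d_X\rangle < 0$. Using the same splitting, $\langle d_V, \delta\rangle = -\langle \delta, d_V\rangle$ is strictly negative because $V$ is preinjective, and $\langle d_V, d_Y\rangle = -\dim_k\Ext^1(V,Y) \leq 0$ since $\Hom(V,Y) = 0$. Adding gives the required strict inequality.

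The only place where real work is hidden is the tube-combinatorics step that produces the decomposition $d_X = \delta + d_Y$ with $Y$ regular; once this is in hand, the remainder is a careful bookkeeping with Proposition~\ref{ARShape} and the Auslander-Reiten formula, and I do not anticipate a further obstacle.
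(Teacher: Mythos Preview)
Your argument is correct, but it takes a different route from the paper's. The paper proves only part~(2) (part~(1) being dual) by translating $V$ back to an injective: choose $r\ge 0$ with $\tau^{-r}V$ injective indecomposable, use the Auslander--Reiten formula to rewrite $\Ext^1(V,X)\cong D\Hom(\tau^{-r-1}X,\tau^{-r}V)$, and then observe that the entire $\tau$-orbit of a non-exceptional $X$ is sincere (its dimension vector dominates $\delta$), so any injective indecomposable receives a nonzero map from $\tau^{-r-1}X$. Your approach instead stays on the Euler-form side: you use the component orthogonality in Proposition~\ref{ARShape} to kill one of $\Hom$ or $\Ext^1$, and then compute the sign of $\langle d_V,d_X\rangle$ via the splitting $d_X=\delta+d_Y$ and the radical property $\langle d_V,\delta\rangle=-\langle\delta,d_V\rangle$. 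The paper's argument is slightly more self-contained (it does not invoke the whole of Proposition~\ref{ARShape}, only the sincerity of $\delta$ and the AR formula), whereas yours makes the dependence on the sign of $\langle\delta,d_V\rangle$ completely transparent and avoids iterating $\tau$. Both are short and standard; your version is a perfectly acceptable alternative.
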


\begin{proof}
We only prove the second part. Suppose that $V$ is preinjective. There exists a non-negative integer $r$ with $\tau^{-r}V$ injective
indecomposable.
Hence, by the Auslander-Reiten formula,
$$\Ext^1(V,X) \cong D\Hom(X,\tau V) \cong D\Hom(\tau^{-r-1}X,\tau^{-r} V).$$
Being non-exceptional, $X$ lies in a stable tube of the Auslander-Reiten quiver of $\rep(Q)$, and on or above the level in the tube consisting of
representations whose dimension vector is the null root.  It follows
that the entire $\tau$-orbit of $X$ is sincere.
Hence, $\tau^{-r-1}X$ is sincere. Since $\tau^{-r}V$ is injective indecomposable,
we clearly have $\Hom(\tau^{-r-1}X,\tau^{-r} V) \ne 0$.
\end{proof}

\begin{Prop} \label{equiv_thick}
Let $\C$ be an abelian, extension-closed subcategory of $\rep(Q)$.  The following are equivalent.
\begin{enumerate}[$(a)$]
    \item $\C$ is generated by an exceptional sequence,
\item $\C$ is equivalent to $\rep(Q')$ for an acyclic quiver $Q'$,
\item $\C = V^\perp$ for some rigid representation $V$ of $\rep(Q)$.
\end{enumerate}
\end{Prop}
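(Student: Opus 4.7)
The plan is to prove the equivalence via the cycle (a) $\Rightarrow$ (b), (b) $\Rightarrow$ (a), (b) $\Rightarrow$ (c), (c) $\Rightarrow$ (b). The key tools throughout are Proposition \ref{GL2} and the Crawley-Boevey extension of any exceptional sequence to a complete one.

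For (a) $\Rightarrow$ (b), starting from $\C = \C(X_1,\ldots,X_r)$, I would extend $(X_1,\ldots,X_r)$ to a full exceptional sequence $(X_1,\ldots,X_r,Y_1,\ldots,Y_{n-r})$. The defining orthogonality $\Hom(X_i,Y_j)=0=\Ext^1(X_i,Y_j)$ places every $X_i$ in ${}^\perp\C(Y_1,\ldots,Y_{n-r})$, hence $\C \subseteq {}^\perp\C(Y_1,\ldots,Y_{n-r})$. By Proposition \ref{GL2}, the right-hand side is equivalent to $\rep(Q')$ for an acyclic $Q'$ with $r$ vertices; a rank comparison on Grothendieck groups together with the observation that the $X_i$'s already form an exceptional sequence of length $r$ inside $\rep(Q')$ forces equality, giving (b). Conversely, (b) $\Rightarrow$ (a) is immediate: through the equivalence $\C \cong \rep(Q')$, the indecomposable projectives of $\rep(Q')$, ordered by a linear extension of the source-to-sink order on $Q'$, pull back to an exceptional sequence generating $\C$, using that extension-closedness of $\C$ in $\rep(Q)$ makes $\Ext^1$ agree with the ambient one.

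For (b) $\Rightarrow$ (c), I would use (a) to write $\C = \C(X_1,\ldots,X_r)$ and this time extend the exceptional sequence \emph{on the left} to $(Y_1,\ldots,Y_{n-r},X_1,\ldots,X_r)$. Proposition \ref{GL2} gives ${}^\perp \C \cong \rep(Q'')$ for an acyclic $Q''$ with $n-r$ vertices, and applying (b) $\Rightarrow$ (a) inside this smaller category lets me arrange the $Y_j$'s to correspond to its indecomposable projectives. Then $V := \bigoplus_j Y_j$ is rigid, because a projective generator of $\rep(Q'')$ has vanishing self-$\Ext^1$. Finally $V^\perp = \C(Y_1,\ldots,Y_{n-r})^\perp = ({}^\perp \C)^\perp = \C$, the last equality being the double-orthogonal identity from the perpendicular calculus.

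For (c) $\Rightarrow$ (b), I would decompose $V = V_1 \oplus \cdots \oplus V_s$ into indecomposable summands; each is a rigid indecomposable, hence exceptional. Rigidity of $V$ forces $\Ext^1(V_i,V_j)=0$ for all $i,j$, and the Hom-quiver on $\{V_1,\ldots,V_s\}$ is acyclic (a standard fact for rigid modules over a hereditary algebra, since an oriented cycle would provide non-scalar endomorphisms of $\bigoplus V_i$ incompatible with rigidity). Reordering accordingly makes $(V_1,\ldots,V_s)$ an exceptional sequence, and Proposition \ref{GL2} then gives $V^\perp = \C(V_1,\ldots,V_s)^\perp \cong \rep(Q')$ with $n-s$ vertices. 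The main obstacle I anticipate is the double-orthogonal identity $({}^\perp \C)^\perp = \C$ used in (b) $\Rightarrow$ (c): although classical (Geigle--Lenzing / Schofield perpendicular calculus), it is not packaged in the form needed by Propositions \ref{GL} and \ref{GL2}, so I would either cite it explicitly from \cite{Sch2} or reprove it by combining the two one-sided extensions just constructed.
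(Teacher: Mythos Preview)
Your argument is correct and tracks the paper's proof closely. A few comparative remarks.

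For (a)$\Leftrightarrow$(b) the paper extends the exceptional sequence on the \emph{left} and realizes $\C$ as a right-perpendicular category, whereas you extend on the right and realize $\C$ as a left-perpendicular category; these are mirror images and neither buys anything over the other. Your worry about the identity $({}^\perp\C)^\perp=\C$ in (b)$\Rightarrow$(c) is in fact already resolved by the same mechanism you use in (a)$\Rightarrow$(b): once $(Y_1,\ldots,Y_{n-r},X_1,\ldots,X_r)$ is a full exceptional sequence, the $X_i$ form a full-length exceptional sequence inside $\C(Y_1,\ldots,Y_{n-r})^\perp$ (which has $r$ simples by Proposition~\ref{GL2}) and therefore generate it. The paper argues identically, so no separate double-orthogonal lemma is needed here (though the paper does state and prove it later as Lemma~\ref{leftrightperp}).

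The one place where the approaches genuinely differ is (c)$\Rightarrow$(a)/(b). You argue directly that the Hom-quiver on the indecomposable summands $V_i$ of a rigid $V$ is acyclic; the paper instead invokes Bongartz completion of $V$ to a tilting module $V\oplus V'$ and uses that $\End(V\oplus V')$, being a tilted algebra, has an acyclic ordinary quiver. Your route is lighter, but your stated justification (``an oriented cycle would provide non-scalar endomorphisms'') is not quite right: composing around a cycle lands in $\End(V_i)\cong k$, which is no contradiction by itself. The standard fix is Happel--Ringel (a nonzero map between indecomposables with vanishing $\Ext^1$ in one direction is mono or epi) together with a strict dimension inequality along the cycle. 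With that correction your proof stands.
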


\begin{proof}
Suppose that $\C$ is generated by an exceptional sequence $E= (X_{r+1},\ldots,X_n)$.  Then, as was already remarked, $E$ can be completed to a full exceptional sequence $$E' = (X_1,\ldots,X_r,X_{r+1},\ldots,X_n).$$   Then $\C = \C(E'')^\perp$ where $E'' = (X_1,\ldots,X_r)$ is an exceptional sequence.  By Proposition \ref{GL2}, we get that $\cap_{i=1}^rX_i^\perp = \C(E'')^\perp$ is equivalent to $\rep(Q')$ for an acyclic quiver $Q'$. This proves that $(a)$ implies $(b)$.  The fact that $(b)$ implies $(a)$ follows from the fact that the module category of any triangular algebra is generated by an exceptional sequence, and an exceptional sequence for $\rep(Q')$ is also an exceptional sequence for $\rep(Q)$, since $\rep(Q')$ is an exact extension-closed subcategory of $\rep(Q)$.

Suppose now that $\C$ is generated by an exceptional sequence $E= (X_{r+1},\ldots,X_n)$.  As done in the first part of the proof, $E$ can be completed to a full exceptional sequence $E' = (X_1,\ldots,X_r,X_{r+1},\ldots,X_n)$. Then $E'' = (X_1,\ldots,X_r)$ is an exceptional sequence and $\C(E'')$ is equivalent to $\rep(Q')$ for an acyclic quiver $Q'$.  Take $V$ a minimal projective generator of $\C(E'')$.  Then $V$ is rigid and $\C(V) = \C(E'')$ with $V^\perp = \C(E'')^\perp = \C$. This proves that $(a)$ implies $(c)$. Finally, assume that $\C = V^\perp$ for a rigid representation $V$. Then $V$ is a partial tilting module and by Bongartz's lemma, it can be completed to a tilting module $V \oplus V'$.  Then ${\rm End}(V \oplus V')$ is a tilted algebra and hence has no oriented cycles in its quiver. In particular, the non-isomorphic indecomposable summands of $V$ form an exceptional sequence $E''=(Y_1,\ldots,Y_s)$ that can be completed to a full exceptional sequence $E' = (Y_1,\ldots,Y_s,Y_{s+1},\ldots,Y_n)$.  Then $V^\perp = \C(E)$, where $E = (Y_{s+1},\ldots,Y_n)$. This proves that $(c)$ implies $(a)$.
\end{proof}

We can refine the previous proposition in the case that $Q$ is
Euclidean.  Recall our definition that a \emph{possibly disconnected Euclidean}
quiver is a quiver which has one Euclidean component and a finite number of
Dynkin components. Similarly a \emph{possibly disconnected Dynkin} quiver is a quiver whose connected components are all of Dynkin type.

\begin{Lemma}\label{kindofperp} Let $Q$ be a Euclidean quiver and $V$ be a rigid
representation.  Then $V^\perp$ is equivalent to the representations of
a (possibly disconnected) Euclidean quiver  if $V$ is regular; otherwise, $V^\perp$ is equivalent to
the representations of a (possibly disconnected) Dynkin quiver. \end{Lemma}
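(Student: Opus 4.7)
The plan is to apply Proposition \ref{equiv_thick} to $V^\perp$: since $V$ is rigid, that proposition identifies $V^\perp$ with $\rep(Q')$ for some acyclic quiver $Q'$ having $n-s$ vertices, where $s$ is the number of pairwise non-isomorphic indecomposable summands of $V$. The question then becomes: when does $Q'$ have a Euclidean component, and when is it (possibly disconnected) Dynkin?

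The main tool is the Euler form. Because $V^\perp$ is an exact extension-closed subcategory of $\rep(Q)$, the inclusion preserves $\Hom$ and $\Ext^1$, so the Euler form of $Q'$ coincides with the restriction of the Euler form of $Q$ to the sublattice $K_0(V^\perp) \subseteq \Z^n$. I would next establish the identification
\[
K_0(V^\perp) \otimes \mathbb{Q} = \{w \in \mathbb{Q}^n : \langle d_{V_i}, w \rangle = 0 \text{ for each distinct indecomposable summand } V_i \text{ of } V\}.
\]
The inclusion $\subseteq$ is immediate from the definition of $V^\perp$. Equality then follows by a dimension count, since the distinct summands $V_i$ form a partial tilting module (pairwise Ext-orthogonal exceptional objects), so their dimension vectors are linearly independent, making both sides $(n-s)$-dimensional.

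The Euclidean hypothesis means the Euler form of $Q$ is positive semidefinite with $1$-dimensional radical spanned by the null root $\delta$. Restriction inherits positive semidefiniteness with radical of dimension at most $1$. This rules out any wild component in $Q'$ (which would force negative values somewhere) and also forbids more than one Euclidean component (which would force a radical of rank at least $2$). Hence $Q'$ is either (possibly disconnected) Dynkin, or has exactly one Euclidean component together with some Dynkin components.

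The dichotomy is then governed by whether $\delta \in K_0(V^\perp) \otimes \mathbb{Q}$. Since $\delta$ lies in the radical of the symmetrized form, we have $\langle d_{V_i}, \delta \rangle = -\langle \delta, d_{V_i} \rangle$, and by the characterization recalled at the end of Section \ref{sectiontwo}, $\langle \delta, d_{V_i} \rangle = 0$ exactly when $V_i$ is regular. Hence $\delta$ is orthogonal to every $d_{V_i}$ if and only if every $V_i$ is regular, i.e., if and only if $V$ itself is regular. When this holds, $\delta$ sits in the restricted radical, so $Q'$ carries a (unique) Euclidean component; otherwise the restricted form is positive definite and $Q'$ is (possibly disconnected) Dynkin. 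The chief technical point is the orthogonal-complement description of $K_0(V^\perp) \otimes \mathbb{Q}$, which rests on the linear independence of the $d_{V_i}$.
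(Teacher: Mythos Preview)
Your argument is correct and takes a genuinely different route from the paper's proof. The paper argues directly at the level of representations: when $V$ is regular, every homogeneous tube lies in $V^\perp$, so $V^\perp$ contains indecomposables with dimension vector $\delta$ and therefore cannot be Dynkin (and cannot be wild since it sits inside $\rep(Q)$); when $V$ has a non-regular summand $Y$, Lemma~\ref{Lemma1} forces $Y^\perp$, and hence $V^\perp$, to contain only exceptional indecomposables, so it is of finite representation type. Your proof instead works entirely with the quadratic form: you identify $K_0(V^\perp)\otimes\mathbb{Q}$ with the right-orthogonal of the $d_{V_i}$ under $\langle -,-\rangle$ via a dimension count, observe that the restricted Tits form stays positive semidefinite with radical $\mathbb{Q}\delta \cap K_0(V^\perp)\otimes\mathbb{Q}$, and then use $\langle d_{V_i},\delta\rangle = -\langle \delta,d_{V_i}\rangle$ to see that $\delta$ survives in this subspace exactly when every $V_i$ is regular. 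This is a cleaner, more uniform argument that bypasses Lemma~\ref{Lemma1} and the explicit appeal to homogeneous tubes; the paper's approach, on the other hand, is more concrete and reuses machinery already in place, and in the regular case it exhibits actual objects of $V^\perp$ with dimension vector $\delta$ rather than just detecting $\delta$ in the Grothendieck group.
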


\begin{proof}  If $V$ is regular, then $V^\perp$ will include all the
objects from the homogeneous tubes, and in particular, it will include
representations whose dimension vector is the unique imaginary Schur root.
Since we know $V^\perp$ is equivalent to
the representations of some quiver, it must be equivalent to the representations
of a (possibly disconnected) Euclidean quiver.  (It clearly cannot be equivalent to the
representations of a wild quiver, since it is contained in
$\rep(Q)$.)

If $V$ is not regular, then let $Y$ be a non-regular indecomposable
summand of $V$.  Applying Lemma \ref{Lemma1}, we see that $Y^\perp$
contains no non-exceptional indecomposables.  Thus $V^\perp \subseteq
Y^\perp$ must be of finite representation type.
\end{proof}

\section{Description of semi-stable subcategories ---
proof of Theorem \ref{one}}

In this section, we prove Theorem \ref{one}. We begin with $Q$ any
acyclic quiver, and then specialize to the Euclidean case.
Recall the following from \cite{King}.

\begin{Prop}[King] \label{King}
Let $d \in \Z^n$.  Then $M \in \rep(Q)$ is $\langle d, - \rangle$-semi-stable if $\langle d, d_M \rangle = 0$ and whenever $M'$ is a subrepresentation of $M$, then $\langle d, d_{M'} \rangle \le 0$.
\end{Prop}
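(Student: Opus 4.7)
The plan is to apply the semi-invariant machinery just built in this section. By Proposition \ref{fund} together with Corollary \ref{cor1}, the representation $M$ is $\langle d,-\rangle$-semi-stable if and only if there exist a positive integer $m$ and a representation $V$ supported on $Q_{d_M}$ satisfying $\langle d_V,-\rangle \sim_M m\langle d,-\rangle$ and $C^V(M)\neq 0$; and by the exact sequence (\ref{eqn1}), the condition $C^V(M)\neq 0$ is equivalent to $\Hom(V,M)=0=\Ext^1(V,M)$, which in particular forces $\langle d_V,d_M\rangle = 0$.

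The necessity direction is then essentially immediate. From $\Hom(V,M)=0$ and the inclusion $M'\hookrightarrow M$ we get $\Hom(V,M')=0$, so
$$\langle d_V,d_{M'}\rangle \;=\; -\dim_k \Ext^1(V,M') \;\le\; 0.$$
Since both $d_M$ and $d_{M'}$ are supported on $Q_{d_M}$, the weight equivalence $\langle d_V,-\rangle \sim_M m\langle d,-\rangle$ turns these evaluations into $\langle d,d_M\rangle = 0$ and $\langle d,d_{M'}\rangle \le 0$.

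For the sufficiency direction, starting from the numerical hypotheses I would construct $V$ as follows. Choose a dimension vector $e$ supported on $Q_{d_M}$ with $\langle e,-\rangle \sim_M m\langle d,-\rangle$ for some $m>0$, which is possible because $\langle d,d_M\rangle=0$ ensures the restriction of $m\langle d,-\rangle$ to $Q_{d_M}$ is consistent with such a class. Then take $V$ to be a general representation of dimension vector $e$ and argue that $\Hom(V,M)=\Ext^1(V,M)=0$: a nonzero map $V\to M$ would have an image $N\subseteq M$ forcing $\langle d_V,d_N\rangle>0$, and after translating by the support and weight equivalence this contradicts $\langle d,d_N\rangle\le 0$; the $\Ext^1$ vanishing follows by a dual argument applied to quotients of $M$, combined with $\langle d,d_M\rangle=0$.

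The main obstacle is sufficiency, specifically exhibiting a single $V$ whose Hom and Ext against $M$ simultaneously vanish. I expect the cleanest route is to follow King's original GIT proof via Mumford's numerical criterion: one-parameter subgroups of $\mathrm{GL}_{d_M}$ correspond exactly to $\mathbb Z$-graded filtrations of $M$, and the hypothesis that $\langle d,d_{M'}\rangle\le 0$ on every subrepresentation is precisely the Hilbert--Mumford inequality needed to conclude that the $\mathrm{SL}_{d_M}$-orbit of $M$ is not separated from $0$ by the character $m\langle d,-\rangle$, hence $M$ is semi-stable. This avoids reproving Schofield's general-representation theory inside the argument.
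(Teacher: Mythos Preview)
The paper does not supply its own proof of this proposition; it is stated as King's result and cited to \cite{King}. So there is no in-paper argument to compare against, and what you are really doing is attempting an independent proof.

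Your necessity direction is clean and correct: from $C^V(M)\neq 0$ you get $\Hom(V,M)=0$, hence $\Hom(V,M')=0$ for every subrepresentation $M'$, and the weight equivalence $\langle d_V,-\rangle \sim_M m\langle d,-\rangle$ transfers the inequalities as you say.

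Your sufficiency sketch, however, has a genuine gap---and you have already put your finger on it. Two issues: first, it is not clear that a \emph{dimension vector} $e$ (i.e.\ with non-negative entries) supported on $Q_{d_M}$ exists with $\langle e,-\rangle \sim_M m\langle d,-\rangle$; the condition $\langle d,d_M\rangle=0$ alone does not produce one. Second, and more seriously, your claimed contradiction ``a nonzero map $V\to M$ would have image $N$ forcing $\langle d_V,d_N\rangle>0$'' requires $\Ext^1(V,N)=0$. In the paper's own use of this trick (Proposition \ref{dnonreg}) this works because $V$ is \emph{rigid}, so the epimorphism $V\twoheadrightarrow N$ induces a surjection $\Ext^1(V,V)\twoheadrightarrow\Ext^1(V,N)$ from zero. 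A merely general representation of an arbitrary dimension vector $e$ need not be rigid, so the step fails.

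Your instinct to retreat to King's original Hilbert--Mumford argument is the right one: that is exactly the content of \cite{King}, and it is the proof the paper is implicitly invoking. The semi-invariant machinery of Section~3 lets you read off necessity, but it is not designed to reprove sufficiency without the GIT input.
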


The following proposition is well known, but we include a proof
of our own.

\begin{Prop} \label{dnonreg}
Suppose that $d$ is a prehomogeneous dimension vector
and set $V = M(d)$.
Then
$\rep(Q)_{d} = \{M \in \rep(Q) \mmid C^V(M) \ne 0\} = V^\perp.$
\end{Prop}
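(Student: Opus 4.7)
The plan is to establish the two equalities separately, leveraging the rigidity of $V = M(d)$. Since $d$ is prehomogeneous, its canonical decomposition $d = d(1) \oplus \cdots \oplus d(r)$ consists only of real Schur roots, so by Proposition \ref{Kac1} the summands $M(d(i))$ are pairwise Ext-orthogonal exceptional representations, and hence $V = \bigoplus_i M(d(i))$ satisfies $\Ext^1(V,V) = 0$.

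The equality $\{M : C^V(M) \ne 0\} = V^\perp$ will follow directly from the defining exact sequence \eqref{eqn1}: the map $f^V_M$ is a square matrix exactly when $\langle d, d_M \rangle = 0$, and in that case $C^V(M) = \det f^V_M \ne 0$ if and only if $f^V_M$ is an isomorphism, equivalently $\Hom(V,M) = 0 = \Ext^1(V,M)$. The inclusion $V^\perp \subseteq \rep(Q)_d$ is then immediate: for $M \in V^\perp$, the function $C^V$ is a semi-invariant of weight $\langle d_V, -\rangle = \langle d, -\rangle$ on $\rep(Q, d_M)$ which does not vanish at $M$, witnessing its semi-stability.

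For the main inclusion $\rep(Q)_d \subseteq V^\perp$, my plan is to invoke King's criterion --- Proposition \ref{King} together with its converse from \cite{King} --- which gives, for $M \in \rep(Q)_d$, both $\langle d, d_M \rangle = 0$ and $\langle d, d_{M'} \rangle \le 0$ for every subrepresentation $M' \subseteq M$. Suppose, towards a contradiction, that $\Hom(V,M) \ne 0$, and let $\phi : V \to M$ be nonzero with image $M' \subseteq M$. Then $M'$ is a quotient of $V$, and applying $\Hom(V,-)$ to $0 \to \ker\phi \to V \to M' \to 0$, using $\Ext^1(V,V) = 0$ (rigidity) and the vanishing of $\Ext^2$ (hereditariness of $Q$), yields $\Ext^1(V, M') = 0$. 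Therefore
\[
\langle d, d_{M'} \rangle = \dim \Hom(V, M') - \dim \Ext^1(V, M') \ge 1,
\]
contradicting King's inequality. So $\Hom(V, M) = 0$, after which $\langle d, d_M \rangle = \dim \Hom(V,M) - \dim \Ext^1(V,M) = 0$ forces $\Ext^1(V, M) = 0$ as well, and $M \in V^\perp$.

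The only nontrivial ingredient is the necessity direction of King's theorem, which is not spelled out in Proposition \ref{King} but is the content of the original result in \cite{King}; once this is in hand, the proof is purely formal and does not require Schofield's Proposition \ref{Sch} or Corollary \ref{cor1}.
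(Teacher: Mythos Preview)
Your proof is correct and follows essentially the same approach as the paper: both arguments use King's criterion to get $\langle d, d_M\rangle = 0$ and the sub-inequality, take the image $M'$ of a nonzero map $V \to M$, use rigidity of $V$ to force $\Ext^1(V,M')=0$, and derive the contradiction $\langle d, d_{M'}\rangle > 0$. You are right to flag that the argument uses the necessity direction of King's characterization, which Proposition~\ref{King} states only as a sufficient condition; the paper's proof tacitly uses it as well.
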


\begin{proof}
First, the fact that  $\{M \in \rep(Q) \mmid C^V(M) \ne 0\}= V^\perp$ follows from the exact sequence \eqref{eqn1} and the definition of $C^V(M)$. Let $M \in \rep(Q)_d$. By Proposition \ref{King}, $\langle d, d_M \rangle =0$. In particular dim$_k\Hom(V,M) = {\rm dim}_k\Ext^1(V,M)$. If both spaces are non-zero, then there exists a subrepresentation $M'$ of $M$ with an epimorphism $V \to M'$. This yields an epimorphism $\Ext^1(V,V) \to \Ext^1(V,M')$. Since $V$ is rigid, this gives $\Ext^1(V,M')=0$. But then, $\langle d, d_{M'} \rangle = {\rm dim}_k\Hom(V,M') > 0$, in contradiction to Proposition \ref{King}. Hence, $\Hom(V,M) = 0 = \Ext^1(V,M)$ which means that $M \in V^\perp.$ Conversely, it is clear that if $M \in \rep(Q)$ with $C^V(M) \ne 0$, then
$M \in \rep(Q)_d$.
\end{proof}

Let us denote by $\mathcal{R}{\rm eg}$ the full additive subcategory of $\rep(Q)$ generated by the indecomposable regular representations.
When $Q$ is a Euclidean quiver, the canonical decomposition of a dimension vector only involves real Schur roots and possibly the null root $\delta$.
We first need the following simple lemma about canonical decompositions.

\begin{Lemma}\label{onlyreg} Let $Q$ be a Euclidean quiver. Let $d$ be a dimension vector which has the null root in
its canonical decomposition. Then all the other summands of its
canonical decomposition are regular.  \end{Lemma}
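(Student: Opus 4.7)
The plan is to argue by contradiction using Kac's characterization of the canonical decomposition together with the homological interpretation of the Euler form. Write the canonical decomposition as $d = d(1)\oplus\cdots\oplus d(r)$ with $d(i)=\delta$ for some $i$, and fix an arbitrary other summand $d(j)$. I aim to show that $d(j)$ is neither preprojective nor preinjective, which forces it to be regular.

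First I would record the setup. By Proposition \ref{Kac1}, each $d(j)$ is a Schur root and ${\rm ext}(d(i),d(j))=0={\rm ext}(d(j),d(i))$ for $i\ne j$. The paragraph just before the lemma has already noted that in the Euclidean case every isotropic Schur root equals $\delta$ (other positive multiples are roots but not Schur roots), so $d(j)$ is a real Schur root. Consequently $d(j)$ must be preprojective, preinjective, or regular, and as recalled at the end of Section~2 this trichotomy is detected by the sign of $\langle \delta,d(j)\rangle$ being negative, positive, or zero.

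Next I would dispose of the preprojective case. If $\langle\delta,d(j)\rangle<0$, then Proposition 2.1 gives
$$\dim_k\Ext^1(M,N) \;\geq\; -\langle\delta,d(j)\rangle \;>\; 0$$
for every $M$ of dimension $\delta$ and every $N$ of dimension $d(j)$. Therefore ${\rm ext}(\delta,d(j))>0$, contradicting the vanishing coming from Proposition \ref{Kac1}. For the preinjective case I would exploit the identity $\langle d(j),\delta\rangle=-\langle\delta,d(j)\rangle$, valid because $\delta$ generates the radical of the positive semidefinite form $q$, hence lies in the radical of the symmetrized Euler form. Under this identity, if $\langle\delta,d(j)\rangle>0$ then $\langle d(j),\delta\rangle<0$, and the same homological estimate yields ${\rm ext}(d(j),\delta)>0$, again a contradiction.

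The only nontrivial input is the radical identity $\langle d(j),\delta\rangle=-\langle\delta,d(j)\rangle$, which I expect to be the main (and essentially the only) technical obstacle; once it is in hand, the rest is a direct translation of ${\rm ext}$-vanishing through the Euler form. I would also remark that the same argument gives no information in the wild case, which is consistent with the hypothesis that $Q$ is Euclidean.
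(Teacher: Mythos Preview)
Your argument is correct and takes a genuinely different route from the paper's. The paper produces, via Proposition~\ref{Kac1}, an indecomposable $V$ of dimension $\delta$ with $\Ext^1(V,M(\alpha))=0=\Ext^1(M(\alpha),V)$, and then invokes Lemma~\ref{Lemma1} (an Auslander--Reiten-theoretic statement about preprojectives/preinjectives versus non-exceptional indecomposables) to force $\Ext^1(M(\alpha),V)\neq 0$ when $M(\alpha)$ is preinjective. You bypass Lemma~\ref{Lemma1} entirely, reading the contradiction directly off the Euler form: since $\dim_k\Ext^1(M,N)\geq -\langle d_M,d_N\rangle$ for all $M,N$, the sign of $\langle\delta,d(j)\rangle$ alone forces ${\rm ext}>0$. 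This is more elementary, needing only the homological interpretation of $\langle\,,\,\rangle$ and the identity $\langle x,\delta\rangle=-\langle\delta,x\rangle$. That identity, which you flag as the main obstacle, is in fact immediate from positive semi-definiteness: if $q(\delta)=0$ then $q(y+t\delta)=q(y)+t\bigl(\langle\delta,y\rangle+\langle y,\delta\rangle\bigr)\geq 0$ for all $t\in\mathbb{Q}$, so the symmetrized pairing of $\delta$ with any $y$ must vanish. The paper's route has the virtue that Lemma~\ref{Lemma1} is reused repeatedly elsewhere, so it is the natural tool in context; your route is self-contained and makes clear that this particular lemma is really a numerical fact about the bilinear form. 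One tiny omission: you should note that if the ``other'' summand $d(j)$ happens to equal $\delta$ (when $\delta$ occurs with multiplicity) it is already regular, before passing to the real Schur root case.
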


\begin{proof} Suppose that some real Schur root $\alpha$ appears in
the canonical decomposition of $d$.
Seeking a contradiction,
suppose that $M(\alpha)$ is preinjective.
We know by Proposition \ref{Kac1}
that there exists an indecomposable with dimension vector $\delta$,
say $V$, such
that $\Ext^1(V,M(\alpha))=0=\Ext^1(M(\alpha),V)$.
On the other hand, Lemma \ref{Lemma1} applied to $M(\alpha)$ and $V$
implies that $\Ext^1(M(\alpha),V)\ne 0$. A similar contradiction
follows dually if $M(\alpha)$ is preprojective.  Therefore $M(\alpha)$ is
regular.  \end{proof}

In the sequel, for a dimension vector $d$, we will denote by $\hat d$ the sum of the real Schur roots appearing in its canonical decomposition. The following
result complements Proposition \ref{dnonreg} in the Euclidean case.

\begin{Prop}\label{dreg}
Let $Q$ be a Euclidean quiver. Suppose that $d$ is a dimension vector with canonical decomposition
$d = \hat d \oplus \delta^s$
with $s \ne 0$. Set $V = M(\hat d)$.
Then
$$\rep(Q)_{d} = \{M \in \rep(Q) \mmid C^V(M) \ne 0\} \cap \mathcal{R}{\rm eg} = V^\perp \cap \mathcal{R}{\rm eg}.$$
\end{Prop}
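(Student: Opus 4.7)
The first equality $\{M : C^V(M) \neq 0\} = V^\perp$ is immediate from the exact sequence \eqref{eqn1}: the map $f^V_M$ is a square matrix exactly when $\langle d_V, d_M \rangle = 0$, and it is invertible exactly when $\Hom(V,M) = 0 = \Ext^1(V,M)$, i.e.\ $M \in V^\perp$. Note that by Lemma \ref{onlyreg} all real Schur summands in the canonical decomposition of $d$ are regular, so $V = M(\hat d)$ is a rigid regular representation.

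For the inclusion $\rep(Q)_d \subseteq V^\perp \cap \reg$, I take $M \in \rep(Q)_d$ and split $M = M_{\mathcal P} \oplus M_{\mathcal R} \oplus M_{\mathcal I}$ into its preprojective, regular, and preinjective parts. Each summand is itself $\langle d,-\rangle$-semi-stable: each is a subrepresentation, hence has weight $\le 0$ by Proposition \ref{King}, and the three weights sum to $\langle d, d_M \rangle = 0$, forcing each to vanish. To eliminate $M_{\mathcal P}$, Proposition \ref{ARShape}(1) gives $\Hom(V, M_{\mathcal P}) = 0$, hence $\langle \hat d, d_{M_{\mathcal P}} \rangle = -\dim_k \Ext^1(V, M_{\mathcal P}) \le 0$; combined with $s\langle \delta, d_{M_{\mathcal P}} \rangle < 0$ (since $s > 0$ and $M_{\mathcal P}$ is preprojective), we get $\langle d, d_{M_{\mathcal P}} \rangle < 0$, a contradiction. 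Dually, $\Ext^1(V, M_{\mathcal I}) \cong D\Hom(M_{\mathcal I}, \tau V)$ vanishes because $\tau V$ is regular and $\Hom(\mathcal I, T_\lambda)=0$, so $\langle \hat d, d_{M_{\mathcal I}} \rangle \ge 0$ while $s \langle \delta, d_{M_{\mathcal I}} \rangle > 0$, forcing $M_{\mathcal I} = 0$. Thus $M$ is regular, and then $\langle \delta, d_M \rangle = 0$ forces $\langle \hat d, d_M \rangle = 0$. To show $M \in V^\perp$, I mimic Proposition \ref{dnonreg}: if $\Hom(V, M) \neq 0$ (equivalently $\Ext^1(V, M) \neq 0$), let $M'$ be the image of a nonzero map $V \to M$; since $M'$ is simultaneously a quotient of the regular $V$ and a subrepresentation of the regular $M$, the orthogonalities $\Hom(T_\lambda, \mathcal P) = 0$ and $\Hom(\mathcal I, T_\lambda) = 0$ rule out preprojective and preinjective summands of $M'$, so $M'$ is regular and $\langle \delta, d_{M'}\rangle=0$. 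Rigidity of $V$ combined with the surjection $\Ext^1(V,V) \twoheadrightarrow \Ext^1(V, M')$ gives $\Ext^1(V, M') = 0$, whence $\langle d, d_{M'} \rangle = \langle \hat d, d_{M'} \rangle = \dim_k \Hom(V, M') > 0$, contradicting the semi-stability of $M$.

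For the reverse inclusion $V^\perp \cap \reg \subseteq \rep(Q)_d$, let $M \in V^\perp \cap \reg$. Only finitely many tubes contain an indecomposable summand of $V$ or $M$, so I can select $s$ pairwise distinct homogeneous tubes $T_1, \ldots, T_s$ disjoint from this finite list, and set $W = W_1 \oplus \cdots \oplus W_s$ where $W_i$ is the quasi-simple of $T_i$ of dimension $\delta$. By the orthogonality of distinct tubes (Proposition \ref{ARShape}(2)), $W$ is both Hom- and $\Ext^1$-orthogonal to $V$ and to $M$. Hence $C^W(M) \neq 0$, $C^V(M) \neq 0$, and since the projective resolution of $V \oplus W$ is the direct sum of those of $V$ and $W$, the map $f^{V\oplus W}_M$ is block-diagonal and $C^{V \oplus W}(M) = C^V(M) \cdot C^W(M) \neq 0$. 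This is a semi-invariant of weight $\langle d_V + d_W, - \rangle = \langle \hat d + s\delta, - \rangle = \langle d, - \rangle$, so $M \in \rep(Q)_d$. The most delicate step I expect is verifying that the image $M'$ in the preceding paragraph is regular, since this requires simultaneously exploiting its quotient structure from $V$ and its subrepresentation structure inside $M$ to exclude both preprojective and preinjective summands via the two orthogonality statements in Proposition \ref{ARShape}(1).
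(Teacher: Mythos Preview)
Your proof is correct and follows essentially the same approach as the paper's: both establish that a semi-stable $M$ must be regular via King's criterion combined with the sign of $\langle \delta,-\rangle$ on preprojectives/preinjectives, then use the rigidity-of-$V$-plus-image argument to force $M\in V^\perp$, and obtain the reverse inclusion by adjoining homogeneous quasi-simples of total dimension $s\delta$ to $V$. The only cosmetic differences are that the paper works with indecomposable $M$ (rather than splitting $M=M_{\mathcal P}\oplus M_{\mathcal R}\oplus M_{\mathcal I}$) and takes a single $W$ with multiplicity $s$ rather than $s$ distinct $W_i$; incidentally, your worry about regularity of $M'$ is slightly overstated, since once you know $M'$ has no preprojective summand (immediate from $V\twoheadrightarrow M'$ with $V$ regular) you already get $\langle\delta,d_{M'}\rangle\ge 0$, which together with $\langle\hat d,d_{M'}\rangle=\dim_k\Hom(V,M')>0$ suffices for the contradiction.
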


\begin{proof}
By Proposition \ref{dnonreg}, we only need to prove the first equality. Let $M(\delta)$ be a general representation of dimension vector $\delta$. Then $V \oplus M(\delta)^s$ is a general representation of dimension vector $d$. By Lemma \ref{onlyreg}, we see that $V$ is regular.
Let $M \in \rep(Q)_{d}$ be indecomposable.  Then $\langle d, d_M \rangle = 0$. If $M$ is preinjective, then $\Ext^1(M(\delta) \oplus V, M) = 0$, and hence $0 = \langle d, d_M \rangle = {\rm dim}_k \Hom(V \oplus M(\delta)^s, M)$. But $\Hom(M(\delta),M) \ne 0$ by Lemma \ref{Lemma1}, so we get a contradiction. Similarly, $M$ cannot be preprojective. Therefore, $M$ is regular. We may assume that $\Hom(M(\delta), M) = 0 = \Ext^1(M(\delta),M)$. This gives $\langle d, d_M \rangle = {\rm dim}_k \Hom(V,M) - {\rm dim}_k\Ext^1(V,M)=0$.  Assume $\Hom(V,M) \ne 0$. There exists an epimorphism $V \to L$ for some subrepresentation $L$ of $M$. Since we have an epimorphism $\Ext^1(V, V) \to \Ext^1(V, L)$ and $V$ is rigid, we see that $\Ext^1(V,L)=0$. Since $\langle d, d_{L} \rangle \le 0$, this gives $\Hom(V,L)=0$, a contradiction. Therefore, $\Hom(V,M)=0$ and hence, $M \in V^\perp.$ This shows $\rep(Q)_{d} \subseteq V^\perp \cap \mathcal{R}{\rm eg}$.
Conversely, assume that $M \in V^\perp \cap \mathcal{R}{\rm eg}$.
First, $M \in V^\perp$ is clearly equivalent to
$C^V(M) \ne 0$. Since $M$ is regular, $M$ lies in a stable tube.
Hence, there exists a quasi-simple representation $W$ lying
in a homogeneous tube (and hence having dimension vector $\delta$) with $\Hom(W,M) = \Ext^1(W,M) = 0$.  We therefore have $C^{V
\oplus W^{s}}(M) \ne 0$, which shows that $M \in \rep(Q)_d$.
\end{proof}

We can now prove our first theorem from the introduction. Recall that for a possibly disconnected Euclidean quiver $Q$, the indecomposable \emph{regular representations} are defined to be the regular representations of the Euclidean component together with the indecomposable representations of the Dynkin components.

\begin{Theo}\label{one} For $Q$ a Euclidean quiver, an abelian and extension-closed
subcategory $\mathcal B$ of $\rep (Q)$ is the subcategory of
$\theta$-semi-stable representations for some $\theta$ if and only if either:
\begin{enumerate}[$(i)$]
\item $\mathcal B$ is finitely generated, or
\item  there exists some abelian, extension-closed,
finitely generated subcategory $\mathcal A$ of
$\rep(Q)$, equivalent to the representations of a Euclidean quiver (possibly
disconnected),
and $\mathcal B$ consists of all the regular objects of $\mathcal A$.
\end{enumerate}
\end{Theo}

\begin{proof}
First, write $\theta = \langle d, - \rangle$. If $d$ is not a dimension vector, then it follows from Corollary \ref{cor1} that $\rep(Q)_d$ is supported over a proper subquiver of $Q$, and hence it is an abelian extension-closed subcategory of a category of representations of a (possibly disconnected) Dynkin quiver and hence it satisfies (i). So assume that $d$ is a dimension vector.
Suppose the canonical decomposition
of $d$ does not include the null root.  Then by Proposition \ref{dnonreg},
we know that $\rep(Q)_d$ is of the form $V^\perp$ for some rigid
representation $V$.  By Proposition \ref{equiv_thick}, this implies
that it is finitely generated.

Suppose the canonical decomposition of $d$ does include the null root.
By Proposition \ref{dreg}, we know that
the
semi-stable category corresponding to $d$ is of the form
$V^\perp \cap \reg$.  By Lemma \ref{onlyreg}, we can take $V$ to be
regular and rigid.   By Lemma \ref{kindofperp}, $V^\perp$ is equivalent to
the representations of a (possibly disconnected) Euclidean quiver.
The subcategory is therefore of type (ii).

Conversely, suppose that we have a finitely generated abelian and extension-closed subcategory
$\mathcal B$ of $\rep(Q)$.  It is generated by an exceptional sequence, say
$(X_{r+1},\dots,X_n)$ which can be extended to a full exceptional
sequence $(X_1,\dots,X_n)$.  Let $\mathcal F$ be the extension-closed abelian subcategory
generated by $(X_1,\dots,X_r)$, and let $P$ be a projective generator
of $\mathcal F$.
Then $\mathcal B = P^\perp$.  Since $P$ is partial tilting, it is the
general representation of dimension $d_P$.  Therefore the semi-stable subcategory
associated to $d_P$ is $\mathcal B$ by Proposition \ref{dnonreg}.

Finally, suppose that we have an extension-closed abelian subcategory $\mathcal B$ as in
(ii), which consists of the regular objects in
some finitely generated abelian and extension-closed
subcategory $\mathcal A$, with $\mathcal A$ equivalent to the representations
of some (possibly disconnected) Euclidean quiver.  As in the previous case, we know that
$\mathcal A$ can be written as $P^\perp$.   Since $\mathcal A$ is of
(possibly disconnected) Euclidean type, $P$ must be regular by Lemma \ref{kindofperp}.  Hence, an indecomposable summand of $P$ is either in a stable tube or is supported by a Dynkin component of the quiver. By
Proposition \ref{Kac1}, we know that a general representation of dimension
vector $\delta+d_P$ will be isomorphic to a direct sum of $P$ and
an indecomposable representation of dimension vector $\delta$.
Therefore the semi-stable subcategory associated to $\delta+d_P$ will
be $\mathcal B$ by Proposition \ref{dreg}.
\end{proof}

Note that from the previous theorem, not all abelian and extension-closed subcategories of $\rep(Q)$ are semi-stable, and that a semi-stable subcategory may be disconnected. In \cite{Dichev}, there is a description of all the \emph{connected} abelian and extension-closed subcategories of $\rep(Q)$.

The previous theorem can be interpreted for the category coh$(\mathbb{X})$ of coherent sheaves over a weighted projective line $\mathbb{X}$ of positive Euler characteristic (that is, of domestic tame type). The reader is referred to \cite{Len} for an introduction to coherent sheaves over a weighted projective line.
It is well known that when $\mathbb{X}$ is a weighted projective line of tame domestic type, then coh$(\mathbb{X})$ is derived equivalent to $\rep(Q)$ for $Q$ a Euclidean quiver; see \cite[page 126]{Len}. The abelian category  coh$(\mathbb{X})$
is also equivalent to the category of sheaves on a Fano orbifold with coarse
moduli space
$\mathbb{P}^1,$ and to the category of sheaves of modules on a Fano hereditary order
as can be seen, for example, in \cite{ChanSplitting}, \cite{ChanIngalls}.
Here Fano is equivalent to the fact that
$\sum(1- 1/r_i)<2$ where the $r_i$ represent the widths of
the non-homogeneous tubes, the orders of the non-trivial stabilizers of the
orbifold, or the ramification indices of the hereditary order.
 Hence, we can immediately derive the following as a corollary of
the previous theorem.

\begin{Theo} \label{coh}
Let $\mathbb{X}$ be a weighted projective line of tame domestic type.  Then an abelian and extension-closed
subcategory $\mathcal B$ of coh$(\mathbb{X})$ is the subcategory of
$\theta$-semi-stable objects for some $\theta$ if and only if either:
\begin{enumerate}[$(i)$]
\item $\mathcal{B}$ is equivalent to the category of coherent sheaves over a weighted projective line,
\item $\mathcal{B}$ is equivalent to the torsion part of a category of coherent sheaves over a weighted projective line,
\item $\mathcal B$ is the additive hull of two abelian, extension-closed subcategories $\mathcal B_1,\mathcal B_2$ that are orthogonal to each other and where $\mathcal{B}_1$ is as in (i) or (ii) and $\mathcal{B}_2$ is of finite representation type.
\end{enumerate}
\end{Theo}

\section{Canonical decomposition on the regular hyperplane} \label{section6}

In this section, $Q$ is assumed to be a (connected) Euclidean quiver. Let us introduce some notation. Note that a representation is quasi-simple if it is a simple object in the full subcategory of $\rep(Q)$
consisting of the regular representations.  For convenience, a regular real Schur root which corresponds to a quasi-simple representation will be called a \emph{quasi-simple root}. Hence, the null root is not considered to be a quasi-simple root here. We define $H_\delta^{ss} \subseteq H_\delta^{+}$ to be the convex cone in $\mathbb{Q}^n$ generated by the quasi-simple roots and $\delta$. In particular, $H_\delta^{ss} \subseteq (\mathbb{Q}^n)_+$. We call it the \emph{regular cone}
of $Q,$ or of $\mathbb{Q}^n$. Our goal in this section is to describe the
geometry of $H_\delta^{ss}$ and the
structure of the canonical decomposition for $d\in H_\delta^{ss}$. The reader is referred to \cite{Bar} for the basic notions on convex geometry that are used in this section.

We label the non-homogeneous tubes in the Auslander-Reiten quiver of $\rep(Q)$, by $1, \ldots, N$. For $1 \le i \le N$, let $r_i$ be the rank of the tube labeled $i$ and let $\beta_{i1},\dots,\beta_{ir_i}$ be
the quasi-simple roots in that tube.
It is well known (see \cite[Theorem XIII 2.1]{AS2}) that
$$\sum_{i=1}^N (r_i - 1) = n-2.$$
Hence there are exactly $n-2+N$ quasi-simple roots. If $d_1,d_2$ are roots corresponding to the indecomposable representations $M_1,M_2$, respectively, and $M_2$ is the Auslander-Reiten translate of $M_1$, then we write $\tau(d_1) = d_2$ and say that $d_2$ is the \emph{Auslander-Reiten translate} of $d_1$.  If $C$ denotes the Coxeter transformation, this just means that $d_2 = C(d_1)$.

\begin{Lemma}\label{Lemma:dep} The space of linear dependencies among
the vectors $\beta_{ij}$ is $(N-1)$-dimensional.  The linear dependencies
are spanned by those that arise from the fact that the sum of the
quasi-simple roots of any non-homogeneous tube is equal to that of any other
non-homogeneous tube.
\end{Lemma}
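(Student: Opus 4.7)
The plan is to combine two ingredients: an explicit presentation of $N$ natural relations coming from the tube structure, and a dimension count confirming that they exhaust all dependencies. First, I would verify that for each non-homogeneous tube $T_i$, the standard tube structure yields the identity $\sum_{j=1}^{r_i} \beta_{ij} = \delta$; indeed, any indecomposable of regular length $r_i$ in $T_i$ has dimension vector $\delta$ (it is the unique imaginary root in a positive multiple of $\delta$ realized at that level) and a regular Jordan--H\"older series whose factors are the $r_i$ quasi-simples $\beta_{i,1}, \ldots, \beta_{i,r_i}$, each with multiplicity one. Subtracting pairs of these $N$ identities produces $N-1$ manifestly independent dependencies among the $\beta_{ij}$, for example $\sum_j \beta_{1j} - \sum_j \beta_{ij} = 0$ for $i = 2, \ldots, N$. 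It then remains to show that no further dependencies exist; since the $n-2+N$ vectors $\beta_{ij}$ all lie inside the $(n-1)$-dimensional hyperplane $H_\delta$, this is equivalent to showing that they span all of $H_\delta$.

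For the spanning statement, I would work with the subset
\[
\mathcal{E} \,=\, \{\beta_{ij} \,:\, 1 \le i \le N,\ 1 \le j \le r_i - 1\},
\]
obtained by omitting $\beta_{i,r_i}$ from each tube, and show that $\mathcal{E}$ is an exceptional sequence of length $n-2$ once ordered first by tube and then by $j$. Indeed, Proposition \ref{ARShape} gives the cross-tube Hom- and Ext-orthogonality, while within a single tube the Auslander--Reiten formula $\Ext^1(\beta_{ij}, \beta_{ij'}) \cong D\Hom(\beta_{ij'}, \tau\beta_{ij})$, combined with the fact that $\tau$ cyclically permutes the quasi-simples of the tube, forces the only nonzero $\Ext^1(\beta_{ij}, \beta_{ij'})$ within the tube to occur at $j' \equiv j-1 \pmod{r_i}$, which is impossible when $j < j'$ and both indices lie in $\{1, \ldots, r_i - 1\}$. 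The Gram matrix of the Euler form on $\mathcal{E}$ is then lower-triangular with $1$'s on the diagonal, so the $n-2$ vectors in $\mathcal{E}$ are linearly independent in $\mathbb{Z}^n$.

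The main step---and the main obstacle---is then to show that $\delta$ is not in the $\mathbb{Q}$-span of $\mathcal{E}$: combined with the relation $\delta = \sum_j \beta_{1j}$, this promotes the independence count from $n-2$ to $n-1$ and forces the $\mathbb{Q}$-span of the $\beta_{ij}$ to equal $H_\delta$. I would argue by contradiction: suppose $\delta = \sum_{i,\, j \le r_i - 1} a_{ij}\, \beta_{ij}$. A direct computation in each tube $T_{i_0}$, using the Auslander--Reiten formula above, gives
\[
\langle \beta_{ij},\ \beta_{i_0,k_0}\rangle \,=\, \delta_{i,i_0}\bigl(\delta_{j,k_0} - \delta_{j,\,k_0+1 \bmod r_{i_0}}\bigr), \qquad k_0 \in \{1, \ldots, r_{i_0}\},
\]
while $\langle \delta, \beta_{i_0,k_0}\rangle = 0$ since $\beta_{i_0,k_0}$ is regular. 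Pairing both sides of the putative identity against each $\beta_{i_0,k_0}$, and adopting the convention $a_{i_0,r_{i_0}} := 0$, one obtains the cyclic recurrence $a_{i_0,k_0} = a_{i_0,\, k_0+1 \bmod r_{i_0}}$. Taking $k_0 = r_{i_0}$ forces $a_{i_0,1} = 0$, and the remaining instances then propagate this value to every coefficient in tube $i_0$; running this for every $i_0$ yields $\delta = 0$, the required contradiction. With this in hand, the space of dependencies has dimension exactly $(n-2+N) - (n-1) = N-1$, and the $N-1$ tube identities, being linearly independent and living in this space, span it.
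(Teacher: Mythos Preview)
Your argument is correct, and the core computation---the values $\langle \beta_{ij},\beta_{i_0k_0}\rangle$ obtained from the Auslander--Reiten formula---is exactly what the paper uses as well. The packaging, however, is different. The paper applies that pairing \emph{directly} to an arbitrary dependency $\sum c_{ij}\beta_{ij}=0$: pairing with each $\beta_{i_0j_0}$ shows that $c_{i_0j_0}$ equals the coefficient of $\tau^{-1}\beta_{i_0j_0}$, so all coefficients within a tube coincide and the dependency is a combination of the $N-1$ known ones. Your route instead passes through a dimension count: you isolate the $n-2$ quasi-simples in $\mathcal{E}$, argue they form an exceptional sequence (hence are independent via the unitriangular Gram matrix), and then use the very same pairing to show $\delta\notin\operatorname{span}(\mathcal{E})$, pushing the rank up to $n-1$. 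Both arguments are valid; the paper's is shorter because it avoids the exceptional-sequence detour, while yours has the side benefit of exhibiting an explicit basis $\mathcal{E}\cup\{\delta\}$ of $H_\delta$ adapted to the tube structure. Note that your step~3 (showing $\delta$ is not in the span of $\mathcal{E}$) is essentially the paper's whole second half specialised to the single relation $\sum_j\beta_{1j}=\delta$ with the last coefficient set to zero, so one could streamline your proof by dropping the exceptional-sequence discussion and running the pairing argument on a general dependency from the start.
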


\begin{proof} It is well known that each tube contains representations whose dimension vector is the null root and
that these representations have a filtration by quasi-simples in which each quasi-simple in the tube appears
exactly once.  This implies that the sum of  the quasi-simple roots in any
tube equals the null root.  This gives rise to $(N-1)$ linearly independent
dependencies among the vectors $\beta_{ij}$.
We need therefore
only verify that there are no additional dependencies.

Let $\sum c_{ij}\beta_{ij}=0$ for some constants
$c_{ij}$.  Pick some tube with index $i_0$ and some quasi-simple with index
$j_0$.
There are two dimension
vectors of quasi-simples which
have non-zero pairing with $\beta_{i_0j_0}$, namely $\beta_{i_0j_0}$ and its
inverse AR translate.  Specifically,
$$\langle \beta_{i_0j_0},\beta_{i_0j_0}\rangle = 1 \textrm { and }
\langle \tau^{-1}\beta_{i_0j_0},\beta_{i_0j_0}\rangle = -1.$$
Since
$\sum c_{ij}\beta_{ij}=0$, we must have that $\langle \sum c_{ij}\beta_{ij},
\beta_{i_0j_0}\rangle = 0$.  It follows that the coefficients of
$\beta_{i_0j_0}$ and $\tau^{-1}\beta_{i_0j_0}$ must be equal.  By considering all
possible choices of $j_0$ for the tube $i_0$, we see that all the coefficients
corresponding to quasi-simples from tube $i_0$ must be equal.  The same argument applies
to the other tubes, and thus the linear dependence which we started with
lies in the span of those we found initially.
\end{proof}

To describe the facets of $H_\delta^{ss}$, we introduce some notation.  If $|Q_0| > 2$, write $R$ for the set of $N$-tuples $(a_1,\dots,a_N)$ with
$1\leq a_i\leq r_i$.  For $(a_1,\dots,a_N)\in R$, write $F_{(a_1,\dots,a_N)}$ for the convex cone in $(\mathbb{Q}^n)_+$ generated by the
quasi-simple roots except for $\beta_{ia_i}$ for $1\leq i \leq N$.

\begin{Prop}  \label{boundaryofregular}Suppose that $|Q_0|> 2$. The boundary facets of $H_\delta^{ss}$
are exactly the $F_I$
for $I\in R$.
\end{Prop}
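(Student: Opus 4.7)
The plan is to classify the facets of $H_\delta^{ss}$ via their supporting linear functionals in $(\mathbb{Q}^n)^*$. Since $\delta = \sum_j \beta_{ij}$ for each tube, the vector $\delta$ is redundant as a generator, so $H_\delta^{ss}$ is the conic hull of the $n-2+N$ quasi-simple roots; by Lemma \ref{Lemma:dep} these span an $(n-1)$-dimensional subspace, and a facet is thus a face of dimension $n-2$. The key fact I will extract from Lemma \ref{Lemma:dep} is that in any linear relation $\sum c_{ij}\beta_{ij}=0$ the coefficient $c_{ij}$ depends only on the tube index $i$.

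First I will show each $F_I$ is a facet. From the ``constant within a tube'' property and the fact that $r_i\ge 2$ for every non-homogeneous tube, the $n-2$ vectors $\{\beta_{ij}:j\ne a_i\}$ are linearly independent, and a similar argument shows that $\delta\notin\operatorname{span}\{\beta_{ij}:j\ne a_i\}$. Consequently the system $f(\beta_{ij})=0$ for $j\ne a_i$ cuts out a $2$-dimensional subspace of $(\mathbb{Q}^n)^*$ on which $f(\delta)$ is a nontrivial linear form, and I can choose $f$ in this subspace with $f(\delta)>0$. The identity $f(\delta)=\sum_j f(\beta_{ij})$ then forces $f(\beta_{ia_i})=f(\delta)>0$ for every $i$, so $f\ge 0$ on every quasi-simple root, hence on $H_\delta^{ss}$. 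A direct computation on conic combinations will identify $\{f=0\}\cap H_\delta^{ss}$ with $F_I$, exhibiting $F_I$ as a face of dimension $n-2$, i.e.\ a facet.

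Conversely, let $F$ be a facet cut out by a supporting functional $f$ with $f|_{H_\delta^{ss}}\ge 0$, not identically zero. For each tube $i$ the relation $\delta=\sum_j\beta_{ij}$ yields $\sum_j f(\beta_{ij})=f(\delta)=:c$, a value independent of $i$. If $c=0$, non-negativity forces $f(\beta_{ij})=0$ for every $i,j$, so $f\equiv 0$ on $H_\delta^{ss}$, contradicting properness of the face. Hence $c>0$, and in each tube $i$ some index $a_i$ satisfies $f(\beta_{ia_i})>0$. Every quasi-simple lying on $\{f=0\}$ is then forced to lie in $\{\beta_{ij}:j\ne a_i\}$, so $F\subseteq F_{(a_1,\dots,a_N)}$; equality follows by dimension count since both sides have dimension $n-2$.

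The main technical obstacle is the ``constant within a tube'' consequence of Lemma \ref{Lemma:dep}; once this is in hand, the construction of the supporting functionals in the forward direction and the classification of facets in the converse both follow cleanly from elementary polyhedral geometry.
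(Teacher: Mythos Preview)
Your proof is correct and complete. It takes a genuinely different route from the paper's argument, though both rest on Lemma~\ref{Lemma:dep}.

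The paper argues structurally: it passes to the quotient $\mathbb{Q}^n/\langle\delta\rangle$, observes that the images $\overline{P_i}$ of the cones $P_i$ generated by the quasi-simples of each tube are complementary subspaces (this is where Lemma~\ref{Lemma:dep} enters), and concludes that $H_\delta^{ss}$ has the combinatorial structure of a product, so its facets are obtained by choosing one facet of each $P_i$. This structural viewpoint is what feeds directly into the subsequent corollary describing the boundary as a simplicial join.

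You instead classify the supporting linear functionals directly. The forward direction constructs an explicit $f$ annihilating $\{\beta_{ij}:j\ne a_i\}$ with $f(\delta)>0$, using the linear independence drawn from the constant-within-tube property; the converse uses the identity $\sum_j f(\beta_{ij})=f(\delta)$ to force a strictly positive value in each tube, pinning down the $a_i$. Your argument is more elementary and entirely self-contained in polyhedral terms, and it makes the role of the hypothesis $r_i\ge 2$ (implicit in $|Q_0|>2$ for each non-homogeneous tube) visible in the linear-independence step. The paper's approach, by contrast, packages the same information as a direct-sum decomposition, which is shorter to state but leaves more of the polyhedral geometry to the reader. One small remark: in your final ``equality by dimension count,'' it is worth noting that $F$ and $F_I$ are both \emph{faces} of $H_\delta^{ss}$, so $F\subseteq F_I$ with equal dimension forces $F=F_I$; mere equality of dimension of two nested cones would not suffice.
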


\begin{proof}
For $1\leq i \leq N$, write
$P_i$ for the cone generated by the quasi-simples from the $i$-th
tube.  From Lemma \ref{Lemma:dep}, we see that the dimension vectors of the quasi-simple representations in a given tube are linearly independent. Therefore, $P_i$ is a simplicial cone
containing the ray
generated by $\delta$.

Consider the $(n-1)$-dimensional vector space
$\mathbb Q^n/\langle \delta\rangle$, and
let $\overline P_i$ be the subspace of this quotient space generated by $P_i$.
By Lemma \ref{Lemma:dep}, the $\overline P_i$ are complementary.
It follows that codimension
one facets of $H_\delta^{ss}$
are each formed by taking the convex hull of
one facet from each $P_i$.  The description of the facets given in
the statement of the proposition then follows.
\end{proof}

If $\Sigma_1$ and $\Sigma_2$ are two simplicial complexes on disjoint
vertex sets $V_1$, $V_2$, then the \emph{simplicial join} of $\Sigma_1$ and $\Sigma_2$
is defined by saying that a set $F\subseteq V_1\cup V_2$ is a face if and
only if $F\cap V_1$ is a face of $\Sigma_1$ and $F\cap V_2$ is a face of
$\Sigma_2$. In the sequel, we will use the simplicial join of at most three simplices.  Some low dimensional
examples include a square, an octahedron and a triangular bipyramid.

\begin{Cor} Suppose that $|Q_0| > 2$. The combinatorial structure of the boundary of
$H_{\delta}^{ss}$
can be described as a cone over the simplicial join of the boundary of $N$ simplices, one for each non-homogeneous tube,
where the $i$-th simplex has as vertices the quasi-simples from the $i$-th tube.
\end{Cor}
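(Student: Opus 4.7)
The plan is to read off the facets of $H_\delta^{ss}$ from Proposition \ref{boundaryofregular} and match their combinatorial incidence pattern with that of the claimed simplicial join. Because $H_\delta^{ss}$ is a polyhedral cone with apex at the origin, its boundary is (topologically) the cone from the origin over $\partial P$, where $P$ is any transverse cross-section of $H_\delta^{ss}$ by an affine hyperplane. It therefore suffices to identify the combinatorial structure of $\partial P$, as an abstract complex on the quasi-simple vertex set, with the simplicial join $\Sigma := \partial \Delta_1 \ast \cdots \ast \partial \Delta_N$, where $\Delta_i$ denotes the abstract simplex on the vertex set $\{\beta_{i1},\dots,\beta_{ir_i}\}$.

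The first step is to verify that every facet $F_I = F_{(a_1,\dots,a_N)}$ of $H_\delta^{ss}$ is a simplicial cone, i.e., that its $n-2$ generating rays $\{\beta_{ij} : j \ne a_i\}$ are linearly independent. To see this I would expand an arbitrary linear dependence $\sum_{i,j} c_{ij}\beta_{ij} = 0$ using Lemma \ref{Lemma:dep}: the dependence space is spanned by the $N-1$ relations equating the various tube sums, and a quick computation shows that in any such combination the resulting coefficients $c_{ij}$ are constant in $j$ within each tube $i$. Imposing $c_{ia_i} = 0$ for every $i$ therefore forces all $c_{ij} = 0$, so the rays of $F_I$ are independent and $F_I$ is simplicial.

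Once every facet is known to be simplicial, the boundary $\partial H_\delta^{ss}$ acquires the structure of a simplicial complex on the quasi-simple vertex set. A subset $S$ of that vertex set is a face iff it is contained in some facet $F_I$, iff for each tube $i$ there exists $a_i \in \{1,\dots,r_i\}$ with $\beta_{ia_i} \notin S$, iff for every tube $i$, $S$ fails to contain all of $\{\beta_{i1},\dots,\beta_{ir_i}\}$. This last description is exactly the defining condition for a face of the join $\Sigma$: a face of $\Sigma$ is a union of faces, one per $\partial \Delta_i$, and a face of $\partial \Delta_i$ is a proper subset of the vertices of $\Delta_i$. This matches the face posets and yields the combinatorial isomorphism $\partial P \cong \Sigma$.

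The only substantive technical input in the argument is the linear-independence check for $F_I$, which relies on the precise description of the dependences provided by Lemma \ref{Lemma:dep}. Once that step is completed, the remainder of the proof is a straightforward combinatorial matching of the face-poset condition for $\partial H_\delta^{ss}$ with that for the simplicial join, and no further representation-theoretic input is required.
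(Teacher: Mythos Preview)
Your proposal is correct. The paper gives no explicit proof of this Corollary, treating it as an immediate consequence of Proposition~\ref{boundaryofregular}; your argument fills in precisely the details one would expect, drawing on the facet description $F_{(a_1,\dots,a_N)}$ from that proposition and the dependence structure from Lemma~\ref{Lemma:dep} to verify simpliciality, and then matching the face condition ``miss at least one quasi-simple from each tube'' with the definition of the simplicial join of the $\partial\Delta_i$. This is essentially the same approach implicit in the paper, where the proof of Proposition~\ref{boundaryofregular} already observes that the $\overline P_i$ are complementary in $\mathbb{Q}^n/\langle\delta\rangle$, which is the geometric form of the join decomposition you spell out combinatorially.
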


\medskip

\begin{Exam}
Let $Q$ be the quiver below:

$$\begin{tikzpicture}[>=stealth',shorten >=1pt,node distance=1.2cm,on grid,initial/.style    ={}]
  \node[draw=none,fill=none]          (P2)                        {$2$};
  \node[draw=none,fill=none]          (P1) [above =of P2]    {$1$};
  \node[draw=none,fill=none]          (P4) [right =of P2]    {$4$};
  \node[draw=none,fill=none]          (P3) [right =of P1]    {$3$};
\tikzset{mystyle/.style={->}}
\tikzset{every node/.style={draw=none,fill=none}}
\path (P1)     edge [mystyle]       (P2)
      (P2)     edge [mystyle]        (P4)
      (P1)     edge [mystyle]      (P3)
      (P3)     edge [mystyle]      (P4);
\end{tikzpicture}$$
The following is an affine slice of
$(\mathbb{Q}^n)_+$, with $H_\delta^{ss}$ indicated:

$$\begin{tikzpicture}[thick,scale=5]

\coordinate[label=above:$1$] (A1) at (0.5,0.5);
\coordinate[label=left:$2$] (A2) at (0,-0.2);
\coordinate[label=right:$3$] (A3) at (1,-0.2);
\coordinate[label=below:$4$] (A4) at (0.5,-0.5);
\coordinate[label=right:$\renewcommand\arraystretch{0.5}\begin{array}{c}1\\3\\4\end{array}$]  (B3) at (0.66,-0.013); 
\coordinate[label=left:$\renewcommand\arraystretch{0.5}\begin{array}{c}1\\2\\4\end{array}$] (B2) at (0.33,-0.013);

\begin{scope}[thick,dashed,,opacity=0.6]
\draw (A2) -- (A3);
\draw (B2) -- (B3);
\end{scope}

\begin{scope}[thick,,,opacity=0.6]
\draw (A2) -- (B2);
\draw (A3) -- (B3);
\draw (A2) -- (A1);
\draw (A1) -- (A3);
\draw (A3) -- (A4);
\draw (A4) -- (A2);
\draw (A1) -- (A4);
\end{scope}

\draw[fill=yellow,opacity=0.4]  (A1) -- (A2) -- (A4) --cycle;
\draw[fill=green,opacity=0.4]  (A1) -- (A4) -- (A3) --cycle;
\draw[fill=red,opacity=0.1] (B3) -- (B2) -- (A2) -- (A3) --cycle;

\end{tikzpicture}$$

Observe that $H_\delta^{ss}$ is not equal to
the intersection of the affine span of $H_\delta$ with $(\mathbb{Q}^n)_+$ in this case
(contrary to what one might have expected from smaller examples). For instance, the dimension vector $d_{S_1} + d_{S_4}$ lies in $H_\delta$ but not in  $H_\delta^{ss}$.

\end{Exam}

\begin{Cor} All the regular real Schur roots lie on the boundary of
$H_\delta^{ss}$. \end{Cor}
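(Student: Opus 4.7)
The plan is to directly exhibit, for each regular real Schur root $\alpha$, an explicit boundary facet $F_I$ of $H_\delta^{ss}$ containing it, where the $F_I$ are as described in Proposition \ref{boundaryofregular}. The case $|Q_0|=2$ (Kronecker quiver) is vacuous, since then every regular indecomposable has dimension vector a multiple of $\delta$ and there are no regular real Schur roots at all. So I assume $|Q_0| > 2$.

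Let $\alpha$ be a regular real Schur root, so $M(\alpha)$ is an exceptional (rigid indecomposable) representation lying in some standard stable tube $T_{i_0}$ of rank $r_{i_0}$. The key representation-theoretic input is the structure of such tubes: an indecomposable in $T_{i_0}$ of quasi-length $\ell$ with quasi-socle $\beta_{i_0,j}$ has dimension vector $\beta_{i_0,j}+\beta_{i_0,j+1}+\cdots+\beta_{i_0,j+\ell-1}$ (indices read mod $r_{i_0}$), and such a representation is rigid precisely when $\ell \leq r_{i_0}-1$ (once $\ell = r_{i_0}$, the dimension vector is $\delta$ and $\operatorname{Ext}^1$ of the representation with itself is non-zero, and beyond that the representation is no longer Schur). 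Consequently there is at least one index $a_{i_0} \in \{1,\dots,r_{i_0}\}$ such that $\beta_{i_0,a_{i_0}}$ does not occur in the above expression for $\alpha$.

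I then choose $a_i$ arbitrarily for each $i\neq i_0$ and set $I = (a_1,\dots,a_N)\in R$. By construction, $\alpha$ is a non-negative (indeed $\{0,1\}$-) combination of the generators $\{\beta_{ij} : (i,j)\ne (i',a_{i'})\text{ for any }i'\}$ of the facet $F_I$, so $\alpha \in F_I$. By Proposition \ref{boundaryofregular}, $F_I$ is a boundary facet of $H_\delta^{ss}$, so $\alpha$ lies on the boundary, as required. There is no real obstacle here; the only non-trivial point is the classical fact that rigid indecomposables in a tube of rank $r$ have quasi-length strictly less than $r$, which forces the expression of $\alpha$ in quasi-simples to omit at least one generator from its own tube and hence to sit on a facet.
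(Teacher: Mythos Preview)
Your proof is correct and follows essentially the same approach as the paper: both arguments hinge on the fact that a rigid indecomposable in a tube has a quasi-simple filtration omitting at least one quasi-simple from that tube, so its dimension vector lies on a boundary facet of $H_\delta^{ss}$. You make the choice of facet $F_I$ explicit, whereas the paper phrases it via the boundary of the simplicial cone $P_{i_0}$ from the proof of Proposition~\ref{boundaryofregular}, but this is only a cosmetic difference.
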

\begin{proof} Using the fact that the stable tubes are standard (see Proposition \ref{ARShape}) and that for $X$ regular, $\Ext^1(X,X) \cong D\Hom(X,\tau X)$, we get that the regular real Schur roots correspond
to representations which admit a filtration by quasi-simples from some
tube which does not include all the quasi-simples from that tube.
It therefore follows that each real Schur root lie on the boundary of
some $P_i$ (defined in the proof of Proposition~\ref{boundaryofregular}), and thus on the boundary of $H_\delta^{ss}$.
\end{proof}

For $Q$ the Kronecker quiver, that is, when $|Q_0|=2$, we define $R$ to be the set with one element $\emptyset$ and we set $F_\emptyset = \emptyset$.

For each facet $F_{I}$ of $H_\delta^{ss}$, denote by $C_{I}$ the
simplicial cone in $(\mathbb{Q}^n)_+$ generated by $F_{I}$ and
$\delta$.
Note that in the Kronecker case, there is a unique cone $C_\emptyset$, and it is equal to the ray generated by $\delta$.

\begin{Prop}
Let $d$ be a dimension vector lying in $H_\delta^{ss}$.
\begin{enumerate}[$(1)$]
    \item If $d$ lies on a facet $F_I$ of $H^{ss}_\delta$, then the canonical decomposition of $d$ only involves regular real Schur roots in $F_I$.
\item If $d$ lies in the relative interior of $H_\delta^{ss}$, say in $C_I$, then the canonical decomposition of $d$ involves the null root and possibly some regular real Schur roots in $F_I$. \end{enumerate}
\end{Prop}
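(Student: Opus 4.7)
The plan is to construct an explicit general representation of dimension vector $d$ whose indecomposable summands are pairwise Ext-orthogonal Schur representations, and then appeal to Proposition~\ref{Kac1} to identify the canonical decomposition.

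First, I would decompose $d$ as
\[ d = m\delta + \sum_{i=1}^N d^{(i)}, \]
with $m \in \mathbb{Z}_{\geq 0}$ and each $d^{(i)}$ a dimension vector supported on the arc $\{\beta_{ij} : j \ne a_i\}$ of tube $i$. This rests on Lemma~\ref{Lemma:dep}: because the sum of quasi-simples in any tube equals $\delta$, the coefficient of $\beta_{ia_i}$ in an expression for $d$ as a positive combination of quasi-simples can be absorbed into a multiple of $\delta$, with a compensating adjustment of the other quasi-simple coefficients. The hypothesis $d \in C_I$ guarantees non-negativity of the resulting coefficients in the basis $\{\beta_{ij} : j \ne a_i\} \cup \{\delta\}$ of $H_\delta$; part (1), where $d \in F_I$, corresponds to $m = 0$, and part (2) to $m > 0$.

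Next, for each non-homogeneous tube $i$, I would use the fact that the smallest abelian and extension-closed subcategory of $\rep(Q)$ containing $\{\beta_{ij} : j \ne a_i\}$ is equivalent to $\rep(Q')$ for some orientation $Q'$ of a Dynkin quiver of type $A_{r_i-1}$. Indeed, removing one quasi-simple from the cyclic $\tau$-orbit of the tube leaves a linear chain of $r_i - 1$ quasi-simples with $\Ext^1(\beta_{i,j+1}, \beta_{ij}) \ne 0$ and no other nonzero Ext between arc quasi-simples. Hence the canonical decomposition of $d^{(i)}$ inside this Dynkin subcategory consists of pairwise Ext-orthogonal real Schur roots; each such root is a regular real Schur root of tube $i$ lying on the arc, hence in $F_I$.

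I would then assemble the general representation. Choose $m$ distinct homogeneous tubes $\lambda_1, \ldots, \lambda_m \in \mathbb{P}^1(k)$ and let $W_k$ be the quasi-simple of tube $\lambda_k$, so $d_{W_k} = \delta$; let $M^{(i)}$ realize the Dynkin canonical decomposition of $d^{(i)}$. Then
\[ M = W_1 \oplus \cdots \oplus W_m \oplus M^{(1)} \oplus \cdots \oplus M^{(N)} \]
satisfies $d_M = d$, and its indecomposable summands are pairwise Ext-orthogonal Schur representations: summands in distinct tubes are Hom- and Ext-orthogonal by Proposition~\ref{ARShape}, the summands of each $M^{(i)}$ are Ext-orthogonal by the Dynkin canonical decomposition, and the $W_k$ live in distinct homogeneous tubes. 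Proposition~\ref{Kac1} then shows that $M$ realizes the canonical decomposition of $d$, which in part (1) consists only of regular real Schur roots in $F_I$, and in part (2) of such roots together with $m$ copies of $\delta$. The main obstacle is the first step: justifying that $d \in C_I \cap \mathbb{Z}^n$ admits an \emph{integral} decomposition $d = m\delta + \sum d^{(i)}$ of the stated form. Linear independence and the $\mathbb{Q}$-basis property of $\{\beta_{ij} : j \ne a_i\} \cup \{\delta\}$ for $H_\delta$ are immediate from Lemma~\ref{Lemma:dep}, and non-negativity follows from $d \in C_I$, but integrality requires comparing this basis to the lattice generated by all quasi-simples and $\delta$, which contains every integer dimension vector in $H_\delta^{ss}$.
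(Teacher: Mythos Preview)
Your approach is essentially the paper's: reduce to the canonical decomposition inside the type-$A$ wing category $\mathcal{W}_I$ on the facet $F_I$, then adjoin copies of $\delta$ using ext-orthogonality and Proposition~\ref{Kac1}. The paper is terser, simply writing $d=d_1+r\delta$ with $d_1\in F_I$ and invoking Proposition~\ref{Kac1} at the level of Schur roots rather than building an explicit general representation, but the content is the same.

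The integrality obstacle you flag is genuine and is equally glossed over in the paper, which asserts $r\in\mathbb{Z}$ without comment. One clean resolution: since a projective generator of $\mathcal{W}_I$ is regular and rigid, $\mathcal{W}_I^\perp$ is, by Lemma~\ref{kindofperp} and Proposition~\ref{GL2}, equivalent to the representations of a two-vertex possibly disconnected Euclidean quiver, hence the Kronecker quiver. Its two relative simples $S,S'$ complete $\{M(\beta_{ij}):j\ne a_i\}$ to a full exceptional sequence, so the $n$ dimension vectors form a $\mathbb{Z}$-basis of $\mathbb{Z}^n$, and $d_S+d_{S'}=\delta$ since the null root of the Kronecker corresponds to $\delta$. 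Writing $d=\sum c_{ij}\beta_{ij}+a\,d_S+b\,d_{S'}$ with integer coefficients and pairing with $\delta$, the relation $\langle\delta,d_S\rangle=-\langle\delta,d_{S'}\rangle\ne 0$ forces $a=b$, whence $d=\sum c_{ij}\beta_{ij}+a\delta$ with all coefficients in $\mathbb{Z}$.
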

\begin{proof}

From what we just proved, we have that $\delta$ lies in the relative interior of $H_\delta^{ss}$.  Moreover, all the quasi-simple roots lie on the boundary of $H_\delta^{ss}$.

Suppose first that $d$ is a dimension vector lying on the facet $F_I$ of $H_\delta^{ss}$.
The  additive extension-closed abelian subcategory of $\rep(Q)$ generated by the quasi-simple representations corresponding to the roots in $F_I$ is equivalent to the category of representations of a quiver which is a union of quivers of type $\mathbb{A}$. This observation, together with Proposition \ref{Kac1}, gives that the canonical decomposition of $d$ only involves real Schur roots lying on $F_I$.

Suppose now that $d$ lies in the relative interior of $H_\delta^{ss}$, say in $C_I$.
We can decompose $d$ as $d = d_1 + r\delta$ where $d_1 \in F_I$ and $r \in \mathbb{Z}$.  Let $d_1 = d(1) \oplus \cdots \oplus d(m)$ be the canonical decomposition of $d_1$ which, by the above argument, only involves real Schur roots in $F_I$.  By Proposition \ref{Kac1}, it is clear that
$$d = d(1) \oplus \cdots \oplus d(m) \oplus \delta^r$$
is the canonical decomposition of $d$ since ${\rm ext}(d(i),\delta) = {\rm ext}(\delta,d(i))=0$ for all $i$.
\end{proof}

\section{Ss-equivalence and proof of Theorem \ref{ssequiv}}

In this section, we suppose that $Q$ is a Dynkin or Euclidean quiver with $n$ vertices, unless otherwise indicated.  We prove Theorem \ref{ssequiv}, which describes when two
dimension vectors determine the same semi-stable subcategories.

Recall that when $Q$ is Euclidean, the quadratic form $q(x) = \langle x, x \rangle$ is positive semi-definite and its radical is of rank one, generated by the null root $\delta$.
Recall also that the convex set $H_\delta^{ss}$ is defined as the cone generated by the quasi-simple roots, if $Q$ is of Euclidean type and contains more than $2$ vertices,
and $H_\delta^{ss}=\{[\delta]\}$, if $Q$ is the Kronecker quiver.

\medskip

Let $X$ be an exceptional representation in $\rep(Q)$ and from Proposition \ref{GL}, let $Q_X$ be the quiver with $n-1$ vertices for which $^\perp  X \cong \rep(Q_X)$. Note that $Q_X$ is a possibly disconnected Euclidean or Dynkin quiver.
Let $$F_{d_X} : {}^\perp X \to  \rep(Q_X)$$
be an exact functor
which is an equivalence.  Denote by $G_{d_X}$ a quasi-inverse functor.
Let $S_1^X,S_2^X,\ldots,S_{n-1}^X$ be the non-isomorphic simple objects in $^\perp X$,
called the \emph{relative simples} in $\p X$. It is clear that
$K_0(\p X)$, which is isomorphic to $K_0(\rep(Q_X))$,
is the subgroup of $K_0(\rep(Q))$
generated by the classes $[S_1^X],\ldots,[S_{n-1}^X]$ in $K_0(\rep(Q))$. Put
$$\varphi_X: K_0(\p X) \to K_0(\rep(Q_X))$$
for the canonical isomorphism. Let $\langle \;,\; \rangle_X$ be the Euler form for
$\rep(Q_X)$.  Since $F_{d_X}$ is exact, $\varphi_X,\varphi_X^{-1}$ are isometries, that is, for $a,b \in K_0(\rep(Q_X))$, we have $$\langle a,b \rangle_X = \langle \varphi_X^{-1}(a),\varphi_X^{-1}(b) \rangle.$$

Let us consider $P(X)$ the (possibly empty) set of all dimension vectors of the indecomposable projective representations of $\rep(Q)$ in $^\perp X$. We let $H_{d_X}^{ss}$ denote the cone in $\mathbb{Q}^n$ generated by the elements of $-P(X) \cup \{d_{S_1^X}, \ldots, d_{S_{n-1}^X}\}$. As a first property of $H_{d_X}^{ss}$, we have the following.

\begin{Lemma} \label{lemmad+d-} Let $\alpha$ be a real Schur root and let $d \in \Z^n$ with $d \in H_\alpha^{ss}$. Then $d_+, d_- \in H_\alpha^{ss}$.
\end{Lemma}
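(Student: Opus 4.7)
My plan is to exploit the decomposition coming from membership in $H_\alpha^{ss}$ and then reconcile it with the canonical presentation of Lemma \ref{IOTW}. Concretely, since $d \in H_\alpha^{ss}$, after clearing denominators we may write
\[
d = d_M - d_P,
\]
where $M = \bigoplus_i (S_i^\alpha)^{a_i}$ is a representation in ${}^\perp M(\alpha)$ and $P = \bigoplus_j P_j^{b_j}$ is a projective representation of $\rep(Q)$ lying in ${}^\perp M(\alpha)$, with each $P_j \in P(\alpha)$ and all $a_i, b_j \geq 0$. In particular, $\langle d, d_\alpha\rangle = 0$.

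The heart of the argument is to compare $(d_M, -d_P)$ with the canonical presentation $d = d_+ + d_-$. These need not coincide, but I expect them to differ only by a transfer of indecomposable projective summands between the two sides. Specifically, if the top of $P$ supports a vertex $i$ at which $d_M$ is non-zero, one extracts an indecomposable projective summand $P_0$ of $P$, then decreases both $d_M$ and $d_P$ by $d_{P_0}$ --- replacing $M$ by a representation whose dimension vector is $d_M - d_{P_0}$, realized via a short exact sequence (as in the case $0 \to P_3 \to P_1 \to M_{12} \to 0$ that arises in type $\mathbb{A}_3$). Iterating until no further transfer is possible produces a decomposition $d = d_{M^*} - d_{P^*}$ with the top of $P^*$ disjoint from the support of $d_{M^*}$; by uniqueness of the canonical presentation this must be $(d_+, -d_-)$.

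The crucial verification is that each projective $P_0$ removed during the transfers lies in $P(\alpha)$: this is automatic, because $P_0$ is an indecomposable summand of the original $P$, and $P$ is by construction a direct sum of members of $P(\alpha)$. Upon stabilization, $-d_-$ is a non-negative integer combination of $\{d_{P_j} : P_j \in P(\alpha)\}$, so $d_- \in H_\alpha^{ss}$. For $d_+ = d_{M^*}$, I would argue that $d_+ \in K_0({}^\perp M(\alpha))$, and then express $d_+$ as a non-negative combination of $\{d_{S_i^\alpha}\}$ and $\{-d_{P_j} : P_j \in P(\alpha)\}$: this is possible because whenever the expansion of $d_+$ in the basis of relative simples has a negative coefficient at $S_i^\alpha$, the required compensation is supplied by $-d_{P_j}$ for some $P_j \in P(\alpha)$, the existence of which is ensured by the vanishing $\Hom(P_j, M(\alpha)) = 0$ that follows from $\langle d_+, d_\alpha\rangle = 0$.

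The principal obstacle is formalizing the transfer procedure and verifying its convergence to the canonical presentation --- in particular, handling situations where the canonical presentation involves Schur roots of $\rep(Q)$ not themselves lying in ${}^\perp M(\alpha)$ (such as $M_{12}$ in the $\mathbb{A}_3$ example with $\alpha = d_{S_2}$), and confirming that the compensating contributions from $P(\alpha)$ are always available in sufficient supply.
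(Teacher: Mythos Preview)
Your overall strategy matches the paper's: start from an expression $d = d_M - d_P$ with $M \in {}^\perp M(\alpha)$ and $P$ projective in ${}^\perp M(\alpha)$, and iteratively reduce until the pair realizes $(d_+,d_-)$. The gap is in your transfer step. You propose to peel off one indecomposable projective summand $P_0$ of $P$ and replace $M$ by a representation of dimension vector $d_M - d_{P_0}$, ``realized via a short exact sequence.'' But that requires an \emph{injection} $P_0 \hookrightarrow M$, and a nonzero map $P_0 \to M$ need not be injective: for $Q = 1 \to 2$ with $M = S_1^2$ and $P_0 = P_1$ one has $\Hom(P_0,M)\ne 0$, yet $d_M - d_{P_0} = (1,-1)$ is not a dimension vector at all. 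So the step, as you describe it, is not well-defined.

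The paper's proof avoids this by never insisting on an injection or on subtracting a whole $d_{P_0}$. It takes \emph{any} nonzero morphism $f: P_0 \to M_0$ (with $P_0$ the full projective) and replaces the pair $(M_0,P_0)$ by $(\operatorname{coker} f, \ker f)$. Because ${}^\perp M(\alpha)$ is abelian and extension-closed, both $\ker f$ and $\operatorname{coker} f$ lie in ${}^\perp M(\alpha)$; heredity makes $\ker f$ projective; and the four-term exact sequence gives $d_{\operatorname{coker} f} - d_{\ker f} = d_{M_0} - d_{P_0} = d$. Iterating strictly shrinks the projective part, and at termination $\Hom(P_t,M_t)=0$, which is exactly the disjoint-support condition characterizing $(d_+,d_-)$. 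Your ``uniqueness of the canonical presentation'' step is fine and is used implicitly here.

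Finally, your last paragraph is unnecessary once the transfer is done correctly: if $M^* \in {}^\perp M(\alpha)$ then $d_+ = d_{M^*}$ is \emph{automatically} a non-negative integer combination of the relative simples (take a composition series of $M^*$ in ${}^\perp M(\alpha)$), so $d_+ \in H_\alpha^{ss}$ without any talk of negative coefficients or compensation. The obstacle you flag---Schur roots in the canonical decomposition of $d_+$ not lying in ${}^\perp M(\alpha)$---is a red herring: the argument never needs to look at the canonical decomposition of $d_+$ in $\rep(Q)$ at all.
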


\begin{proof} Let $d_1, \ldots, d_r$ be the dimension vectors of the relative simple modules of $\p M(\alpha)$. If $M(\alpha)$ is sincere, $d=d_+$, $d_-=0$ and there is nothing to prove. Otherwise, let $q_1, \ldots, q_s$ be the dimension vectors of the indecomposable projective representations that are in $\p M(\alpha)$. Since $d \in H_\alpha^{ss}$, $d = \sum_{i=1}^ra_id_i - \sum_{j=1}^sb_jq_j$ where the $a_i, b_j$ are non-negative integers. Now, consider the projective representation $P_0$ of dimension vector $\sum_{j=1}^sb_jq_j$ and consider $M_0 = \bigoplus_{i=1}^rM(d_i)^{a_i}$, which is a representation in $\p M(\alpha)$ of dimension vector $\sum_{i=1}^ra_id_i$. If $\Hom(P_0,M_0)=0$, then $d_+ = d_{M_0}, d_- = -d_{P_0}$ and we are done. Otherwise, let $f: P_0 \to M_0$ be a non-zero homomorphism with kernel $P_1$ and cokernel $M_1$. Since $P_0,M_0 \in \p M(\alpha)$ and $\p M(\alpha)$ is an abelian subcategory of $\rep(Q)$, we have $P_1,M_1 \in \p M(\alpha)$. Since $\rep(Q)$ is hereditary, $P_1$ is projective. Observe that $d = d_{M_1} - d_{P_1}$. If $\Hom(P_1, M_1)=0$, then $d_+ = d_{M_1}, d_- = -d_{P_1}$ and we are done. Otherwise, we let the process goes by induction and we get a sequence of submodules $\cdots \subset P_1 \subset P_0$, a sequence of quotients $M_0 \to M_1 \to \cdots$ such that $\Hom(P_i,M_i)\ne 0$ if and only if both $P_{i+1} \subset P_i$ and $M_i \to M_{i+1}$ are proper. Moreover, $d = d_{M_i} - d_{P_i}$ for all $i \ge 0$. Since $P_0$ is finite dimensional, there exists $t \ge 1$ such that $P_t = P_j$ whenever $j \ge t$. This gives $\Hom(P_t, M_t)=0$. Thus, $d_+ = d_{M_t}, d_- = -d_{P_t}$. By construction, all $P_i$ and $M_j$ are in $\p M(\alpha)$, which proves the lemma.
\end{proof}

Now, if $Q$ is Euclidean and $\delta \in H_{d_X}^{ss}$, then
$\varphi_X(\delta)$ is clearly the null root for the quiver $Q_X$ of $\p X$. Observe that even when $Q$ is connected, $Q_X$ may be disconnected (a union of quivers of Dynkin type and at most one quiver of Euclidean type), thus, $\varphi_X(\delta)$ may be non-sincere.

Observe also that it is possible to have a dimension vector $f \in H^{ss}_{d_X}$ and a representation $M \in \rep(Q,f)$ with $M$ not isomorphic to any representation in
$\p X$.  However, this does not happen for general representations.
The notion of \emph{general representation} used below was defined
after Proposition~\ref{Kac1}.

\begin{Lemma} \label{Lemma2}
Let $X$ be an exceptional representation and $f \in H^{ss}_{d_X}$ a dimension vector. Then there is a general representation of dimension vector $f$ in $\p X$.
Moreover, $M$ is a general representation of dimension vector $\varphi_X(f)$ in $\rep(Q_X)$
if and only if $G_{d_X}(M)$ is a general representation of dimension vector
$f$ in $\p X$. In particular, $\varphi_X, \varphi_X^{-1}$ preserve the canonical decomposition.
\end{Lemma}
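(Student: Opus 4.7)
The plan is to use the exact equivalence $F_{d_X}: \p X \to \rep(Q_X)$ to transport the notion of general representation between the two categories. Two preservation properties will be crucial: first, since $F_{d_X}$ is an equivalence it preserves endomorphism rings, so $M \in \p X$ is Schur if and only if $F_{d_X}(M)$ is Schur; second, since $\p X$ is abelian and extension-closed in $\rep(Q)$, the natural maps yield $\Ext^1_{\rep(Q)}(M,N) \cong \Ext^1_{\p X}(M,N) \cong \Ext^1_{\rep(Q_X)}(F_{d_X}(M),F_{d_X}(N))$ for $M,N \in \p X$, so both $F_{d_X}$ and $G_{d_X}$ preserve Ext-orthogonality of Schur representations.

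For the existence claim, I will show that the locus $U = \{M \in \rep(Q,f) : M \in \p X\}$ is a non-empty open subset of $\rep(Q,f)$, which then forces any general representation of dimension vector $f$ in $\rep(Q)$ to lie in $\p X$. Openness is immediate from upper semicontinuity of $M \mapsto \dim \Hom(X,M)$ and $M \mapsto \dim \Ext^1(X,M)$. For non-emptiness, the assumption $f \in H^{ss}_{d_X}$ combined with $f \in \mathbb{N}^n$ forces $\varphi_X(f)$ into the positive orthant of $K_0(\rep(Q_X))$; writing $\varphi_X(f) = (a_1,\dots,a_{n-1})$, the explicit representation $\bigoplus_i (S_i^X)^{a_i}$ lies in $\p X$ and witnesses $f$ as a dimension vector on the $\p X$ side.

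Finally I deduce the two-directional correspondence. If $M \in \rep(Q,f)$ is general and lies in $\p X$, then Proposition~\ref{Kac1} yields a decomposition $M = \bigoplus M_i$ with each $M_i$ Schur and the $M_i$ pairwise Ext-orthogonal in $\rep(Q)$. Since $\p X$ is closed under direct summands, each $M_i \in \p X$; applying $F_{d_X}$ and invoking the preservation properties above shows that $F_{d_X}(M) = \bigoplus F_{d_X}(M_i)$ is a general representation of dimension vector $\varphi_X(f)$ in $\rep(Q_X)$. The converse direction is symmetric, applying $G_{d_X}$ to a general decomposition in $\rep(Q_X,\varphi_X(f))$. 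The preservation of canonical decompositions is then immediate: the Schur roots appearing in the canonical decomposition of $f$ are exactly the dimension vectors $d_{M_i}$, which correspond bijectively under $\varphi_X$ to the Schur roots appearing in the canonical decomposition of $\varphi_X(f)$. The main technical hurdle I anticipate is the non-emptiness of $U$, specifically showing that the defining expression $f = \sum a_i d_{S_i^X} - \sum b_j q_j$ in $H^{ss}_{d_X}$ can be rearranged with no projective part once $f$ is known to be a genuine dimension vector in $\mathbb{N}^n$.
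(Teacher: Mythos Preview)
Your approach is correct and essentially the same as the paper's. The paper is terser: it establishes openness of the locus $\{N \in \rep(Q,f) : N \in \p X\}$ via the non-vanishing of the single determinantal polynomial $C^N(X)$ rather than by semicontinuity of $\Hom$ and $\Ext^1$ separately, and it compresses your careful transport argument into the one-line remark that exact equivalences preserve general representations (your unpacking via the Schur-plus-Ext-orthogonal characterization is exactly what makes that remark work).

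The hurdle you flag---showing that a dimension vector $f \in H^{ss}_{d_X}$ is actually realized by an object of $\p X$---is real, and the paper glosses over it with the sentence ``Since $f\in H^{ss}_{d_X}$, there exists a representation of dimension vector $f$ in $\p X$.'' The justification is the construction in the proof of the immediately preceding Lemma~\ref{lemmad+d-}: starting from any expression $f = \sum a_i d_{S_i^X} - \sum b_j q_j$ with $a_i,b_j \ge 0$, set $M_0 = \bigoplus_i (S_i^X)^{a_i}$ and $P_0 = \bigoplus_j P_j^{b_j}$, and iteratively replace $(P_t,M_t)$ by the kernel and cokernel of a nonzero map $P_t \to M_t$; the process stays inside $\p X$ and terminates with some $M_t \in \p X$ of dimension vector $f_+$. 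Since $f$ is already a dimension vector, $f_+ = f$, and $M_t$ is the witness you need. So rather than trying to argue directly that the coefficients $b_j$ can be taken to be zero, you should simply invoke that construction.
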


\begin{proof}
Since $f\in H^{ss}_{d_X}$, there exists a representation of dimension vector $f$ in $\p X$.  Let $\mathcal{U}$ be a non-empty open set of $\rep(Q,f)$ as in the definition of the canonical decomposition of $f$. A representation $N$ in $\rep(Q,f)$ lies in $\p X$ if and only if the determinant $C^N(X)$ does not vanish.  This defines a non-empty open set $\mathcal{U}'$ in $\rep(Q,f)$.  Being an affine space, $\rep(Q,f)$ is irreducible. The first statement follows from the fact the the intersection $\mathcal{U} \cap \mathcal{U'}$ is non-empty and $\mathcal{U}$ only contains general representations. Being exact equivalences, it is clear that $F_{d_X}, G_{d_X}$ preserve the general representations.
\end{proof}

If
$X$ is exceptional, then the cd-fan for $Q$ restricts to a simplicial fan in $H_{d_X}^{ss} \cap (\mathbb{Q}^n)_+$. From Lemma
\ref{Lemma2}, this simplicial fan on $H_{d_X}^{ss}\cap (\mathbb{Q}^n)_+$ corresponds, under $\varphi_X$, to the cd-fan for $\rep(Q_X)$. Let us note that it is not true that the cp-fan for $Q$ restricts to the cp-fan for $\rep(Q_X)$. The reason is that the relative projective objects in $\rep(Q_X)$ are not necessarily projective objects in $\rep(Q)$.

Recall, from Section \ref{section6}, the definition of the set $R$ when $Q$ is Euclidean. To unify the notations, we set $R = \emptyset$ when $Q$ is Dynkin. For $Q$ Dynkin or Euclidean, let $\hyp=\hyp_Q = \{C_I\}_{I \in R} \cup \{H_\alpha^{ss}\}_{\alpha}$ where $\alpha$ runs through the set of all real Schur roots.
Let $\lat=\lat_Q$ be the set of all subsets of $\mathbb{Q}^n$ that can be expressed
as intersections of some subset of $\hyp$.  This is called the
\emph{intersection lattice} of $\hyp$.
For $L\in \lat$, define the faces of $L$ to be
the connected components of the set of points which are in $L$ but
not in any smaller intersection.  Define the faces of $\lat$ to be
the collection of all faces of all the elements of $\lat$. Given $d \in \Z^n$, $\hyp_d \subseteq \hyp$ consists of all the elements in $\hyp$ containing $d$.

\begin{Lemma}\label{reals} Let $Q$ be a Dynkin or Euclidean quiver, $\alpha$ a
real Schur root and $d \in \Z^n$.  Then $d\in H_\alpha^{ss}$ if and only if $M(\alpha)\in \rep(Q)_d$.
\end{Lemma}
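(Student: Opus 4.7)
The plan is to argue both implications using the abelian category $\p M(\alpha)$, whose relative simples $S_1^{M(\alpha)}, \ldots, S_{n-1}^{M(\alpha)}$, together with the negatives of the dimension vectors $d_{P_j}$ of those indecomposable projectives $P_j$ of $\rep(Q)$ lying in $\p M(\alpha)$ (equivalently, $j \notin {\rm Su}(\alpha)$), generate $H_\alpha^{ss}$.

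For the direction $d \in H_\alpha^{ss} \Rightarrow M(\alpha) \in \rep(Q)_d$, I would write $d = \sum_i a_i d_{S_i^{M(\alpha)}} - \sum_{j \notin {\rm Su}(\alpha)} b_j d_{P_j}$ with $a_i, b_j \in \mathbb{Q}_{\ge 0}$ and verify King's criterion (Proposition \ref{King}) for $M(\alpha)$. Every generator of $H_\alpha^{ss}$ is the dimension vector of an object of $\p M(\alpha)$ and hence pairs to zero with $\alpha$, giving $\langle d, \alpha \rangle = 0$. For a subrepresentation $M' \subseteq M(\alpha)$ with quotient $M''$, applying $\Hom(S_i^{M(\alpha)}, -)$ to $0 \to M' \to M(\alpha) \to M'' \to 0$ and using $\Hom(S_i^{M(\alpha)}, M(\alpha)) = 0 = \Ext^1(S_i^{M(\alpha)}, M(\alpha))$ yields $\langle d_{S_i^{M(\alpha)}}, d_{M'} \rangle = -\dim_k \Hom(S_i^{M(\alpha)}, M'') \le 0$, while $\langle d_{P_j}, d_{M'} \rangle = (M')_j = 0$ for $j \notin {\rm Su}(\alpha) \supseteq {\rm Su}(M')$. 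Combined with $a_i, b_j \ge 0$, this delivers $\langle d, d_{M'} \rangle \le 0$.

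In the other direction, suppose $M(\alpha) \in \rep(Q)_d$. I would apply Corollary \ref{cor1} to obtain $m > 0$ and a representation $V$ supported on $Q_\alpha$ with $C^V(M(\alpha)) \ne 0$ and $\langle d_V, -\rangle \sim_\alpha \langle md, -\rangle$; the exact sequence \eqref{eqn1} forces $V \in \p M(\alpha)$, so $d_V$ is a non-negative integer combination of the $d_{S_i^{M(\alpha)}}$. The main obstacle is bridging the geometric cone description with the weight-equivalence $\sim_\alpha$, and for this I would invoke the Euler-form duality $\langle d_{P_j}, e_i \rangle = \delta_{ij}$: since $\{d_{P_1}, \ldots, d_{P_n}\}$ is the basis of $\mathbb{Q}^n$ dual to $\{e_1, \ldots, e_n\}$, any $f \in \mathbb{Q}^n$ admits the expansion $f = \sum_j \langle f, e_j \rangle d_{P_j}$. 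Applied to $f = d_V - md$, the $\sim_\alpha$-condition kills the coefficients $\langle f, e_i \rangle$ for $i \in {\rm Su}(\alpha)$, leaving $f = \sum_{j \notin {\rm Su}(\alpha)} c_j d_{P_j}$. Splitting $md = d_V - f$ according to the sign of $c_j$, the terms with $c_j \ge 0$ contribute non-negative multiples of the generators $-d_{P_j}$, while those with $c_j < 0$ contribute $(-c_j) d_{P_j}$, itself a non-negative combination of the $d_{S_i^{M(\alpha)}}$ since $P_j \in \p M(\alpha)$. This exhibits $md$, hence $d$, as a non-negative rational combination of the generators of $H_\alpha^{ss}$.
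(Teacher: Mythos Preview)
Your argument is correct. For the implication $M(\alpha)\in\rep(Q)_d \Rightarrow d\in H_\alpha^{ss}$ you follow essentially the paper's route via Corollary~\ref{cor1} and the dual-basis identity $\langle d_{P_j},e_i\rangle=\delta_{ij}$; your explicit sign-splitting of $d_V-md=\sum_{j\notin\mathrm{Su}(\alpha)}c_jd_{P_j}$ is exactly the content behind the paper's terse ``this gives $d\in H_\alpha^{ss}$'', and is justified since $d_{P_j}$ (for $j\notin\mathrm{Su}(\alpha)$) lies in the non-negative span of the relative simples.

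For the implication $d\in H_\alpha^{ss}\Rightarrow M(\alpha)\in\rep(Q)_d$ you take a genuinely different, and more elementary, path. The paper invokes Lemma~\ref{lemmad+d-} to split $d=d_++d_-$ inside $H_\alpha^{ss}$, then uses Lemma~\ref{Lemma2} to produce a general representation $V$ of dimension $d_+$ in $\p M(\alpha)$, so that $C^V(M(\alpha))\ne 0$ witnesses semi-stability. You bypass both lemmas and verify King's numerical criterion directly on the cone generators: from $S_i^{M(\alpha)}\in\p M(\alpha)$ and the long exact sequence you get $\Hom(S_i^{M(\alpha)},M')=0$ and $\Ext^1(S_i^{M(\alpha)},M')\cong\Hom(S_i^{M(\alpha)},M'')$, hence $\langle d_{S_i^{M(\alpha)}},d_{M'}\rangle\le 0$; the projective generators contribute zero since $\mathrm{Su}(M')\subseteq\mathrm{Su}(\alpha)$. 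This is cleaner and self-contained. The paper's approach, on the other hand, actually exhibits a semi-invariant $C^V$ not vanishing at $M(\alpha)$, which is closer in spirit to the original definition of semi-stability and reuses machinery (general representations in $\p X$) already in place for Theorem~\ref{two}.
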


\begin{proof} Suppose that $M(\alpha)\in \rep(Q)_d$. Then from Corollary \ref{cor1}, there exist a positive integer $m$ and a representation
$V$ supported over $Q_{\alpha}$ with $\langle d_V, - \rangle \sim_\alpha \langle md, - \rangle$ and with $C^{V}(M(\alpha)) \ne 0$. Then $V \in {}^\perp M(\alpha)$ and $d_V \in H_\alpha^{ss}$. If $M(\alpha)$ is sincere, then $d_V = md$ and hence $d \in H_\alpha^{ss}$. Otherwise, $\langle d_V-dm, - \rangle \sim_\alpha 0$. There exists $f \in \Z^n$ having a support disjoint from $\alpha$ such that $\langle d_V-dm, - \rangle = f$, which gives $d_V-dm = fE^{-1}$, where $E$ is the matrix of the Euler form. But $fE^{-1}$ lies in the span of the dimension vectors of the projective representations whose tops are not supported over $Q_{\alpha}$. This gives $d\in H_\alpha^{ss}$. Conversely, assume
$d \in H_\alpha^{ss}$. Write $d = d_+ + d_-$, where $d_+$ is a dimension vector and $-d_-$ is the dimension vector of a projective representation such that $\langle d_-, d_+ \rangle = 0$. By Lemma \ref{lemmad+d-}, we have $d_+, d_- \in H_\alpha^{ss}$. Then a general representation of $Q_{M(\alpha)}$ of
dimension vector $\varphi_{M(\alpha)}(d_+)$ is a general representation of $Q$ of dimension
vector $d_+$, and this gives us a supply of representations of
dimension vector $d_+$ which lie in $\p M(\alpha)$.  Any such representation $V$
will satisfy $C^V(M(\alpha))\ne 0$. Since $\langle d, - \rangle$ and $\langle d_+, - \rangle$ agree on the support of $M(\alpha)$, we see by Propositions \ref{King} and \ref{dnonreg} that $M(\alpha)\in \rep(Q)_d$.
\end{proof}

We will now state the main theorem of this section, and prove it in the
Dynkin case.  The proof in the Euclidean case is similar but
requires some further lemmas,
so we defer it to the end of this section.

\begin{Theo}\label{ssequiv} Let $Q$ be a Dynkin or Euclidean quiver and $d_1,d_2 \in \Z^n$.
Then $d_1$ and $d_2$ are ss-equivalent if and only if $\hyp_{d_1}=\hyp_{d_2}$.
\end{Theo}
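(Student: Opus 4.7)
The plan is to prove both implications. The Dynkin case is essentially immediate from Lemma~\ref{reals}: every indecomposable in $\rep(Q)$ has the form $M(\alpha)$ for some real Schur root $\alpha$, so
\[\rep(Q)_d=\mathrm{add}\{M(\alpha)\mid d\in H_\alpha^{ss}\}\]
depends only on $\hyp_d$, and since in Dynkin type $\hyp$ consists only of the subsets $H_\alpha^{ss}$, the equivalence follows in both directions at once.

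In the Euclidean case, the forward direction begins the same way: if $\rep(Q)_{d_1}=\rep(Q)_{d_2}$, then Lemma~\ref{reals} forces $d_1$ and $d_2$ to lie in the same subsets of type $H_\alpha^{ss}$. The nontrivial additional step is to show that they lie in the same cones $C_I$. I would first verify that membership in $H_\delta^{ss}$ is itself ss-invariant, using Theorem~\ref{one} to distinguish type (i) from type (ii) semi-stable subcategories (the latter contain a quasi-simple of each homogeneous tube, while finitely generated semi-stable subcategories of Dynkin type do not). Then, for $d_1,d_2\in H_\delta^{ss}$, Proposition~\ref{dreg} writes $\rep(Q)_{d_i}=V_i^\perp\cap\reg$ with $V_i=M(\hat{d}_i)$ a regular rigid module; I would argue, using the standard tube structure recalled in Proposition~\ref{ARShape} together with the description of $H_\delta^{ss}$ in Proposition~\ref{boundaryofregular}, that the tube-by-tube ``shape'' (i.e., the multiset of segments) of the summands of $V_i$ can be recovered from the category $V_i^\perp\cap\reg$, and that this shape in turn determines which cones $C_I$ contain $d_i$.

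For the backward direction, suppose $\hyp_{d_1}=\hyp_{d_2}$. By Lemma~\ref{reals} the two categories $\rep(Q)_{d_1}$ and $\rep(Q)_{d_2}$ contain the same exceptional indecomposables $M(\alpha)$. In the non-regular case $d_i\notin H_\delta^{ss}$, the category $\rep(Q)_{d_i}=V_i^\perp$ is finitely generated (Theorem~\ref{one}), hence recovered from its exceptional indecomposables, and we are done. In the regular case $d_i\in H_\delta^{ss}$, the additional information about which $C_I$'s contain $d_i$ is used to recover the non-exceptional regular indecomposables of $\rep(Q)_{d_i}$ in each tube, which then identifies $V_i^\perp\cap\reg$ on both sides.

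The principal obstacle is precisely this translation between the combinatorial datum ``$d\in C_I$'' and the representation-theoretic structure of the summands of $V=M(\hat d)$ inside the non-homogeneous tubes. Concretely, one has to show that the support of $V$ within each tube of $\rep(Q)$ is determined by (and determines) the set of cones $C_I$ containing $d$, and that two $d_i$'s with matching $\hyp_{d_i}$ produce $V_i$'s having the same non-exceptional regular indecomposables in the perp-intersected-with-$\reg$. This will require careful use of the hom-ext orthogonality relations between segments in tubes, together with the fact that the boundary of $H_\delta^{ss}$ is governed by the combinatorics of the facets $F_I$ from Proposition~\ref{boundaryofregular}.
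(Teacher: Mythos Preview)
Your Dynkin argument is correct and coincides with the paper's. In the Euclidean case your outline is reasonable, but it leaves the hard step---your ``principal obstacle''---unproven, and the paper resolves it by a cleaner device that avoids analysing $V_i=M(\hat d_i)$ altogether.

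The paper's key tool is Lemma~\ref{siso}: for $I\in R$ one has $d\in C_I$ if and only if $Z_I\in\rep(Q)_d$, where $Z_I$ is the direct sum of the singular-isotropic representations whose quasi-socles are the quasi-simples omitted from $\mathcal W_I$. This is the precise analogue of Lemma~\ref{reals} for the cones $C_I$, and it makes the forward direction immediate: if $\rep(Q)_{d_1}=\rep(Q)_{d_2}$, then Lemmas~\ref{reals} and~\ref{siso} together read off $\hyp_{d_1}=\hyp_{d_2}$ directly from that category. For the converse the paper uses Lemma~\ref{intersections}, which says that two extension-closed abelian subcategories agreeing on the exceptional indecomposables and on the indecomposables of dimension vector $\delta$ must be equal. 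Given $\hyp_{d_1}=\hyp_{d_2}$, Lemmas~\ref{reals} and~\ref{siso} force agreement on exceptionals and on singular-isotropic representations; a one-line case split (some $C_I\in\hyp_{d_i}$ or not) handles the quasi-simples of the homogeneous tubes; and Lemma~\ref{intersections} finishes.

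By contrast, you propose to reconstruct the tube-by-tube segment structure of $V_i$ from $V_i^\perp\cap\reg$ and then deduce $C_I$-membership of $d_i$ from that. This asks for more than is needed---only the set of singular-isotropic representations lying in $\rep(Q)_{d_i}$ matters, which is exactly what Lemma~\ref{siso} isolates---and you have not actually carried it out. One further imprecision: ``finitely generated, hence recovered from its exceptional indecomposables'' is false as stated, since a finitely generated subcategory equivalent to the representations of a Euclidean quiver is not determined by its exceptionals. What you need is that $d_i\notin H_\delta^{ss}$ forces $\rep(Q)_{d_i}$ to be of (possibly disconnected) Dynkin type; this is true, but it requires an argument via Lemma~\ref{kindofperp} rather than following from finite generation alone.
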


\begin{proof}[Proof of Theorem \ref{ssequiv} in the Dynkin case]
Let $Q$ be a Dynkin quiver.  Suppose that $d_1$ and $d_2$ are ss-equivalent.
By definition, $\rep(Q)_{d_1}=\rep(Q)_{d_2}$, and Lemma \ref{reals}
characterizes $\hyp_{d_1}$ and $\hyp_{d_2}$ in terms of this subcategory,
so they must be equal.  (Since $Q$ is Dynkin, $\hyp$ consists only of
cones of the form $H_{\alpha}^{ss}$.)

Conversely, suppose $\hyp_{d_1}=\hyp_{d_2}$.  By Lemma \ref{reals},
$\rep(Q)_{d_1}$ and $\rep(Q)_{d_2}$ contain the same exceptional
representations.  Since $Q$ is Dynkin, all indecomposables are exceptional,
and thus $\rep(Q)_{d_1}=\rep(Q)_{d_2}$.
\end{proof}

Suppose now that $Q$ is Euclidean.
For $I \in R$, we denote by $\mathcal{W}_I$ the abelian, extension-closed subcategory generated by the indecomposable representations of dimension vectors in $F_I$.
There is a unique quasi-simple in each non-homogeneous tube
not contained in $\mathcal W_I$.

An indecomposable representation lying in a non-homogeneous tube and
having dimension vector the null root will be called a
\emph{singular-isotropic representation}.
For each quasi-simple of a non-homogeneous tube, there is a
unique corresponding singular-isotropic representation for which there is a monomorphism from the quasi-simple to the singular-isotropic representation.

For $Q$ having more than $2$ vertices and $I\in R$, define $Z_I$ to be the direct sum of the
singular-isotropic representations corresponding to the quasi-simples
not in $\mathcal W_I$.  We have the following simple lemma:

\begin{Lemma}\label{pZ}
For $Q$ Euclidean and having more than $2$ vertices, $\p Z_I$ consists of the additive hull of $\mathcal W_I$ together with the
homogeneous tubes.
\end{Lemma}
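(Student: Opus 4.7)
The plan is to classify the indecomposables of $\rep(Q)$ lying in $\p Z_I$ according to the component of the Auslander--Reiten quiver in which they sit, and then observe that different tubes interact independently with the summands of $Z_I$. Recall that every summand $Z_{I,i}$ of $Z_I$ has dimension vector $\delta$ and is therefore non-exceptional.

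Preprojective and preinjective indecomposables are immediately excluded: Lemma \ref{Lemma1}(1) gives $\Hom(V, Z_{I,i}) \ne 0$ for every preprojective indecomposable $V$, and Lemma \ref{Lemma1}(2) gives $\Ext^1(V, Z_{I,i}) \ne 0$ for every preinjective indecomposable $V$. Conversely, indecomposables in homogeneous tubes lie entirely in $\p Z_I$: for $M$ in a homogeneous tube $T_\lambda$, Proposition \ref{ARShape}(2) gives $\Hom(M, Z_{I,i}) = 0$, and the Auslander--Reiten formula gives $\Ext^1(M, Z_{I,i}) \cong D\Hom(Z_{I,i}, \tau M) = 0$ since $\tau M \in T_\lambda$ remains in a tube different from that of $Z_{I,i}$.

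The crux is the analysis inside a non-homogeneous tube $T_i$. Using the standardness of $T_i$ (Proposition \ref{ARShape}), indecomposables in $T_i$ are parametrized by their socle quasi-simple $\beta_{i,j}$ and their quasi-length $h$, and every morphism $M \to Z_{I,i}$ factors as a surjection onto a common quotient-subobject. The non-zero subobjects of $Z_{I,i} = [a_i; r_i]$ form the chain $[a_i;1] \subset \cdots \subset [a_i;r_i]$, all having $\beta_{i, a_i}$ as socle, while the non-zero quotients of $M = [j;h]$ are the $[j+k; h-k]$ for $0 \le k < h$, obtained by cyclic shifts of the socle. A non-zero morphism therefore exists iff some non-zero quotient of $M$ has socle $\beta_{i, a_i}$, which is equivalent to $\beta_{i, a_i}$ appearing as a composition factor of $M$. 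Since $\langle d_M, \delta\rangle = 0$ forces $\dim_k \Hom(M, Z_{I,i}) = \dim_k \Ext^1(M, Z_{I,i})$, the $\Ext^1$ vanishing is automatic. Thus $M \in \p Z_{I,i}$ iff $\beta_{i, a_i}$ is not a composition factor of $M$, which is exactly the condition that $M$ lies in the abelian, extension-closed subcategory of $T_i$ generated by $\{\beta_{i, j} : j \ne a_i\}$, namely the $T_i$-part of $\mathcal W_I$. Since different tubes are Hom- and $\Ext^1$-orthogonal by Proposition \ref{ARShape}(2) and the Auslander--Reiten formula, this tube-wise analysis assembles into the global claim.

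The main obstacle is the combinatorial step inside $T_i$: verifying that the description of subobjects and quotients in the standard tube translates faithfully into the asserted non-vanishing of $\Hom$. This is routine book-keeping in the mesh category of the cyclic translation quiver associated with $T_i$, but requires care in tracking indices modulo $r_i$ and in checking that the chain-of-subobjects description of $Z_{I,i}$ really is complete.
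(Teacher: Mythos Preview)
Your proof is correct and follows essentially the same route as the paper's own argument: the paper invokes Lemma~\ref{Lemma1} to confine $\p Z_I$ to $\mathcal{R}{\rm eg}$, declares the homogeneous-tube inclusion ``clear,'' and leaves the non-homogeneous case as ``a simple check within each of the non-homogeneous tubes.'' You have simply carried out that check in full, using the standardness of the tubes and the Euler-form trick $\langle d_M,\delta\rangle=0$ to reduce $\Ext^1$-vanishing to $\Hom$-vanishing.
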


\begin{proof} Lemma \ref{Lemma1} tells us that $\p Z_I$ is contained in
$\mathcal{R}$eg.  It is clear that the homogeneous tubes lie in
$\p Z_I$.  The rest is just a simple check within each of the
non-homogeneous tubes.
\end{proof}

We can now prove an analogue of Lemma \ref{reals} for the $C_I$.

\begin{Lemma} \label{siso} Let $Q$ be a Euclidean quiver having $n$ vertices and let $d \in \Z^n$.  If $n > 2$, then for $I\in R$, we have
$d \in C_I$ if and only if $Z_I \in \rep(Q)_d$.  If $n=2$, then $d \in C_I = H_\delta^{ss}$ if and only if $\rep(Q)_d = \mathcal{R}$eg. \end{Lemma}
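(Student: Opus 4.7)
The approach is to apply Corollary \ref{cor1} to reformulate $Z_I \in \rep(Q)_d$ as the existence of a representation $V$ with $d_V = md$ and $C^V(Z_I) \neq 0$, and then to describe $\p Z_I$ concretely using Lemma \ref{pZ}. The useful simplification is that for $Q$ connected Euclidean, the null root $\delta$ is sincere, so $Z_I$ (a direct sum of singular-isotropic representations each of dimension $\delta$) is also sincere; hence $Q_{d_{Z_I}} = Q$ and $\sim_{Z_I}$ reduces to equality on $\mathbb{Z}^n$. Thus the criterion of Corollary \ref{cor1} becomes: $Z_I \in \rep(Q)_d$ iff there exist $m > 0$ and $V \in \rep(Q)$ with $d_V = md$ and $C^V(Z_I) \neq 0$, and the latter condition is equivalent to $V \in \p Z_I$ (together with $\langle d_V, d_{Z_I} \rangle = 0$, which is automatic once $V \in \p Z_I$).

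For the main case $n > 2$, the forward direction is immediate: the $V$ produced by Corollary \ref{cor1} lies in $\p Z_I$, so by Lemma \ref{pZ} it is a direct sum of indecomposable summands drawn from $\mathcal W_I$ and from the homogeneous tubes; summing dimension vectors places $md = d_V$ in the non-negative integer cone generated by the quasi-simple roots in $F_I$ and $\delta$, namely $C_I$. Rationality of $C_I$ then yields $d \in C_I$. Conversely, suppose $d \in C_I$. Since $C_I$ is simplicial with generators the quasi-simples in $F_I$ together with $\delta$, I write $d = \sum_\beta q_\beta \beta + s\delta$ uniquely with $q_\beta, s \in \mathbb{Q}_{\ge 0}$, and clear denominators to obtain $md = \sum_\beta a_\beta \beta + r\delta$ with $a_\beta, r \in \mathbb{Z}_{\ge 0}$. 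Setting $V := \bigoplus_\beta M(\beta)^{a_\beta} \oplus W^r$ where $W$ is any quasi-simple from a homogeneous tube, each summand lies in $\p Z_I$ by Lemma \ref{pZ}, so $V \in \p Z_I$; combined with $\langle d_V, d_{Z_I} \rangle = mN\langle d, \delta \rangle = 0$ (using $C_I \subseteq H_\delta^{ss} \subseteq H_\delta$, where $N$ is the number of non-homogeneous tubes), this yields $C^V(Z_I) \neq 0$, as the determinant of a square matrix with vanishing kernel.

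For the Kronecker case $n = 2$, all tubes are homogeneous and $H_\delta^{ss}$ is the $\delta$-ray, so a direct application of Proposition \ref{King} handles both directions: if $d$ is a (positive) multiple of $\delta$, every regular $M$ satisfies $\langle d, d_M \rangle = 0$ and the standard slope argument yields $\langle d, d_{M'} \rangle \le 0$ for every submodule $M'$, while preprojective and preinjective indecomposables violate $\langle d, d_M \rangle = 0$; conversely, if $\rep(Q)_d = \reg$, semi-stability of a quasi-simple in a homogeneous tube forces $\langle d, \delta \rangle = 0$, placing $d$ on the $\delta$-line. The main technical work is concentrated in Lemma \ref{pZ}, which identifies $\p Z_I$; once it is in hand, the argument is essentially a translation in which sincerity of $Z_I$ neutralizes the equivalence $\sim_{Z_I}$ and the simplicial structure of $C_I$ permits an explicit tube-by-tube construction of $V$.
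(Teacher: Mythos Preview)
Your argument is correct and follows essentially the same route as the paper: both directions rest on the sincerity of $Z_I$ (so that Corollary~\ref{cor1} collapses to $d_V=md$) together with Lemma~\ref{pZ} identifying $\p Z_I$. The only tactical difference is that the paper phrases each implication through the canonical decomposition of $md$ (arguing that a \emph{general} representation of dimension $md$ lies in $\p Z_I$, hence its Schur summands are drawn from $\mathcal W_I$ or equal $\delta$), whereas you decompose the specific $V$ directly in the forward direction and build $V$ explicitly from simplicial coordinates in $C_I$ for the converse. This is a harmless streamlining that bypasses the Section~\ref{section6} machinery.

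One small gap in your $n=2$ case: from $\langle d,\delta\rangle=0$ you only place $d$ on the $\delta$-\emph{line}, but $C_\emptyset=H_\delta^{ss}$ is the \emph{positive ray} $[\delta]$. To finish, note that since $\reg$ contains sincere representations, Corollary~\ref{cor1} forces $md$ to be a genuine dimension vector for some $m>0$, so $d$ is a non-negative multiple of $\delta$; and $d\ne 0$ because $\rep(Q)_0=\rep(Q)\ne\reg$.
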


\begin{proof} The case $n=2$ is trivial. If $Z_I\in \rep(Q)_d$, since it is sincere, then there is some representation
$V$ with dimension vector $md$ such that $C^V(Z_I)\ne 0$, and indeed,
a Zariski open subset of the representations with dimension $md$ have
this property.
By Lemma
\ref{pZ}, this implies that the canonical decomposition of
$md$ must contain Schur roots from $\mathcal W_I$
together with some multiple of $\delta$, and this implies that $d$ lies
in $C_I$.

Conversely, if $d$ lies in $C_I$, then the canonical decomposition of
$d$ consists of Schur roots from $\mathcal W_I$,
together with some non-negative multiple of $\delta$.  Lemma \ref{pZ}
implies that there exist representations of dimension vector $d$ which
lie in $\p Z_I$, so $Z_I\in \rep(Q)_d$.
\end{proof}

Lemmas \ref{reals} and \ref{siso} tell us that $\hyp_d$ contains enough
information to determine
exactly which exceptional or singular-isotropic indecomposable representations lie in $\rep(Q)_d$.  We will
use the
following lemma to show that this is enough information
to reconstruct $\rep(Q)_d$.

\begin{Lemma} \label{intersections}
Let $Q$ be Euclidean and let $\mathcal A$, $\mathcal B$ be
extension-closed abelian subcategories
of $\rep(Q)$.  Suppose $\mathcal A$ and $\mathcal B$ have the same
intersection with the exceptional representations and the
indecomposables whose dimension vector is the null root.  Then
$\mathcal A=\mathcal B$.
\end{Lemma}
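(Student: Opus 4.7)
The plan is to reduce the statement to indecomposable objects and then run a strong induction on quasi-length inside a tube. First, any extension-closed abelian subcategory of $\rep(Q)$ is automatically closed under direct summands (any idempotent endomorphism of an object splits it via the abelian structure of the subcategory), so it suffices to prove that each indecomposable $M \in \mathcal{A}$ lies in $\mathcal{B}$; the reverse containment then follows by symmetry. Any preprojective or preinjective indecomposable of $\rep(Q)$ is exceptional and hence in $\mathcal{B}$ by hypothesis, so we may assume $M$ is regular, lying in a stable tube of rank $r$ with quasi-length $\ell$.

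The induction is on $\ell$. The base cases are $\ell \leq r$: for $\ell < r$ the representation $M$ is exceptional, since standardness of the tube (Proposition~\ref{ARShape}) gives $\End(M) = k$ and $\Hom(M, \tau M) = 0$, whence $\Ext^1(M, M) = 0$; for $\ell = r$, the quasi-composition series of $M$ wraps exactly once around the tube so $d_M = \delta$. In either situation $M$ is covered by the hypothesis and lies in $\mathcal{B}$. Note that this already handles homogeneous tubes, where $r = 1$ and the only base case is the quasi-simple itself.

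For the inductive step $\ell > r$, standardness of the tube gives $\End(M) \cong k[T]/(T^{\lceil \ell/r \rceil})$, which is local of $k$-dimension at least $2$. Any nonzero element of the maximal ideal provides a nonzero non-invertible endomorphism $f : M \to M$. Because $f$ is a morphism in the abelian subcategory $\mathcal{A}$, both $\ker f$ and $\mathrm{im}(f)$ belong to $\mathcal{A}$. Writing $f = T^k u$ with $u$ invertible, we have $\ker f = \ker T^k$ and $\mathrm{im}(f) = \mathrm{im}(T^k)$; in a standard tube these are indecomposable regular objects lying in the same tube, of quasi-lengths $kr$ and $\ell - kr$ respectively, each strictly between $0$ and $\ell$. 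By the inductive hypothesis they belong to $\mathcal{B}$, and the short exact sequence $0 \to \ker f \to M \to \mathrm{im}(f) \to 0$, combined with the extension-closedness of $\mathcal{B}$, forces $M \in \mathcal{B}$.

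The main obstacle I anticipate is handling $\ker f$ and $\mathrm{im}(f)$ carefully: a submodule of a regular representation need not itself be regular in general (a quasi-simple of a homogeneous tube, for example, typically admits non-regular submodules in $\rep(Q)$). This is precisely where the structural description of $\End(M)$ in a standard tube is essential --- every non-invertible endomorphism is a polynomial in the canonical nilpotent $T$, which pins $\ker f$ and $\mathrm{im}(f)$ down to specific indecomposable sub-objects $\ker T^k$ and $\mathrm{im}(T^k)$ that remain inside the tube and have smaller quasi-length than $M$. Once this is in place, the induction is clean.
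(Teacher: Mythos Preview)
Your proof is correct. The paper takes a related but distinct route: rather than working with a nilpotent endomorphism of $M$, it constructs, via chains of irreducible maps in the tube, an explicit short exact sequence $0 \to X \to Y \oplus Z \to X \to 0$ with $Y$ singular-isotropic, deduces $Y \in \mathcal{A}$ from closure under summands, and then peels $Y$ off $X$ to lower the quasi-length; it also splits into two cases according to whether $d_X = m\delta$ or $d_X = m\delta + \alpha$ for a real Schur root $\alpha$. Your single induction on quasi-length using $\ker f$ and $\mathrm{im}(f)$ is more uniform and avoids that case distinction entirely. On the other hand, the paper's argument has an explicit by-product --- any thick subcategory of a tube containing a non-exceptional indecomposable must contain a singular-isotropic one --- which is invoked later in the proof of Proposition~\ref{thicktube}. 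Your approach yields the same fact (take $f = T$, so $\ker f$ has quasi-length exactly $r$ and hence dimension vector $\delta$), but you would want to record it separately if you intend to reuse it.
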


\begin{proof}  Suppose there is some object $X\in \mathcal A$.  We want
to show that $X\in \mathcal B$.  We may assume that $X$ is indecomposable.
If $X$ is exceptional or $X$ is at the bottom of a homogeneous tube, then $X\in \mathcal B$ by assumption, so assume
otherwise.

Suppose that the
dimension vector of $X$ is $m\delta$ for some $m>1$.  We claim that
$X$ admits a filtration by representations each of whose dimension vectors
is the null root, and each of which lies in $\mathcal A$.  The proof
is by induction on $m$.  Suppose that the tube containing $X$ has width $r$. There is a path of length $(m-1)r$ consisting of irreducible epimorphisms from $X$ to a singular-isotropic representation $Y$. There is also a path of length $(m-1)r$ consisting of irreducible monomorphisms from $X$ to a representation $Z$ of dimension vector $(2m-1)\delta$. This yields a monomorphism $X \to Y \oplus Z$ whose cokernel is clearly $X$.
Hence, there
is a short exact sequence
$$ 0 \rightarrow X \rightarrow Y\oplus Z \rightarrow X \rightarrow 0.$$  It
follows that $Y \in \mathcal A$.
Now, let $K$ be the kernel of the surjection from $X$ onto $Y$.  Since
$\mathcal A$ is abelian, $K$ also lies in $\mathcal A$, and its dimension
vector is $(m-1)\delta$.  By induction, it admits a filtration as desired,
and therefore so does $X$.
Now since each of the terms of the filtration lies in
$\mathcal A$ and has dimension vector $\delta$, they each also lie in
$\mathcal B$, and therefore $X\in \mathcal B$.

Finally, suppose that the dimension vector of $X$ is of the form
$m\delta+\alpha$, where $\alpha$ is a real Schur root.  Similarly to the
previous situation, there is an extension of $X$ by $X$ which has
$M(\alpha)$ as an indecomposable summand, so $M(\alpha)$ lies in
$\mathcal A$, and so does the kernel $K$ of the map from $X$ to $M(\alpha)$.
The dimension vector of $K$ is $m\delta$, so by what we have already established, it lies in $\mathcal B$, and so does $M(\alpha)$, so $X$ does as well.
\end{proof}

\begin{proof}[Proof of Theorem \ref{ssequiv} in the Euclidean case]
If $d_1$ and $d_2$ are ss-equivalent, then, by definition,
$\rep(Q)_{d_1}=\rep(Q)_{d_2}$, and now Lemmas \ref{reals} and \ref{siso}
characterize $\hyp_{d_1}$ and $\hyp_{d_2}$ in terms of this subcategory,
so they are equal.

Conversely, if $\hyp_{d_1} = \hyp_{d_2}$, then by Lemmas \ref{reals} and \ref{siso}, $\rep(Q)_{d_1}$ and $\rep(Q)_{d_2}$ agree as
to their intersection with exceptional representations and singular-isotropic
representations.  In order to apply Lemma \ref{intersections}, we also
need to check their intersections with the quasi-simples of homogeneous tubes.
If $\hyp_{d_1}$ and $\hyp_{d_2}$ do not include any
$C_I$, then neither $d_i$ lies on $H_\delta^{ss}$, so neither
$\rep(Q)_{d_1}$ nor $\rep(Q)_{d_2}$ contains any homogeneous tubes by Lemma \ref{Lemma1} and Proposition \ref{dnonreg}.
On the other hand, if $\hyp_{d_1}$ and $\hyp_{d_2}$ do contain some
$C_I$, then $d_1$ and $d_2$ both lie on the regular hyperplane, and thus
both $\rep(Q)_{d_1}$ and $\rep(Q)_{d_2}$ contain all the homogeneous tubes by Lemmas \ref{Lemma1} and \ref{onlyreg} and Proposition \ref{dreg}.
In either case, we see that $\rep(Q)_{d_1}$ and $\rep(Q)_{d_2}$ agree as
to their intersections with quasi-simples from homogeneous tubes as well,
and therefore Lemma \ref{intersections} applies to tell us that
$\rep(Q)_{d_1}=\rep(Q)_{d_2}$.  \end{proof}

\section{Properties of the cp-fan and proof of Theorem \ref{two}}

In this section, we prove Theorem \ref{two}, describing when two
elements of $\Z^n$ have canonical presentations whose summands correspond to the same
rays in the cp-fan. Recall that since $Q$ is Dynkin or Euclidean, the rays of the cp-fan correspond bijectively to the summands occurring in cp-presentations. All the quivers in this section are Dynkin or Euclidean.

In the sequel, an additive $k$-category $\mathcal{C}$ will be called \emph{representation-finite}, or of \emph{finite type}, if it contains finitely many indecomposable objects, up to isomorphism.
In the Euclidean case, all the categories $\mathcal{W}_I$ are representation-finite, abelian, extension-closed subcategories of $\mathcal{R}$eg.
The following shows that the $\mathcal{W}_I$ are the maximal such subcategories.

\begin{Lemma} \label{Lemma3}
Let $Q$ be Euclidean and $\mathcal{C}$ be an abelian, extension-closed subcategory of $\rep(Q)$ contained in $\mathcal{R}$eg.  Then $\mathcal{C}$ is representation-finite if and only if $\mathcal{C}$ is contained in some $\mathcal{W}_I$.
\end{Lemma}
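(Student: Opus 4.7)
The reverse direction is immediate. The subcategory $\mathcal{W}_I$ is generated by $\sum_{i=1}^N (r_i-1) = n-2$ pairwise Hom- and Ext-orthogonal quasi-simple roots, which in any linear order form an exceptional sequence; by Proposition \ref{equiv_thick}, $\mathcal{W}_I$ is equivalent to $\rep(Q')$ where $Q'$ is a disjoint union of Dynkin quivers of type $\mathbb{A}_{r_i-1}$, hence is representation-finite, as is any subcategory $\mathcal{C} \subseteq \mathcal{W}_I$.

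For the forward direction, suppose $\mathcal{C} \subseteq \mathcal{R}\mathrm{eg}$ is abelian, extension-closed, and representation-finite. By the vanishing of Homs and Exts between distinct tubes (Proposition \ref{ARShape}(2)), $\mathcal{C}$ splits as $\bigoplus_\lambda \mathcal{C}_\lambda$, where each $\mathcal{C}_\lambda$ is abelian and extension-closed. No homogeneous tube can contribute: for indecomposable $X$ in such a tube $\tau X = X$, so $\Ext^1(X,X) = D\Hom(X,X) \ne 0$, and iterated non-split self-extensions yield indecomposables of unbounded dimension. The argument in the proof of Lemma \ref{intersections} further shows that inside each non-homogeneous tube $T_i$, $\mathcal{C}_i$ cannot contain any indecomposable of dimension $m\delta+\alpha$ with $m \ge 1$, since such an object produces a singular-isotropic in $\mathcal{C}_i$ and thence infinitely many indecomposables. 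Hence every indecomposable of $\mathcal{C}_i$ is exceptional of length strictly less than $r_i$ in $T_i$.

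It remains to exhibit, for each $i$, a quasi-simple $\beta_{i,a_i}$ not appearing as a composition factor of any object in $\mathcal{C}_i$; the $a_i$'s then assemble into $I \in R$ with $\mathcal{C} \subseteq \mathcal{W}_I$. Let $S_1,\dots,S_p$ be the simples of $\mathcal{C}_i$, all exceptional of length $<r_i$ in $T_i$. If both $\Ext^1(S_a,S_b)$ and $\Ext^1(S_b,S_a)$ were nonzero for some $a\ne b$, then combining the corresponding non-split extensions would produce an object of length $r_i$ in $\mathcal{C}_i$ (a singular-isotropic wrap-around in the tube), contradicting the preceding step. Thus the simples can be linearly ordered into an exceptional sequence, so by Proposition \ref{equiv_thick}, $\mathcal{C}_i$ is equivalent to $\rep(Q')$ for an acyclic quiver $Q'$. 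Suppose for contradiction that the cyclic supports of $S_1,\dots,S_p$ in $\{\beta_{i,1},\dots,\beta_{i,r_i}\}$ cover the whole $r_i$-cycle. Traversing these arcs cyclically and applying the AR duality $\Ext^1(X,Y)=D\Hom(Y,\tau X)$ together with the explicit formulas for Hom in a standard tube, one finds nonzero Ext in one direction between each consecutive pair; this produces a cyclic chain of arrows in the Ext-quiver of $\mathcal{C}_i$, contradicting the acyclicity of $Q'$.

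The main obstacle is the last step: verifying that a cyclic cover of the quasi-simples by the supports of $S_1,\dots,S_p$ always produces a cycle in the Ext-quiver of $\mathcal{C}_i$. Since the supports may be of different lengths, strictly nested, or overlap in intricate ways, one must choose a minimal cyclically arranged collection of supports that still covers, identify the ``consecutive'' simples in that order, and verify in each configuration that the AR formula combined with the tube Hom-formula does force the predicted Ext to be nonzero.
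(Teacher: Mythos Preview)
Your approach diverges from the paper's and carries a real gap that you yourself flag. The paper's argument is much shorter and avoids the combinatorics of arcs entirely: assume $\mathcal{C}$ is representation-finite but not contained in any $\mathcal{W}_I$; after disposing of the case where $\mathcal{C}$ contains a non-exceptional indecomposable (which forces infinitely many objects), pick a non-homogeneous tube $\mathcal{T}$ where $\mathcal{C}\cap\mathcal{T}$ is not contained in the wing generated by any $r-1$ quasi-simples, and let $X$ be an indecomposable of \emph{maximal quasi-length} in $\mathcal{C}\cap\mathcal{T}$. Maximality forces $\Ext^1(X,\mathcal{C}\cap\mathcal{T})=\Ext^1(\mathcal{C}\cap\mathcal{T},X)=0$, since a nonzero extension in a standard tube produces an indecomposable of strictly larger quasi-length. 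If the quasi-composition factors of $X$ are $S,\tau S,\ldots,\tau^m S$, then any $Y\in\mathcal{C}\cap\mathcal{T}$ with $\tau^{m+1}S$ as a factor satisfies $\Ext^1(X,Y)\cong D\Hom(Y,\tau X)\ne 0$, a contradiction. Hence $\tau^{m+1}S$ is avoided by every object of $\mathcal{C}\cap\mathcal{T}$, contradicting the choice of $\mathcal{T}$.

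Your route via the simples of $\mathcal{C}_i$ has two soft spots. First, the claim that ``both $\Ext^1(S_a,S_b)$ and $\Ext^1(S_b,S_a)$ nonzero forces an object of quasi-length $r_i$'' is not justified: the middle terms of the two non-split extensions need not be indecomposable, and even when they are, their quasi-lengths depend on how the supports of $S_a$ and $S_b$ sit in the cycle, not just on $\ell_a+\ell_b$. Hom-orthogonality of simples (which you do have, since $\mathcal{C}_i$ is abelian) does not by itself pin this down. Second, and more seriously, the final step---that a cyclic cover of the quasi-simples by the supports of the $S_j$ yields an oriented cycle in the Ext-quiver---is exactly where the work lies, and you have not done it. Supports can nest or overlap in ways that break a naive ``consecutive pair'' argument; extracting a minimal covering subfamily and checking the Ext direction at each junction is a genuine case analysis. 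The paper's maximal-quasi-length trick sidesteps all of this with a single object.
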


\begin{proof} We only need to prove the necessity.
Suppose that $\C$ is representation-finite but not included in any of the $\mathcal{W}_I$.
If $\C$ contains an indecomposable non-exceptional representation $X$, then $\C$ clearly contains infinitely many indecomposable representations lying in the same component as $X$ of the Auslander-Reiten quiver of $\rep(Q)$, so
suppose this does not hold.
Since $\mathcal C$ is not contained in any $\mathcal W_I$, there exists a non-homogeneous tube $\mathcal{T}$, say of rank $r$, for which $\mathcal{C} \cap \mathcal{T}$ is not contained in any abelian, extension-closed subcategory of $\mathcal{T}$ generated by $r-1$ quasi-simple representations. Let $X$ be an exceptional representation in $\mathcal{C} \cap \mathcal{T}$ of largest quasi-length.  Then $X$ is such that $\Ext^1(X,\mathcal{C} \cap \mathcal{T}) = \Ext^1(\mathcal{C} \cap \mathcal{T},X)=0$ since otherwise, this would provide a representation in $\mathcal{C} \cap \mathcal{T}$ with quasi-length larger than that of $X$. Suppose that the quasi-simple composition factors of $X$ are $S, \tau S, \ldots, \tau^mS$, $0 \le m <r$.  Since $X$ is exceptional, $\tau^{m+1}S \not \cong S$ and $\tau^{m+1}S$ is not a quasi-simple composition factor of $X$ and any $Y \in \mathcal{C} \cap \mathcal{T}$ having $\tau^{m+1}S$ as a quasi-simple composition factor is such that $\Ext(X,Y)\cong D\Hom(Y,\tau X) \ne 0$, which is impossible. Hence, $\mathcal{C} \cap \mathcal{T}$ is contained in the extension-closed abelian subcategory of $\mathcal{T}$ generated by all the quasi-simple representations except $\tau^{m+1}S$. This is a contradiction.
\end{proof}

We say that two elements $d_1,d_2$ in $\Z^n$ are \emph{cp-equivalent} if the canonical presentations of $d_1,d_2$ involve the same summands, up to multiplicity.

\begin{Exam} Let $Q$ be the quiver $2 \longrightarrow 1$. The indecomposable representations, up to isomorphism, are the simple representations $S_1, S_2$ at vertices $1,2$, respectively and the projective-injective representation $P_2$. The canonical presentation of $(1,2)$ is $(1,1)+(0,1)$ and that of $(2,3)$ is $2(1,1) + (0,1)$. Therefore, $(1,2), (2,3)$ are cp-equivalent. The canonical presentation of $(-2,-3)$ is $3(-1,-1) + 1(1,0)$. So $(-2,-3)$ is cp-equivalent to $(-b,-a)$ for $a,b$ positive integers with $b < a$.
\end{Exam}

The following result gives an explicit description of when two elements are cp-equivalent.

\begin{Theo}\label{two}
Let $Q$ be a Dynkin or Euclidean quiver.  Two elements $d_1, d_2$ in $\Z^n$ are cp-equivalent if and only if
they lie in the same face of $\lat_Q$.
\end{Theo}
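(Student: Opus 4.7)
The plan is to establish the following \emph{dictionary} between membership in $\hyp_Q$ and canonical presentations: for any $d\in\Z^n$ and any $J\in\hyp_Q$,
\[
d\in J \iff \text{every summand ray in the canonical presentation of } d \text{ lies in } J.
\]
For $J=H_\alpha^{ss}$, the direction ``$\Leftarrow$'' is immediate from convexity. For ``$\Rightarrow$'', I would invoke Lemma \ref{lemmad+d-} to split $d=d_++d_-$ with both in $H_\alpha^{ss}$, use Lemma \ref{Lemma2} to realize the canonical decomposition of $d_+$ as a direct sum of Schur representations inside $\p M(\alpha)$ (forcing each positive summand into $H_\alpha^{ss}$), and invoke uniqueness of the expression of $d_-$ in the basis of negatives of indecomposable projectives to force each summand $-d_{P_i}$ with positive coefficient to lie in $H_\alpha^{ss}$ (equivalently, $i\notin{\rm Su}(M(\alpha))$). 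For $J=C_I$ in the Euclidean case, since $C_I\subseteq(\mathbb{Q}^n)_+$ we have $d=d_+$, and Lemmas \ref{siso} and \ref{pZ} identify the cp-summands as regular real Schur roots in $F_I$ or copies of $\delta$.

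The forward direction follows directly from the dictionary. If $d_1,d_2\in{\rm relint}(C)$ for the same cp-cone $C$ with rays $r_1,\dots,r_k$, then $\hyp_{d_1}=\hyp_{d_2}=\{J\in\hyp_Q:r_j\in J \text{ for all } j\}$, so both lie in the same smallest element $L\in\lat$. Applying the dictionary at every point of ${\rm relint}(C)$ shows no such point lies in any $L'\subsetneq L$ (else its $\hyp$-index would strictly contain $\hyp_L$). Thus ${\rm relint}(C)$ is a connected subset of $L\setminus\bigcup_{L'\subsetneq L}L'$ and lies in a single face of $\lat$.

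For the converse the main tool is the \emph{Key Claim}: if $C^*\subsetneq C$ are cp-cones, there exists $J\in\hyp_Q$ with $C^*\subseteq J$ but $C\not\subseteq J$. Since any proper face of a simplicial cone sits in some facet, and any cp-cone is a face of a maximal one, it suffices to treat $C^*=W$ a codimension-one face of a maximal cp-cone $C$. Then $W$ is spanned by $n-1$ linearly independent compatible rays which, via Propositions \ref{GL2}, \ref{equiv_thick} and the compatibility framework of \cite{IOTW}, determine a thick subcategory of $D^b(\rep(Q))$ whose perpendicular is generated by a single object $X$. If $X=M(\alpha)$ for a real Schur root $\alpha$, then each ray of $W$ lies in $\p M(\alpha)$ or is a negative projective of $\rep(Q)$ that lies in $\p M(\alpha)$, giving $W\subseteq J:=H_\alpha^{ss}$. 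In the Euclidean case where $\delta$ is a ray of $W$, compatibility with $\delta$ forces each of the remaining $n-2$ rays to be a regular real Schur root, and Lemma \ref{Lemma:dep} with the count $\sum(r_i-1)=n-2$ shows they exhaust exactly $r_i-1$ quasi-simples per non-homogeneous tube; the omitted quasi-simple in each tube determines an index $I\in R$ with $W\subseteq J:=C_I$. In either case the $n$-th ray of $C$ is linearly independent from $W$ and, since $\dim J=\dim W=n-1$, lies outside $J$, so $C\not\subseteq J$. Given the Key Claim, the converse follows: for $d_1,d_2$ in a common face $F$, pick a path $\gamma\subseteq F$ along which $\hyp_{\gamma(t)}$ is constant; if the cp-cone of $\gamma(t)$ jumped at some $t^*$, then $\gamma(t^*)$ would lie in a proper face $C^*$ of the cp-cone $C_-$ just before $t^*$, and the Key Claim together with the dictionary would force $J\in\hyp_{\gamma(t^*)}\setminus\hyp_{\gamma(t^*-\epsilon)}$, contradicting constancy.

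The main obstacle is the Key Claim in its Euclidean subcase. Since the defining generators of $C_I$ are not all pairwise ext-orthogonal when some non-homogeneous tube has rank $\ge 3$, $C_I$ is generally not itself a simplicial cone of the cp-fan but a union of several such. Matching the correct $C_I$ to each codimension-one wall of the cp-fan requires the combinatorial input of Lemma \ref{Lemma:dep} on linear dependencies among quasi-simple roots.
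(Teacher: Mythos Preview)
Your overall architecture matches the paper's: a ``dictionary'' (for each $J\in\hyp_Q$, membership $d\in J$ is equivalent to every cp-summand of $d$ lying in $J$) handles the forward direction, and a separation statement for facets of cp-cones handles the converse. The forward direction and the deduction of the converse from your Key Claim via a path argument are fine and are essentially reformulations of the paper's arguments.

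The genuine gap is in your proof of the Key Claim. You reduce to the case where $C$ is a \emph{maximal} cp-cone and $W$ a facet, and then assert that $W$ has $n-1$ rays. In the Euclidean case this fails precisely for the cones that matter: any maximal cp-cone containing $\delta$ has at most $n-1$ rays, because its remaining rays must be pairwise ext-orthogonal regular real Schur roots, and such a collection has size at most $\sum_i(r_i-1)=n-2$. Hence its facets have only $n-2$ rays, and neither of your two cases applies. Your Case~B (``$\delta\in W$, the remaining $n-2$ rays exhaust $r_i-1$ quasi-simples per tube'') is therefore vacuous for facets of genuinely maximal cones, and even informally it has two further problems: the regular rays in $W$ need not be quasi-simple, and the $C_I$ you produce contains the entire cone $C$, so it cannot separate $W$ from $C$. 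The paper sidesteps all of this by never reducing to maximal cones: for the cp-cone of $d_1$ with rays $f_1,\dots,f_r$ (arbitrary $r$), it completes the exceptional sequence on the $f_j$ with $j\ne i$ to a longer one---full in $\rep(Q)$ when $\delta$ is absent or is the omitted ray, and full inside the wing $\mathcal W_I$ when $\delta$ is retained---and locates a new term $X_k$ with $f_i\notin H^{ss}_{d_{X_k}}$. In every case the separating element of $\hyp_Q$ is an $H_\alpha^{ss}$; the cones $C_I$ are never invoked on this side of the proof.
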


\begin{proof}
Let $d_1, d_2 \in \Z^n$. Suppose that $d_1,d_2$ are cp-equivalent. Then any non-negative integer linear
combination of $d_1$ and $d_2$ will also be cp-equivalent to both of them,
by Lemma \ref{IOTW}.
If $d_1,d_2$ do not lie on the same face of $\lat_Q$, either $\{J \in \hyp_Q \mid d_1 \in J\} \ne \{J \in \hyp_Q \mid d_2 \in J\}$ or $\{J \in \hyp_Q \mid d_1 \in J\} = \{J \in \hyp_Q \mid d_2 \in J\}$ but there exists $H \in \hyp_Q$ such that $(\cap_{d_2 \in J}J) \backslash H$ consists of multiple
components, with $d_1$ and $d_2$ in different components.
In both cases, there is a vector $d_3$ which lies on the straight line joining $d_1,d_2$ and $H \in \hyp_Q$ such that $d_3 \in H$ and at least one of $d_1$, $d_2$ does not
lie on $H$.  Without loss of generality, suppose that $d_1 \not\in H$.
As we have already remarked, $d_3$ must be cp-equivalent to $d_1$, and $(d_3)_+$ must be cp-equivalent to $(d_1)_+$.  Also, since $d_3$
lies on $H$, by Lemma \ref{lemmad+d-}, $(d_3)_+$ does.

If $H = H_{\alpha}^{ss}$ for a real Schur root $\alpha$, then there is a general representation $N$ of dimension vector $(d_3)_+$ such that $N \in \p M(\alpha)$ by Lemma \ref{reals}.  But there exists a general representation $N'$ (built from the indecomposable direct summands of $N$) of dimension vector $(d_1)_+$ that has the
indecomposable summands of the same dimensions (though
different multiplicities),
so $N' \in \p M(\alpha)$, showing that $(d_1)_+ \in H_{\alpha}^{ss}$ by Lemma \ref{reals}. Now, the fact that $(d_3)_- \in H_{\alpha}^{ss}$ clearly implies that $(d_1)_- \in H_{\alpha}^{ss}$. This gives, by the convexity of $H_{\alpha}^{ss}$, that $d_1 \in H_{\alpha}^{ss}$, a contradiction.

Similarly, if $|Q_0|>2$ and $H=C_I$ for some
$I\in R$, then $d_3$ is a dimension vector and we deduce that a general representation $N$ of dimension
vector $d_3$ lies in $\p Z_I$ by Lemma \ref{siso}.
As argued above, there will be a general representation $N'$ of dimension vector $d_1$ which
lies in $\p Z_I$, showing that $d_1\in C_I$, a contradiction. If $|Q_0|=2$ and $H = C_\emptyset = H_\delta^{ss}$, then by cp-equivalence, both $d_1,d_3$ are multiples of $\delta$ and it is clear that $d_1 \in H$, a contradiction.
This establishes one direction.

Suppose now that $d_1, d_2$ lie in the same face of $\lat_Q$.
Suppose further that the canonical presentation of $d_1$ involves distinct
vectors $f_1,\dots,f_r$, where $f_i$ is either a Schur root or the negative of the dimension vector of a projective indecomposable representation.  By Lemma \ref{IOTW},
any positive linear combination of
$f_1,\dots,f_r$ will have the same summands appearing in its
canonical presentation as for $d_1$.  Thus, we will be done if we can
show that the fact that $d_2$ lies in the same face of $\lat_Q$, implies
that $d_2$ lies in the cone generated by the $f_i$.
Since the $f_i$ are linearly independent, the boundary facets of this
cone are spanned by any $r-1$ of the $f_i$.  We will show that for any $1\leq i \leq r$,
we have that $\{f_1,\ldots,f_{i-1},f_{i+1},\ldots, f_r\}$ lies on some
$H\in \hyp$ with $f_i \not\in H$.
The rest of the proof is devoted to proving the claim, where it suffices to consider one particular value of $i$. So let us consider
$i=r$.

Suppose first that $\{f_1,\dots,f_r\}$ does not contain the null root.
For each $i$, let $f_i'=f_i$ if $f_i$ is a Schur root and $f_i' = -f_i$ otherwise.
The representations $M(f_i')$ can be ordered into an exceptional sequence:
those corresponding to positive roots can be so ordered because they form
a partial tilting object, and then those corresponding to negative roots
can be put at the front.
Now remove $M(f_r')$.
The resulting exceptional sequence can be completed to a full
exceptional sequence by adding some terms $X_{r},\dots,X_{n}$ to the
end.  If we re-insert $M(f_r')$ into the sequence at the place where it
was before, the resulting sequence is too long to be exceptional, so
there is some $X_k$ with $r\leq k \leq n$ such that
$M(f_r')\not\in \p X_k$, while $M(f_j')\in \p X_k$ for $1 \le j < r$, by virtue of the
exceptional sequence property.  Therefore, $f_r' \not \in H^{ss}_{d_{X_k}}$ and $f_j' \in H^{ss}_{d_{X_k}}$ for $1 \le j < r$. It is clear that for $1 \le j \le r$, $f_j \in H^{ss}_{d_{X_k}}$ if and only if $f_j' \in H^{ss}_{d_{X_k}}$. This shows that $H^{ss}_{d_{X_k}}$ is a hyperplane
of the desired type. This case has now been dealt with, which
completes the proof of the second direction in the Dynkin setting.

Next, assume $Q$ is Euclidean and suppose that $f_r=\delta$.  In this case, all the other $f_i$ are dimension vectors. Moreover $\bigoplus_{1 \le j < r} M(f_j)$ forms a
regular partial tilting object by Lemma \ref{Lemma1}, and its summands can be ordered into an exceptional
sequence.  Complete this to an exceptional sequence by adding some
terms $X_r,\dots,X_n$.  Since a full exceptional sequence cannot consist
entirely of regular objects, there is some $X_k$ which is not regular.
Now $H^{ss}_{d_{X_k}}$ contains $f_1,\dots,f_{r-1}$, but by Lemma \ref{Lemma1} it
does not contain $f_r=\delta$.  Thus $H^{ss}_{d_{X_k}}$ has the desired properties.

Finally, suppose that some $f_j=\delta$ for some $1\leq j\leq r-1$.
For convenience, let $f_{r-1}=\delta$.  Then $M(f_1),\dots,M(f_{r-2})$ and
$M(f_r)$
are
the summands of a partial tilting object and can be ordered into
an exceptional sequence which is contained in the regular component.
Since they can be ordered into an
exceptional sequence contained in
the regular component, they generate an abelian subcategory of finite
representation type, which is therefore contained in some wing
$\mathcal W_I$ by Lemma \ref{Lemma3}.  Delete $M(f_r)$ from the sequence, and then
complete it to an exceptional
sequence which is full inside $\mathcal W_I$, by adding some
terms $X_{r-1},\dots,X_{n-2}$.  Since $M(f_r)$ cannot be added back into
the sequence, we see that there is some $X_k$ such that $M(f_r)$ is not
in $\p X_k$, so $f_r \not\in H_{d_{X_k}}^{ss}$, while by the exceptional
sequence property, $f_t \in H_{d_{X_k}}^{ss}$ for $1\leq t\leq r-2$, and
$f_{r-1} \in H_{d_{X_{k}}}^{ss}$ because $f_{r-1}=\delta$ and $X_k$ is regular.
\end{proof}

\section{Thick subcategories} \label{sectionthick}

This section is devoted to proving some facts concerning extension-closed abelian subcategories of $\rep(Q)$. For part of the section, we assume only
that $Q$ is acyclic; later, we assume that $Q$ is Euclidean.

We begin in an even more general setting. Let $\mathcal{H}$ be a hereditary abelian $k$-category. A (full) subcategory $\mathcal{A}$ of $\mathcal{H}$ is \emph{thick} if it is closed under direct summands and whenever we have a short exact sequence with two terms in $\mathcal{A}$, then the third term also lies in $\mathcal{A}$. We start with the following result which is well known; see for example \cite[Theorem 3.3.1]{Dichev}. We include a proof for the convenience of the reader.

\begin{Prop} \label{DieterVossiek} Let $\mathcal{H}$ be a hereditary abelian $k$-category with a full subcategory $\mathcal{A}$.  Then $\mathcal{A}$ is extension-closed abelian if and only if it is thick.
\end{Prop}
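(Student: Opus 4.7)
The plan is to establish both implications, the first being formal and the second exploiting the hereditary hypothesis in an essential way.

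For the direction extension-closed abelian $\Rightarrow$ thick, suppose $\mathcal{A}\subseteq \mathcal{H}$ is an extension-closed exact abelian full subcategory. The two-out-of-three axiom for short exact sequences breaks into three cases: the case where both outer terms lie in $\mathcal{A}$ is exactly extension-closedness; the other two cases (middle term plus one outer term in $\mathcal{A}$) follow from the fact that in any exact abelian full subcategory, the kernel and cokernel of a morphism in $\mathcal{A}$ agree with those computed in $\mathcal{H}$. Closure under direct summands is established the same way: if $X\oplus Y \in \mathcal{A}$, then the projection idempotent on $X\oplus Y$, being a morphism in the full subcategory $\mathcal{A}$, has kernel $Y$ and image $X$ computed in $\mathcal{H}$, and both must therefore belong to $\mathcal{A}$.

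The converse requires the hereditary hypothesis. Assume $\mathcal{A}$ is thick. Extension-closedness is immediate. To endow $\mathcal{A}$ with an exact abelian structure, it suffices to prove that for any morphism $f\colon A \to B$ in $\mathcal{A}$, the kernel $K$, image $I$, and cokernel $C$ of $f$, computed in $\mathcal{H}$, all lie in $\mathcal{A}$. Once $I$ is placed in $\mathcal{A}$, applying the thickness axiom to the short exact sequences $0 \to K \to A \to I \to 0$ and $0 \to I \to B \to C \to 0$ immediately produces $K,C \in \mathcal{A}$.

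The crucial step is therefore to show $I \in \mathcal{A}$, and here is how I would proceed. Let $\eta \in \Ext^1_{\mathcal{H}}(C, I)$ be the class of $0 \to I \to B \to C \to 0$. Applying $\Hom_{\mathcal{H}}(C,-)$ to $0 \to K \to A \to I \to 0$ and invoking $\Ext^2_{\mathcal{H}}(C,K)=0$ (the hereditary hypothesis), one sees that the map $\Ext^1_{\mathcal{H}}(C,A) \to \Ext^1_{\mathcal{H}}(C,I)$ induced by $A \twoheadrightarrow I$ is surjective. Hence $\eta$ admits a lift to a class represented by a short exact sequence $0 \to A \to Y \to C \to 0$ whose pushforward along $A \twoheadrightarrow I$ recovers the original sequence. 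Since $A \to Y$ is a monomorphism, the resulting pushout square is bicartesian, and yields a short exact sequence
$$0 \to A \to I \oplus Y \to B \to 0.$$
Its outer terms lie in $\mathcal{A}$, so thickness forces $I\oplus Y \in \mathcal{A}$, and closure under summands produces $I\in \mathcal{A}$.

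The main obstacle is precisely this synthesis step: neither of the two short exact sequences directly involving $I$ has two of its three terms a priori in $\mathcal{A}$, so the thickness axiom cannot be applied to them immediately. The hereditary hypothesis is exactly what lets us construct the auxiliary object $Y$ and the pushout sequence above, whose outer terms are the two objects we do know to lie in $\mathcal{A}$.
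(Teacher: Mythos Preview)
Your argument is correct and follows essentially the same strategy as the paper's proof. The only difference is that you run the key construction dually: the paper lifts the extension $0\to K\to A\to I\to 0$ along the monomorphism $I\hookrightarrow B$ (using surjectivity of $\Ext^1(B,K)\to\Ext^1(I,K)$) to obtain a pullback sequence $0\to A\to A'\oplus I\to B\to 0$, whereas you lift $0\to I\to B\to C\to 0$ along the epimorphism $A\twoheadrightarrow I$ (using surjectivity of $\Ext^1(C,A)\to\Ext^1(C,I)$) to obtain the pushout sequence $0\to A\to I\oplus Y\to B\to 0$. Both invoke the vanishing of $\Ext^2(C,K)$ at the same point and conclude identically.
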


\begin{proof} The necessity follows trivially. Suppose that $\mathcal{A}$ is thick. We need to show that $\mathcal{A}$ has kernels and cokernels.  Let $f: A \to B$ be a morphism in $\mathcal{A}$ with kernel $u: K \to A$, cokernel $v: B \to C$ and coimage $g: A \to E$. Since $\mathcal{H}$ is hereditary and we have a monomorphism $g': E \cong {\rm im}(f) \to B$, we have a surjective map $\Ext^1(g',K): \Ext^1(B,K) \to \Ext^1(E,K)$.
The short exact sequence $0 \to K \to A \to E \to 0$ is an element in $\Ext^1(E,K)$ and hence is the image of an element in
$\Ext^1(B,K)$.  We have a pullback diagram
$$\xymatrix{0 \ar[r] & K \ar[r]^u \ar@{=}[d] & A \ar[r]^g \ar[d] & E \ar[r] \ar[d]^{g'} & 0 \\
0 \ar[r] & K \ar[r] & A' \ar[r] & B \ar[r] & 0}.$$
This gives rise to a short exact sequence
$$0 \to A \to A' \oplus E \to B \to 0.$$
Since $A,B \in \mathcal{A}$ and $\mathcal{A}$ is thick, we get $E \in \mathcal{A}$.  Hence, $K,C \in \mathcal{A}$.
\end{proof}

Thanks to $\mathcal{H}$ being hereditary, the bounded derived category of $\mathcal{H}$, written as $D^b(\mathcal{H})$, is easy to describe. Recall that a stalk complex in $D^b(\mathcal{H})$ is a complex concentrated in one degree, that is, a complex of the form $X[i]$ for an object $X$ in $\mathcal{H}$. From \cite{Len}, every object in $D^b(\mathcal{H})$ is a finite direct sum of stalk complexes. Observe also that given two objects $X,Y \in \mathcal{H}$, the condition $\Hom_{D^b(\mathcal{H})}(X,Y[i]) \ne 0$ implies either $i=0$ or $i=1$.
A full subcategory $\mathcal{A}$ of $D^b(\mathcal{H})$ is \emph{thick} if it closed under direct summands and
whenever we have a map $U\to V$ in $\mathcal{A}$,
then the distinguished triangle
$$W \to U \to V \to W[1]$$
lies in $\mathcal{A}$.  In particular, a thick subcategory is closed under shifts and is a triangulated subcategory of $D^b(\mathcal{H})$. Given a thick subcategory $\mathcal{A}$ of $D^b(\mathcal{H})$, we denote by
$H^0(\mathcal{A})$ the category $\mathcal{H} \cap \mathcal{A}$, that is, the complexes $C$ in $\mathcal{A}$ for which $H^i(C) =0$
for all $i \ne 0$.  The category $H^0(\mathcal{A})$ is abelian and extension-closed, and hence is a thick subcategory of $\mathcal{H}$.  Observe also that it is the heart of the $t$-structure on $\mathcal{A}$ coming from the canonical $t$-structure on $D^b(\mathcal{H})$; see \cite{Ha}. It is easily seen that $\mathcal{A}$ is triangle-equivalent to the bounded derived category of $H^0(\mathcal{A})$.

\medskip

Given a family of objects $E$ in $D^b(\mathcal{H})$, we write $\mathcal{D}(E)$ for the thick subcategory of $D^b(\mathcal{H})$ generated by the objects in $E$. We define $\mathcal{D}(E)^\perp$ (resp.~$\p \mathcal{D}(E)$) to be the full subcategory of $D^b(\mathcal{H})$ generated by the objects $Y$ with $\Hom(X,Y[i])=0$ (resp. $\Hom(Y,X[i])=0$) for all $X \in E$ and all $i \in \mathbb{Z}$. Observe that if $E \subseteq \mathcal{H}$, $\mathcal{C}(E) = H^0(\mathcal{D}(E))$ and $H^0(\mathcal{D}(E)^\perp)=\mathcal{C}(E)^\perp$.
As on the level of abelian categories, an indecomposable object $X$ in $D^b(\mathcal{H})$ is \emph{exceptional} if $\Hom(X,X[i])=0$ for all nonzero integers $i$ and ${\rm End}(X)\cong k$.  An \emph{exceptional sequence} $E=(X_1,X_2,\ldots,X_r)$ in $D^b(\mathcal{H})$ is a sequence of exceptional objects for which $$\Hom(X_i,X_j[l])=0 \; \text{for all} \; l \; \text{and} \; 1 \le i<j \le r.$$
Any exceptional sequence in $\mathcal{H}$ is also an exceptional sequence in $D^b(\mathcal{H})$.  Conversely, if $E=(X_1,X_2,\ldots,X_r)$ is exceptional in $D^b(\mathcal{H})$, then there exist integers $i_1,i_2,\ldots,i_r$ for which $H^0(E) := (X_1[i_1],\ldots,X_r[i_r])$ is an exceptional sequence in $\mathcal{H}$.

\medskip

Let us introduce more terminology.  Let $\mathcal{E}$ be any $k$-linear category.  A full subcategory $\mathcal{F}$ of $\mathcal{E}$ is said to be \emph{contravariantly finite} if for any object $E$ in $\mathcal{E}$, there exists a morphism $f: F \to E$ with $F \in \mathcal{F}$ such that Hom$(F',f)$ is surjective for any $F' \in \mathcal{F}$. Such a morphism $f$ is called a \emph{right $\mathcal{F}$-approximation} of $E$. The following result is stated at the derived category level but is also true at the abelian category level.

\begin{Lemma} \label{lemmaContr}Suppose that $\mathcal{H}$ is Hom-finite. Let $E=(X_1,X_2,\ldots,X_r)$ be an exceptional sequence in $D^b(\mathcal{H})$. Then $\mathcal{D}(E)$ is contravariantly finite in $D^b(\mathcal{H})$.
\end{Lemma}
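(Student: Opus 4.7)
The plan is to prove the stronger statement that $\mathcal{D}(E)$ is right-admissible in $D^b(\mathcal{H})$: for every $Y$ there is a distinguished triangle $Z \to A \to Y \to Z[1]$ with $A \in \mathcal{D}(E)$ and $Z \in \mathcal{D}(E)^\perp$. Any such $A \to Y$ is automatically a right $\mathcal{D}(E)$-approximation, since for $A' \in \mathcal{D}(E)$ the long exact sequence obtained by applying $\Hom(A',-)$ sandwiches $\Hom(A',A) \to \Hom(A',Y)$ between two vanishing groups. I proceed by induction on the length $r$ of $E$.

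For the base case $r=1$: since $X_1$ is exceptional, $\mathcal{D}(X_1) = \add\{X_1[i] : i \in \mathbb{Z}\}$. Writing $Y$ as a finite direct sum of stalk complexes and using both the Hom-finiteness of $\mathcal{H}$ and the fact that $\Hom_{D^b(\mathcal{H})}(U,V[n])$ vanishes for $U,V \in \mathcal{H}$ unless $n \in \{0,1\}$, the graded space $V_i := \Hom(X_1[i],Y)$ is finite-dimensional in each degree and nonzero in only finitely many degrees. The evaluation map $A := \bigoplus_i V_i \otimes_k X_1[i] \to Y$ then induces an isomorphism $\Hom(X_1[j], A[k]) \to \Hom(X_1[j], Y[k])$ in every bidegree, and the associated long exact sequence forces the fiber of $A \to Y$ into $\mathcal{D}(X_1)^\perp$.

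For the inductive step, set $E' = (X_1,\dots,X_{r-1})$. Given $Y$, apply the inductive hypothesis to produce a triangle $W \to A' \to Y \to W[1]$ with $A' \in \mathcal{D}(E')$ and $W \in \mathcal{D}(E')^\perp$, then apply the base case to $W$ to produce $V \to A_r \to W \to V[1]$ with $A_r \in \mathcal{D}(X_r)$ and $V \in \mathcal{D}(X_r)^\perp$. The exceptional sequence condition $\Hom(X_i, X_r[k]) = 0$ for $i < r$ extends by thickness to $\Hom(\mathcal{D}(E'), \mathcal{D}(X_r)[k]) = 0$ for all $k$; combined with $W \in \mathcal{D}(E')^\perp$, the long exact sequence for the second triangle forces $V \in \mathcal{D}(E')^\perp$, and hence $V \in \mathcal{D}(E)^\perp$ since the left perpendicular of any object is a thick subcategory.

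To assemble the approximation, apply the octahedral axiom to the composable pair $\alpha: A_r \to W$ and $\beta: W \to A'$ extracted from the two triangles. The individual cones are $V[1]$ and $Y$, respectively, while $A := \operatorname{cone}(\beta\alpha)$ fits in the triangle $A_r \to A' \to A \to A_r[1]$, placing $A \in \mathcal{D}(E)$ by thickness. The octahedral triangle reads $V[1] \to A \to Y \to V[2]$, exhibiting the fiber of $A \to Y$ as $V[1] \in \mathcal{D}(E)^\perp$, which completes the induction. The one subtle point is the order of the two approximations: one must approximate by $\mathcal{D}(E')$ first and $\mathcal{D}(X_r)$ second, because the exceptional sequence condition is one-sided, so the semi-orthogonality $\Hom(\mathcal{D}(E'),\mathcal{D}(X_r)[*])=0$ is available but not its transpose, and without it the argument that the successive fiber lies in both perpendiculars collapses.
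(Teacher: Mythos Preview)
Your proof is correct and follows the same inductive skeleton as the paper's: induct on $r$, approximate first by $\mathcal{D}(E')$ with $E'=(X_1,\dots,X_{r-1})$, then approximate the fiber by $\mathcal{D}(X_r)$, and glue via the octahedral axiom, using the one-sided orthogonality $\Hom(\mathcal{D}(E'),\mathcal{D}(X_r)[*])=0$ to see that the final fiber lies in $\mathcal{D}(E)^\perp$.

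The one genuine difference is that you strengthen the statement being inducted on to right-admissibility (the fiber of the approximation lies in the perpendicular), whereas the paper only carries contravariant finiteness through the induction. Because of this, the paper must insert an extra argument in the inductive step: it takes a \emph{minimal} right $\mathcal{D}(E')$-approximation $f:C\to M$ and then uses a separate octahedral diagram together with right-minimality of $f$ to prove that the fiber $N$ lands in $\mathcal{D}(E')^\perp$. Your formulation makes that step automatic, and your base case (the explicit evaluation map $\bigoplus_i \Hom(X_1[i],Y)\otimes_k X_1[i]\to Y$) is more concrete than the paper's, which simply asserts that $\add\{X_1[i]\}$ is contravariantly finite. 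The trade-off is negligible here; your version is a cleaner packaging of the same idea.
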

\begin{proof} We proceed by induction on $r$.  If $r=1$, then $\mathcal{D}(E)$ is the additive category containing the shifts of $X_1$.  This is clearly contravariantly finite in $D^b(\mathcal{H})$.  Suppose now that $r>1$. We have that $E' = (X_1,\ldots,X_{r-1})$ is an exceptional sequence with $\mathcal{D}(E')$ contravariantly finite.  Let $M$ be any object in $D^b(\mathcal{H})$ with $f: C \to M$ a right $\mathcal{D}(E')$-approximation of $M$.  Since $\mathcal{H}$ is abelian and Hom-finite, $\mathcal{H}$, and hence $D^b(\mathcal{H})$, is Krull-Schmidt.  Therefore, we may assume that $f$ is right-minimal, in the sense that if $\varphi: C \to C$ is such that $f = f \varphi$, then $\varphi$ is an isomorphism. Now, we have a triangle
$$\xymatrix{ N \ar[r]^g & C \ar[r]^f & M \ar[r] & N[1]}.$$
We claim that $N$ lies in $\mathcal{D}(E')^\perp$.  Let $u: B \to N$ be a morphism with $B \in \mathcal{D}(E')$. By the octahedral axiom, we have a commutative diagram
$$\xymatrix{ B \ar@{=}[r] \ar[d]^u & B \ar[d]^{gu} & & \\ N \ar[r]^g \ar[d] & C \ar[r]^{f} \ar[d]^w & M \ar[r] \ar@{=}[d] & N[1]\ar[d] \\ F \ar[r] & B' \ar[r]^v & M \ar[r] & F[1]}$$
where $B'$ is an object in $\mathcal{D}(E')$ and all rows and columns are distinguished triangles. Since $f$ is a right $\mathcal{D}(E')$-approximation of $M$, $v$ factors through $f$ and hence, there exists $w': B' \to C$ with $f = fw'w$.  Hence, since $f$ is right minimal, we get that $w$ is a section, which means that $gu=0$.  Hence, $u$ factors through $M[-1]$.  But since $B$ is in $\mathcal{D}(E')$ and $f[-1]$ is a right $\mathcal{D}(E')$-approximation of $M[-1]$, $u$ factors through $f[-1]$ and therefore, $u=0$.  This proves that $N$ lies in $\mathcal{D}(E')^\perp$.

Now, let $\mathcal{X}$ be the thick subcategory of $D^b(\mathcal{H})$ generated by $X_r$, which is contravariantly finite in $D^b(\mathcal{H})$ using the base case $r=1$.  Let $h: X \to N$ be a minimal right $\mathcal{X}$-approximation of $N$.  The octahedral axiom gives
a commutative diagram
$$\xymatrix{ X \ar@{=}[r] \ar[d] & X \ar[d] & & \\ N \ar[r] \ar[d] & C \ar[r]^{f} \ar[d] & M \ar[r] \ar@{=}[d] & N[1]\ar[d] \\ N' \ar[r] & F \ar[r]^w & M \ar[r] & N'[1]}$$
where $N'$ is an object in $\mathcal{D}(E')^\perp \cap \mathcal{X}^\perp = \mathcal{D}(E)^\perp$ and all rows and columns are distinguished triangles. It is easily seen that the morphism $w: F \to M$ is a right $\mathcal{D}(E)$-approximation of $M$.
\end{proof}

\begin{Prop} \label{exceptionalDerived} Let $\mathcal{H}$ be a Hom-finite hereditary abelian $k$-category with a non-full exceptional sequence $E=(X_1,\ldots,X_r)$ in $D^b(\mathcal{H})$ and such that all indecomposable objects in $\mathcal{D}(E)^\perp$ are exceptional. Then $E$ can be extended to an exceptional sequence $E=(X_1,\ldots,X_r,X_{r+1})$.
\end{Prop}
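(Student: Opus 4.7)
The plan is to produce $X_{r+1}$ as an indecomposable summand of the ``error term'' in a right $\mathcal{D}(E)$-approximation of some object not lying in $\mathcal{D}(E)$. The work has essentially been done inside the proof of Lemma \ref{lemmaContr}; we merely harvest the relevant triangle.

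First, since $E$ is non-full, $\mathcal{D}(E) \neq D^b(\mathcal{H})$, so we may choose an object $M \in D^b(\mathcal{H})$ which is not isomorphic to any object of $\mathcal{D}(E)$. By Lemma \ref{lemmaContr}, $\mathcal{D}(E)$ is contravariantly finite in $D^b(\mathcal{H})$, so there is a right $\mathcal{D}(E)$-approximation $f : C \to M$, which (by Krull--Schmidt, since $\mathcal{H}$ is Hom-finite) can be chosen to be right minimal. Complete $f$ to a distinguished triangle
$$N \to C \xrightarrow{f} M \to N[1].$$
Arguing exactly as in the proof of Lemma \ref{lemmaContr} (applied this time to $E$ rather than $E'$): for any morphism $u : B \to N$ with $B \in \mathcal{D}(E)$, the octahedral axiom and the right-minimality of $f$ force the composite $gu$ to be split, hence $u$ factors through $M[-1]$, and then through $g[-1]$, and so $u = 0$. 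Thus $N \in \mathcal{D}(E)^{\perp}$.

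Next, $N$ is nonzero: otherwise $f$ would be an isomorphism and $M$ would lie in $\mathcal{D}(E)$, contradicting our choice. Since $D^b(\mathcal{H})$ is Krull--Schmidt, $N$ admits an indecomposable summand, which is again in $\mathcal{D}(E)^{\perp}$; call it $X_{r+1}$. By the hypothesis on $\mathcal{D}(E)^{\perp}$, $X_{r+1}$ is exceptional. Finally, $X_{r+1} \in \mathcal{D}(E)^{\perp}$ means precisely that $\Hom(X_i, X_{r+1}[l]) = 0$ for all $l \in \mathbb{Z}$ and all $1 \le i \le r$, so $(X_1, \ldots, X_r, X_{r+1})$ is an exceptional sequence.

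The only delicate point is verifying that $N \in \mathcal{D}(E)^{\perp}$, which is where right-minimality of the approximation is essential; but this is the same octahedral computation already carried out for the inductive step in the proof of Lemma \ref{lemmaContr}, so no new difficulty arises.
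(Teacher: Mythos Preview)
Your proposal is correct and follows essentially the same approach as the paper: choose $M \notin \mathcal{D}(E)$, take a minimal right $\mathcal{D}(E)$-approximation $f:C\to M$, complete to a triangle $N\to C\to M\to N[1]$, note that the argument from Lemma~\ref{lemmaContr} gives $N\in \mathcal{D}(E)^\perp$ and $N\ne 0$, and pick an indecomposable summand $X_{r+1}$, which is exceptional by hypothesis. The paper's proof is terser (it simply cites the triangle from Lemma~\ref{lemmaContr} and asserts $N\in \mathcal{D}(E)^\perp$ without repeating the octahedral argument), but the content is the same.
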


\begin{proof} We only need to prove that
the category $\mathcal{D}(E)^\perp$ is nonzero.
By the assumption, there is an indecomposable object $M$ which is not in $\mathcal{D}(E)$.  By Lemma \ref{lemmaContr}, we have a triangle
$$\xymatrix{ N \ar[r]^g & C \ar[r]^f & M \ar[r] & N[1]},$$
where $f$ is a right $\mathcal{D}(E)$-approximation of $M$.  Then $N$ is nonzero in $\mathcal{D}(E)^\perp$ and has an exceptional object $X_{r+1}$ as a direct summand. This yields an exceptional sequence $E=(X_1,\ldots,X_r,X_{r+1})$.
\end{proof}

In the rest of this section, we specialize to $\mathcal{H} = \rep(Q)$ for an acyclic quiver $Q$ with $n$ vertices. Given two thick subcategories $\mathcal{C}_1,\mathcal{C}_2$ of $\rep(Q)$, we call the pair $(\mathcal{C}_1,\mathcal{C}_2)$ a \emph{semi-orthogonal pair} if $\mathcal{C}_2 = \mathcal{C}_1^\perp$. In order to deal with such pair, we need the following easy observation.

\begin{Lemma} \label{leftrightperp}
Let $\C$ be a finitely generated abelian, extension-closed subcategory of $\rep(Q)$.  Then $\p (\C^\perp) = \C$.
\end{Lemma}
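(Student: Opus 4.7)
The inclusion $\mathcal{C} \subseteq {}^\perp(\mathcal{C}^\perp)$ is immediate from the definitions: any $M \in \mathcal{C}$ and any $N \in \mathcal{C}^\perp$ satisfy $\Hom(M,N) = 0 = \Ext^1(M,N)$, so $M$ lies in ${}^\perp(\mathcal{C}^\perp)$.

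For the reverse inclusion, my plan is to exploit Proposition \ref{equiv_thick} and the completion of exceptional sequences. Since $\mathcal{C}$ is finitely generated, it is generated by an exceptional sequence $E = (X_{r+1},\ldots,X_n)$. By \cite{CB}, we may extend $E$ on the right to a full exceptional sequence $(X_{r+1},\ldots,X_n,Z_1,\ldots,Z_r)$. I then claim that $\mathcal{C}^\perp = \mathcal{C}(Z_1,\ldots,Z_r)$. The inclusion $\mathcal{C}(Z_1,\ldots,Z_r) \subseteq \mathcal{C}^\perp$ is immediate from the exceptional sequence condition. For the reverse inclusion, Proposition \ref{GL2} shows that $\mathcal{C}^\perp$ is equivalent to $\rep(Q'')$ for some acyclic quiver $Q''$ with $r$ vertices, while $(Z_1,\ldots,Z_r)$ remains an exceptional sequence of length $r = |Q''_0|$ inside it; a length-$r$ exceptional sequence in such a category must generate the whole category.

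Having established $\mathcal{C}^\perp = \mathcal{C}(Z_1,\ldots,Z_r)$, I repeat the argument on the left side. Proposition \ref{GL2} treats both right and left perpendiculars, so ${}^\perp\mathcal{C}(Z_1,\ldots,Z_r)$ is equivalent to the representations of an acyclic quiver with $n-r$ vertices. The sequence $(X_{r+1},\ldots,X_n)$ lies in ${}^\perp\mathcal{C}(Z_1,\ldots,Z_r)$ (by the exceptional sequence condition relative to the $Z_j$) and has length $n-r$, so by the same observation it generates the whole of ${}^\perp\mathcal{C}(Z_1,\ldots,Z_r)$. Therefore ${}^\perp(\mathcal{C}^\perp) = \mathcal{C}(X_{r+1},\ldots,X_n) = \mathcal{C}$, as required.

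The one point requiring care — and really the only one beyond what is already in the paper — is the assertion that an exceptional sequence of length equal to the number of vertices of the underlying quiver generates the entire representation category. This is built into the iterated perpendicular reduction coming from Proposition \ref{GL}: taking the perpendicular to one exceptional object drops the number of vertices by one, so applying this $r$ times to a length-$r$ exceptional sequence in an $r$-vertex category lands in the zero subcategory, and running the nesting of perpendiculars backwards exhibits the ambient category as the thick subcategory generated by the original exceptional sequence. This reasoning already occurs, implicitly, in the proof of Proposition \ref{equiv_thick}.
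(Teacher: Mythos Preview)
Your proof is correct and follows essentially the same approach as the paper's: complete the exceptional sequence generating $\C$ to a full one by adding terms on the right, identify $\C^\perp$ as the thick subcategory generated by the added terms, and then observe that ${}^\perp(\C^\perp)$ is equivalent to the representations of a quiver with exactly as many vertices as the length of the original sequence, which is therefore full there and hence generates. The paper's version is slightly more terse (it asserts the step $\C^\perp=\C(Z_1,\ldots,Z_r)$ rather than arguing it), but the logic is identical.
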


\begin{proof}
The category $\C$ is generated by an exceptional sequence $(X_1,X_2,\ldots,X_r)$ that can be completed to a full exceptional sequence $(X_1,\ldots,X_r,\ldots,X_n)$.  Then $\C^\perp$ is generated by $X_{r+1},\ldots,X_n$. It is clear that $\C \subseteq \bigcap_{i=r+1}^{n}\, \p X_i$ and that $\bigcap_{i=r+1}^{n}\, \p X_i$ is equivalent to the category of representations of a quiver with $n-(n-r) = r$ vertices. If equality does not hold, then $(X_1,\ldots,X_r)$ is not full in $\bigcap_{i=r+1}^{n}\, \p X_i$, which is a contradiction.
\end{proof}

Given a full exceptional sequence $E=(X_1,X_2,\ldots,X_n)$ in $\rep(Q)$ and an integer $0 \le s \le n$, there is a semi-orthogonal pair $$\mathcal{P}(E,s):=(\mathcal{C}(E_{\le s}),\mathcal{C}(E_{>s})),$$ where $E_{\le s} = (X_1,\ldots,X_s)$ and $E_{>s} = (X_{s+1},\ldots,X_n)$. We set $E_{\le 0} = \emptyset$ and $E_{>n}=\emptyset$. It is clear that $\mathcal{C}(E_{>s}) \subseteq \mathcal{C}(E_{\le s})^\perp$. Moreover,
$E_{>s}$ is an exceptional sequence in $\mathcal{C}(E_{\le s})^\perp$, which is equivalent to $\rep(Q')$ for some acyclic quiver $Q'$ with $n-s$ vertices. Since any exceptional sequence in $\rep(Q')$ can be completed to a full exceptional sequence, see \cite{CB}, and $E$ is full, we see that $\mathcal{C}(E_{>s}) = \mathcal{C}(E_{\le s})^\perp$. An exceptional pair of the form $\mathcal{P}(E,s)$ where $E=(X_1,X_2,\ldots,X_n)$ is a full exceptional sequence and $1 \le s \le n$ will be called an \emph{exceptional semi-orthogonal pair}.

\begin{Lemma} \label{repr-finite}Let $Q$ be any acyclic quiver and let $\mathcal{C}$ be any thick subcategory of $\rep(Q)$ which is representation-finite.  Then $\mathcal{C}$ is equivalent to $\rep(Q')$ for some (possibly disconnected) Dynkin quiver $Q'$.
\end{Lemma}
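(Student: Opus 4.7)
The plan is to prove that $\mathcal{C}$ is generated by an exceptional sequence inside $\rep(Q)$, so that Proposition \ref{equiv_thick} yields an equivalence $\mathcal{C} \simeq \rep(Q')$ for some acyclic quiver $Q'$; then representation-finiteness of $\mathcal{C}$ together with Gabriel's theorem will force each connected component of $Q'$ to be of Dynkin type.

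The main obstacle is the following key claim: every indecomposable $X \in \mathcal{C}$ is exceptional, meaning $\End_{\mathcal{C}}(X) = k$ and $\Ext^1_{\mathcal{C}}(X,X) = 0$. The algebra $\End_{\mathcal{C}}(X) = \End_{\rep(Q)}(X)$ is a finite-dimensional local $k$-algebra with residue field $k$; for the vanishing of self-extensions, I would suppose $\Ext^1_{\mathcal{C}}(X,X) \ne 0$ and derive a contradiction. Applying $\Hom_{\mathcal{C}}(-,X)$ to a non-split extension $0 \to X \to Y_n \to Y_{n-1} \to 0$ (with $Y_0 = X$) and using that $\mathcal{C}$ is hereditary yields a surjection $\Ext^1_{\mathcal{C}}(Y_n,X) \twoheadrightarrow \Ext^1_{\mathcal{C}}(X,X) \ne 0$, so one may iteratively build a chain of non-split extensions producing objects $Y_n$ of strictly increasing $K_0$-class. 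A socle analysis inside $\mathcal{C}$ at the bottom of this chain, together with the fact that $k$ is infinite (so that the projectivization of a nontrivial $\Ext^1$ space yields infinitely many pairwise non-isomorphic middle terms at some stage), will then contradict the representation-finiteness of $\mathcal{C}$. A parallel argument rules out non-scalar endomorphisms of $X$.

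Granting the key claim, every indecomposable of $\mathcal{C}$---and hence of $D^b(\mathcal{C})$, which sits as a thick subcategory of $D^b(\rep(Q))$---is exceptional. Assuming $\mathcal{C} \ne 0$ (otherwise the conclusion is trivial), I would start with any exceptional object as $E = (X_1)$ and iteratively apply Proposition \ref{exceptionalDerived} with $\mathcal{H} = \mathcal{C}$: at each step, if the current exceptional sequence $E$ does not yet generate $D^b(\mathcal{C})$ as a thick subcategory, then $\mathcal{D}(E)^\perp$ is a nonzero full subcategory of $D^b(\mathcal{C})$ all of whose indecomposables are exceptional, so the proposition extends $E$ by one further term. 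Because the length of any exceptional sequence in $D^b(\mathcal{C})$ is bounded by the rank of $K_0(\mathcal{C})$, which is finite by representation-finiteness, this process terminates at an exceptional sequence $E = (X_1, \ldots, X_r)$ with $\mathcal{D}(E) = D^b(\mathcal{C})$.

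To conclude, I would invoke the observation in the preliminary discussion of Section \ref{sectionthick} that any exceptional sequence in $D^b(\mathcal{C})$ can be shifted termwise to an exceptional sequence $(Y_1, \ldots, Y_r) := H^0(E)$ lying in the heart $\mathcal{C}$. Taking hearts of thick closures yields that the smallest abelian extension-closed subcategory of $\rep(Q)$ containing $Y_1, \ldots, Y_r$ equals $H^0(\mathcal{D}(E)) = \mathcal{C}$, so $\mathcal{C}$ is generated by the exceptional sequence $(Y_1, \ldots, Y_r)$. Proposition \ref{equiv_thick} now delivers $\mathcal{C} \simeq \rep(Q')$ for an acyclic quiver $Q'$ on $r$ vertices, and representation-finiteness of $\mathcal{C}$ together with Gabriel's classification forces each connected component of $Q'$ to be Dynkin.
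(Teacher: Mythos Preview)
Your overall strategy matches the paper's: show that $\mathcal{C}$ is generated by an exceptional sequence, invoke Proposition~\ref{equiv_thick}, then use Gabriel's theorem.  The steps \emph{after} your key claim are fine and essentially coincide with the paper's use of Proposition~\ref{exceptionalDerived}.

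The gap is in the key claim itself.  Your chain $Y_0=X,\,Y_1,\,Y_2,\dots$ of non-split extensions does give objects with strictly increasing class in $K_0$, but this does not contradict representation-finiteness: nothing prevents each $Y_n$ from decomposing as a sum of the finitely many indecomposables of $\mathcal{C}$ with growing multiplicities.  To get indecomposable $Y_n$ you would need $X$ (and then each $Y_n$) to be a brick, which is precisely part of what you are trying to prove.  Likewise, the assertion that ``the projectivization of a nontrivial $\Ext^1$ space yields infinitely many pairwise non-isomorphic middle terms'' is not automatic: proportional extension classes always give isomorphic middle terms, and even a $\mathbb{P}^1$-family of classes can yield a single isomorphism type of middle term (think of a quasi-simple in a homogeneous tube).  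Finally, the promised ``parallel argument'' for $\End(X)=k$ is not given, and there is no evident parallel---for an \emph{arbitrary} indecomposable in a hereditary category this is genuinely nontrivial.

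The paper avoids all of this by arguing inductively on the number of indecomposables and choosing $X_0\in\mathcal{C}$ of \emph{minimal} $k$-dimension.  Minimality forces $\End(X_0)=k$ immediately: a non-zero non-invertible endomorphism would have a non-zero image in $\mathcal{C}$ of strictly smaller dimension.  With the brick condition in hand, the paper then quotes Happel's argument (the lemma on p.~166 of \cite{Ha}) to obtain $\Ext^1(X_0,X_0)=0$, passes to $X_0^\perp$ inside $\mathcal{C}$, and applies the induction hypothesis.  If you want to repair your approach, the minimal-dimension trick is exactly the missing ingredient: it gives you one exceptional object for free, and you never need the stronger (and harder) statement that \emph{every} indecomposable of $\mathcal{C}$ is exceptional.
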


\begin{proof} We prove the statement by induction on the number $r$ of non-isomorphic indecomposable objects in $\mathcal{C}$. If $r \le 1$, the result is clear.  So assume $r \ge 2$. Let $X_0$ be an indecomposable representation in $\mathcal{C}$ of least dimension over $k$. Since $\mathcal{C}$ is thick, we get ${\rm End}(X_0) \cong k$. Using the same argument as in the proof of the lemma in \cite[p. 166]{Ha}, we get $\Ext^1(X_0,X_0)=0$. Hence, $X_0$ is exceptional.  Now, in $\mathcal{C}$, $X_0^\perp$ is thick and representation-finite with fewer non-isomorphic indecomposable objects. By induction, $X_0^\perp$ is equivalent to $\rep(Q'')$ where $Q''$ is a (possibly disconnected) Dynkin quiver. Now, from Proposition \ref{equiv_thick}, $X_0^\perp$ is given by an exceptional sequence $(X_1, \ldots, X_m)$. Let $E$ be the exceptional sequence $(X_0, X_1, \ldots, X_m)$ in $\mathcal{C}$. It is clear that $\mathcal{C}$ is a hom-finite hereditary abelian $k$-category, and $E$ is also an exceptional sequence in the bounded derived category $D^b(\mathcal{C})$ of $\mathcal{C}$. By Proposition \ref{exceptionalDerived}, since $\mathcal{D}(E)^\perp$ is zero, we get that $E$ is full.  By Proposition \ref{equiv_thick}, $\mathcal{C}$ is equivalent to $\rep(Q')$ for some acyclic quiver $Q'$.  Since $\mathcal{C}$ is representation-finite, $Q'$ is a union of quivers of Dynkin type.
\end{proof}

The preceding lemma yields the following: for an acyclic quiver $Q$, any thick subcategory of $\rep(Q)$ which is representation-finite is generated by an exceptional sequence.

\medskip

A \emph{connecting component} of $D^b(\rep(Q))$ is a connected component of the Auslander-Reiten quiver of $D^b(\rep(Q))$ containing projective representations. It is unique if and only if $Q$ is connected. The following result strengthens \cite[Prop. 3.1.10]{Dichev}.

\begin{Prop} Any thick subcategory generated by a finite set of objects in connecting components of $D^b(\rep(Q))$ is generated by an exceptional sequence $E$ where all the terms can be chosen to be in connecting components.
\end{Prop}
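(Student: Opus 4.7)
My plan is to induct on the number of vertices $n = |Q_0|$; the cases $n \le 1$ are immediate. Let $\mathcal{T}$ be the thick subcategory generated by the finite set $\{M_1,\dots,M_k\}$, which I may assume consists of pairwise non-isomorphic indecomposable objects in connecting components of $D^b(\rep(Q))$. Each such $M_i$ is, up to shift, a preprojective or preinjective representation of $\rep(Q)$ and hence rigid, so each is exceptional.

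For the inductive step, I would single out $M_1$ to be the first term of the exceptional sequence to be built. By Lemma \ref{lemmaContr} applied to the length-one sequence $(M_1)$, for each $j \ge 2$ there is a right $\mathcal{D}(M_1)$-approximation triangle
$$ N_j \to C_j \to M_j \to N_j[1] $$
with $C_j \in \mathcal{D}(M_1)$ and $N_j \in M_1^\perp$. Thickness of $\mathcal{T}$ forces $N_j, C_j \in \mathcal{T}$, so $\mathcal{T} \cap M_1^\perp = \mathcal{D}(N_2,\dots,N_k)$. The derived version of Proposition \ref{GL} identifies $M_1^\perp$ with $D^b(\rep(Q'))$ for some acyclic quiver $Q'$ on $n-1$ vertices. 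If each $N_j$ is a direct sum of objects lying in connecting components of $D^b(\rep(Q'))$, then the inductive hypothesis applied inside $D^b(\rep(Q'))$ yields an exceptional sequence $(Y_1,\dots,Y_l)$ in connecting components of $D^b(\rep(Q'))$ generating $\mathcal{T}\cap M_1^\perp$, and prepending $M_1$ produces the desired exceptional sequence for $\mathcal{T}$, each of whose terms lies in a connecting component of $D^b(\rep(Q))$.

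The main obstacle is verifying that the cones $N_j$ really are sums of connecting objects of $D^b(\rep(Q'))$. After a global shift I may assume $M_1\in\rep(Q)$, and since $\rep(Q)$ is hereditary, any morphism $C_j \to M_j$ between objects of $\rep(Q)$ has cone isomorphic to $\mathrm{coker}(C_j\to M_j) \oplus \ker(C_j\to M_j)[1]$. My plan to control the summands is a case analysis according to whether $M_1$ and $M_j$ are preprojective or preinjective: in each case, I would argue that the kernel and cokernel of the right-approximation morphism are extensions of preprojective/preinjective modules of $\rep(Q)$ that lie inside $M_1^\perp$. To close the argument, I would verify that the equivalence $M_1^\perp \cap \rep(Q) \simeq \rep(Q')$ of Proposition \ref{GL} sends preprojective (resp.\ preinjective) representations of $\rep(Q)$ that lie in $M_1^\perp$ to preprojective (resp.\ preinjective) representations of $\rep(Q')$, which is what places the summands of $N_j$ inside the connecting components of $D^b(\rep(Q'))$ and makes the induction go through.
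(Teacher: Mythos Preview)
Your inductive scheme via the perpendicular category $M_1^\perp\simeq\rep(Q')$ is a natural idea, but the last two steps are where the argument breaks down, and they are not mere details.

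First, the claim that the summands of $N_j$ are preprojective or preinjective in $\rep(Q)$ is not justified. Even restricting to the case where $M_1$ and $M_j$ are both preprojective and $C_j\in\rep(Q)$, the cokernel of a map between preprojectives can perfectly well be regular: any regular indecomposable $R$ sits in a short exact sequence $0\to P'\to P\to R\to 0$ with $P,P'$ projective. You would need the special shape of the right $\mathcal D(M_1)$-approximation to rule this out, and you have not done so. Moreover, your cone computation assumes $C_j\in\rep(Q)$; but $\mathcal D(M_1)$ consists of all shifts of $M_1$, and whenever $\Ext^1(M_1,M_j)\neq 0$ the minimal approximation acquires a summand $M_1[-1]$, so $C_j\notin\rep(Q)$. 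A single global shift puts $M_1$ in $\rep(Q)$ but does nothing to control this, nor does it force the remaining $M_j$ to lie in $\rep(Q)$.

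Second, and more seriously, the assertion that the equivalence $M_1^\perp\cap\rep(Q)\simeq\rep(Q')$ takes preprojectives (resp.\ preinjectives) of $\rep(Q)$ to preprojectives (resp.\ preinjectives) of $\rep(Q')$ is exactly the kind of statement that fails without extra hypotheses: the embedding $\rep(Q')\hookrightarrow\rep(Q)$ does not intertwine the two Auslander--Reiten translates, the relative projectives of $M_1^\perp$ need not be projective in $\rep(Q)$, and the preprojective component of $\rep(Q')$ has no a priori relation to the set of preprojective $Q$-modules lying in $M_1^\perp$. You have stated this as something you ``would verify'', but it is the entire difficulty.

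The paper sidesteps all of this. Since the Auslander--Reiten translate $\tau$ is an autoequivalence of $D^b(\rep(Q))$ preserving the connecting component, one can choose $t\gg 0$ so that every $\tau^{-t}X_i$ is a \emph{preprojective representation} in $\rep(Q)$. Dichev's result then says the thick subcategory they generate is generated by an exceptional sequence of preprojectives $(Y_1,\dots,Y_m)$; applying $\tau^{t}$ gives an exceptional sequence $(\tau^tY_1,\dots,\tau^tY_m)$ in the connecting component generating the original $\mathcal D(X_1,\dots,X_r)$. No perpendicular-category bookkeeping is needed.
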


\begin{proof} It is sufficient to consider the case where $Q$ is connected and non-Dynkin. In \cite{Dichev}, it is proven that if $\mathcal{C}$ is a thick subcategory of $\rep(Q)$ which is generated by a finite set of preprojective representations, then $\C$ is generated by an exceptional sequence where each term can be chosen to be preprojective. Let $\mathcal{A}$ be a thick subcategory of $D^b(\rep(Q))$ generated by objects $X_1,\ldots,X_r$ where all the $X_i$ lie in the connecting component and hence are exceptional. Let $\tau$ denote the Auslander-Reiten translate in $D^b(\rep(Q))$.  The connecting component contains only the preprojective representations and the inverse shifts of the preinjective representations. There exists a positive integer $t$ for which all $\tau^{-t}X_i$ are preprojective indecomposable representations.  Hence $\mathcal{C}(\tau^{-t}X_1,\ldots,\tau^{-t}X_r)$ is a thick subcategory of $\rep(Q)$ generated by preprojective representations.  Therefore, by \cite[Prop. 3.1.10]{Dichev},
$$\mathcal{C}(\tau^{-t}X_1,\ldots,\tau^{-t}X_r) = \mathcal{C}(E)$$
for an exceptional sequence $E=(Y_1, \ldots,Y_m)$ where all $Y_i$ are preprojective. Clearly, we also have
$$\mathcal{D}(\tau^{-t}X_1,\ldots,\tau^{-t}X_r) = \mathcal{D}(E)$$
where here, $E$ is seen as an exceptional sequence in $D^b(\rep(Q))$. From this, we see that
$$\mathcal{A}=\mathcal{D}(X_1,\ldots,X_r) = \mathcal{D}(E'),$$
where $E'$ is the exceptional sequence $E'=(\tau^tY_1,\ldots,\tau^tY_m)$.
\end{proof}

Using the fact that the thick subcategories $\mathcal{D}$ of $D^b(\rep(Q))$ correspond to the thick subcategories $H^0(\mathcal{D})$ of $\rep(Q)$, we get the following result.

\begin{Cor} \label{Cor2}
Let $Q$ be a connected acyclic quiver. Any thick subcategory of $\rep(Q)$ generated by non-regular representations is generated by an exceptional sequence whose terms can be chosen to be non-regular.
\end{Cor}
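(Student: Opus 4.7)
The plan is to lift the problem to $D^b(\rep(Q))$, invoke the preceding proposition, and descend back to $\rep(Q)$ via the correspondence $\mathcal{C}\leftrightarrow \mathcal{D}(\mathcal{C})$ between thick subcategories of $\rep(Q)$ and of $D^b(\rep(Q))$. The Dynkin case is handled first and trivially: every representation is non-regular, $\rep(Q)$ is representation-finite, and Lemma~\ref{repr-finite} immediately expresses any thick subcategory as $\mathcal{C}(E)$ for an exceptional sequence $E$. Assume henceforth that $Q$ is connected and non-Dynkin.

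Let $\mathcal{C}$ be a thick subcategory of $\rep(Q)$ generated by a family $\{M_\lambda\}_\lambda$ of non-regular representations, and set $\mathcal{D} = \mathcal{D}(\{M_\lambda\}_\lambda)$, the thick subcategory of $D^b(\rep(Q))$ generated by the same family. Since $\rep(Q)$ is hereditary and $\mathcal{C}$ is thick in $\rep(Q)$, the standard correspondence recorded earlier in the section gives $\mathcal{D}\simeq D^b(\mathcal{C})$ and $H^0(\mathcal{D})=\mathcal{C}$.

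The key structural observation is that the unique connecting component of $D^b(\rep(Q))$ consists of the preprojective modules (in degree $0$) together with the shifts $I[-1]$ of the preinjective modules $I$, since the AR translate in $D^b(\rep(Q))$ takes a projective $P$ to $\nu P[-1]$, thereby linking the preprojective and shifted preinjective parts. Because $\mathcal{D}$ is closed under shifts, each preprojective generator $M_\lambda$ lies in the connecting component, and each preinjective generator $M_\lambda$ contributes the shifted object $M_\lambda[-1]$, which also lies in the connecting component. Hence $\mathcal{D}$ is generated by a collection of objects of the connecting component. After a finiteness reduction (exceptional sequences in $D^b(\rep(Q))$ have length at most $n$, so the thick closure stabilises after finitely many generators), the preceding proposition yields an exceptional sequence $E=(X_1,\dots,X_r)$ in $D^b(\rep(Q))$ with every $X_i$ in the connecting component and $\mathcal{D}=\mathcal{D}(E)$.

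To descend to $\rep(Q)$, set $n_i=0$ if $X_i$ is a preprojective module, and $n_i=1$ if $X_i=I[-1]$ for a preinjective $I$; in either case $X_i[n_i]$ is a non-regular representation. Since $\rep(Q)$ is hereditary, the vanishings $\Hom_{D^b}(X_i,X_j[\ell])=0$ for all $\ell$ and $i<j$ collapse to $\Hom(X_i[n_i],X_j[n_j])=0=\Ext^1(X_i[n_i],X_j[n_j])$ for $i<j$, so $(X_1[n_1],\dots,X_r[n_r])$ is an exceptional sequence in $\rep(Q)$ consisting entirely of non-regular representations. Using the identity $H^0(\mathcal{D}(F))=\mathcal{C}(F)$ for $F\subseteq\rep(Q)$, we conclude
$$\mathcal{C}=H^0(\mathcal{D})=H^0(\mathcal{D}(E))=\mathcal{C}(X_1[n_1],\dots,X_r[n_r]),$$
as required. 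The main obstacle I anticipate is the bookkeeping of shifts: one must verify that the connecting component is described precisely by the two cases above, and that the shifts $n_i$ forced by the structure of that component are exactly the ones that turn $E$ into an exceptional sequence inside $\rep(Q)$.
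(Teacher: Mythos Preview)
Your proposal is correct and follows essentially the same approach as the paper: lift to $D^b(\rep(Q))$, apply the preceding proposition about thick subcategories generated by objects in the connecting component, then descend via the correspondence $\mathcal{C}\leftrightarrow\mathcal{D}(\mathcal{C})$. The paper's own proof is a single sentence invoking exactly this correspondence; you have simply spelled out the details, including the explicit identification of the shifts $n_i$ and the finiteness reduction (the latter being a point the paper leaves implicit, since the proposition is stated only for finite generating sets).
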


For the rest of this section, we specialize to the Euclidean case. Let $Q$ be a Euclidean quiver and let $E=(X_1,X_2,\ldots,X_n)$ be a full exceptional sequence in $\rep(Q)$.  Let $s$  be an integer with $1\le s \le n$.
Clearly, one of the $X_i$ is not regular since $E$ is full. Then, by Lemma \ref{Lemma1}, one of $\mathcal{C}(E_{\le s})$, $\mathcal{C}(E_{>s})$ only contains indecomposable representations that are exceptional.  Since each of $\mathcal{C}(E_{\le s})$, $\mathcal{C}(E_{>s})$ is equivalent to the category of representations of some acyclic quiver, we get that one of $\mathcal{C}(E_{\le s})$, $\mathcal{C}(E_{>s})$ is representation-finite. The following theorem also appears in \cite[Theorem 3.2.15]{Dichev}.

\begin{Theo}(Dichev) \label{thick}
Let $Q$ be a Euclidean quiver. Any thick subcategory of $\rep(Q)$ is either of the form $\mathcal{C}(E)$ for an exceptional sequence $E$ in $\rep(Q)$ or is entirely contained in $\mathcal{R}$eg.
\end{Theo}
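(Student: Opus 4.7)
The plan is to show that if a thick subcategory $\mathcal{T}$ of $\rep(Q)$ is not entirely contained in $\mathcal{R}\mathrm{eg}$, then $\mathcal{T}$ is of the form $\mathcal{C}(E)$ for some exceptional sequence $E$. Pick an indecomposable $Y \in \mathcal{T}$ that is not regular. Since $Q$ is Euclidean, every indecomposable in the preprojective or preinjective component is exceptional, so $Y$ is exceptional. By Lemma \ref{Lemma1}, every indecomposable object in $Y^{\perp}$ must itself be exceptional, since otherwise $\Hom(Y,-)$ (if $Y$ is preprojective) or $\Ext^{1}(Y,-)$ (if $Y$ is preinjective) would be nonzero on it. Combined with Proposition \ref{GL}, which identifies $Y^{\perp}$ with $\rep(Q_{Y})$ for an acyclic $(n-1)$-vertex quiver $Q_{Y}$, this forces $Q_{Y}$ to be a disjoint union of Dynkin quivers; in particular, $Y^{\perp}$ is of finite representation type.

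Consequently $\mathcal{T} \cap Y^{\perp}$ is a thick subcategory of $\rep(Q)$ with only finitely many indecomposables, so Lemma \ref{repr-finite} together with Proposition \ref{equiv_thick} yields an exceptional sequence $(Z_{1},\dots,Z_{s})$ generating $\mathcal{T} \cap Y^{\perp}$. Since each $Z_{i}\in Y^{\perp}$, the concatenation $E := (Y, Z_{1},\dots,Z_{s})$ is itself an exceptional sequence in $\rep(Q)$, and $\mathcal{C}(E) \subseteq \mathcal{T}$ holds by thickness of $\mathcal{T}$.

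The core step, and the main obstacle, is the reverse inclusion $\mathcal{T} \subseteq \mathcal{C}(E)$. Fix an indecomposable $W \in \mathcal{T}$; the plan is to place $W$ in $\mathcal{C}(E)$ via a Geigle--Lenzing style two-step mutation with respect to $Y$. First, form the universal extension $0 \to W \to W^{\sharp} \to Y^{b} \to 0$ with $b = \dim_{k}\Ext^{1}(Y,W)$, which annihilates $\Ext^{1}(Y,-)$ on $W^{\sharp}$. Second, with $a = \dim_{k}\Hom(Y,W^{\sharp})$, form the four-term evaluation sequence $0 \to K \to Y^{a} \to W^{\sharp} \to W^{\flat} \to 0$. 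A standard long-exact-sequence argument using the rigidity of $Y$ and the fact that every morphism $Y \to W^{\sharp}$ factors through the image of evaluation shows that both $K$ and $W^{\flat}$ lie in $Y^{\perp}$; this is the one place where the argument does real work. Thickness of $\mathcal{T}$ places $W^{\sharp}, K, W^{\flat}$ inside $\mathcal{T}$, so $K, W^{\flat} \in \mathcal{T} \cap Y^{\perp} = \mathcal{C}(Z_{1},\dots,Z_{s}) \subseteq \mathcal{C}(E)$. Thickness of $\mathcal{C}(E)$ then brings $W^{\sharp}$, and then $W = \ker(W^{\sharp} \to Y^{b})$, back inside $\mathcal{C}(E)$. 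If one prefers, the mutation computation can be replaced by an appeal to Bondal's mutation triangles in $D^{b}(\rep(Q))$ followed by passage to cohomology, in the spirit of Proposition \ref{exceptionalDerived}.
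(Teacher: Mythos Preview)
Your argument is correct. The universal-extension and evaluation steps do exactly what you claim: the standard long exact sequence computations (using that $Y$ is exceptional and that the evaluation map induces an isomorphism $\Hom(Y,Y^{a})\to\Hom(Y,W^{\sharp})$) show $\Hom(Y,K)=\Ext^{1}(Y,K)=0$ and $\Hom(Y,W^{\flat})=\Ext^{1}(Y,W^{\flat})=0$, and thickness then finishes the job as you describe.

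Your route, however, differs from the paper's. The paper does not work in the ambient $Y^{\perp}\subseteq\rep(Q)$ at all; instead it treats the thick subcategory $\mathcal{A}$ itself as an abstract Hom-finite hereditary abelian category and applies Proposition~\ref{exceptionalDerived} directly inside $D^{b}(\mathcal{A})$: the singleton $(X)$ is an exceptional sequence whose right perpendicular (in $D^{b}(\mathcal{A})$) has only exceptional indecomposables by Lemma~\ref{Lemma1}, so it extends step by step to a \emph{full} exceptional sequence in $D^{b}(\mathcal{A})$, and fullness immediately gives $\mathcal{A}=\mathcal{C}(H^{0}(E))$ with no separate ``reverse inclusion'' argument needed. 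Your approach instead exploits the ambient structure (Proposition~\ref{GL} identifies $Y^{\perp}$ with a Dynkin module category), reduces to Lemma~\ref{repr-finite} for $\mathcal{T}\cap Y^{\perp}$, and then recovers the reverse inclusion by an explicit Geigle--Lenzing reduction. Note that your appeal to Lemma~\ref{repr-finite} already invokes Proposition~\ref{exceptionalDerived} indirectly, so you are not avoiding the derived machinery; what you gain is a concrete picture of how each $W\in\mathcal{T}$ is built from $Y$ and the $Z_{i}$, at the cost of a longer argument. The paper's version is shorter and more conceptual but leaves the ``why does fullness suffice'' step implicit.
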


\begin{proof} Let $\mathcal{A}$ be a thick subcategory of $\rep(Q)$ which contains at least one preprojective or preinjective indecomposable object $X$.  By Proposition \ref{DieterVossiek}, $\mathcal{A}$ is a Hom-finite hereditary abelian category containing $X$.  Clearly, $X$ is an exceptional representation, hence providing an exceptional sequence $E' = (X)$ in $\mathcal{A}$. From Lemma \ref{Lemma1}, $\mathcal{D}(E')^\perp$ in $D^b(\mathcal{A})$ only contains exceptional objects. By Lemma \ref{exceptionalDerived}, we see that $E'$ can be completed to a full exceptional sequence $E$ in $D^b(\mathcal{A})$, proving that $\mathcal{A}$ is also generated by an exceptional sequence.
\end{proof}

The following lemma is well known; see for example \cite{IT}. We will need
it shortly.

\begin{Lemma} \label{lemmaDynkin}
Let $R$ be a Dynkin quiver.  Any thick subcategory of $\rep(R)$ is equivalent to
$V^\perp$ for a rigid representation $V \in \rep(R)$.  Moreover,
$V^\perp$ is equivalent to $\rep(R')$, where $R'$ is a (possibly disconnected) Dynkin quiver.
\end{Lemma}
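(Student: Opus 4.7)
The plan is to chain together two results already at hand: Lemma \ref{repr-finite} and Proposition \ref{equiv_thick}. Let $\mathcal{A}$ be a thick subcategory of $\rep(R)$. First I would observe that because $R$ is Dynkin, $\rep(R)$ itself contains only finitely many indecomposable objects up to isomorphism, so $\mathcal{A}$ is automatically representation-finite. Proposition \ref{DieterVossiek} identifies thick subcategories of the hereditary category $\rep(R)$ with extension-closed abelian ones, which is the hypothesis needed downstream. Applying Lemma \ref{repr-finite} to $\mathcal{A}$ then immediately yields that $\mathcal{A}$ is equivalent to $\rep(R')$ for some (possibly disconnected) Dynkin quiver $R'$; this disposes of the second assertion of the lemma.

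For the first assertion, I would feed the equivalence $\mathcal{A} \simeq \rep(R')$ into Proposition \ref{equiv_thick}, which asserts (for any acyclic $Q$) that for an abelian extension-closed subcategory $\mathcal{C}$ of $\rep(Q)$ the conditions (a) generation by an exceptional sequence, (b) equivalence to $\rep(Q')$ for some acyclic $Q'$, and (c) equality with $V^\perp$ for some rigid $V$ are mutually equivalent. Having verified (b) for $\mathcal{A}$, the implication $(b)\Rightarrow(c)$ furnishes a rigid $V \in \rep(R)$ with $\mathcal{A} = V^\perp$, completing the proof.

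There is essentially no obstacle here: both ingredients have already been proved in earlier sections, and the Dynkin hypothesis on $R$ is invoked only in the single elementary step ensuring representation-finiteness so that Lemma \ref{repr-finite} may be applied.
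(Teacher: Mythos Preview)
Your proposal is correct. The paper does not actually prove Lemma~\ref{lemmaDynkin}; it simply states that the result is well known and refers the reader to \cite{IT}. Your argument instead derives the lemma internally from Lemma~\ref{repr-finite} and Proposition~\ref{equiv_thick}, both of which are proved earlier in the paper and do not rely on Lemma~\ref{lemmaDynkin}, so there is no circularity. This is a perfectly valid, self-contained route.
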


In order to understand thick subcategories of $\rep(Q)$ which are contained
in $\mathcal R$eg, we need to classify the thick subcategories of a single
tube.  Let $\mathcal T$ be a tube of rank $r$, which is identified with the additive subcategory of $\rep(Q)$ that it generates.  Let $J$ be a subset of the
quasi-simples of $\mathcal T$.  Write $\mathcal E_J$ for $X^\perp \cap \mathcal{T}$, where
$X$ is the direct sum of the quasi-simples not in $J$.

We say that $(\mathcal E_J,\mathcal F)$ is a \emph{regular orthogonal pair} if
$\mathcal F \subseteq \p \mathcal  E_J \cap \mathcal E_J^\perp$,
$\mathcal F$ is thick in $\mathcal{T}$ and contains only exceptional indecomposables.  In this case, let
$\mathcal S_{J,\mathcal F}$ be the additive hull of $\mathcal E_J$ and
$\mathcal F$, which is clearly a thick subcategory of $\mathcal{T}$. The following result describes all the thick subcategories in a given stable tube. It extends \cite[Prop. 3.2.8]{Dichev}, where connected thick subcategories are considered.

\begin{Prop} \label{thicktube}
Let $\mathcal T$ be a tube in $\rep(Q)$.  Any thick subcategory
of $\mathcal T$ can be written as $\mathcal S_{J,\mathcal F}$ for a unique
subset $J$ of the quasi-simples and subcategory $\mathcal F$ such that
$(\mathcal E_J,\mathcal F)$ is a
regular orthogonal pair.
\end{Prop}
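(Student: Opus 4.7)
The plan is to extract the subset $J$ of quasi-simples from the non-exceptional part of $\mathcal{A}$ and then take $\mathcal{F}$ to be the bi-orthogonal complement of $\mathcal{E}_J$ inside $\mathcal{A}$. Parameterize the indecomposables of the standard tube $\mathcal{T}$ of rank $r$ as $M[j,\ell]$ with quasi-socle $S_j$ (indices modulo $r$) and quasi-length $\ell \ge 1$; recall that $M[j,\ell]$ is exceptional iff $\ell < r$. A direct computation of Hom and $\Ext^1$ in the standard tube shows that $M[j,\ell] \in S_i^\perp \cap \mathcal{T}$ iff $i \not\equiv j$ and $i \not\equiv j+\ell \pmod{r}$, whence $M[j,\ell] \in \mathcal{E}_J$ iff both $S_j$ and $S_{(j+\ell) \bmod r}$ belong to $J$; in particular $M[i,r] \in \mathcal{E}_J$ iff $S_i \in J$.

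If $\mathcal{A}$ contains only exceptional indecomposables, then by Lemma~\ref{repr-finite} it is representation-finite; in this case I set $J = \emptyset$ (so $\mathcal{E}_\emptyset = 0$) and $\mathcal{F} = \mathcal{A}$. Otherwise, I define
\[
J := \bigcup_{\substack{M[j,\ell] \in \mathcal{A}\\ \ell \ge r}} \bigl\{\,S_j,\ S_{(j+\ell) \bmod r}\,\bigr\},
\]
so that every non-exceptional indecomposable of $\mathcal{A}$ lies automatically in $\mathcal{E}_J$. The key technical step is then the reverse inclusion $\mathcal{E}_J \subseteq \mathcal{A}$: for each $S_i \in J$, I would pick a witnessing non-exceptional $M[j,\ell] \in \mathcal{A}$ and use the non-split self-extension $0 \to M[j,\ell] \to M[j,\ell+r] \to M[(j+\ell) \bmod r,\,r] \to 0$, together with kernels and cokernels of the standard short exact sequences inside the tube, to produce first the quasi-simples and boundary indecomposables associated to $J$ and then every element of $\mathcal{E}_J$ via successive extensions, all carried out inside $\mathcal{A}$ by thickness. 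The main obstacle is the combinatorial bookkeeping required to verify that this closure process stays inside $\mathcal{E}_J$ and exhausts it.

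Next, I set $\mathcal{F} := \mathcal{A} \cap ({}^{\perp}\mathcal{E}_J \cap \mathcal{E}_J^\perp)$, which is thick as an intersection of thick subcategories. Any indecomposable $Y \in \mathcal{F}$ must be exceptional, because any non-exceptional indecomposable of $\mathcal{A}$ lies in $\mathcal{E}_J$ and its nonzero identity endomorphism then prevents membership in $\mathcal{E}_J^\perp$; hence $(\mathcal{E}_J, \mathcal{F})$ is a regular orthogonal pair. The equality $\mathcal{A} = \mathcal{S}_{J,\mathcal{F}}$ reduces to showing that any exceptional $Y = M[j',\ell'] \in \mathcal{A} \setminus \mathcal{E}_J$ (so $S_{j'} \notin J$ or $S_{(j'+\ell') \bmod r} \notin J$) lies in ${}^{\perp}\mathcal{E}_J \cap \mathcal{E}_J^\perp$, which follows by combining the criterion of the first paragraph with the description of maps between intervals in a standard tube. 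For uniqueness, if $\mathcal{A} = \mathcal{S}_{J_1,\mathcal{F}_1} = \mathcal{S}_{J_2,\mathcal{F}_2}$, then since $M[i,r]$ is non-exceptional we have $S_i \in J_k$ iff $M[i,r] \in \mathcal{A}$, forcing $J_1 = J_2$; the subcategory $\mathcal{F}_k = \mathcal{A} \cap ({}^{\perp}\mathcal{E}_{J_k} \cap \mathcal{E}_{J_k}^\perp)$ is then also uniquely determined.
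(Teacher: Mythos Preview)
Your overall strategy coincides with the paper's: define $J$ from the non-exceptional part of $\mathcal{A}$, show $\mathcal{E}_J\subseteq\mathcal{A}$, and then argue that the remaining indecomposables form the bi-orthogonal complement $\mathcal{F}$. The definitions of $J$ agree (yours reduces to the set of quasi-socles of the singular-isotropics in $\mathcal{A}$), and your treatment of uniqueness is fine.

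The genuine gap is in your final step. You assert that any exceptional $Y=M[j',\ell']\in\mathcal{A}\setminus\mathcal{E}_J$ lies in ${}^{\perp}\mathcal{E}_J\cap\mathcal{E}_J^{\perp}$, and you justify this as following ``by combining the criterion of the first paragraph with the description of maps between intervals in a standard tube.'' But this is \emph{not} a purely combinatorial fact about intervals: knowing only that $S_{j'}\notin J$ (say) does not prevent the interval $[j',j'+\ell'-1]$ from overlapping an interval of $\mathcal{E}_J$ in a way that produces a nonzero morphism. Concretely, in rank~$3$ with $J=\{S_0\}$, the quasi-simple $S_0$ is exceptional, is not in $\mathcal{E}_J$ (since $S_1\notin J$), yet $\Hom(S_0,M[0,3])\neq 0$ for the singular-isotropic $M[0,3]\in\mathcal{E}_J$. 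What rules this out is that such a $Y$ cannot coexist with $\mathcal{E}_J$ inside a \emph{thick} subcategory without forcing $J$ to grow. The paper spends its main effort here: it introduces the relative quasi-simples $\mathcal{Q}$ of $\mathcal{E}_J$, observes that their composition series partition the quasi-simples of $\mathcal{T}$ into consecutive blocks $\mathcal{K}_Q$, and then shows---using thickness (via Proposition~\ref{DieterVossiek}) and a contradiction with the definition of $J$---that the quasi-simple filtration of any $Y\in\mathcal{F}$ must lie strictly inside a single block $\mathcal{K}_Q$, avoiding its quasi-socle and quasi-top. Only then does bi-orthogonality (and exceptionality of $Y$) follow. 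Your proposal does not engage thickness at this point, so the step as written is unjustified.

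A minor issue: the short exact sequence you write, $0\to M[j,\ell]\to M[j,\ell+r]\to M[(j+\ell)\bmod r,\,r]\to 0$, is a self-extension only when $\ell=r$; for $\ell>r$ the outer terms differ, so thickness does not immediately place the right-hand term in $\mathcal{A}$. The paper avoids this by first reducing (as in Lemma~\ref{intersections}) to the singular-isotropic case and then using that a tube is generated as a thick subcategory by its singular-isotropics, which gives $\mathcal{E}_J\subseteq\mathcal{A}$ without combinatorial bookkeeping.
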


\begin{proof}
Let $\mathcal C$ be a thick subcategory of $\mathcal T$.  Let $J$ be
the set of quasi-socles of the singular-isotropic representations in $\mathcal C$, if any.
We claim that $\mathcal E_J$
is contained in $\mathcal C$, and that if we set $\mathcal F$ to be
the additive hull of $\ind \mathcal C \setminus \ind \mathcal E_J$, then
$(\mathcal E_J,\mathcal F)$ is a regular orthogonal pair, so
$\mathcal C= \mathcal S_{J,\mathcal F}$.

First, we establish that $\mathcal E_J$ is contained in $\mathcal C$.
If $J=\emptyset$, then $\mathcal E_J=0$, so this is obvious.  Assume
otherwise.
Let
$X$ be the direct sum of the quasi-simples not in $J$.  Then
$\mathcal E_J = \mathcal T \cap
X^\perp$.  Suppose that $\mathcal{T}$ has rank $r_1$. Since the summands of $X$ form an exceptional sequence, from Proposition \ref{GL2}, $X^\perp$ is equivalent to the category of representations of a quiver $Q'$ with $|Q_0| - r_1 + |J|$ vertices. It is clear that $Q'$ is a possibly disconnected Euclidean quiver.  Since $\mathcal{T}\cap X^\perp$ contains exactly $|J|$ singular-isotropic representations, we see that the ranks of the tubes of $\rep(Q')$ will be the same as the ones for $\rep(Q)$, but one rank will decrease by $r_1 - |J|$.  If the $r_i$ denote the ranks of the non-homogeneous tubes for $\rep(Q)$, the well know formula $\sum (r_i-1) = n-2$ gives $ (|J|-1) + \sum_{i \ne 1 } (r_i-1) = (n - r_1 + |J|) - 2$ which then tells us that $Q'$ is connected. It follows that $\mathcal E_J$ is equivalent to
some tube $\mathcal T'$ of $\rep(Q')$.  Since the singular-isotropic representations in
$\mathcal T'$ generate all of $\mathcal T'$ as a thick subcategory, it follows
that the smallest thick subcategory containing the singular-isotropic
representations in $\mathcal C$ is $\mathcal E_J$.  Thus, $\mathcal E_J$ is
contained in $\mathcal C$.

Suppose that $J\ne \emptyset$.  Let $\mathcal Q$ denote the set
of objects of $\mathcal E_J$ which correspond to the quasi-simples of
$\mathcal T'$. The objects in $\mathcal E_J$ consist of representations which
have filtrations by objects from $\mathcal Q$.

Now think of the filtration by quasi-simples of the objects from
$\mathcal Q$.  Each $Q$ in $\mathcal Q$ has a filtration $\mathcal K_Q$ by a consecutive
sequence of the quasi-simples; these consecutive sequences are disjoint
and their union is the set of all the quasi-simples of $\mathcal T$.
Suppose we have an indecomposable object $X\in \mathcal F$.  Consider its
filtration by quasi-simples, which also gives rise to a
consecutive sequence of quasi-simples.  Since $X \not \in \mathcal E_J$,
this sequence of quasi-simples is not the concatenation of subsequences
corresponding to elements of $\mathcal Q$: it either begins, or ends, or
both, out of step with the subdivision of quasi-simples of $\mathcal T$
into the
sets $\mathcal K_Q$.  We would like to show that the quasi-simples in the
filtration of $X$ all lie inside $\mathcal K_Q$ for some $Q$, and do not
include either the quasi-socle or the quasi-top of that $Q$.  Suppose that
this is not the case.  Then there is some $Q \in \mathcal Q$ such that $X$ admits a
non-epimorphism to $Q$, or a non-monomorphism from $Q$.  Suppose we are in
the first case.  (The second is dual.)  Since $\mathcal C$ is thick,
by Proposition \ref{DieterVossiek}
$\mathcal C$ contains the image $A$ of $X$ in $Q$, so that there
is a
short exact sequence $0\rightarrow A \rightarrow Z \rightarrow B \rightarrow
0$ in $\C$, where $Z$ is singular-isotropic and lies in $\mathcal E_J$. But then, there is another short exact sequence
$0\rightarrow B \rightarrow Z' \rightarrow A \rightarrow
0$ where $Z'$ is singular-isotropic.  Since $\C$ is extension-closed, $Z' \in \C$ and since $Z'$ is singular-isotropic, we must have $Z' \in \mathcal E_J$. This means that there exists $Q' \in \mathcal Q$ which lies on the co-ray where $Z', X$, and $A$ lie.  This contradicts the fact that $\mathcal K_Q$ and $\mathcal K_{Q'}$ have to be disjoint.
Therefore, we know that, for any indecomposable
$X$ in $\mathcal F$, there is some
$Q\in \mathcal Q$ such that $X$ admits a filtration by the quasi-simples
in the filtration of $Q$, excluding its quasi-socle and quasi-top.  This
implies, in particular, that
$X\subseteq \p \mathcal E_J \cap \mathcal E_J^\perp$.  It also shows that
$X$ is necessarily exceptional.

Now we consider the case that $J=\emptyset$.  The situation which we must
rule out is that $\mathcal C$ contains some non-rigid indecomposables,
but no singular-isotropic representations.  That this is impossible is
a by-product of the proof of Lemma \ref{intersections}.
\end{proof}

Now we are able to describe the semi-stable subcategories of $\rep (Q)$ which
lie in $\mathcal R$eg.

\begin{Prop} \label{semistablenonempty} A thick subcategory of $\mathcal R$eg is semi-stable if and
only if it
contains all the homogeneous tubes and its intersection with each
non-homogeneous tube $\mathcal T_i$
is of the form $\mathcal S_{J_i,\mathcal F_i}$ where each $J_i$ is non-empty.
\end{Prop}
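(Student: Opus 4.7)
The plan is to prove both directions by reducing to Theorem \ref{one} combined with the per-tube description provided by Proposition \ref{thicktube}.

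For the $(\Leftarrow)$ direction, I would construct a regular rigid representation $V$ such that $V^\perp \cap \mathcal{R}\mathrm{eg} = \mathcal{C}$. Setting $d = d_V + \delta$, Lemma \ref{onlyreg} yields the canonical decomposition $d_V \oplus \delta$ and $M(\hat d) = V$, so Proposition \ref{dreg} gives $\rep(Q)_d = V^\perp \cap \mathcal{R}\mathrm{eg} = \mathcal{C}$, establishing semi-stability. To build $V$, I would proceed tube-by-tube: within each non-homogeneous tube $\mathcal{T}_i$, I would construct a rigid $V_i \in \mathcal{T}_i$ satisfying $V_i^\perp \cap \mathcal{T}_i = \mathcal{S}_{J_i, \mathcal{F}_i}$. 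The construction uses the wing decomposition of the quasi-simples of $\mathcal{T}_i$ outside $J_i$ together with the internal structure of $\mathcal{F}_i$ from the proof of Proposition \ref{thicktube}, combined with Lemma \ref{lemmaDynkin} to realize the restriction of $\mathcal{F}_i$ to each wing as the right orthogonal of a rigid object in a Dynkin subcategory. Assembling $V = \bigoplus_i V_i$ over all non-homogeneous tubes, Proposition \ref{ARShape}$(2)$ ensures that $V$ is rigid overall and regular; since no summand of $V$ lies in a homogeneous tube, $V^\perp$ contains every homogeneous tube, and combined with the per-tube equality we obtain $V^\perp \cap \mathcal{R}\mathrm{eg} = \mathcal{C}$.

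For the $(\Rightarrow)$ direction, suppose $\mathcal{C} \subseteq \mathcal{R}\mathrm{eg}$ is thick and semi-stable. By Theorem \ref{one}, $\mathcal{C}$ is either finitely generated or equals the regular part of an abelian, extension-closed, finitely generated subcategory equivalent to a (possibly disconnected) Euclidean quiver. A finitely generated thick subcategory of $\mathcal{R}\mathrm{eg}$ cannot contain a full tube, since tubes lack projective generators, so $J_i = \emptyset$ for every $i$; by Lemma \ref{Lemma3} such a subcategory is contained in some $\mathcal{W}_I$, is representation-finite, and falls outside the scope of the proposition. Hence $\mathcal{C}$ is of type (ii), so the proof of Theorem \ref{one} gives $\mathcal{C} = V^\perp \cap \mathcal{R}\mathrm{eg}$ for some regular rigid $V$. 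By Proposition \ref{ARShape}$(2)$, $V$ has vanishing Hom and Ext with any homogeneous tube, so $\mathcal{C}$ contains every homogeneous tube; and by Proposition \ref{thicktube}, $\mathcal{C} \cap \mathcal{T}_i = \mathcal{S}_{J_i, \mathcal{F}_i}$ for some pair $(J_i, \mathcal{F}_i)$. Since any $d$ with $\rep(Q)_d = \mathcal{C}$ has $\delta$ in its canonical decomposition, $\mathcal{C}$ must contain infinitely many indecomposables from each $\mathcal{T}_i$, which forces $J_i \ne \emptyset$ via the structure described in Proposition \ref{thicktube}.

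The main obstacle is the explicit construction of each $V_i$ inside its tube: verifying that $V_i^\perp \cap \mathcal{T}_i$ neither over-kills, by excluding some objects of $\mathcal{S}_{J_i, \mathcal{F}_i}$, nor under-kills, by including objects outside $\mathcal{S}_{J_i, \mathcal{F}_i}$ (such as exceptional representations straddling the boundary of a wing and $J_i$), requires careful Hom and Ext computations in the translation quiver of $\mathcal{T}_i$, combining the wing boundary quasi-simples with the interior rigid objects supplied by Lemma \ref{lemmaDynkin}.
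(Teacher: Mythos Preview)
Your approach is essentially the same as the paper's: both directions go through the description $\mathcal C = V^\perp \cap \mathcal{R}{\rm eg}$ with $V$ regular rigid, and the converse builds $V$ tube-by-tube using the wing structure together with Lemma~\ref{lemmaDynkin}. The paper resolves your ``main obstacle'' with an explicit choice: for each relative quasi-simple $Q$ of $\mathcal E_J$, let $R_Q$ be the target of the irreducible epimorphism out of $Q$ (or zero if none exists). Then $\bigoplus_Q R_Q$ is rigid and $(\bigoplus_Q R_Q)^\perp \cap \mathcal T$ is exactly the additive hull of $\mathcal E_J$ together with the wings $W_Q$ sitting strictly below each $Q$; since each $W_Q$ is of type~$\mathbb A$, Lemma~\ref{lemmaDynkin} supplies the extra summands needed to cut each $W_Q$ down to $\mathcal F \cap W_Q$. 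This choice of $R_Q$ sidesteps your over-kill/under-kill concern, because the right-perpendicular in $\mathcal T$ separates cleanly into $\mathcal E_J$ and the pairwise disjoint wings $W_Q$, with no exceptional objects straddling the boundary. For the forward direction, the paper argues directly that each $V_i$, being rigid in $\mathcal T_i$, lies in a wing and hence misses some singular-isotropic representation, forcing $J_i \ne \emptyset$; your route via ``infinitely many indecomposables'' reaches the same conclusion.

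There is one genuine gap in your forward direction: your treatment of the finitely generated case is not a valid proof step. You correctly observe that a finitely generated thick subcategory of $\mathcal{R}{\rm eg}$ is representation-finite (hence contains no homogeneous tube and has all $J_i = \emptyset$), but then declaring that it ``falls outside the scope of the proposition'' is circular---on a literal reading of the statement, such a subcategory is semi-stable yet fails the right-hand conditions, which would \emph{falsify} the ``only if'' rather than lie outside its scope. The paper's proof handles this by opening with the assertion that any semi-stable subcategory of $\mathcal{R}{\rm eg}$ can be written as $\mathcal A \cap \mathcal{R}{\rm eg}$ with $\mathcal A$ of (possibly disconnected) Euclidean type, i.e., it passes directly to case~(ii) of Theorem~\ref{one}; in view of the restatement in Proposition~\ref{oneone}, the proposition is really intended to characterize the type~(ii) (non-finitely-generated) semi-stable subcategories. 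So your instinct to flag the finitely generated case is sound, but the fix is to read the statement in that restricted sense and say so, not to dismiss the case.
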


\begin{proof}
By Theorem \ref{one}, a semi-stable subcategory of $\mathcal R$eg can also be
written as $\mathcal A \cap \mathcal R$eg for $\mathcal A$ some finitely
generated extension-closed abelian subcategory, equivalent to the representations of a
(possibly disconnected) Euclidean quiver.  Since $\mathcal A$ is finitely
generated, it can be written as $V^\perp$ for some rigid object $V$, and
since $\mathcal A$ is equivalent to a possibly disconnected Euclidean
quiver, $V$ must be regular.

Since $V$ is regular, $V^\perp$ contains all the homogeneous tubes.
Now consider $V_i$, the maximal direct summand of $V$ lying in $\mathcal T_i$.
Since $V_i$ is rigid, $V_i$ is contained in some wing, and thus $V_i^\perp$
contains some singular-isotropic representation.  It follows that
$V^\perp \cap \mathcal T_i$ is of the form $\mathcal S_{J_i,\mathcal F_i}$ where
$J_i$ is non-empty.

Conversely, suppose we have a thick subcategory as described in the
statement of the proposition.  We want to show that it is of the form
$V^\perp \cap \mathcal R$eg where $V$ is rigid.  Clearly, it suffices to consider the
case of one tube $\mathcal T$, and a subcatgory $\mathcal S_{J,\mathcal F}$,
with $J \ne \emptyset$.  We want to show that there is some rigid representation
$V \in \mathcal T$ such that $V^\perp \cap \mathcal{T} = \mathcal S_{J,\mathcal F}$.

Let $\mathcal Q$ be the collection of the (relative) quasi-simples of $\mathcal E_J$, as
before.  For each $A\in \mathcal Q$, let $R_A$ be the target of the
irreducible epimorphism from $A$ (or $R_A=0$ if there is no such epimorphism).  For $A,B \in \mathcal Q$, if $\Ext(R_A,R_B) \ne 0$, then there is a nonzero morphism $g: R_B \to \tau R_A$. Since there is an epimorphism $f: B \to R_B$ and a monomorphism $h: \tau R_A \to A$, this yields a nonzero radical morphism $hgf: B \to A$, contradicting that $A,B$ are quasi-simples. Therefore, $\bigoplus_{A \in \mathcal{Q}} R_A$ is rigid. For $A \in \mathcal{Q}$, denote by $\mathcal{W}_A$ the extension-closed abelian subcategory generated by all the quasi-simple composition factors of $A$ except its quasi-top and quasi-socle.  We see that
$(\bigoplus_{A \in \mathcal Q} R_A)^\perp$ consists of the additive hull of $\mathcal E_J$
together with each of the wings $\mathcal{W}_A$.  Since each of the $\mathcal{W}_A$ is representation-finite
of type $\mathbb A$, Lemma \ref{lemmaDynkin} tells us that
if we add further summands to $\bigoplus R_A$,
it is possible to find a
rigid object $V$ such that $V^\perp \cap \mathcal{T} = \mathcal S_{J,{\mathcal F}}$.
\end{proof}

Using the classification of thick subcategories inside the regular representations, the  previous proposition can be restated as follows:

\begin{Prop}\label{oneone}
The semi-stable subcategories in (ii) of Theorem \ref{one}
can also be described as those abelian, extension-closed
subcategories of the regular part of $\rep(Q)$, which
contain infinitely many indecomposable objects from each tube.
\end{Prop}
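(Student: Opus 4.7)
The plan is to deduce this reformulation directly from the preceding proposition together with the classification of thick subcategories of a single tube (Proposition \ref{thicktube}). Since $\mathcal B$ is abelian and extension-closed, Proposition \ref{DieterVossiek} makes it a thick subcategory of $\reg$, so the preceding proposition already characterizes the semi-stable $\mathcal B$'s in $\reg$ as those thick subcategories that contain every homogeneous tube and whose intersection with each non-homogeneous tube $\mathcal T_i$ has the form $\mathcal S_{J_i,\mathcal F_i}$ with $J_i$ non-empty. It therefore suffices to show tube by tube that the cardinality condition matches this structural condition.

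For a homogeneous tube $\mathcal T_\lambda$, I would first observe that its unique quasi-simple $S$ satisfies $\Ext^1(S,S) \cong D\Hom(S,\tau S) = D\End(S) \ne 0$ by the Auslander--Reiten formula, so no indecomposable in $\mathcal T_\lambda$ is exceptional. In Proposition \ref{thicktube}, the part $\mathcal F$ of any decomposition $\mathcal S_{J,\mathcal F}$ is required to consist of exceptional indecomposables, hence must vanish for $\mathcal T_\lambda$. The only thick subcategories of $\mathcal T_\lambda$ are thus $0$ (when $J=\emptyset$) and $\mathcal T_\lambda$ itself (when $J=\{S\}$), from which the dichotomy "$\mathcal B\cap\mathcal T_\lambda$ infinite $\iff$ $\mathcal B\supseteq\mathcal T_\lambda$" is immediate.

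For a non-homogeneous tube $\mathcal T_i$ of rank $r_i$, write $\mathcal B\cap\mathcal T_i=\mathcal S_{J_i,\mathcal F_i}$. If $J_i\ne\emptyset$, then by the construction recalled inside the proof of Proposition \ref{thicktube}, $\mathcal E_{J_i}$ is equivalent to a tube in the representation category of a possibly disconnected Euclidean quiver, and so contains infinitely many indecomposables, proving infiniteness of $\mathcal B\cap\mathcal T_i$. Conversely, if $J_i=\emptyset$ then $\mathcal B\cap\mathcal T_i=\mathcal F_i$ consists entirely of exceptional indecomposables, and in a tube of rank $r_i$ there are only $r_i(r_i-1)$ such indecomposables (namely those of quasi-length strictly less than $r_i$), so $\mathcal F_i$ is finite, contradicting the infiniteness hypothesis.

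The only genuinely delicate point is the forward direction for non-homogeneous tubes, where one must invoke that $\mathcal E_{J_i}$ is not merely representation-infinite but actually a full tube (of some auxiliary Euclidean quiver). This is already embedded in the proof of Proposition \ref{thicktube}, so once the translation to the preceding proposition is set up, the remainder of the argument is essentially bookkeeping.
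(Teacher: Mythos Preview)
Your proposal is correct and follows precisely the approach the paper intends: the paper states this proposition as a direct restatement of the preceding one (``Using the classification of thick subcategories inside the regular representations, the previous proposition can be restated as follows''), without spelling out details, and you have accurately supplied the tube-by-tube translation via Propositions \ref{DieterVossiek} and \ref{thicktube}. Your treatment of the homogeneous case and the equivalence $J_i\ne\emptyset \iff |\mathcal B\cap\mathcal T_i|=\infty$ in the non-homogeneous case is exactly what is needed.
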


We end this section with the following, which will be used in the next section.

\begin{Prop} \label{except}
Let $Q$ be a Euclidean quiver and $\mathcal{C}$ be a thick subcategory of $\rep(Q)$. Then one of $\mathcal{C}$, $\mathcal{C}^\perp$ is not contained in $\mathcal{R}$eg if and only if both $\mathcal{C}$ and $\mathcal{C}^\perp$ are generated by exceptional sequences.  In this case, $(\mathcal{C}, \mathcal{C}^\perp)$ is an exceptional semi-orthogonal pair.
\end{Prop}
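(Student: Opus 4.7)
The proof splits naturally into the two implications of the biconditional, plus the supplementary claim that $(\mathcal{C},\mathcal{C}^\perp)$ is then exceptional semi-orthogonal. The plan is to handle both directions by combining Theorem~\ref{thick} with Lemma~\ref{Lemma1} and the argument for exceptional semi-orthogonal pairs already developed in the paper.

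For the forward direction, suppose first that $\mathcal{C}\not\subseteq\mathcal{R}\mathrm{eg}$. Then Theorem~\ref{thick} directly gives $\mathcal{C}=\mathcal{C}(E)$ for an exceptional sequence $E=(X_1,\dots,X_r)$. By \cite{CB} we may extend to a full exceptional sequence $(X_1,\dots,X_n)$; by the discussion of exceptional semi-orthogonal pairs in Section~\ref{sectionthick} this yields $\mathcal{C}^\perp=\mathcal{C}(X_{r+1},\dots,X_n)$, so $(\mathcal{C},\mathcal{C}^\perp)$ is an exceptional semi-orthogonal pair. The remaining case is $\mathcal{C}\subseteq\mathcal{R}\mathrm{eg}$ but $\mathcal{C}^\perp\not\subseteq\mathcal{R}\mathrm{eg}$; pick a non-regular indecomposable $V\in\mathcal{C}^\perp$. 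I claim $\mathcal{C}$ cannot contain any non-exceptional indecomposable $X$. Indeed, if $V$ is preprojective, the Auslander--Reiten formula gives $\Ext^1(X,V)\cong D\Hom(V,\tau X)$, and since $\tau X$ is again non-exceptional regular, Lemma~\ref{Lemma1}(1) forces $\Hom(V,\tau X)\neq 0$, contradicting $V\in\mathcal{C}^\perp$. If instead $V$ is preinjective, write $V=\tau^{-s}I(i)$ for some injective $I(i)$, so that $\Hom(X,V)\cong\Hom(\tau^s X,I(i))\cong D(\tau^s X)_i$; since the $\tau$-orbit of $X$ is sincere (as in the proof of Lemma~\ref{Lemma1}) this is nonzero, again contradicting $V\in\mathcal{C}^\perp$. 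Hence $\mathcal{C}$ contains only exceptional indecomposables, and as there are only finitely many of these in $\mathcal{R}\mathrm{eg}$, $\mathcal{C}$ is representation-finite. By Lemma~\ref{repr-finite}, $\mathcal{C}$ is generated by an exceptional sequence, and we conclude as in the first case.

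For the converse, assume both $\mathcal{C}$ and $\mathcal{C}^\perp$ are generated by exceptional sequences. Write $\mathcal{C}=\mathcal{C}(E_1)$ with $E_1=(X_1,\dots,X_s)$ exceptional, extend to a full exceptional sequence $(X_1,\dots,X_n)$, and use again the fact that $\mathcal{C}^\perp=\mathcal{C}(X_{s+1},\dots,X_n)$. This already shows $(\mathcal{C},\mathcal{C}^\perp)$ is an exceptional semi-orthogonal pair. Suppose for contradiction that both $\mathcal{C}$ and $\mathcal{C}^\perp$ lie in $\mathcal{R}\mathrm{eg}$; then every $X_i$ is regular, so $\langle\delta,d_{X_i}\rangle=0$ for all $i$, placing all $n$ vectors $d_{X_1},\dots,d_{X_n}$ in the hyperplane $\{x:\langle\delta,x\rangle=0\}$ of $\mathbb{Q}^n$. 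But the matrix $(\langle d_{X_i},d_{X_j}\rangle)_{i,j}$ is lower triangular with $1$'s on the diagonal (by the exceptional sequence condition), so these dimension vectors are linearly independent, giving a contradiction.

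The main obstacle is the mixed sub-case $\mathcal{C}\subseteq\mathcal{R}\mathrm{eg}$ and $\mathcal{C}^\perp\not\subseteq\mathcal{R}\mathrm{eg}$, where Theorem~\ref{thick} applies to $\mathcal{C}^\perp$ but does not directly pass that information back to $\mathcal{C}$. The fix is to use the existence of a witness non-regular $V\in\mathcal{C}^\perp$ together with Lemma~\ref{Lemma1} and the sincerity of non-exceptional regular orbits to force $\mathcal{C}$ to contain only exceptional objects, after which Lemma~\ref{repr-finite} finishes the job. Everything else is a direct bookkeeping of exceptional sequences.
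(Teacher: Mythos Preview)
Your proof is correct, but it diverges from the paper's in two places. For the mixed case of the forward direction ($\mathcal{C}\subseteq\mathcal{R}\mathrm{eg}$, $\mathcal{C}^\perp\not\subseteq\mathcal{R}\mathrm{eg}$), the paper applies Theorem~\ref{thick} to $\mathcal{C}^\perp$ and then invokes Lemma~\ref{leftrightperp} to conclude $\mathcal{C}={}^\perp(\mathcal{C}^\perp)$, which is automatically generated by an exceptional sequence; you instead argue directly, via Lemma~\ref{Lemma1} and the Auslander--Reiten formula, that any non-regular $V\in\mathcal{C}^\perp$ forces $\mathcal{C}$ to contain only exceptional indecomposables, whence it is representation-finite and Lemma~\ref{repr-finite} applies. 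Your route is longer but entirely self-contained and avoids the perpendicular calculus. For the converse, the paper observes that if $\mathcal{C}\subseteq\mathcal{R}\mathrm{eg}$ is generated by an exceptional sequence, then every homogeneous tube lies in $\mathcal{C}^\perp$, so $\mathcal{C}^\perp$ cannot itself be generated by an exceptional sequence (and symmetrically); you instead note that a full exceptional sequence has linearly independent dimension vectors (the Gram matrix is unitriangular), which cannot all sit in the hyperplane $\langle\delta,-\rangle=0$. Your dimension-count argument is arguably cleaner and makes no appeal to the tube structure, while the paper's argument is more in keeping with the representation-theoretic flavour of the surrounding material.
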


\begin{proof}
If $\C$ is not contained in $\mathcal{R}$eg, then it follows from Theorem \ref{thick} that $\C$ is generated by an exceptional sequence.  Since this exceptional sequence can be completed to a full exceptional sequence, see \cite{CB}, $\C^\perp$ is also generated by an exceptional sequence.  Similarly, if $\C^\perp$ is not contained in $\mathcal{R}$eg, then $\C^\perp$ is generated by an exceptional sequence by Theorem \ref{thick}. By Lemma \ref{leftrightperp}, we have $\p(\C^\perp) = \C$ and hence $\C$ is generated by an exceptional sequence.
Suppose now that both $\C$, $\C^\perp$ are contained in $\mathcal{R}$eg. If $\C$ is generated by an exceptional sequence, then all the representations in homogeneous tubes are contained in $\C^\perp$ and hence, $\C^\perp$ cannot be generated by an exceptional sequence.  If $\C^\perp$ is generated by an exceptional sequence, then all the representations in homogeneous tubes are contained in $\p (\C^\perp) = \C$ and $\C$ cannot be generated by an exceptional sequence.
\end{proof}

\section{Intersection of semi-stable subcategories}

In this section, we start with $Q$ any connected acyclic quiver and later specialize to the Euclidean case. We first look at some situations where the intersection of semi-stable subcategories is again semi-stable, and we end the section by describing, in the Euclidean case, how to construct the whole set of subcategories of $\rep(Q)$ arising as an intersection of semi-stable subcategories. As already seen, we may assume that our semi-stable subcategories are of the form $\rep(Q)_d$ where $d$ is a dimension vector.

A prehomogeneous dimension vector $d$ such that all the indecomposable summands of $M(d)$ are non-regular will be called \emph{strongly prehomogeneous}. The following result says that when $d_1,d_2$ are strongly prehomogeneous, the intersection $\rep(Q)_{d_1} \cap \rep(Q)_{d_2}$ remains semi-stable.

\medskip

\begin{Prop} \label{stronglyprehomo}Let $d_1,d_2$ be two strongly prehomogeneous dimension vectors.  Then $\rep(Q)_{d_1} \cap \rep(Q)_{d_2} = \rep(Q)_{d_3}$, where $d_3$ is  prehomogeneous.
\end{Prop}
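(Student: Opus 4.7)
The plan is to write the intersection as $P^\perp$ for a single rigid representation $P$ and then take $d_3 := d_P$.

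First, I would apply Proposition \ref{dnonreg} to each $d_i$: since $d_i$ is strongly prehomogeneous, $V_i := M(d_i)$ is a rigid representation whose indecomposable summands are all non-regular exceptional representations, and $\rep(Q)_{d_i} = V_i^\perp$. Consequently,
$$\rep(Q)_{d_1}\cap\rep(Q)_{d_2}=V_1^\perp\cap V_2^\perp=(V_1\oplus V_2)^\perp.$$
Because $\rep(Q)$ is hereditary, for each fixed $M$ the class $\{N \mid \Hom(N,M)=\Ext^1(N,M)=0\}$ is closed under kernels, cokernels and extensions; hence being perpendicular to $V_1\oplus V_2$ is the same as being perpendicular to the whole thick subcategory it generates, and so $(V_1\oplus V_2)^\perp = \C(V_1\oplus V_2)^\perp$.

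Next, because every indecomposable summand of $V_1\oplus V_2$ is non-regular, Corollary \ref{Cor2} guarantees that $\C(V_1\oplus V_2)$ is generated by an exceptional sequence, and Proposition \ref{equiv_thick} then supplies a projective generator $P$ of this thick subcategory, which is automatically rigid. Since $P$ generates $\C(V_1\oplus V_2)$ as an abelian, extension-closed subcategory, the same observation used above gives $\C(V_1\oplus V_2)^\perp = P^\perp$. Setting $d_3 := d_P$, the rigidity of $P$ makes $d_3$ prehomogeneous with $M(d_3)\cong P$, and a second application of Proposition \ref{dnonreg} yields $\rep(Q)_{d_3}=P^\perp=\rep(Q)_{d_1}\cap\rep(Q)_{d_2}$, as required.

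The argument is essentially a chain of previously established results, so I do not foresee a technical obstacle. The one point that deserves attention is the hypothesis that both $d_i$ be \emph{strongly} prehomogeneous, not merely prehomogeneous: it is precisely this non-regularity of the summands of $V_1\oplus V_2$ that allows Corollary \ref{Cor2} to be invoked and prevents $\C(V_1\oplus V_2)$ from being the kind of regular-only thick subcategory that need not admit a projective generator in $\rep(Q)$.
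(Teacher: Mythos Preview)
Your proof is correct and follows essentially the same route as the paper's own argument: express $\rep(Q)_{d_i}=M(d_i)^\perp$ via Proposition~\ref{dnonreg}, pass to $\C(M(d_1),M(d_2))^\perp$, invoke Corollary~\ref{Cor2} to get an exceptional sequence and hence a rigid (projective) generator $V$, and take $d_3=d_V$. Your write-up is a bit more explicit in justifying the equality $(V_1\oplus V_2)^\perp=\C(V_1\oplus V_2)^\perp$ and in citing Proposition~\ref{equiv_thick} for the existence of the projective generator, but the strategy is identical.
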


\begin{proof}
By assumption, $\rep(Q)_{d_i} = M({d_i})^\perp$, $i=1,2$, where the $M({d_i})$ are rigid representations whose indecomposable direct summands are non-regular.  Now, $\rep(Q)_{d_1} \cap \rep(Q)_{d_2} = \C(M({d_1}),M({d_2}))^\perp$.  However, from Corollary \ref{Cor2}, $$\C(M({d_1}),M({d_2})) = \C(E),$$ where $E$ is an exceptional sequence whose terms are non-regular.  This gives a rigid representation $V$ with $\C(M({d_1}),M({d_2}))=\C(V)$ by Proposition \ref{equiv_thick}.  Then, $\rep(Q)_{d_1} \cap \rep(Q)_{d_2} = V^\perp$.  Since $V$ is rigid, the orbit of $V$ is open in $\rep(Q,d_V)$.  Hence, the canonical decomposition of $V$ is given by the dimension vectors of its indecomposable direct summands. In particular, $d_3$ is prehomogeneous.
\end{proof}

Here is a simple examples that illustrate some results of Section \ref{sectionthick} and Proposition \ref{stronglyprehomo}.

\begin{Exam}  Let $Q$ be the following quiver
$$\xymatrixrowsep{12pt}\xymatrixcolsep{12pt}\xymatrix{ & 2 \ar[dl] & \\ 1  & & 3 \ar[ll] \ar[ul]}$$
There is only one non-homogeneous tube in $\rep(Q)$.  It is of rank two with two quasi-simple representations $M_1,M_2$ where $d_{M_1} = (1,0,1)$ and $d_{M_2}=(0,1,0)$.  Observe that $\C(M_1,M_2)$ is not generated by an exceptional sequence.  This shows that
the assumption that $M_1,M_2$ are non-regular in Corollary \ref{Cor2} is essential.

For $i=1,2,3$, denote by $S_i$ the simple representation at $i$, by $P_i$ the projective representation at $i$ and by $I_i$ the injective representation at $i$. Observe that $d_{S_1}, d_{S_3}$ are strongly prehomogeneous.  Consider
$\rep(Q)_{d_{S_1}} \cap \rep(Q)_{d_{S_3}}={\rm add}(I_2)$.  By Proposition \ref{stronglyprehomo}, we can write $\rep(Q)_{d_{S_1}} \cap \rep(Q)_{d_{S_3}}$ as
$\rep(Q)_{d_3}$ for some $d_3$ which is prehomogeneous but not necessarily
strongly prehomogeneous.  We will now determine $d_3$.

Observe that $\rep(Q)_{d_3}={\rm add}(I_2) = V^\perp$ for some $V$. Since $I_2$ is supported at vertices $2,3$, we have that $\langle d_V, - \rangle |_{Q'} = \langle d_3, - \rangle |_{Q'}$ where $Q'$ is the full subquiver of $Q$ generated by vertices $2,3$; see Corollary \ref{cor1}. This gives $d_V |_{Q'} = d_3 |_{Q'}$. The condition $\langle d_V, d_{I_2} \rangle =0$ gives $d_V = (a,0,b)$ for some $a,b \ge 0$. Hence, $d_3 = (c,0,b)$ for some integer $c$. This gives that $I_2$ lies in $\rep(Q)_{(c,0,b)}$ for any $c,b$ with $b \ge 0$. However, for some values of $b,c$, $\rep(Q)_{(c,0,b)}$ properly contains add$(I_2)$. If $c=b$, then by Proposition \ref{King}, $S_1$ lies in $\rep(Q)_{(c,0,b)}$, which is impossible. Similarly, if $c=0$, then $M_1$ lies in $\rep(Q)_{(c,0,b)}$, which is impossible. Finally, if $b=0$, then $S_2$ lies in $\rep(Q)_{(c,0,b)}$, which is again impossible.
Hence, $b,c$ are non-zero and are not equal. An easy check gives that $d_3=(c,0,b)$ is cp-equivalent to one of the vectors $(1,0,2),(2,0,1), (-1,0,1)$ whose canonical presentations are $(1,0,1) + (0,0,1), (1,0,1) +(1,0,0), (-1,0,0) + (0,0,1)$, respectively.

\medskip

Let $d_1 = (1,0,0)$ and $d_2 = (2,1,1)$.   Since $d_1$ is the dimension vector of $P_1$, we have $$\rep(Q)_{d_1} = P_1^\perp = {\rm add}(S_2,S_3,I_2) = \C(I_3,I_2).$$  Similarly, $$\rep(Q)_{d_2} = P_3^\perp = {\rm add}(S_1,S_2,P_2) = \C(P_1, P_2).$$ Therefore, each of $\rep(Q)_{d_1}, \rep(Q)_{d_2}$ is generated by a representation whose indecomposable direct summands are non-regular. We see that the intersection $\rep(Q)_{d_1} \cap \rep(Q)_{d_2}$ is $\C(P_1,P_3)^\perp = {\rm add}(S_2)$, where $S_2$ is regular.
\end{Exam}

From now on, we suppose that $Q$ is a Euclidean quiver. We give a complete description of the possible intersections of semi-stable subcategories of $\rep(Q)$, and in particular, a description of those intersections that are not semi-stable. Let us start with some notations and reminders.

\medskip

Given two semi-orthogonal pairs $(\mathcal{A},\mathcal{A}^\perp)$ and $(\mathcal{B},\mathcal{B}^\perp)$, we define
$$(\mathcal{A},\mathcal{A}^\perp)\ast(\mathcal{B},\mathcal{B}^\perp) = (\C(\mathcal{A},\mathcal{B}),\mathcal{A}^\perp \cap \mathcal{B}^\perp),$$
where we recall that $\C(\mathcal{A},\mathcal{B})$ denotes the smallest thick subcategory of $\rep(Q)$ containing $\mathcal{A}$ and $\mathcal{B}$. Observe that $\mathcal{A}^\perp \cap \mathcal{B}^\perp = \C(\mathcal{A},\mathcal{B})^\perp$, hence defining a new semi-orthogonal pair $(\C(\mathcal{A},\mathcal{B}),\mathcal{A}^\perp \cap \mathcal{B}^\perp)$.

Recall that for each facet $F_I$ of $H_\delta^{ss}$, we have the representation-finite thick subcategory $\mathcal{W}_I$ generated by the exceptional representations whose dimension vectors lie in $F_I$. We also have the simplicial cone $C_{I}$ which is generated by $F_{I}$ and
$\delta$.
Given a dimension vector $d$, recall that $M(\hat d)$ is a rigid representation of dimension vector $\hat d$, where $\hat d$ is the sum of the real Schur roots appearing in the canonical decomposition of $d$. We start with the following.

\begin{Prop} \label{prop5.7}
Let $Q$ be a Euclidean quiver and $d_1,d_2$ be dimension vectors.  Then $(\mathcal{C}(M(\hat d_1)),\mathcal{C}(M(\hat d_1))^\perp)\ast(\mathcal{C}(M(\hat d_2)),\mathcal{C}(M(\hat d_2))^\perp)$ is exceptional if and only if one of the following occurs:
\begin{enumerate}[$(a)$]
    \item At least one of $d_1,d_2$ does not lie on $H_{\delta}^{ss}$,
    \item There exists $I \in R$ with $d_1,d_2 \in C_I$.
\end{enumerate}
\end{Prop}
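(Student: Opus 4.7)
The plan is to reduce the statement to Proposition \ref{except}, which tells us that the semi-orthogonal pair $(\mathcal{C},\mathcal{C}^\perp)$, with $\mathcal{C} = \C(M(\hat d_1),M(\hat d_2))$ (so that the $\ast$-product in question is exactly $(\mathcal{C},\mathcal{C}^\perp)$), is exceptional if and only if at least one of $\mathcal{C}$ or $\mathcal{C}^\perp$ is not contained in $\reg$. The task then becomes to translate this condition into the geometric alternative (a) or (b).

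For the easy direction, assume first that (a) holds: some $d_i$ does not lie on $H_\delta^{ss}$. By the description of canonical decompositions on $H_\delta^{ss}$ in Section \ref{section6}, this forces the canonical decomposition of $d_i$ to contain a non-regular real Schur root, so $M(\hat d_i)$ has a non-regular indecomposable summand and $\mathcal{C} \not\subseteq \reg$. Assume instead (b): both $M(\hat d_1)$ and $M(\hat d_2)$ lie in $\mathcal{W}_I$, so $\mathcal{C} \subseteq \mathcal{W}_I$ is representation-finite, hence (by Lemma \ref{repr-finite}) generated by an exceptional sequence $E$. Completing $E$ to a full exceptional sequence of $\rep(Q)$ must introduce at least one non-regular term, because the dimension vectors of regular exceptional objects span only an $(n-1)$-dimensional subspace of $\mathbb{Q}^n$ (Lemma \ref{Lemma:dep}), while a full exceptional sequence has dimension vectors that form a $\mathbb{Z}$-basis of $\mathbb{Z}^n$. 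That non-regular term lies in $\mathcal{C}^\perp$, so $\mathcal{C}^\perp \not\subseteq \reg$, and Proposition \ref{except} again yields the conclusion.

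For the converse, assume the pair is exceptional but (a) fails, so $d_1,d_2 \in H_\delta^{ss}$. Then $M(\hat d_1), M(\hat d_2)$ are regular and $\mathcal{C} \subseteq \reg$, so Proposition \ref{except} forces $\mathcal{C}^\perp$ to contain a non-regular indecomposable $V$; since all preprojective and preinjective indecomposables over a Euclidean quiver are rigid, $V$ is automatically exceptional. The condition $V \in \mathcal{C}^\perp$ means exactly $\mathcal{C} \subseteq {}^\perp V$. The left-orthogonal analog of Lemma \ref{kindofperp}, which follows at once from the stated lemma by applying it inside $\rep(Q^{\rm op})$, shows that ${}^\perp V$ is equivalent to the representations of a possibly disconnected Dynkin quiver and hence is representation-finite. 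Consequently $\mathcal{C}$ is a representation-finite thick subcategory of $\reg$, and Lemma \ref{Lemma3} places it inside some wing $\mathcal{W}_I$. Then $\hat d_1, \hat d_2 \in F_I$, and combining this with the $\delta$-summands that may occur in the canonical decompositions of $d_1, d_2$ yields $d_1, d_2 \in C_I$, which is (b).

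The main obstacle is the converse, specifically the step $\mathcal{C} \subseteq {}^\perp V \Rightarrow \mathcal{C}$ representation-finite. It relies on the dual of Lemma \ref{kindofperp}, which is not stated explicitly in the excerpt, and which can alternatively be verified by hand using Lemma \ref{Lemma1}, Proposition \ref{ARShape}, and the Auslander--Reiten formula to rule out non-exceptional regular indecomposables from ${}^\perp V$, treating the preprojective and preinjective cases for $V$ separately. Once this representation-finiteness is established, the rest of the converse is just a matter of invoking Lemma \ref{Lemma3} and reading off the location of $d_1, d_2$ from the structure of $H_\delta^{ss}$.
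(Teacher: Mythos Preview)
Your proof is correct and close in spirit to the paper's: both hinge on Proposition~\ref{except}, Lemma~\ref{repr-finite}, and Lemma~\ref{Lemma3}. Two differences are worth noting.

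For the forward direction under (b), your detour through showing $\mathcal{C}^\perp \not\subseteq \reg$ and then invoking Proposition~\ref{except} is unnecessary: once $\mathcal{C}$ is generated by an exceptional sequence (via Lemma~\ref{repr-finite}), completing that sequence to a full one immediately exhibits $(\mathcal{C},\mathcal{C}^\perp)$ as an exceptional semi-orthogonal pair, which is how the paper argues.

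For the converse, the paper proceeds by contrapositive: assuming $d_1,d_2\in H_\delta^{ss}$ lie in different cones $C_I$, it shows $\mathcal{C}=\C(M(\hat d_1),M(\hat d_2))$ is representation-infinite (otherwise Lemma~\ref{Lemma3} would force it into a single $\mathcal{W}_I$) and that $\mathcal{C}^\perp$ is representation-infinite as well, since it contains all homogeneous tubes; it then uses the observation, recorded just before Theorem~\ref{thick}, that in any exceptional semi-orthogonal pair over a Euclidean quiver one side must be representation-finite. Your route instead pulls a non-regular indecomposable $V$ out of $\mathcal{C}^\perp$ via Proposition~\ref{except} and bounds $\mathcal{C}\subseteq{}^\perp V$, concluding representation-finiteness of $\mathcal{C}$ from the dual of Lemma~\ref{kindofperp}. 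Both arguments are valid; the paper's is marginally slicker since it avoids appealing to the dual statement, while yours makes more direct use of Proposition~\ref{except}.
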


\begin{proof}
If at least one of $d_1,d_2$ is not in $H_\delta^{ss}$, then $\C(M(\hat d_1),M(\hat d_2))$ is not contained in $\mathcal{R}$eg. We apply Proposition \ref{except} in this case.
If both $d_1,d_2$ lie in the same $C_I$, then $\C(M(\hat d_1),M(\hat d_2))$ is clearly representation-finite by Lemma \ref{Lemma3}. By Lemma \ref{repr-finite}, $\C(M(\hat d_1),M(\hat d_2))$ is generated by an exceptional sequence and so is $\C(M(\hat d_1),M(\hat d_2))^\perp$. Suppose now that $d_1,d_2$ both lie in $H_{\delta}^{ss}$ but in different cones.  If $\C(M(\hat d_1),M(\hat d_2))$ is representation-finite, then it lies in some $\mathcal{W}_I$ by Lemma \ref{Lemma3}, and hence both $d_1,d_2$ lie in $C_I$, a contradiction. Since $\C(M(\hat d_1),M(\hat d_2))^\perp$ contains all the representations in homogeneous tubes, it is not representation-finite.  Therefore, $(\mathcal{C}(M(\hat d_1)),\mathcal{C}(M(\hat d_1))^\perp)\ast(\mathcal{C}(M(\hat d_2)),\mathcal{C}(M(\hat d_2))^\perp)$ cannot be exceptional since neither $\C(\mathcal{C}(M(\hat d_1)),\mathcal{C}(M(\hat d_2)))$ nor $\C(\mathcal{C}(M(\hat d_1)),\mathcal{C}(M(\hat d_2)))^\perp$ is representation-finite.
\end{proof}

From what we just proved, if $(\mathcal{C}(M(\hat d_1)),\mathcal{C}(M(\hat d_1))^\perp)\ast(\mathcal{C}(M(\hat d_2)),\mathcal{C}(M(\hat d_2))^\perp)$ is not exceptional, then both $\C(M(\hat d_1),M(\hat d_2))$ and $\mathcal{C}(M(\hat d_1))^\perp \cap \mathcal{C}(M(\hat d_2))^\perp$ are contained in $\mathcal{R}$eg and are not representation-finite. The following proposition gives a first partial answer on how to compute the intersection of two semi-stable subcategories in the Euclidean case.

\begin{Prop} Let $Q$ be a Euclidean quiver and $d_1,d_2$ be dimension vectors satisfying the equivalent conditions of Proposition \ref{prop5.7}.
Then
$\rep(Q)_{d_1}\cap\rep(Q)_{d_2} = \rep(Q)_{d_3}$ for some dimension vector $d_3$.
\begin{enumerate}[$(a)$]
    \item If at least one of $d_1,d_2$ is not in $H_\delta^{ss}$, then $d_3$ is not in $H_\delta^{ss}$.  In particular, $\rep(Q)_{d_3}$ is representation-finite.
\item If both $d_1,d_2$ lie on some facet $F_I$, then $d_3$ can be chosen to be in $F_I$ and $\rep(Q)_{d_3} = V^\perp$ where $V$ is rigid and $\C(V)$ is representation-finite.
\item If both $d_1,d_2$ lie in some $C_I$ but not both on $F_I$, then $d_3$ can be chosen to be in $C_I$. In this case, $\rep(Q)_{d_3} = V^\perp \cap \mathcal{R}$eg where $V$ is rigid and $\C(V)$ is representation-finite.
\end{enumerate}
\end{Prop}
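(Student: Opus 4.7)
The plan is to first identify the intersection $\rep(Q)_{d_1} \cap \rep(Q)_{d_2}$ as $V^\perp$ (possibly further intersected with $\reg$) for a single rigid representation $V$. By Proposition \ref{prop5.7}, our hypothesis guarantees that the pair $(\C(M(\hat d_1)), \C(M(\hat d_1))^\perp) \ast (\C(M(\hat d_2)), \C(M(\hat d_2))^\perp)$ is exceptional, so $\C(M(\hat d_1), M(\hat d_2))$ is generated by an exceptional sequence. I take $V$ to be a minimal projective generator of this thick subcategory; then by Proposition \ref{equiv_thick},
$$V^\perp = \C(V)^\perp = \C(M(\hat d_1), M(\hat d_2))^\perp = M(\hat d_1)^\perp \cap M(\hat d_2)^\perp.$$
Which of $V^\perp$ or $V^\perp \cap \reg$ actually arises as $\rep(Q)_{d_1} \cap \rep(Q)_{d_2}$ is then controlled, via Propositions \ref{dnonreg} and \ref{dreg}, by whether each $d_i$'s canonical decomposition contains $\delta$.

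For case (c), neither canonical decomposition contains $\delta$, so the intersection is $V^\perp$; and since $M(\hat d_1), M(\hat d_2) \in \mathcal{W}_I$, we have $\C(V) \subseteq \mathcal{W}_I$, which is representation-finite. Setting $d_3 = d_V$, Proposition \ref{dnonreg} yields $\rep(Q)_{d_3} = V^\perp$, which is the desired form.

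For case (b), at least one canonical decomposition contains $\delta$, forcing the intersection to be $V^\perp \cap \reg$. As in case (c), $V$ is rigid and regular with $\C(V) \subseteq \mathcal{W}_I$, so $\C(V)$ is representation-finite. I set $d_3 = d_V + \delta$: because each summand of $V$ lives in a non-homogeneous tube while $\delta$ comes from the homogeneous ones, Proposition \ref{Kac1} gives that the canonical decomposition of $d_3$ is that of $d_V$ augmented by $\delta$, placing $d_3 \in C_I$ with $\hat d_3 = d_V$. Proposition \ref{dreg} then yields $\rep(Q)_{d_3} = V^\perp \cap \reg$.

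For case (a), at least one $d_i \notin H_\delta^{ss}$, so $M(\hat d_i)$, and hence $V$, has a non-regular summand; by Lemma \ref{kindofperp}, $V^\perp$ is then equivalent to the representations of a (possibly disconnected) Dynkin quiver, hence representation-finite. When both $d_i$ lie outside $H_\delta^{ss}$, the intersection equals $V^\perp = \rep(Q)_{d_V}$ with $d_V \notin H_\delta^{ss}$, which settles the case. The trickier sub-case, and the main obstacle, is when exactly one $d_i$ lies in $H_\delta^{ss}$: then the intersection is $V^\perp \cap \reg$, a representation-finite thick subcategory of $\rep(Q)$. By Lemma \ref{repr-finite} and Proposition \ref{equiv_thick}, it equals $U^\perp$ for some rigid $U$, and to ensure $d_3 := d_U$ fulfills the conclusion I will argue that if $U$ were entirely regular, then Lemma \ref{kindofperp} would make $U^\perp$ equivalent to the representations of a Euclidean quiver, contradicting representation-finiteness. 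So $U$ must have a non-regular summand, making $d_U$ prehomogeneous with $d_U \notin H_\delta^{ss}$, and Proposition \ref{dnonreg} finally yields $\rep(Q)_{d_3} = U^\perp$.
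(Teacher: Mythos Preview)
Your argument is essentially correct and is more constructive than the paper's. The paper simply notes that the main statement follows from Propositions~\ref{prop5.7} and~\ref{equiv_thick}, and then for part~(a) argues by contradiction: once one knows $\rep(Q)_{d_1}$ is representation-finite (because $M(d_1)$ has a non-regular summand, so Lemma~\ref{kindofperp} applies), the intersection $\rep(Q)_{d_3}$ is representation-finite; were $d_3\in H_\delta^{ss}$, then $\rep(Q)_{d_3}$ would contain all homogeneous tubes, which is impossible. Your approach instead builds $d_3$ explicitly in each case, which is informative but requires more care.

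Two small points need tightening. First, in case~(a) the step ``and hence $V$ has a non-regular summand'' is not automatic: $V$ is the projective generator of $\C(M(\hat d_1),M(\hat d_2))$, not $M(\hat d_i)$ itself. The missing sentence is that $\reg$ is a thick subcategory, so if $V$ were entirely regular then $\C(V)\subseteq\reg$, contradicting that $\C(V)$ contains the non-regular $M(\hat d_1)$. Second, your sub-case split in~(a) is phrased in terms of membership in $H_\delta^{ss}$, but what actually governs whether $\rep(Q)_{d_i}$ picks up the extra $\cap\,\reg$ is whether $\delta$ occurs in the canonical decomposition of $d_i$. If exactly one $d_i$ lies in $H_\delta^{ss}$ but on the boundary (on some $F_I$), then $\delta$ is absent and $\rep(Q)_{d_i}=M(\hat d_i)^\perp$, so the intersection is $V^\perp$ rather than $V^\perp\cap\reg$; this falls under your ``both outside'' argument, not the ``trickier sub-case''. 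With these two adjustments your proof goes through, and the explicit $d_3$ you produce in~(b) and~(c) matches the paper's.
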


\begin{proof}
The main statement follows from Propositions \ref{equiv_thick}, \ref{dreg} and \ref{prop5.7}. For $(a)$, suppose that $d_1$ does not lie in $H_\delta^{ss}$.  Then $\rep(Q)_{d_1}$ is representation-finite and so is $\rep(Q)_{d_1} \cap \rep(Q)_{d_2} = \rep(Q)_{d_3}$.  If $d_3$ lies in $H_\delta^{ss}$, then all the homogeneous tubes are contained in $\rep(Q)_{d_3}$, contradicting that $\rep(Q)_{d_3}$ is representation-finite. For $(b)$, $d_1 = \hat d_1$ and $d_2 = \hat d_2$ and we have $M(d_1),M(d_2) \in \mathcal{W}_I$, and hence, $\C(M(d_1),M(d_2)) \subseteq \mathcal{W}_I$ is representation-finite.  Therefore, $\C(M(d_1),M(d_2))$ is generated by some rigid representation $V$ and $\rep(Q)_{d_3} = V^\perp$.  For part $(c)$, observe that, for $i=1,2$, we have $\rep(Q)_{d_i} = \rep(Q)_{\hat d_i} \cap \mathcal{R}{\rm eg}$ where $\hat d_i \in F_I$. Hence, from part $(b)$, the intersection $\rep(Q)_{d_1}\cap\rep(Q)_{d_2}$ is given by $\rep(Q)_{d_3'} \cap \mathcal{R}{\rm eg}$ where $d_3' \in F_I$ and $\rep(Q)_{d_3'} = V^\perp$ for some rigid $V$ such that $\C(V)$ is representation-finite. The result follows by setting $d_3 = d_3' + \delta$.
\end{proof}

Now, we concentrate on the remaining case, that is, to describe the intersection $\rep(Q)_{d_1} \cap \rep(Q)_{d_2}$ when both ${d_1}$ and ${d_2}$ lie in $H_\delta^{ss}$, and not in the same $C_I$. We can clearly assume that $Q$ has more than $2$ vertices.

\begin{Lemma} If $d_1$ and $d_2$ both lie in $H_\delta^{ss}$, and do not both
lie in the same $C_I$, then $\rep(Q)_{d_1} \cap \rep(Q)_{d_2}$ is
contained in $\mathcal R$eg.
\end{Lemma}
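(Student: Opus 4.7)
The plan is to sandwich the intersection inside a third semi-stable subcategory whose containment in $\reg$ we already know from Proposition \ref{dreg}. First I would establish the easy inclusion $\rep(Q)_{d_1}\cap\rep(Q)_{d_2}\subseteq\rep(Q)_{d_1+d_2}$ by a standard product-of-semi-invariants argument: if $f_i\in{\rm SI}(Q,d_M)_{m_i\langle d_i,-\rangle}$ is nonzero at $M$ for $i=1,2$, then $f_1^{m_2}f_2^{m_1}$ is a nonzero semi-invariant on $M$ of weight $m_1m_2\langle d_1+d_2,-\rangle$, so $M$ is $(d_1+d_2)$-semi-stable.

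It therefore remains to show $\rep(Q)_{d_1+d_2}\subseteq\reg$. Since $d_1+d_2$ is a dimension vector lying in $H_\delta^{ss}$ by convexity, Proposition \ref{dreg} reduces the task to checking that the null root $\delta$ appears in the canonical decomposition of $d_1+d_2$. The proposition at the end of Section \ref{section6} characterizes this precisely: for $d\in H_\delta^{ss}$, the null root $\delta$ appears in the canonical decomposition of $d$ exactly when $d$ lies on no facet $F_I$ of $H_\delta^{ss}$, equivalently when $d$ lies in the relative interior of $H_\delta^{ss}$.

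The hypothesis will now be used to show that $d_1+d_2$ avoids every $F_I$. By Proposition \ref{boundaryofregular}, each $F_I$ is a face of the convex cone $H_\delta^{ss}$, so the face property of convex cones gives: if $d_1+d_2\in F_I$ with $d_1,d_2\in H_\delta^{ss}$, then both summands must already lie in $F_I\subseteq C_I$, contradicting the assumption that $d_1$ and $d_2$ do not jointly lie in any single $C_I$. Hence $d_1+d_2\notin F_I$ for any $I$, and the argument is complete. The proof is almost entirely formal once the combinatorial structure of $H_\delta^{ss}$ from Section \ref{section6} is available; the only point requiring a moment of care is confirming that the $F_I$ are genuine faces of $H_\delta^{ss}$ (in the sense of convex geometry) so that the face property applies, but this is immediate from the fact that they are the boundary facets of the convex cone.
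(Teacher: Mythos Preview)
Your proof is correct and follows essentially the same strategy as the paper: sandwich the intersection inside $\rep(Q)_{d_1+d_2}$, then invoke Proposition~\ref{dreg} after checking that $d_1+d_2$ lies in the relative interior of $H_\delta^{ss}$. The convex-geometry step (the face property of the $F_I$) is exactly the content behind the paper's one-line assertion that $f=d_1+d_2$ lies in the interior.

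The one genuine difference is how the inclusion $\rep(Q)_{d_1}\cap\rep(Q)_{d_2}\subseteq\rep(Q)_{d_1+d_2}$ is obtained. The paper first reduces to the case where $d_1,d_2$ lie on the boundary of $H_\delta^{ss}$ (so that both are prehomogeneous), and then exhibits the explicit rigid representation $X=M(d_1)\oplus M(d_2)$ of dimension $d_1+d_2$ with $C^X(Y)\ne 0$ for every $Y$ in the intersection. Your product-of-semi-invariants argument bypasses this reduction entirely and works uniformly for arbitrary $d_1,d_2\in H_\delta^{ss}$; it is cleaner and requires nothing beyond the definition of semi-stability. The paper's version, on the other hand, makes the witnessing representation concrete, which is in keeping with the representation-theoretic flavour of the surrounding arguments.
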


\begin{proof}
We can clearly assume, from Proposition \ref{dreg}, that both $d_1,d_2$ lie on the boundary of $H_\delta^{ss}$. Let $\mathcal C=\rep(Q)_{d_1}\cap \rep(Q)_{d_2}$.
Let $f=d_1+d_2$.  We observe that $X=M(d_1)\oplus M(d_2)$ is a representation
with dimension vector $f$, such that $\Hom(X,Y)=0$ and $\Ext^1(X,Y)=0$
for any $Y$ in $\mathcal C$.  Thus $\mathcal C$ is contained in
$\rep(Q)_{f}$.  Since $d_1$ and $d_2$ do not both lie in the same
$C_I$, we know that $f$ lies in the interior of $H_\delta^{ss}$.
Proposition \ref{dreg} tells us that $\rep(Q)_f$ is contained in
$\mathcal R$eg, proving the result.
\end{proof}

Since we now know that, in the case we are presently studying,
$\rep(Q)_{d_1}\cap \rep(Q)_{d_2}$ is contained in $\mathcal R$eg, it suffices
to restrict our attention to the regular parts of $\rep(Q)_{d_1}$ and
$\rep(Q)_{d_2}$.  It therefore suffices to assume that neither $d_i$ lies
on the boundary of $H_\delta^{ss}$.  If $d_1$ does lie on the boundary,
replace it by $d_1'=d_1+\delta$, and observe that
$\rep(Q)_{d'_1}= \mathcal R\mathrm {eg} \cap \rep(Q)_{d_1}$.

By Proposition \ref{semistablenonempty}, we know exactly what kind of subcategories can
arise as semi-stable subcategories of $\mathcal R$eg: namely, they are
the subcategories with the property that their intersection with each
tube is of the form $\mathcal S_{J,\mathcal F}$ where $J$ is non-empty. Observe that the condition $J \ne \emptyset$ applied to a homogeneous tube $\mathcal{T}$ just means $\mathcal S_{J,\mathcal F} = \mathcal{T}$ (and $\mathcal{F} = 0$).

Given a non-homogeneous tube $\mathcal{T}$ of $\rep(Q)$, denote by $\mathcal{O}_\mathcal{T}$ the set of indecomposable representations in $\mathcal{T}$ which admit an irreducible monomorphism to a singular-isotropic representation. The set $\mathcal{O}_\mathcal{T}$ hence form a $\tau$-orbit of $\mathcal{T}$, which lies just below the $\tau$-orbit of singular-isotropic representations.

\begin{Lemma} \label{leminter}
Let $\mathcal T$ be a non-homogeneous tube, and $\mathcal S_{J_i,\mathcal F_i}$ be
thick subcategories of $\mathcal T$ with each $J_i$ non-empty.
Let $\mathcal C=\bigcap_i \mathcal S_{J_i,\mathcal F_i}$.  Then
\begin{enumerate}[$(a)$]
    \item The category $\mathcal C$ is given by some $\mathcal S_{K,\mathcal G}$,
where $\mathcal G$ does not contain any indecomposables from $\mathcal O_\mathcal T$.
    \item Conversely, any thick subcategory $\mathcal S_{K,\mathcal G}$ of $\mathcal T$
    such that $\mathcal G$ does not contain any indecomposables from
$\mathcal O_\mathcal T$
can be written as an intersection $\mathcal S_{K_1,\mathcal{G}_1} \cap \mathcal S_{K_2,\mathcal{G}_2}$ where $K_1, K_2$ are non-empty.
\end{enumerate}
\end{Lemma}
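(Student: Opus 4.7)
My plan is to handle the two parts separately. For part (a), note that an intersection of thick subcategories is thick, so Proposition \ref{thicktube} writes $\mathcal{C}=\mathcal{S}_{K,\mathcal{G}}$ for a unique regular orthogonal pair $(\mathcal{E}_K,\mathcal{G})$. The goal is to rule out the existence of $X\in\mathcal{G}\cap\mathcal{O}_{\mathcal{T}}$. So assume for contradiction that such an $X$ exists; then $X$ is exceptional of quasi-length $r-1$, where $r$ denotes the rank of $\mathcal{T}$, and has some quasi-socle $S_a$.

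The key observation, extracted from the proof of Proposition \ref{thicktube}, is that for any regular orthogonal pair $(\mathcal{E}_J,\mathcal{F})$ with $J\neq\emptyset$, every indecomposable of $\mathcal{F}$ admits a filtration strictly interior to some relative quasi-simple $Q\in\mathcal{Q}_J$ (omitting $Q$'s quasi-socle and quasi-top), so has quasi-length at most $\mathrm{qlen}(Q)-2\leq r-2$. Since $X$ has quasi-length $r-1$, we conclude $X\in\mathcal{E}_{J_i}$ for every $i$. A count of composition factors ---using that the relative quasi-simples of $\mathcal{E}_{J_i}$ have total quasi-length $r$--- shows that the one relative quasi-simple omitted from $X$'s $\mathcal{Q}_{J_i}$-filtration must have length one, and is forced to be the quasi-simple $S_{a-1}$ missing from $X$'s composition. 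This gives $\{a-1,a\}\subseteq J_i$ for every $i$, so both singular-isotropics $Z_a$ and $Z_{a-1}$ lie in every $\mathcal{E}_{J_i}$, hence in $\mathcal{C}$, forcing $\{a-1,a\}\subseteq K$. Applying the analogous filtration criterion to $\mathcal{E}_K$ (namely, an indecomposable with socle $S_j$ and quasi-length $\ell$ lies in $\mathcal{E}_K$ iff $\{j,j+\ell\}\subseteq K$) now yields $X\in\mathcal{E}_K$, contradicting the disjointness of the indecomposable objects of $\mathcal{G}$ and $\mathcal{E}_K$.

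For part (b), if $K\neq\emptyset$ one simply takes $K_1=K_2=K$ and $\mathcal{G}_1=\mathcal{G}_2=\mathcal{G}$, so the substantive case is $K=\emptyset$, where the hypothesis forces $\mathcal{G}$ to consist only of exceptional indecomposables of quasi-length at most $r-2$. When $\mathcal{G}=0$, choose distinct $a,b$ and set $K_1=\{a\},K_2=\{b\},\mathcal{G}_1=\mathcal{G}_2=0$; the intersection is zero since $Z_a\neq Z_b$. When $\mathcal{G}\neq 0$, the plan is to partition the cyclic set of quasi-simples $\{1,\dots,r\}=K_1\sqcup K_2$ into two non-empty arcs so that for every indecomposable $X\in\mathcal{G}$ with quasi-socle $S_j$ and quasi-length $\ell$, the ``extended span'' $\{j,j+1,\dots,j+\ell\}$ lies entirely within one of the two arcs. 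By the criterion recalled above, indecomposables whose extended span lies in $K_i$ end up in $\mathcal{E}_{K_i}$, while the others are placed in $\mathcal{G}_{3-i}$, with the required orthogonality against $\mathcal{E}_{K_{3-i}}$ verified by the standard Hom/Ext calculations inside the tube. One checks that $K_1\cap K_2=\emptyset$ forces $\mathcal{E}_{K_1}\cap\mathcal{E}_{K_2}=0$, so the four pairwise intersections defining $\mathcal{S}_{K_1,\mathcal{G}_1}\cap\mathcal{S}_{K_2,\mathcal{G}_2}$ collapse to precisely the indecomposables of $\mathcal{G}$.

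The main obstacle is the combinatorial step in part (b), case $K=\emptyset$ with $\mathcal{G}\neq 0$: one must argue that a two-arc partition with the required separation property always exists. The underlying reason is that if the collective extended spans of $\mathcal{G}$ left fewer than two ``gaps'' on the cycle, then thickness of $\mathcal{G}$ combined with the Auslander-Reiten structure of the tube would force an extension whose resulting indecomposable has quasi-length $r-1$, contradicting the hypothesis that $\mathcal{G}\cap\mathcal{O}_{\mathcal{T}}=\emptyset$. By contrast, part (a) proceeds essentially automatically once the quasi-length bound on elements of $\mathcal{F}_i$ is established and the socle-tracking in the filtrations is carried out.
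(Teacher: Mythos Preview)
For part~(a), your argument is correct but more laborious than the paper's. The paper simply observes that if $K\neq\emptyset$ then $\mathcal{E}_K$ contains a singular-isotropic $V$, and since $\mathcal{G}\subseteq{}^\perp V\cap V^\perp$ while no indecomposable of quasi-length $r-1$ is orthogonal to any singular-isotropic, one gets $\mathcal{G}\cap\mathcal{O}_\mathcal{T}=\emptyset$ at once; when $K=\emptyset$, every indecomposable of $\mathcal{C}$ already lies in some $\mathcal{F}_i$, and the same observation applied to the pair $(\mathcal{E}_{J_i},\mathcal{F}_i)$ finishes. Your quasi-length bound and socle-tracking reach the same conclusion by a longer route.

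For part~(b), your approach is genuinely different from the paper's---and in fact more robust. The paper asserts that any such $\mathcal{G}$ lies in ${}^\perp Z\cap Z^\perp$ for a single singular-isotropic $Z$, then writes $\mathcal{S}_{\emptyset,\mathcal{G}}=\mathcal{S}_{\{j\},\mathcal{G}}\cap\mathcal{S}_{\{j\}^c,0}$ where $S_j$ is the quasi-socle of $Z$. But ${}^\perp Z\cap Z^\perp$ restricted to exceptionals is always a wing on $r-2$ \emph{consecutive} quasi-simples, so this assertion fails for disconnected $\mathcal{G}$ such as $\add(S_1,S_3)$ in a rank-$4$ tube; your two-arc partition handles exactly these cases. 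The combinatorial obstacle you flag can be completed as follows: an ``uncrossed edge'' corresponds precisely to a quasi-simple absent from the composition support of $\mathcal{G}$; by Lemma~\ref{Lemma3}, $\mathcal{G}$ sits inside a wing on $r-1$ consecutive quasi-simples, and if its support were all $r-1$ of them, a short induction using thickness produces the length-$(r-1)$ indecomposable, contradicting $\mathcal{G}\cap\mathcal{O}_\mathcal{T}=\emptyset$. Hence the support misses at least two quasi-simples, giving the two cuts you need. The orthogonality you defer to ``standard calculations'' also goes through cleanly: since each indecomposable of $\mathcal{G}_i$ has its extended span inside the arc $K_{3-i}$, its composition factors lie strictly inside the unique long relative quasi-simple of $\mathcal{E}_{K_i}$, which is precisely the criterion extracted from the proof of Proposition~\ref{thicktube} for membership in ${}^\perp\mathcal{E}_{K_i}\cap\mathcal{E}_{K_i}^\perp$.
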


\begin{proof} We begin by proving $(a)$.
Clearly, $K=\bigcap_i J_i$.  If $K\ne \emptyset$,
let $V$ be a singular-isotropic
representation contained in $\mathcal C$.  Since $\mathcal G \subseteq
\p V \cap V^\perp$, it follows that $\mathcal G\cap \mathcal O_\mathcal T = \emptyset$,
as desired.

Suppose now that $K=\emptyset$.  Since $\bigcap_i \mathcal E_{J_i} =
0$, any indecomposable of $\bigcap_i \mathcal S_{J_i,\mathcal F_i}$ must be
contained in some $\mathcal F_i$.  But by the previous argument, $\mathcal
F_i\cap \mathcal O_\mathcal T=\emptyset$.

Now we prove the converse direction.
We may assume that $K=\emptyset$, otherwise, we may take all $\mathcal S_{K_i,\mathcal {G}_i}$ equal $\mathcal S_{K,\mathcal G}$. Let $Z$ be a singular-isotropic representation in $\mathcal T$ with quasi-socle $S$. We claim that for an exceptional object $M$ in $\mathcal T$, we have $\Hom(M,Z) \ne 0$ if and only if $M$ has $S$ as a quasi-simple composition factor. Fix $M$ exceptional in $\mathcal T$. Suppose that $M$ has $S$ as a quasi-simple composition factor. If $M$ has $S$ as quasi-socle, then we have a monomorphism $M \to Z$ and $\Hom(M,Z) \ne 0$. Otherwise $M$ has a quotient $N$ that has $S$ as quasi-socle. The projection $M \to N$ followed by the monomorphism $N \to Z$ gives a nonzero morphism $M \to Z$ and hence, $\Hom(M,Z) \ne 0$. Conversely, assume that $M$ does not have $S$ as a quasi-simple composition factor. Let $f: M \to Z$ be a morphism with image $C$. Since $S$ is the quasi-socle of $Z$, either $C=0$ or $S$ is the quasi-socle of $C$. The latter case gives that $S$ is a quasi-simple composition factor of $M$, a contradiction. Therefore, $C=0$ and hence $f=0$, which gives $\Hom(M,Z)=0$. This proves the claim. Similarly, one can prove that for an exceptional object $M$ in $\mathcal T$, we have $\Ext^1(M,Z) \ne 0$ if and only if $M$ has $S$ as a quasi-simple composition factor.

For an exceptional object $L$ in $\mathcal{O}_T$, denote by $\mathcal{W}_L$ the extension-closed abelian subcategory of $\mathcal T$ generated by the quasi-simple composition factors of $L$. Now from Lemma \ref{Lemma3}, we see that $\mathcal{G}$ is contained in $\mathcal{W}_L$ for some $L \in \mathcal{O}_T$. Observe that there is at least one quasi-simple object $S$ in $\mathcal{W}_L$ such that $S$ is not a quasi-simple composition factor of any object in $\mathcal{G}$. Otherwise, since $\mathcal{G}$ is thick, we would get $L \in \mathcal{G}$, a contradiction. Let $M_1$ be the indecomposable object of maximal quasi-length in $\mathcal{W}_L$ that has $S$ as quasi-socle, and take $N_1 = \tau^{-1}M_1$. Moreover, let $N_2$ be the indecomposable object of maximal quasi-length in $\mathcal{W}_L$ that has $S$ as quasi-top. Observe that $\mathcal{G} \subseteq (N_1 \oplus N_2)^\perp \subseteq \mathcal{W}_L$. Now, $(N_1 \oplus N_2)^\perp = \mathcal{C}_1 \coprod \mathcal{C}_2$ where each $\mathcal{C}_i$, provided it is non-zero, is equivalent to the category of representations of a Dynkin quiver. From Lemma \ref{lemmaDynkin}, there exists a rigid $C_i \in \mathcal{C}_i$ such that $\mathcal{G} \cap \mathcal{C}_i = C_i^\perp \cap \mathcal{C}_i$. Thus, we have $\mathcal{G} = (N_1 \oplus N_2 \oplus C_1 \oplus C_2)^\perp$.
By Proposition \ref{thicktube}, each $(N_i \oplus C_i)^\perp$ can be expressed as some $\mathcal S_{K_i,\mathcal{G}_i}$, so $\mathcal G= \mathcal S_{K_1,\mathcal G_1}\cap \mathcal S_{K_2,\mathcal G_2}$.
Now, $S$ is not a quasi-simple composition factor of $N_1 \oplus C_1$, so
$K_1$ contains $S$ and if $U$ is the unique quasi-simple object not in $\mathcal{W}_L$, then $U$ is not a quasi-simple composition factor of $N_2 \oplus C_2$, so $U\in K_2$.  It follows that the expression we have obtained for
$\mathcal G$ as the intersection of $\mathcal S_{K_1,\mathcal G_1}$ and
$\mathcal S_{K_2,\mathcal G_2}$ is of the desired form.
\end{proof}

The following example illustrates the second part of the Lemma and the idea of its proof.

\begin{Exam}
In the picture below, we have a tube of rank $6$ where only the exceptional indecomposable objects are drawn.
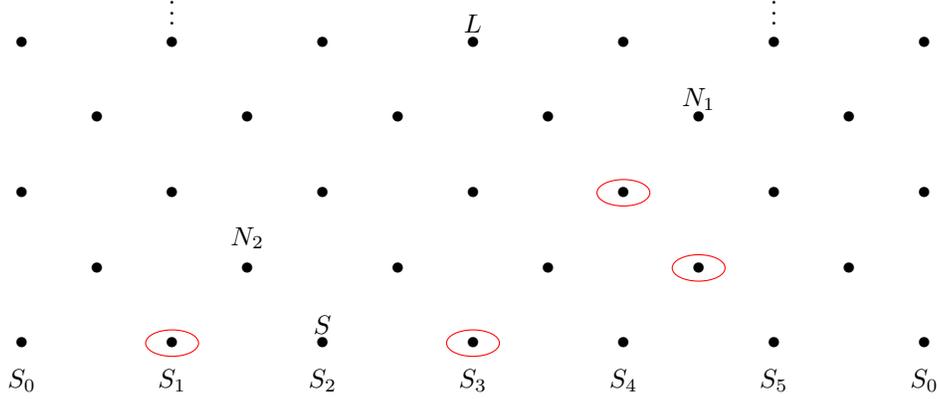
\begin{figure}[h]
  \centering
  \begin{tikzpicture}[xscale=1,yscale=0.5]

\node at (-4, 6) {$\vdots$};
\node at (4, 6) {$\vdots$};

\node at (-6,-3) {$\bullet$};
\node at (-4,-3) {$\bullet$};
\node at (-2,-3) {$\bullet$};
\node at (-0,-3) {$\bullet$};
\node at (2,-3) {$\bullet$};
\node at (4,-3) {$\bullet$};
\node at (6,-3) {$\bullet$};

\node at (-6,-4) {$S_0$};
\node at (-4,-4) {$S_1$};
\node at (-2,-4) {$S_2$};
\node at (-0,-4) {$S_3$};
\node at (2,-4) {$S_4$};
\node at (4,-4) {$S_5$};
\node at (6,-4) {$S_0$};

\node at (-6,1) {$\bullet$};
\node at (-4,1) {$\bullet$};
\node at (-2,1) {$\bullet$};
\node at (-0,1) {$\bullet$};
\node at (2,1) {$\bullet$};
\node at (4,1) {$\bullet$};
\node at (6,1) {$\bullet$};

\node at (-5,-1) {$\bullet$};
\node at (-3,-1) {$\bullet$};
\node at (-1,-1) {$\bullet$};
\node at (1,-1) {$\bullet$};
\node at (3,-1) {$\bullet$};
\node at (5,-1) {$\bullet$};

\node at (-5,3) {$\bullet$};
\node at (-3,3) {$\bullet$};
\node at (-1,3) {$\bullet$};
\node at (1,3) {$\bullet$};
\node at (3,3) {$\bullet$};
\node at (5,3) {$\bullet$};

\node at (-6,5) {$\bullet$};
\node at (-4,5) {$\bullet$};
\node at (-2,5) {$\bullet$};
\node at (-0,5) {$\bullet$};
\node at (2,5) {$\bullet$};
\node at (4,5) {$\bullet$};
\node at (6,5) {$\bullet$};

\node at (-2,-2.5) {$S$};
\node at (0,5.5) {$L$};

\node at (-3,-0.2) {$N_2$};
\node at (3,3.5) {$N_1$};

\draw (-4,-3) [red] circle (10pt);
\draw (3,-1) [red] circle (10pt);
\draw (0,-3) [red] circle (10pt);
\draw (2,1) [red] circle (10pt);

  \end{tikzpicture}
  \caption{A tube of rank $6$}
\label{fig:A3}
\end{figure}

We have a thick subcategory $\mathcal{G}$ whose indecomposable objects are circled. Using the notations of the proof of Lemma \ref{leminter}, take $C_1 = S_5$ and $C_2 = 0$. Then each $(N_i \oplus C_i)^\perp$ is given by $S_{K_i,\mathcal{G}_i}$, where $K_i$ is non-empty.
\end{Exam}

We have now essentially proved our final main theorem.

\begin{Theo}\label{inters} Let $Q$ be a Euclidean quiver.
There are finitely many  subcategories of $\rep(Q)$
which arise as an intersection of semi-stable
subcategories, and which are not themselves semi-stable subcategories
for any stability condition. Moreover, such a subcategory is characterized by the following.
\begin{enumerate}[$(a)$]
    \item It is contained entirely
in the regular part of $Q$.
\item It contains all
the homogeneous tubes.
\item Its intersection with each non-homogeneous tube
$\mathcal T_i$ can be written as $\mathcal S_{J_i,\mathcal F_i}$ where $(\mathcal{E}_{J_i}, \mathcal F_i)$ is a regular orthogonal pair in $\mathcal{T}_i$ with $\mathcal F_i\cap \mathcal O_{\mathcal{T}_i} = \emptyset$.  Moreover, at least one of the $J_i$ is empty.
\item It can be
expressed as the intersection of at most two semi-stable subcategories.
\end{enumerate}
\end{Theo}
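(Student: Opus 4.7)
The plan is to reduce to working inside $\mathcal{R}$eg and then perform a tube-by-tube analysis using the machinery of Section~\ref{sectionthick}. Every semi-stable subcategory has the form $\rep(Q)_d$, and by Proposition~\ref{prop5.7} together with the proposition following it, an intersection $\rep(Q)_{d_1} \cap \rep(Q)_{d_2}$ fails to be itself semi-stable only if both $d_1, d_2$ lie in $H_\delta^{ss}$ without sharing a single cone $C_I$. Under this hypothesis, the Lemma immediately preceding Theorem~\ref{inters} yields property (a), and Proposition~\ref{oneone} applied to each $\rep(Q)_{d_i}$ gives property (b) together with the fact that the restriction of $\rep(Q)_{d_i}$ to each non-homogeneous tube $\mathcal{T}_j$ has the form $\mathcal{S}_{J_j^{(i)}, \mathcal{F}_j^{(i)}}$ with $J_j^{(i)} \ne \emptyset$. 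Applying Lemma~\ref{leminter}(a) tube-by-tube then yields the form $\mathcal{S}_{K_j, \mathcal{G}_j}$ with $\mathcal{G}_j \cap \mathcal{O}_{\mathcal{T}_j} = \emptyset$ asserted in (c). The assertion that at least one $K_j$ is empty is forced by contradiction: otherwise Proposition~\ref{oneone} would make the intersection itself semi-stable, contradicting the reduction.

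To prove the converse together with (d), I would start from any subcategory $\mathcal{C}$ satisfying (a)--(c) and apply Lemma~\ref{leminter}(b) inside each non-homogeneous tube to write $\mathcal{S}_{K_j,\mathcal{G}_j} = \mathcal{S}_{L_j^{(1)},\mathcal{H}_j^{(1)}} \cap \mathcal{S}_{L_j^{(2)},\mathcal{H}_j^{(2)}}$ with every $L_j^{(t)}$ non-empty. Assembling these pieces across tubes and adjoining all homogeneous tubes produces two subcategories of $\mathcal{R}$eg which, by Proposition~\ref{oneone}, are both semi-stable and whose intersection is $\mathcal{C}$. This simultaneously proves the converse direction of the characterization and property (d).

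The main obstacle will be verifying that the global assembly is legitimate. Concretely, the local decompositions in each tube come from rigid representations $V_j^{(t)} \in \mathcal{T}_j$ whose right perp in the tube gives the correct $\mathcal{S}_{L_j^{(t)},\mathcal{H}_j^{(t)}}$; one must check that $V^{(t)} := \bigoplus_j V_j^{(t)}$ is rigid as an object of $\rep(Q)$, so that Theorem~\ref{one}(ii) applies to produce a genuine stability condition. This uses $\Hom(\mathcal{T}_i, \mathcal{T}_j) = 0$ for $i \ne j$ and the standardness of the tubes from Proposition~\ref{ARShape}, which together force cross-tube $\Ext^1$ to vanish in the hereditary setting.

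Finally, finiteness of the class of such subcategories follows immediately from the description in (c): there are finitely many non-homogeneous tubes, each of finite rank, only finitely many subsets $K_j$ of quasi-simples in each, and the admissible $\mathcal{G}_j$ (forbidden from meeting $\mathcal{O}_{\mathcal{T}_j}$) are confined to a finite collection of wings strictly below the $\tau$-orbit of singular-isotropic representations, so only finitely many pieces of data $(K_j,\mathcal{G}_j)_j$ can occur.
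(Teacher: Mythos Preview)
Your proposal is correct and follows essentially the same route as the paper: reduce to the situation where both dimension vectors lie in $H_\delta^{ss}$ without sharing a cone, invoke the preceding lemma to land inside $\mathcal{R}$eg, then apply Proposition~\ref{oneone} and Lemma~\ref{leminter} tube-by-tube for both directions, with finiteness read off from the explicit description. One small remark: the ``main obstacle'' paragraph about assembling rigid objects $V^{(t)} = \bigoplus_j V_j^{(t)}$ is unnecessary, since Proposition~\ref{oneone} already gives an intrinsic characterization of the semi-stable subcategories lying in $\mathcal{R}$eg (infinitely many indecomposables from each tube), and your assembled categories visibly satisfy it once each $L_j^{(t)}$ is non-empty; the paper takes this shorter path rather than checking rigidity across tubes.
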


\begin{proof} We have shown that if $\mathcal C_1$ and $\mathcal C_2$ are
two semi-stable subcategories, such that $\mathcal C_1\cap \mathcal C_2$
is not itself semi-stable, then $\mathcal C_1\cap \mathcal C_2$ is
contained in the regular part $\mathcal{R}$eg, and contains the homogeneous
tubes.  Since it is thick, its intersection with each non-homogeneous
tube can be written as $\mathcal S_{J_i,\mathcal F_i}$, and since, by assumption,
it is not semi-stable, some $J_i$ must be empty.  Lemma \ref{leminter}
says further that $\mathcal F_i \cap \mathcal O_{\mathcal{T}_i} =\emptyset$.  The
same lemma also shows that any such subcategory can be written as the
intersection of at most two semi-stable subcategories.
\end{proof}

\bigskip

{\sc Acknowledgments:} The authors are supported by NSERC while the second author is also supported in part by AARMS.  The third author would like to thank
MSRI for its hospitality during the completion of this paper.  The authors
would like to express their gratitude to an anonymous referee for a careful
reading of the paper, which led to several improvements.

\end{document}